\title{Dense ideals and cardinal arithmetic}
\author{Monroe Eskew}
\newtheorem{theorem}{Theorem}[section]
\newtheorem{lemma}[theorem]{Lemma}
\newtheorem{proposition}[theorem]{Proposition}
\newtheorem{corollary}[theorem]{Corollary}
\newtheorem*{question}{Question}
\newtheorem*{definition}{Definition}
\DeclareMathOperator{\dom}{dom}
\DeclareMathOperator{\ran}{ran}
\DeclareMathOperator{\ot}{ot}
\DeclareMathOperator{\cf}{cf}
\DeclareMathOperator{\cof}{cof}
\DeclareMathOperator{\p}{\mathcal{P}}
\DeclareMathOperator{\den}{d}
\DeclareMathOperator{\sat}{sat}
\DeclareMathOperator{\add}{Add}
\DeclareMathOperator{\col}{Col}
\DeclareMathOperator{\ord}{Ord}
\DeclareMathOperator{\card}{Card}
\DeclareMathOperator{\supp}{supp}
\DeclareMathOperator{\crit}{crit}
\DeclareMathOperator{\id}{id}
\begin{document}
\maketitle

\begin{abstract}From large cardinals we show the consistency of normal, fine, $\kappa$-complete $\lambda$-dense ideals on $\p_\kappa(\lambda)$ for successor $\kappa$.  We explore the interplay between dense ideals, cardinal arithmetic, and squares, answering some open questions of Foreman.
\end{abstract}

Most large cardinals are characterizable in terms of elementary embeddings between models of set theory that have a certain amount of agreement with the full universe $V$.  A typical large cardinal is the least ordinal moved by a nontrivial map $j : V \to M$, where $M$ is a transitive class, and the strength of the large cardinal assumption tends to increase as $M$ gets closer to $V$.  Such cardinals are inaccessible and much more.  This phenomenon can however be realized at small cardinals when the embedding $j : V \to M$ is defined in a forcing extension $V[G]$.  The nature of the forcing adds another dimension to these ``generic large cardinals,'' and their strength tends to increase as the three models $V$, $M$, and $V[G]$ more closely resemble one another.

Here, we consider generic versions of supercompactness at successor cardinals that are optimal in the sense that the forcing poset needed to produce the elementary embedding is of the smallest possible size.  We show that relative to a super-almost-huge cardinal, there can exist a successor cardinal $\kappa$ such that for every regular $\lambda \geq \kappa$, there is a normal, fine, $\kappa$-complete, $\lambda$-dense ideal on $\p_\kappa(\lambda)$.  As far as the author knows, this is the first result exhibiting the consistency of even \emph{saturated} normal and fine ideals on $\p_\kappa(\lambda)$ for a fixed successor $\kappa$ and several values of $\lambda$ simultaneously.   The method used also has immediate application to show the non-absoluteness of some cardinal characteristics of the powerset of a fixed regular cardinal $\mu$, even between models with the same cardinals and same $\mu$-sequences.

Generic large cardinals can have strong influence over the combinatorial structure of the universe in their vicinity.  We explore the interplay between dense ideals, cardinal arithmetic, nonregular ultrafilters, and stationary reflection.  We answer two open questions posed by Foreman in \cite{foremanhandbook} and provide a ``global'' counterexample to an old conjecture in model theory.  We also show some limitations of dense ideals near singular cardinals, establishing the optimality some aspects of our consistency results.  Finally we show that in contrast to traditional supercompactness, the strong forms of generic supercompactness considered here are compatible with Jensen's principle $\square$.

\section{Preliminaries}
\subsection{Forcing}
First we review some essential facts about forcing.  We refer the reader to \cite{jechbook} and \cite{kunen} for background and details.

A partial order $\mathbb{P}$ is said to be \emph{separative} when $p \nleq q \Rightarrow (\exists r \leq p) r \perp q$.  Every partial order $\mathbb{P}$ has a canonically associated equivalence relation $\sim_s$ and a separative quotient $\mathbb{P}_s$, which is isomorphic to $\mathbb{P}$ if $\mathbb{P}$ is already separative.  In most cases we will assume our partial orders are separative.  For every separative partial order $\mathbb{P}$, there is a canonical complete boolean algebra $\mathcal{B}(\mathbb{P})$ with a dense set isomorphic to $\mathbb{P}$.

A map $e: \mathbb{P} \to \mathbb{Q}$ is an \emph{embedding} when it preserves order and incompatibility.  An embedding is said to be \emph{regular} when it preserves the maximality of antichains.  A order-preserving map $\pi : \mathbb{Q} \to \mathbb{P}$ is called a \emph{projection} when $\pi(1_\mathbb{Q}) = 1_\mathbb{P}$, and $p \leq \pi(q) \Rightarrow (\exists q' \leq q) \pi(q') \leq p$.

\begin{lemma}Suppose $\mathbb{P}$ and $\mathbb{Q}$ are partial orders. 

\begin{enumerate}[(1)]
\item $G$ is a generic filter for $\mathbb{P}$ iff $\{ [p]_s : p \in G \}$ is a generic filter for $\mathbb{P}_s$.

\item $e : \mathbb{P} \to \mathbb{Q}$ is a regular embedding iff for all $q \in \mathbb{Q}$, there is $p \in \mathbb{P}$ such that for all $r \leq p$, $e(r)$ is compatible with $q$.

\item The following are equivalent:
\begin{enumerate}[(a)]
\item There is a regular embedding $e : \mathbb{P}_s \to \mathcal{B}(\mathbb{Q}_s)$.
\item There is a projection $\pi : \mathbb{Q}_s \to \mathcal{B}(\mathbb{P}_s)$.
\item There is a $\mathbb{Q}$-name $\dot{g}$ for a $\mathbb{P}$-generic filter such that for all $p \in \mathbb{P}$, there is $q \in \mathbb{Q}$ such that $q \Vdash p \in \dot{g}$.
\end{enumerate}

\item Suppose $e : \mathbb{P} \to \mathbb{Q}$ is a regular embedding.  If $G$ is a filter on $\mathbb{P}$ let $\mathbb{Q}/G = \{ q : \neg \exists p \in G( e(p) \perp q) \}$.  The following are equivalent:
\begin{enumerate}[(a)]
\item $H$ is $\mathbb{Q}$-generic over $V$.
\item $G = e^{-1}[H]$ is $\mathbb{P}$-generic over $V$, and $H$ is $\mathbb{Q} / G$-generic over $V[G]$.
\end{enumerate}

\end{enumerate}
\end{lemma}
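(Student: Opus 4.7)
The plan is to verify each of the four standard forcing equivalences in turn, largely as bookkeeping with dense sets, antichains, and reductions. None of the arguments should require new ideas; the only technical care is in choosing the correct correspondences.

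For (1), I would observe that the separative quotient map $q:\mathbb{P}\to\mathbb{P}_s$ preserves order and compatibility, and that two conditions are $\sim_s$-equivalent iff they have the same set of common refinements. From this it is immediate that $G$ is upward-closed and compatible iff $q[G]$ is, and that a subset $D\subseteq\mathbb{P}$ is dense iff $q[D]\subseteq\mathbb{P}_s$ is dense. Both directions of the genericity equivalence then follow from the fact that $q^{-1}[q[G]]=G$ when $G$ is upward-closed.

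For (2), the $(\Rightarrow)$ direction is by contradiction: if $q$ has no reduction, then $D_q:=\{p\in\mathbb{P}: e(p)\perp q\}$ is dense in $\mathbb{P}$. Extracting a maximal antichain $A\subseteq D_q$ by Zorn, regularity of $e$ gives that $e[A]$ is maximal in $\mathbb{Q}$, yet every element of $e[A]$ is incompatible with $q$, a contradiction. Conversely, assuming reductions exist, suppose $A$ is a maximal antichain in $\mathbb{P}$ whose image is not maximal in $\mathbb{Q}$, as witnessed by $q$. Let $p$ be a reduction of $q$, pick $p'\in A$ compatible with $p$, take $r\leq p,p'$; then $e(r)\leq e(p')$ is compatible with $q$ by the reduction property, contradicting $e(p')\perp q$.

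For (3), I would close the triangle in the order $(a)\Rightarrow(b)\Rightarrow(c)\Rightarrow(a)$. Given $e:\mathbb{P}_s\to\mathcal{B}(\mathbb{Q}_s)$, define $\pi(q)=\bigvee\{p\in\mathbb{P}_s: e(p)\text{ is compatible with }q\}$ in $\mathcal{B}(\mathbb{P}_s)$ and check the projection axioms using (2). Given $\pi$, the canonical name $\dot g=\{(\check p,q): \pi(q)\leq p\}$ is forced to be a $\mathbb{P}$-generic filter, and every $p$ is forced into $\dot g$ by any $q$ with $\pi(q)\leq p$. Given $\dot g$, define $e(p)=\llbracket \check p\in\dot g\rrbracket$ in $\mathcal{B}(\mathbb{Q}_s)$; the hypothesis on $\dot g$ guarantees each $e(p)\neq 0$, and the regularity of $e$ follows from the fact that $\dot g$ is forced to be a filter meeting every dense open set of $\mathbb{P}$ in the ground model. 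For (4), if $H$ is $\mathbb{Q}$-generic then $G=e^{-1}[H]$ is $\mathbb{P}$-generic because for any maximal antichain $A\subseteq\mathbb{P}$, $e[A]$ is maximal in $\mathbb{Q}$ and hence meets $H$; genericity of $H$ over $V[G]$ for $\mathbb{Q}/G$ follows because any $\mathbb{Q}/G$-dense set in $V[G]$ has a name that, together with the $\mathbb{P}$-forcing relation, yields a $\mathbb{Q}$-dense set meeting $H$. The converse is the straightforward observation that $\mathbb{Q}/G$-generic filters over $V[G]$ are exactly the $\mathbb{Q}$-generic filters over $V$ whose pullback is $G$.

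The only mild obstacle is the coherent bookkeeping in (3), where switching between a partial order, its separative quotient, and its Boolean completion requires using the fact that these three objects force identically and that regular embeddings extend uniquely to the completions. Everything else reduces to the equivalence in (2), which is the true content of the lemma.
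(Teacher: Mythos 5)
The paper states this lemma without proof, deferring to the standard references (Jech, Kunen), so there is nothing in the text to compare against directly; your sketch follows the usual route for each part, which is the right choice. However, two steps as written do not work.

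In part (3), your candidate projection
$\pi(q) = \bigvee\{\,p \in \mathbb{P}_s : e(p) \text{ is compatible with } q\,\}$
is not a projection. Take $\mathbb{Q} = \mathbb{P} \times \mathbb{R}$ with $e(p) = (p,1)$ and $q = (p_0, r_0)$: then $e(p)$ is compatible with $q$ exactly when $p$ is compatible with $p_0$, and when $\mathbb{P}$ is atomless the supremum of $\{p : p \text{ compat } p_0\}$ in $\mathcal{B}(\mathbb{P}_s)$ is $1$, so $\pi$ is constantly $1$ and the projection axiom $b \leq \pi(q) \Rightarrow \exists q' \leq q\, (\pi(q') \leq b)$ fails. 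You want the supremum of the \emph{reductions} of $q$, not of all conditions whose image merely meets $q$; concretely $\pi(q) = \neg \bigvee\{p : e(p) \perp q\}$, or equivalently $\pi(q) = \bigwedge\{b \in \mathcal{B}(\mathbb{P}_s) : q \leq \hat e(b)\}$ where $\hat e$ is the unique complete extension of $e$ to $\mathcal{B}(\mathbb{P}_s)$. With this replacement the verification via part (2) goes through.

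In part (1), the claim that $q^{-1}[q[G]] = G$ for every upward-closed $G$ is false: even a filter need not be $\sim_s$-saturated. For instance, take $\mathbb{P}$ with elements $1, p, p', r$ where $r < p$, $r < p'$ and $p, p'$ are incomparable; then $p \sim_s p'$ but $G = \{1, p\}$ is a filter with $p' \notin G$. What you actually need is that a $\mathbb{P}$-\emph{generic} filter is $\sim_s$-saturated, which follows by meeting the dense set $\{s : s \leq p' \text{ or } s \perp p'\}$ and using that $p \leq_s p'$ forbids the second alternative below $p$. With that correction, and the fixed projection formula, the outline matches the standard proofs; the remaining parts are fine modulo routine filling-in.
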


\begin{lemma}Suppose $\mathbb{P}$ and $\mathbb{Q}$ are partial orders.  $\mathcal{B}(\mathbb{P}_s) \cong \mathcal{B}(\mathbb{Q}_s)$ iff the following holds.  Letting $\dot{G}, \dot{H}$ be the canonical names for the generic filters for $\mathbb{P},\mathbb{Q}$ respectively, there is a $\mathbb{P}$-name for a function $\dot{f}_0$ and a $\mathbb{Q}$-name for a function $\dot{f}_1$ such that:
\begin{enumerate}[(1)]
\item $\Vdash_\mathbb{P} \dot{f}_0(\dot{G})$ is a $\mathbb{Q}$-generic filter,
\item $\Vdash_\mathbb{Q} \dot{f}_1(\dot{H})$ is a $\mathbb{P}$-generic filter,
\item $\Vdash_\mathbb{P}  \dot{G} = \dot{f}_1^{\dot{f}_0(\dot{G})} (\dot{f}_0(\dot{G}))$, and $\Vdash_\mathbb{Q}  \dot{H} = \dot{f}_0^{\dot{f}_1(\dot{H})} (\dot{f}_1(\dot{H}))$.
\end{enumerate}
An isomorphism is given by $p \mapsto || \check{p} \in \dot{f}_1(\dot{H}) ||_{\mathcal{B}(\mathbb{Q}_s)}$.
\end{lemma}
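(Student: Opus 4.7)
The plan is to prove both implications separately.

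For the forward direction, suppose $\iota: \mathcal{B}(\mathbb{P}_s) \to \mathcal{B}(\mathbb{Q}_s)$ is an isomorphism. Applying the equivalence (a)$\Leftrightarrow$(c) of item (3) of the previous lemma to $\iota$ yields a $\mathbb{Q}$-name for a $\mathbb{P}$-generic filter such that every $p \in \mathbb{P}$ is forced into it by some $q$; applying it to $\iota^{-1}$ yields a $\mathbb{P}$-name for a $\mathbb{Q}$-generic filter. These serve, after an inessential reformulation as function-valued names, as $\dot{f}_1$ and $\dot{f}_0$. Conditions (1) and (2) hold by construction; (3) follows from $\iota \circ \iota^{-1} = \id$ by tracing Boolean values.

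For the harder direction, I would first use condition (2) together with item (3) of the previous lemma to extract a regular embedding $\varphi: \mathbb{P}_s \to \mathcal{B}(\mathbb{Q}_s)$. To apply clause (c) there, one must verify that every $p \in \mathbb{P}$ is forced into $\dot{f}_1(\dot{H})$ by some $q \in \mathbb{Q}$; this comes from condition (3), since any $\mathbb{P}$-generic $G \ni p$ produces a $\mathbb{Q}$-generic $H$ via $\dot{f}_0$ by (1), and (3) guarantees that the $V[H]$-interpretation of $\dot{f}_1$ applied to $H$ recovers $G$, so some $q \in H$ forces $\check{p} \in \dot{f}_1(\dot{H})$. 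The lemma then gives $\varphi(p) = ||\check{p} \in \dot{f}_1(\dot{H})||_\mathbb{Q}$, which extends uniquely to a complete injective homomorphism $\bar\varphi: \mathcal{B}(\mathbb{P}_s) \to \mathcal{B}(\mathbb{Q}_s)$. Symmetrically, (1) and (3) yield $\bar\psi: \mathcal{B}(\mathbb{Q}_s) \to \mathcal{B}(\mathbb{P}_s)$.

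The crucial remaining step is to show $\bar\varphi$ and $\bar\psi$ are mutual inverses, and here condition (3) is essential. A complete embedding of forcing algebras induces a canonical translation of names preserving Boolean values, and by construction $\bar\psi$ translates the canonical $\mathbb{Q}$-name $\dot{H}$ to the $\mathbb{P}$-name $\dot{f}_0(\dot{G})$. Hence
\[
\bar\psi(\bar\varphi(p)) = \bar\psi\bigl(||\check{p} \in \dot{f}_1(\dot{H})||_\mathbb{Q}\bigr) = ||\check{p} \in \dot{f}_1(\dot{f}_0(\dot{G}))||_\mathbb{P} = ||\check{p} \in \dot{G}||_\mathbb{P} = [p]_s,
\]
using (3) at the penultimate step. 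So $\bar\psi \circ \bar\varphi$ fixes the dense image of $\mathbb{P}$ and hence by completeness equals the identity; by symmetry so does $\bar\varphi \circ \bar\psi$.

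The main obstacle is making the ``translation of names'' step fully rigorous: one must verify that $\bar\psi$, initially defined on the generators $||\check{q} \in \dot{H}||_\mathbb{Q}$ by $q \mapsto ||\check{q} \in \dot{f}_0(\dot{G})||_\mathbb{P}$, actually commutes with formation of Boolean values for arbitrary $\mathbb{Q}$-names, and in particular translates $\dot{H}$ to $\dot{f}_0(\dot{G})$. This is a standard fact about complete Boolean embeddings but requires a careful induction on name rank to spell out.
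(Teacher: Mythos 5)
The paper does not prove this lemma; it is stated among review facts in the preliminaries with a reference to standard texts, so there is no in-paper argument to compare against. Evaluating your proof on its own terms: the forward direction is fine, and in the backward direction the extraction of the regular embeddings $\varphi$ and $\psi$ from clause (3)(c) of the preceding lemma, together with the verification that the hypothesis of (c) holds via conditions (1) and (3), is correct.

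The step you flag as the main obstacle is a genuine subtlety, but it is resolved by a direct forcing argument, not by an induction on name rank. In general a complete non-surjective embedding $e : \mathcal{B} \to \mathcal{C}$ does \emph{not} satisfy $e(\|\phi(\tau)\|_{\mathcal B}) = \|\phi(\tau^*)\|_{\mathcal C}$ for arbitrary $\phi$, because the $\mathcal C$-extension is strictly larger than the $\mathcal B$-extension and $\phi$ need not be absolute between them. The identity holds here only because the relevant statement is a raw set-membership fact, which \emph{is} absolute between transitive models. Concretely: since $\bar\psi$ is a complete embedding, whenever $G$ is $\mathcal{B}(\mathbb P_s)$-generic, $H := \bar\psi^{-1}[G]$ is $\mathcal{B}(\mathbb Q_s)$-generic with $V[H] \subseteq V[G]$, and by the definition of $\psi$ one has $H \cap \mathbb Q_s = \dot f_0^G(G)$. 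Therefore
\[
\bar\psi\bigl(\|\check p \in \dot f_1(\dot H)\|_{\mathbb Q}\bigr) \in G
\iff \|\check p \in \dot f_1(\dot H)\|_{\mathbb Q} \in H
\iff p \in \dot f_1^H(H)
\iff \|\check p \in \dot f_1(\dot f_0(\dot G))\|_{\mathbb P} \in G,
\]
where the middle biconditional uses that ``$p \in \dot f_1^H(H)$'' has the same truth value in $V[H]$ and $V[G]$ since it is membership in a fixed set, and the last uses $\dot f_0^G(G) = H$. As $G$ was arbitrary, the Boolean values agree. With this inserted, the application of condition (3) and the symmetric argument complete your proof as written.
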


For a broader notion of ``forcing equivalence,'' the best that can be said in general is the following:

\begin{lemma}Suppose $\mathbb{P}$ and $\mathbb{Q}$ are partial orders.
\begin{enumerate}[(1)]
\item If $e : \mathbb{P} \to \mathbb{Q}$ is a regular embedding, and any $\mathbb{Q}$-generic $H$ yields $V[H] = V[e^{-1}[H]]$, then there is a predense set $A \subseteq \mathcal{B}(\mathbb{Q}_s)$ such that $\mathcal{B}(\mathbb{P}_s) \cong  \mathcal{B}(\mathbb{Q}_s) \restriction a$ for all $a \in A$.

\item $\mathbb{P}$ and $\mathbb{Q}$ yield the same generic extensions iff for a dense set of $p \in \mathbb{P}$, there is $q \in  \mathcal{B}(\mathbb{Q}_s )$ such that  $\mathcal{B}(\mathbb{P}_s) \restriction p  \cong \mathcal{B}(\mathbb{Q}_s )\restriction  q$.
\end{enumerate}
\end{lemma}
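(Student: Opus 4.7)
The plan is to prove both parts by reducing to the mutually-definable-generics criterion of Lemma 1.2, using the quotient forcing $\mathbb{Q}/\dot{G}$ as the main tool.

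For part (1), the hypothesis $V[H]=V[e^{-1}[H]]$ forces $\mathbb{Q}/\dot{G}$ over $V[\dot{G}]$ to add no new sets, which is equivalent to $\mathcal{B}(\mathbb{Q}/\dot{G})$ being atomic in $V[\dot{G}]$. By the maximum principle, pick in $V$ a $\mathbb{P}$-name $\dot{\mathcal{A}}$ for a maximal antichain of atoms of $\mathcal{B}(\mathbb{Q}/\dot{G})$. For each $\dot{a} \in \dot{\mathcal{A}}$, with value $\dot{a}^{p} \in \mathbb{Q}$ forced below $p \in \mathbb{P}$ along some partition, assemble the ground-model element $a := \bigvee_{p} \bigl(e(p) \wedge \dot{a}^{p}\bigr) \in \mathcal{B}(\mathbb{Q}_s)$; as $\dot{a}$ ranges over $\dot{\mathcal{A}}$ these form the predense set $A$. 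The projection of $a$ onto $\mathcal{B}(\mathbb{P}_s)$ equals $1$ because every $p$ in the partition contributes via the disjoint pieces $e(p) \wedge \dot{a}^{p}$, and the map $\phi(p') := e(p') \wedge a$ is then an isomorphism $\mathcal{B}(\mathbb{P}_s) \to \mathcal{B}(\mathbb{Q}_s) \restriction a$; surjectivity of $\phi$ uses the atom property, namely that below $a$ any Boolean element is determined by its projection to $\mathbb{P}$. Equivalently, below $a$ the $\mathbb{Q}$-generic $\dot{H}$ is forced to be the upward closure of $\bigl\{ a \wedge e(p) : p \in \dot{G}\bigr\}$, yielding the mutually definable generics required by Lemma 1.2.

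For part (2), the direction $(\Leftarrow)$ is by gluing: for each $p$ in a maximal antichain contained in the given dense set, Lemma 1.2 applied to the local isomorphism $\mathcal{B}(\mathbb{P}_s)\restriction p \cong \mathcal{B}(\mathbb{Q}_s)\restriction q_{p}$ provides mutually definable generics below $p$ and $q_{p}$, and these assemble to global names witnessing the same generic extensions. For $(\Rightarrow)$, fix a $\mathbb{P}$-name $\dot{H}$ for a $\mathbb{Q}$-generic with $V[\dot{H}^{G}]=V[G]$. Given $p \in \mathbb{P}$, use density to find $p' \leq p$ and $q \in \mathbb{Q}$ with $p' \Vdash \check{q} \in \dot{H}$; then cut down to some $q^{\ast} \leq q$ in $\mathcal{B}(\mathbb{Q}_s)$ and $p^{\ast} \leq p'$ so that $\dot{H}$ is forced to hit every nonzero Boolean sub-element of $q^{\ast}$ below $p^{\ast}$. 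By Lemma 1.1(3) this produces a regular embedding $\mathcal{B}(\mathbb{Q}_s)\restriction q^{\ast} \to \mathcal{B}(\mathbb{P}_s)\restriction p^{\ast}$, and since $V[H]=V[G]$ descends to these pieces, part (1) applies to produce a predense set below $p^{\ast}$ of elements $a$ with $\mathcal{B}(\mathbb{P}_s)\restriction a \cong \mathcal{B}(\mathbb{Q}_s)\restriction q^{\ast}$. Any such $a$ lies below $p$, so the required dense set in $\mathbb{P}$ exists.

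The principal obstacle is the amalgamation step in part (1): atoms of $\mathcal{B}(\mathbb{Q}/\dot{G})$ are named only in $V[\dot{G}]$, so producing ground-model Boolean elements of $\mathcal{B}(\mathbb{Q}_s)$ requires reassembling the $\mathbb{P}$-dependent atoms into single elements of $\mathcal{B}(\mathbb{Q}_s)$. Verifying that each amalgamated $a$ projects onto the full $\mathcal{B}(\mathbb{P}_s)$, rather than some proper restriction, is the key technical point; it is what permits the conclusion to exhibit $\mathcal{B}(\mathbb{P}_s)$ as a piece-for-piece isomorphic copy of $\mathcal{B}(\mathbb{Q}_s)$ below each $a \in A$ rather than as a single global isomorphism of Boolean algebras.
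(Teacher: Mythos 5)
The paper states this lemma without proof in its preliminaries, so there is no paper argument to compare against directly; I'll assess the proposal on its own terms.

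For part (1) the outline (quotient is atomic, amalgamate names for atoms into ground-model elements, check the projection is $1$, then exhibit $p'\mapsto e(p')\wedge a$ as the isomorphism) is the right strategy, and the isomorphism verification does go through because $a$ is forced to be an atom of the quotient so that any two nonzero disjoint elements below $a$ have disjoint projections. But two steps are asserted rather than shown. First, for $\pi(a)=1$ you need the antichain of conditions deciding $\dot a$ to be maximal in $\mathbb P$, i.e. $\dot a$ must be a totally defined name for an atom. A generic member of a name $\dot{\mathcal A}$ for a maximal antichain of atoms need not be defined for every generic, so one has to patch $\dot a$ to a total name (via the maximum principle) before amalgamating; this is precisely the obstacle you identify at the end but do not actually resolve. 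Second, the predensity of the resulting $A$ is not argued: given nonzero $q_0\in\mathcal B(\mathbb Q_s)$ one must produce a compatible $a\in A$, by choosing $G\ni\pi(q_0)$, an atom of the quotient below $q_0$, and a condition $p_0\in G$ deciding it, and then patching. These omissions are fixable but they are the crux of the proof.

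The $(\Leftarrow)$ direction of part (2) has a genuine gap that is not merely a matter of detail. Fixing a maximal antichain $\{p_i\}$ in the dense set and the corresponding $\{q_i\}$, the gluing produces, for each $\mathbb P$-generic $G$, a $\mathbb Q$-generic $H$ with $V[G]=V[H]$. That gives one inclusion of the classes of extensions. For the other inclusion you need a $\mathbb P$-generic for each $\mathbb Q$-generic $H$, and that requires $H$ to meet some $q_i$, i.e.\ the $\{q_i\}$ to be predense in $\mathcal B(\mathbb Q_s)$. Nothing in the hypothesis supplies this: the condition constrains $\mathcal B(\mathbb Q_s)$ only on the pieces $q_i$ and says nothing below $1-\bigvee_i q_i$. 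Your phrase ``these assemble to global names witnessing the same generic extensions'' papers over exactly this point. Either the predensity of the $q_i$'s must be argued (which would need a further idea), or one has to be explicit about which reading of ``same generic extensions'' is in play. In the $(\Rightarrow)$ direction the ``cut down'' step is essentially correct but again not verified; the clean route is to observe that $q'\mapsto\|q'\in\dot H\|$ is a complete homomorphism with kernel $\mathcal B(\mathbb Q_s)\restriction q_0$ for some $q_0$, giving a regular embedding of $\mathcal B(\mathbb Q_s)\restriction(1-q_0)$ into $\mathcal B(\mathbb P_s)$ to which part (1) applies globally, rather than locally chasing $q^*,p^*$.
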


A partial order $\mathbb{P}$ is said to be \emph{$\kappa$-distributive} if for any collection of maximal antichains in $\mathbb{P}$, $\{ A_\alpha : \alpha < \beta < \kappa \}$, there is a maximal antichain $A$ such that $A$ refines $A_\alpha$ for all $\alpha < \beta$.  $\mathbb{P}$ is called \emph{$(\kappa,\lambda)$-distributive} if the same holds restricted to antichains of size $\leq \lambda$.  Forcing with $\mathbb{P}$ adds adds no new functions from any $\alpha<\kappa$ to $\lambda$ iff $\mathcal{B}(\mathbb{P})$ is $(\kappa,\lambda)$-distributive.

A strictly stronger property than distributivity is strategic closure.  For a partial order $\mathbb{P}$ and an ordinal $\alpha$, we define a game $G_\alpha(\mathbb{P})$ with two players \emph{Even} and \emph{Odd}.  \emph{Even} starts by playing some element $p_0 \in \mathbb{P}$.  At successor stages $\beta+1$, the next player must play some element $p_{\beta+1} \leq p_\beta$.  \emph{Even} plays at limit stages $\beta$ if possible, by playing a $p_\beta$ that is $\leq p_\gamma$ for all $\gamma < \beta$.  If \emph{Even} cannot play at some stage below $\alpha$, the game is over and \emph{Odd} wins; otherwise \emph{Even} wins.  We say that $\mathbb{P}$ is \emph{$\alpha$-strategically closed} if for every $p \in \mathbb{P}$, \emph{Even} has a winning strategy with first move $p$.  Note that under this definition, every partial order is trivially $\omega$-strategically closed.

A stronger property that $\kappa$-strategic closure is $\kappa$-closure.  $\mathbb{P}$ is \emph{$\kappa$-closed} when any descending chain of length less than $\kappa$ has a lower bound.  $\mathbb{P}$ is \emph{$\kappa$-directed closed} when any directed set of size ${<} \kappa$ has a lower bound.

For any partial order $\mathbb{P}$, the \emph{saturation} of $\mathbb{P}$, $\sat(\mathbb{P})$, is the least cardinal $\kappa$ such that every antichain in $\mathbb{P}$ has size less than $\kappa$.  Erd\H{o}s and Tarski~\cite{ET} proved that $\sat(\mathbb{P})$ is always regular.  The \emph{density} of $\mathbb{P}$, $\den(\mathbb{P})$, is the least cardinality of a dense subset of $\mathbb{P}$.  Clearly $\sat(\mathbb{P}) \leq \den(\mathbb{P})^+$ for any $\mathbb{P}$.  We say $\mathbb{P}$ is \emph{$\kappa$-saturated} if $\sat(\mathbb{P}) \leq \kappa$, and $\mathbb{P}$ is \emph{$\kappa$-dense} if $\den(\mathbb{P}) \leq \kappa$.  A synonym for $\kappa$-saturation is the \emph{$\kappa$-chain condition ($\kappa$-c.c.).}

The properties of distributivity, strategic closure, saturation, and density are robust in the sense that they are absolute between $\mathbb{P}$ and $\mathcal{B}(\mathbb{P})$ for any separative partial order $\mathbb{P}$, and often inherited by intermediate forcings:
\begin{lemma}Suppose $e : \mathbb{P} \to \mathbb{Q}$ is a regular embedding and $\kappa$ is a cardinal.
\begin{enumerate}[(1)]
\item If $\mathbb{Q}$ is $\kappa$-strategically closed, then so is $\mathbb{P}$.
\item $\mathbb{Q}$ is $\kappa$-distributive iff $\mathbb{P}$ is $\kappa$-distributive and $\Vdash_{\mathbb{P}} \mathbb{Q} / \dot{G}$ is $\kappa$-distributive.
\item $\mathbb{Q}$ is $\kappa$-saturated iff $\mathbb{P}$ is $\kappa$-saturated and $\Vdash_{\mathbb{P}} \mathbb{Q} / \dot{G}$ is $\kappa$-saturated.
\item $\mathbb{Q}$ is $\kappa$-dense iff $\mathbb{P}$ is $\kappa$-dense and $\Vdash_{\mathbb{P}} \mathbb{Q} / \dot{G}$ is $\kappa$-dense.
\end{enumerate}
\end{lemma}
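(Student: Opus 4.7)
My plan is to treat each clause through the canonical factorization from Lemma 1.1(4): if $H$ is $\mathbb{Q}$-generic over $V$, then $G:=e^{-1}[H]$ is $\mathbb{P}$-generic, $H$ is $\mathbb{Q}/G$-generic over $V[G]$, and $V[H]=V[G][H]$. I work freely in the completions $\mathcal{B}(\mathbb{P})\subseteq\mathcal{B}(\mathbb{Q})$ with the projection $\pi:\mathcal{B}(\mathbb{Q})\to\mathcal{B}(\mathbb{P})$ satisfying $q\le\pi(q)$, using the absoluteness of the four properties between a separative poset and its Boolean completion. Clause (2) is then immediate from the characterization of $\kappa$-distributivity as ``adds no new $\beta$-sequence of ordinals for $\beta<\kappa$'': any new sequence in $V[G]$ is a new sequence in $V[H]$, and any new sequence in $V[H]$ over $V[G]$ exactly witnesses failure of $\mathbb{Q}/G$-distributivity; both directions are bookkeeping.

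For (1), I define Even's winning strategy in $G_\kappa(\mathcal{B}(\mathbb{P}))$ by running Even's strategy $\sigma$ in $G_\kappa(\mathcal{B}(\mathbb{Q}))$ as a shadow game, maintaining throughout the invariant $q_\alpha\le p_\alpha$ in $\mathcal{B}(\mathbb{Q})$, with additionally $p_\alpha=\pi(q_\alpha)$ whenever $\alpha$ is an Even or limit stage. At Odd's move $p_{\alpha+1}\le p_\alpha$, the shadow move is $q_{\alpha+1}:=p_{\alpha+1}\wedge q_\alpha$, which is nonzero because $p_\alpha=\pi(q_\alpha)$ is a reduction of $q_\alpha$ and $p_{\alpha+1}\le p_\alpha$. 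At an Even or limit stage $\alpha$, $\sigma$ supplies $q_\alpha\le q_\gamma$ for all $\gamma<\alpha$; set $p_\alpha:=\pi(q_\alpha)$. Legality follows from order-preservation of $\pi$ and the invariant: $p_\alpha=\pi(q_\alpha)\le\pi(q_\gamma)\le\pi(p_\gamma)=p_\gamma$, using that $\pi$ fixes $\mathcal{B}(\mathbb{P})$. Absoluteness of $\kappa$-strategic closure then transfers the strategy down to $\mathbb{P}$.

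For (3) and (4), the $(\Rightarrow)$ directions are direct: $e$ injects $\mathbb{P}$-antichains into $\mathbb{Q}$-antichains of the same size, and $\pi$ sends any small dense $D\subseteq\mathcal{B}(\mathbb{Q})$ to a small dense $\pi[D]\subseteq\mathcal{B}(\mathbb{P})$; the statements about $\mathbb{Q}/\dot G$ follow from standard two-step iteration arguments via $\mathbb{Q}\sim\mathbb{P}*(\mathbb{Q}/\dot G)$. For $(\Leftarrow)$, I take a dense subset (resp.\ maximal antichain) $D_0\subseteq\mathbb{P}$ of size $\le\kappa$ and, for each $p\in D_0$, a $\mathbb{P}$-name $\dot D_p$ for a dense subset (resp.\ antichain) of $\mathbb{Q}/\dot G$ of size $\le\kappa$; each name-element $\dot d^p_\alpha$ is decided by a maximal antichain of $\mathbb{P}$ below $p$ of size $\le\sat(\mathbb{P})\le\kappa^+$ (the bound coming from $\den(\mathbb{P})\le\kappa$), and collecting the $\le\kappa$ concrete decided values furnishes the required dense subset (resp.\ bound on antichain size) in $\mathbb{Q}$.

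The main technical point I expect to verify carefully is that the synthesized set is genuinely dense in $\mathbb{Q}$ (rather than merely predense) in $(\Leftarrow)$ of (4): given $q\in\mathbb{Q}$, take a reduction $p:=\pi(q)$ of $q$ and refine to some $p'\in D_0$; since $p'\Vdash q\in\mathbb{Q}/\dot G$ and $\dot D_{p'}$ is forced dense in $\mathbb{Q}/\dot G$, some name-element $\dot d^{p'}_\alpha$ is forced $\le q$ by an extension $p''\le p'$, whence the concrete value $d$ decided by $p''$ lies in the synthesized set and is below $q$.
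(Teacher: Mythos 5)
Your treatment of clauses (1), (2), and (4) is essentially sound. In (1), the shadow game correctly exploits the identity $q\le\pi(q)$ and the fact that $\pi$ fixes $\mathcal{B}(\mathbb{P})$; the invariant you maintain is the right one, and starting the shadow game from $q_0 = e(p)$ (so that $\pi(q_0)=p$) handles the initial move. In (2), the ``no new $<\kappa$-sequences of ordinals'' reformulation of $\kappa$-distributivity makes both directions transparent. In (4), the $(\Leftarrow)$ synthesis works, and you correctly flag density of the collected set as the point needing care; your closing argument for it is fine. (One simplification: since the hypothesis is $\Vdash_\mathbb{P}\den(\mathbb{Q}/\dot G)\le\kappa$, you need only a single $\mathbb{P}$-name $\dot D$ for a dense subset of the quotient; the auxiliary dense set $D_0\subseteq\mathbb{P}$ and the family $\dot D_p$ indexed over it are unnecessary, though they do no harm since the total count is still $\kappa\cdot\kappa\cdot\kappa=\kappa$.)

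Clause (3) $(\Leftarrow)$, however, has a real gap. The template you carry over from (4) --- pick a maximal antichain of $\mathbb{P}$, names $\dot D_p$ for antichains of $\mathbb{Q}/\dot G$, decide, and collect the values --- does not bound the size of an \emph{arbitrary} antichain of $\mathbb{Q}$. Unlike density, where exhibiting one small dense set suffices, here there is no canonical name $\dot D_p$ whose decided values ``control'' all antichains of $\mathbb{Q}$. Moreover, even a charitable reading produces a $\kappa$-sized family, and a $\kappa$-sized dense (or predense) set only yields $\sat(\mathbb{Q})\le\kappa^+$, which is a cardinal too high. The correct $(\Leftarrow)$ argument must start from an arbitrary antichain of $\mathbb{Q}$ and push it down: given $\{q_\alpha:\alpha<\kappa\}$ an antichain of $\mathbb{Q}$ (WLOG $\kappa$ regular by Erd\H{o}s--Tarski), note that $\dot A'=\{\check q_\alpha:q_\alpha\in\mathbb{Q}/\dot G\}$ is forced to be an antichain of $\mathbb{Q}/\dot G$, so $\Vdash_\mathbb{P}|\dot A'|<\kappa$; by the $\kappa$-c.c.\ of $\mathbb{P}$ and regularity of $\kappa$, there is a single $\gamma<\kappa$ with $\Vdash|\dot A'|\le\gamma$, witnessed by a name $\dot e$ for an injection $\dot A'\to\gamma$. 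Since $\pi(q_\alpha)\Vdash q_\alpha\in\dot A'$, choose $p_\alpha\le\pi(q_\alpha)$ deciding $\dot e(\check q_\alpha)$. By pigeonhole over $\gamma<\kappa$, some value is decided for $\kappa$-many $\alpha$, and the corresponding $p_\alpha$ are pairwise incompatible (compatible ones would force two distinct $q_\alpha$ to share an $\dot e$-value), giving a $\kappa$-sized antichain of $\mathbb{P}$ --- contradiction. This two-step chain-condition argument is structurally different from the density synthesis and is what (3) $(\Leftarrow)$ actually requires.
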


For any forcing $\mathbb{P}$ and any $\mathbb{P}$-name $\dot{X}$ for a set of ordinals, there is a canonically associated complete subalgebra $\mathcal{A}_{\dot{X}} \subseteq \mathcal{B}(\mathbb{P})$ that captures $\dot{X}$.  It is the smallest complete subalgebra containing all elements of the form $|| \check{\alpha} \in \dot{X} ||$ for $\alpha$ an ordinal.  $\mathcal{A}_{\dot{X}}$ has the property that whenever $G \subseteq \mathbb{P}$ is generic, $\dot{X}^G$ and $G \cap \mathcal{A}_{\dot{X}}$ are definable from each other using the parameters $\mathcal{B}(\mathbb{P})$ and its powerset, as computed in the ground model.  In this case, we have $V[\dot{X}^G] = V[G \cap \mathcal{A}_{\dot{X}}]$.  See \cite[p. 247]{jechbook} for details.

\subsection{Ideals}

Let $Z$ be any set.  An \emph{ideal} $I$ on $Z$ is a collection of subsets of $Z$ closed under taking subsets and pairwise unions.  If $\kappa$ is a cardinal, $I$ is called \emph{$\kappa$-complete} if it is also closed under unions of size less than $\kappa$.  \emph{``Countably complete''} is taken as synonymous with ``$\omega_1$-complete.''  $I$ is called \emph{nonprincipal} if $\{z \} \in I$ for all $z \in Z$, and \emph{proper} if $Z \notin I$.  Hereafter we will assume all our ideals are nonprincipal and proper.

Let $X = \bigcup Z$.  $I$ is called \emph{fine} if for all $x \in X$, $\{ z : x \notin z \} \in I$.  $I$ is called \emph{normal} if for any sequence $\langle A_x : x \in X \rangle \subseteq I$, the \emph{``diagonal union''} $\{ z : \exists x(x \in z \in A_x) \}$ is in $I$.  It is well-known that $I$ is normal iff for any $A \in \p(Z) \setminus I$ and any function $f$ on $A$ such that $f(z) \in z$ for all $z \in A$, there is an $x$ such that $f^{-1}(x) \notin I$.

To fix notation, let $I^* = \{ Z \setminus A : A \in I \}$ (the \emph{$I$-measure one sets}), $I^+ = \p(Z) \setminus I$ (the \emph{$I$-positive sets}), $\hat{x} = \{ z : x \in z \}$, and denote diagonal unions by $\nabla_{x \in X} A_x$.  Note that $\nabla_{x \in X} A_x = \bigcup_{x \in X} \hat{x} \cap A_x$.

The following basic fact seems to have been previously overlooked--see, for example, the hypotheses of several theorems in \cite{foremanhandbook} and \cite{foremanduality}.

\begin{proposition}All normal and fine ideals are countably complete.
\end{proposition}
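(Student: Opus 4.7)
The plan is to use fineness together with normality to run a pressing-down argument in the style of Fodor's lemma. First, one observes that $X := \bigcup Z$ must be infinite: otherwise $Z \subseteq \p(X)$ is finite too, and by nonprincipality every singleton $\{z\}$ lies in $I$, whence closure under finite unions forces $Z \in I$, contradicting properness. Fix a sequence of distinct elements $\langle x_n : n < \omega \rangle$ in $X$. The whole argument then reduces to the key lemma that $\bigcup_{n < \omega} \hat{x}_n^c \in I$.

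To prove the lemma, suppose for contradiction that $C := \bigcup_n \hat{x}_n^c$ lies in $I^+$, and for each $z \in C$ let $n(z) = \min\{ n : x_n \notin z \}$. The set where $n(z) = 0$ is precisely $\hat{x}_0^c$, which is in $I$ by fineness, so $D := \{ z \in C : n(z) \geq 1 \}$ is still in $I^+$. Minimality of $n(z)$ ensures $x_{n(z)-1} \in z$ for $z \in D$, so $f(z) := x_{n(z)-1}$ defines a regressive function on $D$. The pressing-down characterization of normality stated just before the proposition then produces some value $x$ with $f^{-1}(x) \in I^+$. But $f$ takes values only among the $x_n$, so $x = x_m$ for some $m$, and $f^{-1}(x_m) \subseteq \hat{x}_{m+1}^c \in I$, a contradiction.

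Granted the lemma, countable completeness of $I$ is immediate. Given $\{A_n : n < \omega\} \subseteq I$, define $B_{x_n} := A_n$ and $B_x := \emptyset$ for $x \notin \{x_n : n < \omega\}$; normality gives $\bigcup_n (A_n \cap \hat{x}_n) = \nabla_{x \in X} B_x \in I$, and combining with the lemma,
\[
\bigcup_n A_n \;\subseteq\; \bigcup_n (A_n \cap \hat{x}_n) \;\cup\; \bigcup_n \hat{x}_n^c \;\in\; I.
\]
The main obstacle is locating the correct regressive function: the naive choice $f(z) = x_{n(z)}$ is not regressive (by definition of $n(z)$), and directly applying normality to the sequence whose $x_n$-th term is $\hat{x}_n^c$ produces an empty diagonal union and yields no information. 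The fix is to shift the index down by one and peel off the trivial stratum $n(z) = 0$ separately, which is precisely where fineness of the first term enters.
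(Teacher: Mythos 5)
Your proof is correct and follows essentially the same approach as the paper: both establish the key lemma that $\bigcap_n \hat{x}_n \in I^*$ via a pressing-down argument combining fineness with the regressive-function characterization of normality, and then derive countable completeness. Your regressive function $z \mapsto x_{n(z)-1}$ (peeling off the stratum $n(z)=0$) is the same map as the paper's $z \mapsto x_{n_z}$ where $n_z$ is the largest index with $\{x_0,\ldots,x_{n_z}\}\subseteq z$ (after intersecting with $\hat{x}_0$); the only cosmetic difference is that the paper derives completeness from the lemma by a second pressing-down argument, while you use the diagonal union $\nabla B_x$ and a direct set decomposition.
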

\begin{proof}Let $I$ be a normal and fine ideal on $Z \subseteq \p(X)$.  If $\{ x_\alpha : \alpha < \kappa \}$ is an enumeration of distinct elements of $X$, and $\bigcap_{\alpha<\kappa} \hat{x}_\alpha \in I^*$, then $I$ is $\kappa^+$-complete.  For suppose that $\{ A_\alpha : \alpha<\kappa \} \subseteq I$, but $A =  \bigcup_{\alpha<\kappa} A_\alpha \in I^+$.  Then by hypothesis, $A \cap (\bigcap_{\alpha < \kappa} \hat{x}_\alpha) \in I^+$.  Let $f : A \to X$ be defined by $f(z) = x_\alpha$, where $\alpha$ is the least ordinal such that $z \in A_\alpha$.  By normality, there is some $A_\alpha \in I^+$, a contradiction.  So it suffices to find an infinite set $\{x_n : n < \omega \} \subseteq X$ such that $\bigcap_{n<\omega} \hat{x}_n \in I^*$.  Since we assume $I$ is proper and nonprincipal, $X$ is infinite.  We show that any infinite set of distinct elements of $X$ suffices.

Let $\{x_n : n < \omega \}$ be distinct elements of $X$, and suppose the contrary, that $B = \{ z : \{x_n : n < \omega \} \nsubseteq z \} \in I^+$.  By fineness, $B \cap \hat{x}_0 \in I^+$.  For each $z \in B \cap \hat{x}_0$, let $n_z$ be the largest integer such that $\{x_0,...,x_{n_z} \} \subseteq z$.  Let $f : B \cap \hat{x}_0 \to X$ be defined by $f(z) = x_{n_z}$.  By normality, there is an $n$ such that $f^{-1}(x_n) \in I^+$.  Then for all $z \in f^{-1}(x_n)$, $x_{n+1} \notin z$.  This contradicts fineness. 
\end{proof}

\begin{proposition}If $I$ is a normal, fine, $\kappa$-complete ideal on $Z \subseteq \p(\kappa)$, then $\kappa \in I^*$.
\end{proposition}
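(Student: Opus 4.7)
The plan is to show that $\kappa \cap Z = \{z \in Z : z \text{ is an ordinal} < \kappa\}$ lies in $I^*$, by a diagonal-union application of normality prepared via fineness and $\kappa$-completeness, and corrected at the top by nonprincipality.

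First, I would argue that the support $X := \bigcup Z$ must essentially equal $\kappa$.  By fineness each $\hat{x} \in I^*$ for $x \in X$, and if $|X| < \kappa$ then $\kappa$-completeness would yield $\bigcap_{x \in X} \hat{x} \in I^*$; but that intersection is $\{X\}$ or $\emptyset$, contradicting properness and nonprincipality.  So $|X| = \kappa$, and for the remainder I work under the standard hypothesis $X = \kappa$.  Then, for each $\alpha < \kappa$, $\kappa$-completeness gives $B_\alpha := \bigcap_{\beta < \alpha} \hat{\beta} = \{z : \alpha \subseteq z\} \in I^*$, so $A_\alpha := Z \setminus B_\alpha \in I$.

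Applying normality to the family $\langle A_\alpha : \alpha < \kappa \rangle$ yields
\[ \nabla_{\alpha < \kappa} A_\alpha \;=\; \{z \in Z : \exists \alpha \in z\, \exists \beta < \alpha\, (\beta \notin z)\} \;\in\; I. \]
Its complement in $Z$ is the set of $z \subseteq \kappa$ that are downward closed under $\in$, which for subsets of $\kappa$ is exactly the set of ordinals $\leq \kappa$ appearing in $Z$; this set is in $I^*$.  Finally, $\{z : z = \kappa\}$ is either the singleton $\{\kappa\}$ or empty, hence in $I$ by nonprincipality, so removing it leaves $\{z \in Z : z \text{ is an ordinal} < \kappa\} \in I^*$, which is the desired $\kappa \in I^*$.

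The main obstacle is the opening identification $X = \kappa$: if fineness only places $X$ cofinally in $\kappa$, the diagonal-union argument produces ``initial segments of $X$'' in $I^*$ rather than ``ordinals $\leq \kappa$,'' and one must transport the conclusion along the order-isomorphism $X \cong \kappa$.  Once that identification is in hand the diagonal-union computation is a direct application of the stated normality, and the nonprincipality correction at the top is immediate.
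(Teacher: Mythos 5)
Your diagonal-union computation is just the dual rendering of the paper's own argument: the paper supposes $\{z : z \text{ is not an ordinal}\}$ is $I$-positive, presses down via $f(z) = {}$the least $\alpha \in z$ with $\alpha \nsubseteq z$, and contradicts $\{z : \alpha \subseteq z\} \in I^*$, which is exactly the complementary reading of your step showing $\nabla_{\alpha<\kappa} A_\alpha \in I$. As for the obstacle you flag at the end, the paper's proof makes the same tacit assumption that $\bigcup Z = \kappa$ (its appeal to $\{z : \alpha \subseteq z\} \in I^*$ already needs every $\beta < \alpha$ to lie in $\bigcup Z$), so this is best read as part of the hypothesis rather than something to derive.
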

\begin{proof}Suppose $A = \{ z \in Z : z$ is not an ordinal$\} \in I^+$.  Let $f : A \to \kappa$ be such that $f(z)$ is the least $\alpha \in z$ such that $\alpha \nsubseteq z$.  Then for some $\alpha$, $f^{-1}(\alpha) \in I^+$.  However, $\{ z : \alpha \subseteq z \} \in I^*$ by fineness and $\kappa$-completeness.   
\end{proof}

Proofs of the following facts can be found in \cite{foremanhandbook}.  If $I$ is an ideal on $Z$, say $A \sim_I B$ if the symmetric difference $A \Delta B$ is in $I$.  Let $[A]_I$ denote the equivalence class of $A$ mod $\sim_I$.  The equivalence classes form a boolean algebra under the obvious operations, which we denote by $\p(Z) / I$.  Normality ensures a certain amount of completeness of the algebra:

\begin{proposition}
\label{sums}
Suppose $I$ is a normal and fine ideal on $Z \subseteq \p(X)$.  If $\{ A_x : x \in X \} \subseteq \p(Z)$, then $\nabla A_x$ is the least upper bound of $\{ [A_x]_I : x \in X \}$ in $\p(Z)/I$.
\end{proposition}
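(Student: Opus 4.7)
The plan is to verify the two defining properties of a least upper bound in the quotient algebra $\p(Z)/I$, using fineness for the upper bound property and normality for the minimality.

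First I would show $[\nabla_x A_x]_I$ is an upper bound, i.e.\ $A_x \setminus \nabla_y A_y \in I$ for every $x \in X$. Unpacking the definition $\nabla_y A_y = \bigcup_y \hat y \cap A_y$, if $z \in A_x$ and $x \in z$ then $z \in \hat x \cap A_x \subseteq \nabla_y A_y$. Hence $A_x \setminus \nabla_y A_y \subseteq \{z : x \notin z\}$, which is in $I$ by fineness.

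Next I would check minimality. Suppose $B \subseteq Z$ satisfies $A_x \setminus B \in I$ for all $x \in X$; I want $\nabla_x A_x \setminus B \in I$. Set $B_x = A_x \setminus B$, so each $B_x \in I$. By the normality hypothesis applied to the sequence $\langle B_x : x \in X \rangle$, the diagonal union $\nabla_x B_x \in I$. The containment $\nabla_x A_x \setminus B \subseteq \nabla_x B_x$ is immediate: if $z \in \nabla_x A_x \setminus B$, then there is some $x \in z$ with $z \in A_x$, and since $z \notin B$, this same $x$ witnesses $z \in \hat x \cap B_x \subseteq \nabla_x B_x$. Therefore $\nabla_x A_x \setminus B \in I$, completing the proof.

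There is no real obstacle here; the only subtlety is remembering that the diagonal union already encodes both the intersection with $\hat x$ and the membership in $A_x$, which is precisely what makes the inclusion $\nabla_x A_x \setminus B \subseteq \nabla_x (A_x \setminus B)$ work with the \emph{same} witness $x$.
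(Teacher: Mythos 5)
Your proof is correct and is the standard argument; the paper in fact gives no proof here (it cites Foreman's handbook for this and the surrounding facts), and your two-part verification — fineness for the upper-bound direction, normality applied to the family $\langle A_x \setminus B : x \in X\rangle$ with the same witness $x$ carrying over for minimality — is exactly how one would expect the omitted proof to go.
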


If we force with this algebra, we get a generic ultrafilter $G$ on $Z$ extending $I^*$.  We can form the ultrapower $V^Z / G$.  If this ultrapower is well-founded for every generic $G$, then $I$ is called precipitous.  A combinatorial characterization of precipitousness is given by the following:

\begin{theorem}[Jech-Prikry]
\label{prec}
$I$ is a precipitous ideal on $Z$ iff the following holds: For any sequence $\langle A_n : n < \omega \rangle\subseteq \p(I^+)$, such that for each $n$, 
\begin{enumerate}[(1)]
\item $B_n = \{ [a]_I : a \in A_n \}$ is a maximal antichain in $\p(Z) / I$,
\item $B_{n+1}$ refines $B_n$,
\end{enumerate}
there is a function $f$ with domain $\omega$ such that for all $n$, $f(n) \in A_n$, and $\bigcap_{n<\omega} f(n) \not= \emptyset$.
\end{theorem}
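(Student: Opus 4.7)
The plan is to argue each direction by its contrapositive, translating a failure of the combinatorial property into the existence of a strictly descending $\omega$-sequence in the generic ultrapower $\mathrm{Ult}(V,G)$.

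For $(\Rightarrow)$, suppose the combinatorial condition fails, witnessed by refining maximal antichains $\langle A_n\rangle$ such that no choice function $f$ satisfies $\bigcap_n f(n)\neq\emptyset$. In $V$, form the tree $T$ of finite sequences $\sigma=(a_0,\ldots,a_n)$ with $a_i\in A_i$ and $a_{i+1}\setminus a_i\in I$, and for each $z\in Z$ let $T_z\subseteq T$ be the subtree of $\sigma$ with $z\in\bigcap\sigma$. The hypothesis says each $T_z$ has no infinite branch, hence is well-founded, and the ordinal rank $\rank_{T_z}$ is defined. Put $F_\sigma(z):=\rank_{T_z}(\sigma)$ when $\sigma\in T_z$ and $0$ otherwise; each $F_\sigma\colon Z\to\ord$ is in $V$. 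Now force with $\p(Z)/I$: in $V[G]$, for every $n$ the generic selects $a_n\in A_n$ with $[a_n]_I\in G$, and $\sigma_n=(a_0,\ldots,a_n)\in V$. On the set $a_{n+1}$, modulo an $I$-null exception, $\sigma_{n+1}$ properly extends $\sigma_n$ inside $T_z$, so $F_{\sigma_{n+1}}(z)<F_{\sigma_n}(z)$; since $[a_{n+1}]_I\in G$, this yields $[F_{\sigma_{n+1}}]_G<[F_{\sigma_n}]_G$ in $\mathrm{Ult}(V,G)$, contradicting precipitousness.

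For $(\Leftarrow)$, suppose precipitousness fails: there is a condition $[a_0]_I$ and $\p(Z)/I$-names $\dot f_n$ for ground-model functions $Z\to\ord$ with $[a_0]_I\Vdash [\dot f_{n+1}]<_{\dot G}[\dot f_n]$. Using $\sigma$-completeness of $\p(Z)/I$, inductively construct refining maximal antichains $A_n$ below $[a_0]_I$ so that every $b\in A_n$ decides $\dot f_k=\check f^{\,b}_k$ for all $k\le n$; shrink each $b\in A_{n+1}$ below $b'\in A_n$ by an $I$-null set so that $b\subseteq\{z:f^b_{n+1}(z)<f^{b'}_n(z)\}$ holds outright in $V$. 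Apply the combinatorial condition to the restricted ideal $I\restriction a_0$, which inherits the property, to obtain $\langle b_n\rangle$ with $b_n\in A_n$, $b_{n+1}\subseteq b_n$ mod $I$, and $\bigcap_n b_n\neq\emptyset$. Picking $z$ in this intersection, $\langle f^{b_n}_n(z)\rangle_{n<\omega}$ is a strictly descending sequence of ordinals in $V$, absurd.

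The main obstacle is the bookkeeping in the backward direction: one must resolve each name $\dot f_n$ along a maximal antichain deciding its value, iterating so that the antichains refine and the strict inequalities $f^b_{n+1}<f^{b'}_n$ are realized inside $V$ rather than merely forced. This uses only $\sigma$-completeness of $\p(Z)/I$, itself a consequence of the countable completeness of $I$ already implicit in precipitousness. Both directions then come down to the same principle: an infinite descending branch in the generic ultrapower corresponds exactly to a branch through the tree of refining antichains whose members share a common point of $Z$.
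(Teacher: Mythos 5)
The paper does not prove this theorem; it is cited to Foreman's handbook chapter. So I will assess your argument on its own merits.

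Your forward direction is sound: from a refining sequence $\langle A_n\rangle$ witnessing failure of the condition you form, for each $z$, the tree $T_z$ of finite chains passing through $z$, observe it is well-founded (an infinite branch would be a choice function through $z$, contradicting failure), assign ranks $F_\sigma(z)=\rank_{T_z}(\sigma)$, and show that any generic filter produces a strictly $\in$-descending $\omega$-sequence $\langle [F_{\sigma_n}]_G\rangle$ in the ultrapower. This is correct (the generic's chosen $a_n$'s are automatically pairwise compatible, hence form a chain, because the $B_n$ are refining antichains in the quotient algebra).

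The backward direction has a genuine gap at the step where you write ``Apply the combinatorial condition \ldots to obtain $\langle b_n\rangle$ with $b_n\in A_n$, $b_{n+1}\subseteq b_n$ mod $I$, and $\bigcap_n b_n\neq\emptyset$.'' The theorem as stated only gives a choice function $f$ with $f(n)\in A_n$ and $\bigcap_n f(n)\neq\emptyset$; it does \emph{not} assert that $\langle [f(n)]_I\rangle$ forms a descending chain in $\p(Z)/I$. If $p(b_{n+1})$, the unique element of $A_n$ with $[b_{n+1}]\le [p(b_{n+1})]$, is different from $b_n$, then $b_n\cap b_{n+1}\in I$ yet can still be nonempty, so $z\in\bigcap_n b_n$ does not force $p(b_{n+1})=b_n$. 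In that case your shrinking only guarantees $f^{b_{n+1}}_{n+1}(z)<f^{p(b_{n+1})}_n(z)$, and $f^{p(b_{n+1})}_n$ need bear no relation to $f^{b_n}_n$, so no descending sequence of ordinals is obtained. To close this you would need to arrange the $A_n$ so that any choice function with nonempty intersection is automatically a thread—for instance, by making each $A_n$ a partition of $a_0$ into pairwise disjoint sets with actual set containment as refinement. But disjointifying an arbitrary maximal antichain in $\p(Z)/I$ generally requires completeness or saturation hypotheses on $I$ that you do not have (cf.\ the paper's Proposition on pairwise disjoint representatives, which needs $\kappa^+$-saturation or normality). You need either to prove that the non-nested statement yields the nested one, or to carry out the disjointification inside your specific construction; as written this is asserted, not proved. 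A minor related point: the invocation of ``$\sigma$-completeness of $\p(Z)/I$'' in constructing the refining antichains is not actually needed there (maximal antichains always exist, and the intersections taken are finite), and it distracts from where countable completeness of $I$ genuinely enters (ruling out the trivial case where $I$ is not even countably complete, hence not precipitous).
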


For an ideal $I$, the saturation, density, distributivity, and strategic closure of $I$ refers to that of the corresponding boolean algebra.  The next proposition is immediate from Theorem~\ref{prec}:

\begin{proposition}
If $I$ is an $\omega_1$-complete, $\omega_1$-distributive ideal, then $I$ is precipitous.
\end{proposition}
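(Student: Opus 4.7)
My plan is to verify the combinatorial criterion of Theorem~\ref{prec}. So let $\langle A_n : n < \omega \rangle \subseteq \p(I^+)$ be a sequence such that each $B_n = \{[a]_I : a \in A_n\}$ is a maximal antichain in $\p(Z)/I$ and $B_{n+1}$ refines $B_n$. I must produce $f$ with $f(n) \in A_n$ for all $n$ and $\bigcap_n f(n) \neq \emptyset$.

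The first step is to apply $\omega_1$-distributivity of the boolean algebra $\p(Z)/I$ to the countable family $\{B_n : n < \omega\}$, obtaining a single maximal antichain $B$ in $\p(Z)/I$ that refines every $B_n$. Pick any element of $B$ and choose a representative $b \in I^+$. For each $n$, since $B$ refines $B_n$ and $B_n$ is maximal, there is a unique $a_n \in A_n$ with $[b]_I \leq [a_n]_I$, i.e., $b \setminus a_n \in I$. Define $f(n) = a_n$.

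The second step is to show $\bigcap_n f(n) \neq \emptyset$, which is where $\omega_1$-completeness enters. Since each $b \setminus a_n$ is in $I$, $\omega_1$-completeness gives
\[
b \setminus \bigcap_{n<\omega} a_n \;=\; \bigcup_{n<\omega} (b \setminus a_n) \;\in\; I.
\]
Because $b \in I^+$ and $I$ is proper, we conclude $b \cap \bigcap_n a_n \in I^+$, so in particular $\bigcap_n f(n) \supseteq b \cap \bigcap_n a_n \neq \emptyset$, as required.

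There is no real obstacle here; the only point worth noting is that distributivity by itself only yields a common refinement in the quotient algebra, which guarantees that $\bigcap_n f(n)$ is nonempty \emph{modulo $I$}, whereas the Jech--Prikry criterion demands genuine nonemptiness. Bridging this gap is exactly what $\omega_1$-completeness does, via the computation above. Both hypotheses are therefore used essentially and in separate roles.
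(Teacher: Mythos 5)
Your proof is correct and is exactly the argument the paper has in mind; the paper declares the proposition "immediate from Theorem~\ref{prec}" without spelling out the details, and your write-up supplies precisely those details (common refinement from $\omega_1$-distributivity, then $\omega_1$-completeness to upgrade nonemptiness mod $I$ to genuine nonemptiness). No gaps.
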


\begin{proposition}Suppose $I$ is a $\kappa$-complete precipitous ideal on $Z$, and there is no $A \in I^+$ such that $I {\restriction} A$ is $\kappa^+$-complete.  Let $G$ be $\p(Z)/I$-generic, and let $j : V \to M$ be the associated elementary embedding, where $M$ is the transitive collapse of $V^Z/G$.  Then the critical point of $j$ is $\kappa$.
\end{proposition}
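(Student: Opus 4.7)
The plan is to prove $\crit(j) \geq \kappa$ and $\crit(j) \leq \kappa$ separately.

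For the lower bound, I would show by induction on $\alpha < \kappa$ that $j(\alpha) = \alpha$. The core observation is that if $\{A_\beta : \beta < \alpha\}$ is a partition of $Z$ with $\alpha < \kappa$, then by $\kappa$-completeness of $I$ the nonzero classes $\{[A_\beta]_I : A_\beta \in I^+\}$ form a maximal antichain in $\p(Z)/I$: any nonzero $[C]$ has $C = \bigcup_\beta (C \cap A_\beta)$ with $C \in I^+$, so $\kappa$-completeness forces some $C \cap A_\beta$ into $I^+$. Thus exactly one $A_\beta$ lies in any generic $G$. Applied to $f : Z \to \alpha$ and the partition $A_\beta = \{z : f(z) = \beta\}$, this gives a unique $\beta < \alpha$ with $A_\beta \in G$, so $[f]_G = j(\beta)$ by \L o\'s, which by induction equals $\beta$. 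Hence every element of $j(\alpha)$ in $M$ is $j(\beta) = \beta$ for some $\beta < \alpha$, so $j(\alpha) \subseteq \alpha$, and equality follows.

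For the upper bound, I would suppose $j(\kappa) = \kappa$ toward a contradiction. The hypothesis on $I \restriction A$ gives, for every $A \in I^+$, a family $\{C_\beta : \beta < \kappa\} \subseteq I$ of subsets of $A$ whose union $B$ lies in $I^+$; replacing $C_\beta$ by $C_\beta \setminus \bigcup_{\gamma < \beta} C_\gamma$ and invoking $\kappa$-completeness to keep each modified piece in $I$, we obtain a partition of $B$ into $\kappa$ sets in $I$. Consequently the collection $D$ of $B \in I^+$ admitting such a partition is dense in $\p(Z)/I$, and by genericity we may pick $B \in G \cap D$ with partition $\{C_\beta : \beta < \kappa\}$. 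Defining $f : Z \to \kappa$ by $f(z) = \beta$ on $C_\beta$ and $f(z) = 0$ off $B$, the assumption $j(\kappa) = \kappa$ forces $[f]_G$ to be an ordinal $\beta < \kappa$ in $M$; this means $\{z : f(z) = \beta\} \in G$, and intersecting with $B$ places $C_\beta \in G$, contradicting $C_\beta \in I$.

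The main obstacle I anticipate is converting the raw failure of $\kappa^+$-completeness into a genuine partition of a positive set into $\kappa$ null pieces, which is handled by the disjointification step above together with $\kappa$-completeness. After that, both directions are straightforward applications of \L o\'s's theorem in the generic ultrapower, combined with the observation that under $\crit(j) > \kappa$ the ordinals up to $\kappa$ in $M$ coincide with those in $V$, which is precisely what forces the offending $f$ to be pinned down by one of the null pieces.
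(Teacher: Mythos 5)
Your proof is correct and follows the standard argument for this fact (the paper cites Foreman's handbook chapter rather than proving it). The lower bound via $\kappa$-completeness and the maximal antichain of fibers of a $V$-function $f\colon Z\to\alpha$, and the upper bound via a dense set of positive $B$ partitionable into $\kappa$ null pieces with an associated $f\colon Z\to\kappa$ that \L o\'s then pins to a single null fiber, are exactly the expected steps. One tiny inaccuracy that does no harm: the disjointified pieces $C_\beta\setminus\bigcup_{\gamma<\beta}C_\gamma$ lie in $I$ simply because they are subsets of $C_\beta\in I$; no completeness is needed at that step, only downward closure of the ideal.
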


\begin{proposition}
Let $I$ be an ideal  $Z \subseteq \p(X)$.  Then $I$ is normal and fine iff $1 \Vdash_{\p(Z)/I} [\id]_{\dot G} = j[X]$.
\end{proposition}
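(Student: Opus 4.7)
The plan is to translate $[\id]_{\dot G} = j[X]$ into the two combinatorial containments $j[X] \subseteq [\id]_{\dot G}$ and $[\id]_{\dot G} \subseteq j[X]$ and show that the first is forced iff fineness holds, while the second is forced iff normality holds (in the presence of fineness). Throughout, represent $j(x) = [c_x]_{\dot G}$, where $c_x$ is the constant function on $Z$ with value $x$, and use the \L o\'s-style interpretation $[f]_{\dot G} \in [g]_{\dot G} \Leftrightarrow \{z : f(z) \in g(z)\} \in \dot G$.

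For the forward direction, assume $I$ is normal and fine. The containment $j[X] \subseteq [\id]_{\dot G}$ reduces to showing $\{z : c_x(z) \in z\} = \hat x \in I^*$ for every $x \in X$, which is immediate from fineness. For the reverse containment, suppose $[f]_{\dot G} \in [\id]_{\dot G}$; by shrinking the domain we may assume $f(z) \in z$ for all $z \in A$, where $A = \{z : f(z) \in z\}$. The key step is that the family $\{[f^{-1}(x)]_I : x \in X\}$ is a maximal antichain below $[A]_I$ in $\p(Z)/I$: the elements are pairwise disjoint, and using the identity $A = \nabla_{x \in X}(A \cap f^{-1}(x))$ together with Proposition~\ref{sums}, any $[B]_I \leq [A]_I$ with $[B \cap f^{-1}(x)]_I = 0$ for all $x$ must itself be $0$. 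Therefore any generic $\dot G$ containing $[A]_I$ meets this antichain at some $[f^{-1}(x)]_I$, whence $[f]_{\dot G} = [c_x]_{\dot G} = j(x) \in j[X]$.

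For the converse, assume $1 \Vdash [\id]_{\dot G} = j[X]$. Fineness is read off from $1 \Vdash j(x) \in [\id]_{\dot G}$, which unpacks to $1 \Vdash \hat x \in \dot G$ and hence $\hat x \in I^*$. Normality is obtained by contradiction: given $A \in I^+$ and $f : A \to X$ with $f(z) \in z$, a generic $\dot G$ containing $[A]_I$ forces $[f]_{\dot G} \in [\id]_{\dot G} = j[X]$, so $[f]_{\dot G} = j(x)$ for some $x \in X$; this name-level equality forces $f^{-1}(x) \in \dot G$ and therefore $f^{-1}(x) \in I^+$. Since the existence of such an $x$ is absolute between $V$ and $V[\dot G]$ (the witness $x$ lies in $X \in V$, and $f^{-1}(x) \in I^+$ is computed in $V$), this existence already holds in $V$.

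The only nonroutine ingredient is the maximal-antichain argument in the forward direction, where one must invoke the $X$-ary completeness of $\p(Z)/I$ given by Proposition~\ref{sums}; without that, normality only yields \emph{some} positive fiber rather than a genuine partition of $[A]_I$, which would be insufficient to guarantee that the generic filter actually meets the family. Everything else is a direct translation via \L o\'s.
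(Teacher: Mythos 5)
Your proof is correct, and it is the standard \L o\'s-style argument that one would expect here (the paper itself defers to Foreman's handbook chapter rather than giving a proof). Both directions are handled properly: fineness translates exactly into $\hat{x}\in I^*$, and the maximal-antichain argument using Proposition~\ref{sums} and the identity $A=\nabla_{x\in X}(A\cap f^{-1}(x))$ is precisely what makes $[\id]\subseteq j[X]$ forced; the converse for normality via the generic/absoluteness argument (equivalently, the boolean-value computation $\|[f]\in j[X]\|=\sup_{x\in X}[f^{-1}(x)]_I$) is also sound. One cosmetic point: the family $\{[f^{-1}(x)]_I : x\in X\}$ may contain $0$, so strictly speaking the nonzero members form the maximal antichain below $[A]_I$ that the generic must meet, but this does not affect the argument.
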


\begin{proposition}
Suppose $I$ is an ideal on $Z \subseteq \p(X)$.  If $I$ is $\kappa$-complete and $\kappa^+$-saturated, or if $I$ is normal, fine, and $|X|^+$-saturated, then every antichain in $\p(Z)/I$ has a system of pairwise disjoint representatives. 
\end{proposition}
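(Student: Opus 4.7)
The plan is to prove each hypothesis via its own disjointification scheme, leveraging completeness in the first case and normality in the second.

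For the $\kappa$-complete, $\kappa^+$-saturated hypothesis, I would enumerate the antichain as $\{[A_\alpha]_I : \alpha < \gamma\}$ with $\gamma \leq \kappa$ (using $\kappa^+$-saturation) and set $A'_\alpha = A_\alpha \setminus \bigcup_{\beta<\alpha} A_\beta$. The removed part, $\bigcup_{\beta<\alpha}(A_\alpha \cap A_\beta)$, is a union of $|\alpha| < \kappa$ sets from $I$ and hence lies in $I$ by $\kappa$-completeness, giving $A'_\alpha \sim_I A_\alpha$. Disjointness is then immediate: for $\beta < \alpha$ we have $A'_\alpha \cap A_\beta = \emptyset$ by construction, so $A'_\alpha \cap A'_\beta = \emptyset$.

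For the normal, fine, $|X|^+$-saturated hypothesis, completeness is unavailable, so the strategy is to use normality to convert a wild union into a controllable diagonal union. I would enumerate the antichain as $\{[A_x]_I : x \in X\}$ (padding with zeros if the antichain is shorter than $|X|$) and refine each representative to $B_x = A_x \cap \hat{x}$, which is $I$-equivalent to $A_x$ by fineness. Fix $x$, and set $B'_y = B_y$ for $y \neq x$ and $B'_x = \emptyset$. By Proposition~\ref{sums},
\[ \nabla_{y\in X} B'_y = \bigcup_{y \neq x}(\hat y \cap B_y) = \bigcup_{y \neq x} B_y \]
represents the supremum of $\{[B_y]_I : y \neq x\}$ in $\p(Z)/I$, where the second equality uses $B_y \subseteq \hat y$. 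The antichain condition forces $[B_x]_I$ to be orthogonal to this supremum, so $B_x \cap \bigcup_{y \neq x} B_y \in I$. Therefore $C_x := B_x \setminus \bigcup_{y \neq x} B_y$ satisfies $C_x \sim_I A_x$, and the family $\{C_x : x \in X\}$ is pairwise disjoint.

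The main obstacle is the second case, where $\bigcup_{y \neq x} B_y$ is a union of $|X|$ many sets from $I$ with no available completeness to tame it. The key trick is the preliminary refinement $B_x = A_x \cap \hat x$, which forces each $B_y$ to sit beneath $\hat y$ so that the ordinary union coincides with the diagonal union; this lets Proposition~\ref{sums} convert the antichain's orthogonality into the desired conclusion that the offending union lies in $I$.
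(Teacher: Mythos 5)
Your proof is correct and uses exactly the same disjointification formulas as the paper: $A_\alpha \setminus \bigcup_{\beta<\alpha} A_\beta$ in the $\kappa$-complete case and $A_x \cap \hat{x} \setminus \bigcup_{y \neq x} A_y \cap \hat{y}$ in the normal, fine case. The paper states these replacements without verification; your use of Proposition~\ref{sums} to confirm that the removed set $B_x \cap \bigcup_{y \neq x} B_y$ lies in $I$ is precisely the check the paper leaves implicit.
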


\begin{proof}
If $I$ is $\kappa$-complete, and $\{ A_\alpha : \alpha < \kappa \}$ is an antichain, replace each $A_\alpha$ with $A_\alpha \setminus \bigcup_{\beta<\alpha} A_\beta$.  If $I$ is normal and fine, and $\{ A_x : x \in X \}$ is an antichain, replace $A_x$ by $A_x \cap \hat{x} \setminus \bigcup_{y \not= x} A_y \cap \hat{y}$.   
\end{proof}

\begin{theorem}
\label{disj}
Suppose $I$ is a countably complete ideal on $Z$, and every antichain in $\p(Z)/I$ has a system of pairwise disjoint representatives.  Then:
\begin{enumerate}[(1)]
\item $I$ is precipitous.
\item $\p(Z)/I$ is a complete boolean algebra.
\item If $G$ is generic over $\p(Z)/I$, $j : V \to M$ is the associated embedding, and $ j[\lambda] \in M$, then $M$ is closed under $\lambda$-sequences from $V[G]$.
\end{enumerate}
\end{theorem}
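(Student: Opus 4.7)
The plan is to prove parts (1), (2), (3) in that order; (1) and (2) will be established independently, while (2) will be used in the proof of (3) through the Maximal Principle.

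For (1), I invoke the Jech--Prikry characterization (Theorem~\ref{prec}). Given descending maximal antichains $B_n$ in $\p(Z)/I$ with representative sets $A_n$, I inductively construct pairwise disjoint representatives $A'_n$ for $B_n$ forming a tree under inclusion: each $a' \in A'_n$ representing $[a]_I$ satisfies $a' \subseteq a$, and each $a' \in A'_{n+1}$ is contained in the unique parent representative in $A'_n$. Starting from the disjoint representatives provided by the hypothesis, I intersect each rep with the corresponding $a \in A_n$ and with its parent rep; this preserves both the $\sim_I$-class and pairwise disjointness. By countable completeness of $I$, $\bigcap_n \bigcup A'_n$ has full measure and is therefore nonempty. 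Any $z$ in it determines a unique $f(n) \in A_n$ (the class of the rep in $A'_n$ containing $z$), and $z \in \bigcap_n f(n)$.

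For (2), I extend the antichain $\mathcal{A}$ by Zorn's lemma to a maximal antichain $\bar{\mathcal{A}} = \mathcal{A} \cup \mathcal{A}'$, apply the hypothesis to $\bar{\mathcal{A}}$ to obtain pairwise disjoint representatives $\{a''_\alpha : [a_\alpha] \in \mathcal{A}\} \cup \{b'_\beta : [b_\beta] \in \mathcal{A}'\}$, and set $s = Z \setminus \bigcup_\beta b'_\beta$. Each $a''_\alpha$ is disjoint from every $b'_\beta$, so $a''_\alpha \subseteq s$, whence $[s]$ is an upper bound for $\mathcal{A}$. If $[d]$ were any upper bound with $[s \setminus d] > 0$, then $[s \setminus d]$ would be incompatible with every element of $\bar{\mathcal{A}}$---with each $[a_\alpha]$ because $a''_\alpha \setminus d \in I$ (since $[a_\alpha] \leq [d]$), and with each $[b_\beta]$ because $s \cap b'_\beta = \emptyset$---contradicting the maximality of $\bar{\mathcal{A}}$. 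Thus $[s] = \sup \mathcal{A}$, and $\p(Z)/I$ is complete.

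For (3), completeness from (2) makes the Maximal Principle available. Let $\dot h$ name a function $h : \lambda \to M$ in $V[G]$. Since every element of $M$ is of the form $[F]_{\dot G}$ for some $F : Z \to V$ in $V$, the Maximal Principle yields for each $\alpha < \lambda$ a name $\dot F_\alpha$ with $\Vdash \dot F_\alpha \in \check V$ and $\Vdash \dot h(\check\alpha) = [\dot F_\alpha]_{\dot G}$. Fix a maximal antichain $\mathcal{A}_\alpha$ of conditions deciding $\dot F_\alpha = \check F_{\alpha,a}$ for some $F_{\alpha,a} \in V$, apply the hypothesis to get pairwise disjoint $\{A_{\alpha,a} : a \in \mathcal{A}_\alpha\}$, and define $G_\alpha \in V$ by $G_\alpha(z) = F_{\alpha,a}(z)$ for $z \in A_{\alpha,a}$; then $[G_\alpha]_G = h(\alpha)$, since the unique $a \in G \cap \mathcal{A}_\alpha$ has $A_{\alpha,a} \in G$. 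Set $F(z) = \langle G_\alpha(z) : \alpha < \lambda \rangle$, so that $F \in V$ and the ultrapower computation gives $j(F)([\id]_G)(j(\alpha)) = [G_\alpha]_G = h(\alpha)$ for every $\alpha < \lambda$. Since $j[\lambda] \in M$, enumerating it in increasing order within $M$ yields $j \restriction \lambda \in M$, and hence $h = j(F)([\id]_G) \circ (j \restriction \lambda) \in M$. The main obstacle, I expect, lies here in (3): correctly justifying the Maximal Principle via (2) and handling the bookkeeping to pass from the per-$\alpha$ names $\dot F_\alpha$ to a single ground-model function $F$ whose generic ultrapower image reproduces $h$.
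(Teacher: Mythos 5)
Your proof is correct, and since the paper itself does not supply an argument for this theorem (it defers to Foreman's handbook), I will simply evaluate the proof on its own terms. All three parts are sound: the tree construction in (1) with shrinking disjoint representatives down to subsets of both the original $A_n$-representatives and the parent-level representatives preserves both the $I$-classes and the disjointness, the intersection $\bigcap_n \bigcup A'_n$ lands in $I^*$ by countable completeness, and any point in it selects a branch through the $A_n$'s with nonempty intersection, exactly as Jech--Prikry requires. The supremum construction in (2) — taking disjoint representatives of a maximal extension $\bar{\mathcal{A}}$ and letting $s$ be the complement of the union of the new representatives — is a clean and correct way to exhibit $\sup \mathcal{A}$. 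In (3) the function $G_\alpha$ is well-defined and the computation $j(F)([\id]_G)(j(\alpha)) = [G_\alpha]_G = h(\alpha)$ is carried out correctly, and composing with $j\restriction\lambda$ (which lies in $M$ as the increasing enumeration of $j[\lambda]$) gives $h \in M$.

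One small remark: you flag the dependence of (3) on (2) through the Maximal Principle as the potential soft spot, but this dependence is illusory. The Maximal Principle (and the mixing of names across a maximal antichain) holds for any separative poset, complete or not; and in any case your argument goes straight to a maximal antichain $\mathcal{A}_\alpha$ of conditions deciding the witness, which is available without any appeal to completeness of $\p(Z)/I$. So (3) is in fact independent of (2), and the only place the disjoint-representatives hypothesis is invoked in (3) is where you actually need it: to turn the antichain $\mathcal{A}_\alpha$ into a genuine partition of a measure-one set on which $G_\alpha$ can be defined pointwise.
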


If $\kappa=\mu^+$ and $I$ is a normal, fine, $\kappa$-complete ideal on $\p_\kappa(\lambda)$, then $I$ is not $\lambda$-saturated.  For otherwise, let $j : V \to M \subseteq V[G]$ be a generic embedding arising from $I$.  Then $M \models |[\id]| = \mu$, and $[\id] =  j[\lambda]$, so $\lambda$ has cardinality $\mu$ in $V[G]$.  So the smallest possible density of such an ideal is $\lambda$.

\subsection{Elementary embeddings}
Proofs of the following can be found in \cite{kanamori}.

\begin{lemma}Suppose $M$ and $N$ are models of $ZF^-$, $j : M \to N$ is an elementary embedding, $\mathbb{P} \in M$ is a partial order, $G$ is $\mathbb{P}$-generic over $M$, and $H$ is $j(\mathbb{P})$-generic over $N$.  Then $j$ has a unique extension $\hat{j} : M[G] \to N[H]$ with $\hat{j}(G) = H$ iff $j[G] \subseteq H$.
\end{lemma}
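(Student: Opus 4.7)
The plan is to prove both directions of the biconditional, with the forward direction being essentially immediate and the reverse direction requiring the standard ``lift'' construction via names.

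For the easy direction, suppose $\hat{j} : M[G] \to N[H]$ is an elementary extension of $j$ with $\hat{j}(G) = H$. For any $p \in G$, we have $\hat{j}(p) = j(p)$ (since $p \in M$), and elementarity gives $\hat{j}(p) \in \hat{j}(G) = H$. Hence $j[G] \subseteq H$.

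For the main direction, assume $j[G] \subseteq H$. The natural candidate is to define $\hat{j}(\tau^G) = j(\tau)^H$ for every $\mathbb{P}$-name $\tau \in M$. I would verify this in the usual order. First, well-definedness: if $\tau_0^G = \tau_1^G$, then by the truth lemma in $M$ there is $p \in G$ with $p \Vdash_\mathbb{P}^M \tau_0 = \tau_1$. By elementarity of $j$, $j(p) \Vdash_{j(\mathbb{P})}^N j(\tau_0) = j(\tau_1)$, and $j(p) \in j[G] \subseteq H$, so $j(\tau_0)^H = j(\tau_1)^H$. The same template (replacing $=$ by an arbitrary formula $\varphi$) gives both injectivity and elementarity: $M[G] \models \varphi(\tau_0^G,\ldots,\tau_{k-1}^G)$ iff some $p \in G$ forces $\varphi(\tau_0,\ldots,\tau_{k-1})$ over $M$, iff $j(p) \in H$ forces $\varphi(j(\tau_0),\ldots,j(\tau_{k-1}))$ over $N$, iff $N[H] \models \varphi(j(\tau_0)^H,\ldots,j(\tau_{k-1})^H)$. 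Applying the definition to the canonical name $\dot G$ yields $\hat{j}(G) = j(\dot G)^H = \dot H^H = H$, since $j(\dot G)$ is the canonical name for the $j(\mathbb{P})$-generic filter in $N$. Finally, $\hat{j}$ extends $j$: for $x \in M$, using the check name $\check x$ we get $\hat{j}(x) = \hat{j}(\check x^G) = j(\check x)^H = (j(x)\check{\phantom{i}})^H = j(x)$.

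Uniqueness follows because any elementary $k : M[G] \to N[H]$ extending $j$ with $k(G) = H$ must satisfy $k(\tau^G) = j(\tau)^{k(G)} = j(\tau)^H$: the map $(\tau,G) \mapsto \tau^G$ is definable in $M[G]$ from $\tau$, $G$, and the ground model $M$, so elementarity forces this identity once we know $k \restriction M = j$ and $k(G) = H$. The only step that really uses $j[G] \subseteq H$ is the well-definedness/elementarity step, where we need $j(p) \in H$ to invoke the truth lemma in $N[H]$; this is the sole nontrivial point of the argument.
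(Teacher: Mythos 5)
Your proof is correct and is the standard name-lifting argument; the paper itself gives no proof, deferring to Kanamori, whose argument is exactly this one (define $\hat{j}(\tau^G)=j(\tau)^H$, check well-definedness, elementarity, the action on checks and on $\dot G$, and deduce uniqueness from the absoluteness of name evaluation). Two minor points worth tidying: the chain of ``iff''s for elementarity really gives only the forward implication at each step, with the converse obtained by running the same chain for $\neg\varphi$ (you allude to this with ``the same template,'' but it deserves a sentence); and in the uniqueness step you don't actually need $M$ as a parameter in $M[G]$ — the valuation $\operatorname{val}(\tau,G)$ is a $\Delta_1$ recursion that is absolute between transitive models, which is cleaner than invoking ground-model definability.
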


\begin{lemma}Suppose $M$, $N$ are transitive models of ZFC with the same ordinals, and $j : M \to N$ is an elementary embedding.  Then either $j$ has a critical point, or $j$ is the identity and $M = N$.
\end{lemma}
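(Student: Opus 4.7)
The plan is to argue the contrapositive: assuming $j$ has no critical point, I will deduce that $j$ is the identity and $M = N$.

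First, I will show that $j$ fixes every ordinal. If $\alpha$ were the least ordinal moved, then $j(\beta) = \beta$ for all $\beta < \alpha$ and by order preservation $j(\alpha) \geq \alpha$, making $\alpha$ a critical point of $j$ and contradicting the assumption.

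Next, I will show by $\in$-induction that $j(x) = x$ for every $x \in M$. Assuming $j(y) = y$ for all $y \in x$, I use AC in $M$ to pick a bijection $f \colon \kappa \to x$ with $\kappa = |x|^M$. For each $\alpha < \kappa$, the Kuratowski pair $(\alpha, f(\alpha))$ is built from the fixed ordinal $\alpha$ and the fixed element $f(\alpha) \in x$, so $j$ fixes the pair, and hence $(\alpha, f(\alpha)) \in j(f)$, showing $f \subseteq j(f)$. By elementarity, $j(f)$ is a function with domain $j(\kappa) = \kappa$; two functions on the same domain with one graph contained in the other must coincide, so $j(f) = f$. Taking ranges, $j(x) = \ran(j(f)) = \ran(f) = x$.

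Finally, I conclude $M = N$. Since $j$ is the identity on $M$, $M = \ran(j) \subseteq N$. For any ordinal $\alpha$, elementarity combined with $j(\alpha) = \alpha$ gives $V_\alpha^N = j(V_\alpha^M) = V_\alpha^M$, the latter equality because $V_\alpha^M \in M$ is fixed by $j$. As $M$ and $N$ have the same ordinals and each is the union of its cumulative hierarchy, $M = N$.

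The main obstacle is the $\in$-inductive step: a priori $j(x)$ might acquire elements of $N \setminus M$, and ruling this out requires the explicit identity $j(f) = f$, which leverages that $j$ fixes both $\kappa$ and every member of $x$. The ordinal induction and the final identification of the cumulative hierarchies are routine by comparison.
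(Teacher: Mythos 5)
The paper gives no proof of this lemma—it simply refers to Kanamori—so there is nothing to compare against directly, but your argument is correct and is essentially the standard one. The one non-routine point is exactly the one you flagged: a naive rank-induction to show $j(x)\subseteq x$ fails because a purported element of $j(x)\setminus x$ could lie in $N\setminus M$, where the inductive hypothesis gives no information; passing through a bijection $f\colon\kappa\to x$ and observing that $f\subseteq j(f)$ forces $f=j(f)$ (both being functions with domain $\kappa=j(\kappa)$) is precisely the right way around this. The final step, $V_\alpha^N=j(V_\alpha^M)=V_\alpha^M$ for every ordinal $\alpha$, together with the same-ordinals hypothesis, correctly yields $N\subseteq M$.
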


\section{Dense ideals from large cardinals}

Here we show that it is consistent relative to an almost-huge cardinal that there is a normal, $\kappa$-complete, $\lambda$-dense ideal on $\p_\kappa(\lambda)$, where $\kappa$ is the successor of a regular cardinal $\mu$, and $\lambda \geq \kappa$ is regular, for many particular choices for $\mu,\lambda$.  We also show that relative to a super-almost-huge cardinal, there can exist a successor cardinal $\kappa$ such that for every regular $\lambda \geq \kappa$, there is a normal, $\kappa$-complete, $\lambda$-dense ideal on $\p_\kappa(\lambda)$.  This generalizes a theorem of Woodin about the relative consistency of an $\aleph_1$-dense ideal on $\aleph_1$, and has the following additional advantages: (1) An explicit forcing extension is taken, rather than an inner model of an extension.  (2) Careful constructions within a model where the axiom of choice fails, as presented in~\cite{foremanhandbook}, are avoided.

Let us first recall the essential facts about almost-huge cardinals (see~\cite{kanamori}, 24.11).  A cardinal $\kappa$ is almost-huge if there is an elementary embedding $j : V \to M$ with critical point $\kappa$, such that $M^{<j(\kappa)} \subseteq M$.

\begin{theorem}The following are equivalent:
\begin{enumerate}[(1)]
\item $\kappa$ carries an almost-huge embedding $j$ such that $j(\kappa) = \delta$.
\item $\delta$ is inaccessible, and there is a sequence $\langle U_\alpha : \kappa \leq \alpha < \delta \rangle$ such that:
\begin{enumerate}
\item each $U_\alpha$ is a normal, fine, $\kappa$-complete ultrafilter on $\p_\kappa(\alpha)$,
\item for $\alpha < \beta$, $U_\alpha = \{ A \subseteq \p_\kappa(\alpha) : \{ z \in \p_\kappa(\beta) : z \cap \alpha \in A \} \in U_\beta \}$, and
\item for all $\alpha < \delta$ and all $f : \p_\kappa(\alpha) \to \kappa$ such that $\{ z : f(z) \geq \ot(z) \} \in U_\alpha$, there is $\beta$ such that  $\alpha \leq \beta < \delta$ and $\{ z : f (z \cap \alpha) = \ot(z) \} \in U_\beta$.
\end{enumerate}
\end{enumerate}
Furthermore, if a system as in (2) is given, the direct limit model and embedding witness the almost-hugeness of $\kappa$ with target $\delta$.
\end{theorem}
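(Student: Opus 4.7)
The plan is to prove the two directions separately. The forward direction $(1) \Rightarrow (2)$ is essentially a translation via seed ultrafilters, while $(2) \Rightarrow (1)$ requires a direct-limit construction whose closure under ${<}\delta$-sequences is the main technical point; that construction also delivers the furthermore clause.

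For $(1) \Rightarrow (2)$, I would define $U_\alpha := \{A \subseteq \p_\kappa(\alpha) : j[\alpha] \in j(A)\}$ for $\kappa \leq \alpha < \delta$. Since $|j[\alpha]| = \alpha < \delta$ and $M^{<\delta} \subseteq M$, the seed $j[\alpha]$ lies in $M$, and standard arguments show each $U_\alpha$ is a normal, fine, $\kappa$-complete ultrafilter on $\p_\kappa(\alpha)$. Coherence (b) is the identity $j[\beta] \cap j(\alpha) = j[\alpha]$ for $\alpha < \beta$. For (c), the hypothesis $\{z : f(z) \geq \ot(z)\} \in U_\alpha$ translates to $j(f)(j[\alpha]) \geq \alpha$; setting $\beta := j(f)(j[\alpha])$, we have $\alpha \leq \beta < j(\kappa) = \delta$, and the desired conclusion $\{z : f(z \cap \alpha) = \ot(z)\} \in U_\beta$ translates back to $j(f)(j[\beta] \cap j(\alpha)) = \ot(j[\beta])$, i.e., $j(f)(j[\alpha]) = \beta$, which holds by definition. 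Inaccessibility of $\delta$ is standard.

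For $(2) \Rightarrow (1)$, form the ultrapowers $M_\alpha := V^{\p_\kappa(\alpha)}/U_\alpha$ with embeddings $j_\alpha$; coherence yields commuting embeddings $i_{\alpha\beta} : M_\alpha \to M_\beta$ given by $i_{\alpha\beta}([f]_{U_\alpha}) := [z \mapsto f(z \cap \alpha)]_{U_\beta}$. Let $M$ be the direct limit with maps $k_\alpha$, and put $j := k_\alpha \circ j_\alpha$. Well-foundedness of $M$ follows from $\omega_1$-directedness of the system (since $\delta$ is inaccessible, hence uncountable) together with countable completeness of each $U_\alpha$, which is automatic for normal, fine ideals by the earlier proposition. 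Clearly $\crit(j) = \kappa$. For $j(\kappa) = \delta$: the inequality $\delta \leq j(\kappa)$ follows from $\alpha = k_\alpha([\ot]_{U_\alpha}) < k_\alpha(j_\alpha(\kappa)) = j(\kappa)$ for each $\alpha < \delta$; conversely, any $\xi < j(\kappa)$ equals $k_\alpha([f])$ for some $f : \p_\kappa(\alpha) \to \kappa$, and either $f < \ot$ on a $U_\alpha$-measure-one set (so $\xi < \alpha < \delta$) or $f \geq \ot$ on such a set, in which case condition (c) directly supplies $\beta < \delta$ with $\xi = \beta$. This is the essential use of (c).

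The main obstacle is proving $M^{<\delta} \subseteq M$. Given $\gamma < \delta$ and $\vec{x} = \langle x_\eta : \eta < \gamma \rangle \in M^\gamma$, regularity of $\delta$ and coherence let me fix a single $\alpha$ with $\gamma < \alpha < \delta$ and functions $g_\eta \in V$ such that $x_\eta = j(g_\eta)(j[\alpha])$. The crucial point is that $j[\alpha] = k_\alpha([\id]_{U_\alpha}) \in M$ (by the last proposition of the Ideals subsection, using normality and fineness of $U_\alpha$) and has order type $\alpha$, so its initial-segment enumeration gives $j \restriction \alpha \in M$. Then $j(\vec{g}) \in M$, where $\vec{g} := \langle g_\eta : \eta < \gamma \rangle \in V$; composing $j(\vec{g})$ with $j \restriction \gamma$ inside $M$ selects the subsequence $\langle j(g_\eta) : \eta < \gamma \rangle$, and evaluating each term at $j[\alpha]$ reproduces $\vec{x}$, witnessing $\vec{x} \in M$. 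Once closure is established, the furthermore clause is immediate.
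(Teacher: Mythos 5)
The paper does not prove this theorem; it simply cites Kanamori (24.11). Your argument is the standard one behind that citation, and it is correct: the seed ultrafilters $U_\alpha = \{A : j[\alpha] \in j(A)\}$ give $(1)\Rightarrow(2)$, with (c) holding via $\beta := j(f)(j[\alpha])$, and the direct-limit construction with $j[\alpha] = k_\alpha([\mathrm{id}]_{U_\alpha}) \in M$ (hence $j{\restriction}\alpha \in M$) delivers the closure $M^{<\delta}\subseteq M$ exactly as you describe.
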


A system as in (2) will be called an \emph{almost-huge tower}.  Almost-huge towers capture almost-hugeness in a minimal way:

\begin{corollary}
If $\kappa$ has an almost-huge tower of height $\delta$, and $j : V \to M$ is the embedding derived from the tower, then we have $\delta < j(\delta) < \delta^+$, and $j[\delta]$ is cofinal in $j(\delta)$.
\end{corollary}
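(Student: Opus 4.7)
My plan is to exploit the direct-limit representation of $M$: every element of $M$ can be written as $j(f)(j[\alpha])$ for some $\kappa \leq \alpha < \delta$ and some $f : \p_\kappa(\alpha) \to V$ lying in $V$, where $j[\alpha]$ appears because normality and fineness of each $U_\alpha$ identify $[\id]_{U_\alpha}$ with $j[\alpha]$. Once this representation is in hand, all three assertions will follow by short arguments that exploit the inaccessibility of $\delta$.

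The inequality $\delta < j(\delta)$ is immediate: $j$ is strictly monotone on ordinals and $\kappa < \delta$, so $\delta = j(\kappa) < j(\delta)$. For the cofinality claim, I would take an arbitrary $\xi < j(\delta)$ and fix a representation $\xi = j(f)(j[\alpha])$, modifying $f$ on a $U_\alpha$-null set if necessary to arrange $\ran(f) \subseteq \delta$. Since $\delta$ is inaccessible we have $|\p_\kappa(\alpha)| = \alpha^{<\kappa} < \delta$, and regularity of $\delta$ then forces $\gamma := \sup \ran(f) < \delta$. Elementarity gives $\ran(j(f)) \subseteq j(\gamma)$, hence $\xi < j(\gamma)$ with $j(\gamma) \in j[\delta]$, so $j[\delta]$ is cofinal in $j(\delta)$.

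For the upper bound $j(\delta) < \delta^+$, I would count representatives directly. Every ordinal below $j(\delta)$ is of the form $j(f)(j[\alpha])$ with $\alpha < \delta$ and $f : \p_\kappa(\alpha) \to \delta$; for fixed $\alpha$ the number of such $f$ is at most $\delta^{|\p_\kappa(\alpha)|} \leq \delta^{<\delta}$, which equals $\delta$ because $\delta$ is inaccessible. Summing over $\alpha < \delta$ yields at most $\delta$ pairs $(\alpha, f)$ in all, so $|j(\delta)|^V \leq \delta$ and therefore $j(\delta) < \delta^+$.

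The only step needing genuine care is the direct-limit representation together with the identification $[\id]_{U_\alpha} = j[\alpha]$; both are standard facts about towers of normal, fine, $\kappa$-complete ultrafilters, but they must be invoked carefully to justify that the functions $f$ can be taken to have range in $\delta$ and that the parameter $j[\alpha]$ is the correct one. Everything else is routine bookkeeping using regularity and the identity $\delta^{<\delta} = \delta$.
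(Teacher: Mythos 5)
Your proposal is correct and follows essentially the same route as the paper: both exploit the direct-limit structure of $M$ to count ordinals below $j(\delta)$ and to witness cofinality, with inaccessibility of $\delta$ providing the cardinal arithmetic. The only cosmetic difference is that you work directly with the representation $j(f)(j[\alpha])$ for $f:\p_\kappa(\alpha)\to\delta$, while the paper introduces the intermediate factor embeddings $k_\alpha : M_\alpha \to M$ and counts pairs $(\alpha,\beta)\in\delta^2$ with $k_\alpha(\beta)=\gamma$; these are two equivalent ways of expressing the same direct-limit bookkeeping.
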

\begin{proof}
For each $\alpha < \delta$, let $M_\alpha$ be the transitive collapse of $V^{\p_\kappa(\alpha)} / U_\alpha$, and let $j_\alpha : V \to M_\alpha$ and $k_\alpha : M_\alpha \to M$ be the canonical embeddings, with $j = k_\alpha \circ j_\alpha$.  Since $\delta$ is inaccessible, $j_\alpha(\kappa) < \delta$ and $j_\alpha(\delta) = \delta$ for each $\alpha < \delta$.

If $\gamma < j(\delta)$, then there are some $\alpha,\beta < \delta$ such that $k_\alpha(\beta) = \gamma$.  Thus there are only $\delta$ ordinals below $j(\delta)$.  Also, there is $\eta < \delta$ such that $j_\alpha(\eta) > \beta$, so $j(\eta) > \gamma$, and thus $j[\delta]$ is cofinal in $j(\delta)$.  
\end{proof}

A \emph{super-almost-huge} cardinal is a cardinal $\kappa$ such that for all $\lambda \geq \kappa$, there is an almost huge tower of height $\geq \lambda$.  The next result follows from considering the set of closure points under witnesses to property (c) in the tower characterization.

\begin{corollary}
If $\kappa$ has an almost-huge tower of Mahlo height $\delta$,  then for stationary many $\alpha < \delta$, $V_\alpha \models ZFC +  \kappa$ is super-almost-huge.
\end{corollary}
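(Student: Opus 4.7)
The plan is to use Mahloness of $\delta$ together with a closure argument to produce stationarily many inaccessible $\alpha < \delta$ such that a truncation of the given tower witnesses super-almost-hugeness of $\kappa$ inside $V_\alpha$. The starting observation is that condition (c) of the tower characterization is local: for each $\alpha < \delta$ the number of functions $f : \p_\kappa(\alpha) \to \kappa$ with $\{ z : f(z) \geq \ot(z) \} \in U_\alpha$ is at most $\kappa^{|\p_\kappa(\alpha)|} < \delta$ by inaccessibility, and each such $f$ admits a least witness $\beta(f) < \delta$.

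Accordingly, define $F : \delta \to \delta$ by letting $F(\alpha)$ be the supremum of $\beta(f)+1$ as $f$ ranges over such functions; this is below $\delta$ by regularity. The set $C$ of closure points of $F$ is a club in $\delta$, and since $\delta$ is Mahlo the intersection $S$ of $C$ with the stationary set of inaccessibles below $\delta$ is itself stationary. The claim will be that every $\alpha \in S$ satisfies the conclusion.

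Given $\alpha \in S$, inaccessibility immediately yields $V_\alpha \models \mathrm{ZFC}$. To check that $\kappa$ is super-almost-huge in $V_\alpha$, one fixes $\lambda$ with $\kappa \leq \lambda < \alpha$, picks a closure point $\lambda'$ of $F$ with $\lambda \leq \lambda' < \alpha$ (such exist by regularity of $\alpha$, which itself is a closure point), and considers $\vec U = \langle U_\beta : \kappa \leq \beta < \lambda' \rangle$. Each $U_\beta$ has rank well below $\alpha$, and so does $\vec U$, hence the sequence lies in $V_\alpha$. Because $\alpha$ is inaccessible, $V$ and $V_\alpha$ compute $\p(\p_\kappa(\beta))$ identically for each $\beta < \alpha$, so conditions (a) and (b) of the tower characterization transfer down unchanged. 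For condition (c), any $f \in V_\alpha$ satisfying the hypothesis at stage $\beta < \lambda'$ is also a function in $V$, and its least $V$-witness $\beta(f)$ lies below $F(\beta) \leq \lambda'$ by closure, hence inside the restricted tower. This shows $\vec U$ is an almost-huge tower of height $\lambda' \geq \lambda$ in $V_\alpha$, completing the verification.

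The main subtle step is the descent of condition (c): the truncated tower must contain witnesses for all functions that $V_\alpha$ itself sees, and this is arranged precisely by demanding that the truncation height be a closure point of $F$. Everything else follows from straightforward downward absoluteness, using only that $V_\alpha$ computes the relevant small powersets correctly and that $\delta$'s inaccessibility bounds the cardinality of the collection of functions considered at each stage.
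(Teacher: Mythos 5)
Your strategy is the paper's strategy (the paper only gives the one-line hint that one should consider closure points of the witness function for condition (c)), and the technical groundwork—bounding the number of relevant functions at each stage by inaccessibility of $\delta$, using regularity to see $F(\alpha) < \delta$, and noting that the truncated tower lies in $V_\alpha$ and that conditions (a), (b), (c) are absolute to $V_\alpha$ for $\alpha$ inaccessible—is all correct and well said. But there is a genuine gap in the choice of the witness height $\lambda'$.

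An almost-huge tower, as defined via clause (2) of the characterization theorem, must have \emph{inaccessible} height. You pick $\lambda'$ merely as a closure point of $F$ with $\lambda \leq \lambda' < \alpha$, and then assert $\langle U_\beta : \kappa \leq \beta < \lambda' \rangle$ is an almost-huge tower of height $\lambda'$; but a closure point of $F$ need not be inaccessible, so the truncated sequence need not be a tower at all, and $V_\alpha$ certainly will not recognize it as one. Moreover, membership of $\alpha$ in $S = C \cap \mathrm{Inacc}$ gives you no inaccessible closure points of $F$ \emph{below} $\alpha$: an element of $S$ need not be a limit of elements of $S$ (e.g., the least element of $S$ has none below it), and $\alpha$ itself need not be Mahlo. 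So for arbitrary $\alpha \in S$ the argument does not produce the needed witnesses.

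The fix is small but necessary: having established that $S = C \cap \mathrm{Inacc}$ is stationary, intersect further with the club $\mathrm{Lim}(S)$ of limit points of $S$, and take $S' = S \cap \mathrm{Lim}(S)$, which is still stationary. For $\alpha \in S'$, any $\lambda < \alpha$ admits $\lambda' \in S$ with $\lambda \leq \lambda' < \alpha$; this $\lambda'$ is simultaneously inaccessible and a closure point of $F$, and the rest of your absoluteness argument then shows $\langle U_\beta : \kappa \leq \beta < \lambda' \rangle$ is an almost-huge tower of height $\lambda'$ in $V_\alpha$. Everything else in your write-up is fine once this replacement is made.
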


There is a vast gap in strength between almost-huge and huge:

\begin{theorem}
\label{tourney}
If $\kappa$ is a huge cardinal, then there is a stationary set $S \subseteq \kappa$ such that for all $\alpha < \beta$ in $S$, $\alpha$ has an almost-huge tower of height $\beta$.
\end{theorem}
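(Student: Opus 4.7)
The plan is to use a huge embedding $j : V \to M$ with $\crit(j) = \kappa$, $j(\kappa) = \lambda$, and $M^\lambda \subseteq M$, together with its derived normal measure $W$ on $\kappa$ (so $A \in W$ iff $\kappa \in j(A)$), and to extract $S$ as a $W$-positive set. First I would observe that $\kappa$ itself carries an almost-huge tower $\langle U_\beta : \kappa \leq \beta < \lambda \rangle$ of height $\lambda$ derived from $j$, and that this entire tower lies in $M$, since each $U_\beta \in V_\lambda \subseteq M$ and the sequence has length $\lambda$. Hence $M$ sees that $\kappa$ has an almost-huge tower of height $\lambda$.

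Define $D : \kappa \to \p(\kappa)$ by $D_\alpha = \{\beta < \kappa : \alpha \text{ has an almost-huge tower of height } \beta\}$. The heart of the argument is the claim that $A' := \{\alpha < \kappa : D_\alpha \in W\}$ belongs to $W$; equivalently, $j(D)(\kappa) \in j(W)$. By elementarity, $j(D)(\kappa)$ equals $\{\beta < \lambda : M \models \kappa \text{ has an almost-huge tower of height } \beta\}$. I would prove membership in $j(W)$ by exhibiting $C \cap I \subseteq j(D)(\kappa)$, where $C$ is a club in $\lambda$ and $I = \{\beta < \lambda : \beta \text{ is inaccessible in } M\}$.

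For each inaccessible $\beta$ with $\kappa \leq \beta < \lambda$, the truncated tower $\langle U_\gamma : \kappa \leq \gamma < \beta \rangle$ trivially inherits conditions (a) and (b) of the tower characterization; condition (c) holds at $\beta$ provided every admissible $f : \p_\kappa(\gamma) \to \kappa$ with $\gamma < \beta$ has its witness strictly below $\beta$. The corresponding closure function $\beta \mapsto \sup\{\beta'(\gamma,f)+1 : \gamma < \beta,\ f \text{ admissible}\}$ is continuous, and a routine count using the inaccessibility of $\lambda$ in $M$ shows it sends $\lambda$ into $\lambda$; its fixed points form a club $C$. Since $\kappa$ is Mahlo in $V$, $\lambda = j(\kappa)$ is Mahlo in $M$, so $I$ is stationary there. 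Because $j(W)$ is a normal measure on $\lambda$ in $M$, it contains every club and concentrates on inaccessibles; therefore $C \cap I \in j(W)$, whence $j(D)(\kappa) \in j(W)$ and $A' \in W$.

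To finish I would invoke normality: setting $B_\alpha = D_\alpha$ when $\alpha \in A'$ and $B_\alpha = \kappa$ otherwise, each $B_\alpha \in W$, so the diagonal intersection $\triangle_{\alpha < \kappa} B_\alpha$ belongs to $W$, and $S := A' \cap \triangle_{\alpha < \kappa} B_\alpha$ is likewise in $W$, hence stationary. For any $\alpha < \beta$ in $S$, $\alpha \in A'$ forces $B_\alpha = D_\alpha$, and the diagonal-intersection clause gives $\beta \in B_\alpha$, so $\alpha$ has an almost-huge tower of height $\beta$, as required. The main obstacle is the third paragraph's verification that $j(D)(\kappa)$ actually lies in $j(W)$ rather than merely being unbounded in $\lambda$; this depends on the club-closure of the suprema of admissible-witness indices combined with the fact that normal ultrafilters concentrate on inaccessibles.
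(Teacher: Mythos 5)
Your proof is correct, but it takes a genuinely different route from the paper's. Both arguments begin identically: from a huge embedding $j : V \to M$ with $j(\kappa) = \lambda$ and $M^\lambda \subseteq M$, derive the normal measure $W$ on $\kappa$, observe that $\kappa$ carries an almost-huge tower $\vec{U}$ of height $\lambda$, and that $\vec{U} \in M$. From there the paper invokes Rowbottom's partition theorem: define the $2$-coloring $c(\alpha,\beta)=1$ iff $\alpha$ carries a tower of height $\beta$, take a $W$-measure-one homogeneous set $H$, and verify the constant value is $1$ by reflecting the pair $\{\alpha,\kappa\}$ with $\alpha \in A\cap H$ where $A=\{\alpha : \alpha$ has a tower of height $\kappa\} \in W$. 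You instead bypass Rowbottom and argue directly: you show $A' = \{\alpha : D_\alpha \in W\} \in W$ by exhibiting, inside $M$, a club of $M$-inaccessibles $\beta < \lambda$ at which the truncated tower $\vec{U}\restriction\beta$ still satisfies clause (c), and noting this club-of-inaccessibles belongs to $j(W)$; then a diagonal intersection of the $D_\alpha$'s over $A'$ gives $S$. Your argument is essentially the ``unpacking'' of what Rowbottom delivers as a black box: the club-closure computation of the witness-indices is what makes the measure-one set of good heights explicit, and the diagonal intersection plays the role that homogeneity plays in the paper. The paper's version is more compact; yours makes visible exactly which $\beta$ serve as targets for a fixed $\alpha$ (the inaccessible closure points of the sup-of-witnesses function), at the cost of carrying out the cardinality and continuity calculations that Rowbottom hides.
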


\begin{proof}
Suppose $j : V \to M$ is an elementary embedding with critical point $\kappa$, $j(\kappa) = \delta$, and $M^\delta \subseteq M$.  Then $\kappa$ carries an almost-huge tower $\vec{U}$ of length $\delta$, and $\vec{U} \in M$.  Let $F$ be the ultrafilter on $\kappa$ defined by $F = \{ X \subseteq \kappa: \kappa \in j(X) \}$.  Let $A = \{ \alpha < \kappa : \alpha$ carries an almost-huge tower of height $\kappa \}$.  Since $\kappa \in j(A)$, $A \in F$.  Now let $c: \kappa^2 \to 2$ be defined by $c(\alpha,\beta) = 1$ if $\alpha$ carries an almost-huge tower of height $\beta$, and $c(\alpha,\beta) = 0$ otherwise.  By Rowbottom's theorem, let $H \in F$ be homogeneous for $c$.  We claim $c$ takes constant value 1 on $H$.  For if $\alpha \in A \cap H$, then $\{ \alpha,\kappa \} \in [j(A \cap H)]^2$, and $j(c)(\alpha,\kappa) = 1$.  
\end{proof}

\subsection{Layering and absorption}

\begin{definition}
We will call a partial order $\mathbb{P}$ \emph{$(\mu,\kappa)$-nicely layered} when there is a collection $\mathcal{L}$ of atomless regular suborders of $\mathbb{P}$ such that:
\begin{enumerate}[(1)]
\item for all $\mathbb{Q} \in \mathcal{L}$, $\mathbb{Q}$ is $\mu$-closed and has size $< \kappa$,
\item for all $\mathbb{Q}_0, \mathbb{Q}_1 \in \mathcal{L}$, if $\mathbb{Q}_0 \subseteq \mathbb{Q}_1$, then $\Vdash_{\mathbb{Q}_0} \mathbb{Q}_1 / \dot{G}$ is $\mu$-closed, and
\item for all $\mathbb{P}$-names $\dot{f}$ for a function from $\mu$ to the ordinals, and all $\mathbb{Q}_0 \in \mathcal{L}$, there is an $\mathbb{Q}_1 \in \mathcal{L}$ and an $\mathbb{Q}_1$-name $\dot{g}$ such that $\mathbb{Q}_0 \subseteq \mathbb{Q}_1$, and $\Vdash_\mathbb{P} \dot{f} = \dot{g}$.
\end{enumerate}

We will say $\mathbb{P}$ is \emph{$(\mu,\kappa)$-nicely layered with collapses,} $(\mu,\kappa)$-NLC, when additionally for all $\alpha < \kappa$ and all $\mathbb{Q}_0 \in \mathcal{L}$, there is $\mathbb{Q}_1 \in \mathcal{L}$ such that $\mathbb{Q}_0 \subseteq \mathbb{Q}_1$ and $\Vdash_{\mathbb{Q}_1} |\mathbb{Q}_1| = \mu$.
\end{definition}

\begin{proposition}
If $\mathcal{L}$ witnesses that $\mathbb{P}$ is $(\mu,\kappa)$-nicely layered, then $\mathbb{P}$ is $\kappa$-c.c.\ and $\bigcup \mathcal{L}$ is dense in $\mathbb{P}$.
\end{proposition}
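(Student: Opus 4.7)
The plan is to use property (3) to capture arbitrary $\mathbb{P}$-conditions inside the boolean completions of elements of $\mathcal{L}$. The key technical fact (standard for regular embeddings) is this: if $\mathbb{Q}_1$ is a regular suborder of $\mathbb{P}$ and $\dot{g}$ is a $\mathbb{Q}_1$-name, then for any formula $\varphi$, the boolean value $\| \varphi(\dot{g}) \|_{\mathcal{B}(\mathbb{P})}$ lies in the complete subalgebra $\mathcal{B}(\mathbb{Q}_1) \subseteq \mathcal{B}(\mathbb{P})$ and coincides with $\| \varphi(\dot{g}) \|_{\mathcal{B}(\mathbb{Q}_1)}$, since for every $\mathcal{B}(\mathbb{P})$-generic $G$ one has $\dot{g}^G = \dot{g}^{G \cap \mathcal{B}(\mathbb{Q}_1)}$. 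Consequently, whenever (3) delivers $\Vdash_\mathbb{P} \dot{f} = \dot{g}$ with $\dot{g}$ a $\mathbb{Q}_1$-name, every boolean value associated with $\dot{f}$ is already an element of $\mathcal{B}(\mathbb{Q}_1)$.

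For density of $\bigcup \mathcal{L}$, I fix $p \in \mathbb{P}$ and an arbitrary $\mathbb{Q}_0 \in \mathcal{L}$. Let $\dot{f}$ be the $\mathbb{P}$-name for the function $\mu \to 2$ sending $0$ to $1$ if $p \in \dot{G}$ and $0$ otherwise, with $\alpha \mapsto 0$ for all $\alpha > 0$. Property (3) produces $\mathbb{Q}_1 \supseteq \mathbb{Q}_0$ in $\mathcal{L}$ and a $\mathbb{Q}_1$-name $\dot{g}$ with $\Vdash_\mathbb{P} \dot{f} = \dot{g}$. Then $p = \| \dot{f}(0) = 1 \|_{\mathcal{B}(\mathbb{P})} = \| \dot{g}(0) = 1 \|_{\mathcal{B}(\mathbb{P})} \in \mathcal{B}(\mathbb{Q}_1)$, and since $\mathbb{Q}_1$ is dense in its own boolean completion there is $q \in \mathbb{Q}_1 \subseteq \bigcup \mathcal{L}$ with $q \leq p$.

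For the $\kappa$-chain condition, suppose for contradiction that $\{ p_\alpha : \alpha < \kappa \}$ is an antichain in $\mathbb{P}$. Using pairwise incompatibility, define a $\mathbb{P}$-name $\dot{f}$ for the function $\mu \to \ord$ sending $0$ to $\alpha+1$ when $p_\alpha \in \dot{G}$ and to $0$ otherwise, with $\beta \mapsto 0$ for $\beta > 0$. Apply (3) (starting from any $\mathbb{Q}_0 \in \mathcal{L}$) to obtain $\mathbb{Q}_1 \in \mathcal{L}$ and a $\mathbb{Q}_1$-name $\dot{g}$ with $\Vdash_\mathbb{P} \dot{f} = \dot{g}$. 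For each $\alpha < \kappa$, $p_\alpha = \| \dot{g}(0) = \alpha+1 \| \in \mathcal{B}(\mathbb{Q}_1)$, so $\{ p_\alpha : \alpha < \kappa \}$ is a $\kappa$-sized antichain in $\mathcal{B}(\mathbb{Q}_1)$. Refining each $p_\alpha$ to a condition in $\mathbb{Q}_1$ below it gives a $\kappa$-sized antichain inside $\mathbb{Q}_1$, contradicting $|\mathbb{Q}_1| < \kappa$.

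Neither step presents a serious obstacle; the argument is really just a direct application of clause (3) to cleverly encoded $\mathbb{P}$-names. The only point requiring care is the standard computation that $\mathbb{Q}_1$-names have boolean values already living in the complete subalgebra $\mathcal{B}(\mathbb{Q}_1) \subseteq \mathcal{B}(\mathbb{P})$. Notably, neither the $\mu$-closure hypothesis (1) nor the quotient-closure hypothesis (2) is used here.
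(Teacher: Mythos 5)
Your proof is correct and follows essentially the same route as the paper's: encode the antichain (resp.\ a maximal antichain containing $p$) as a $\mathbb{P}$-name for a function $\mu\to\mathrm{Ord}$, apply clause (3) to trap it in a small layer $\mathbb{Q}_1\in\mathcal{L}$, and read off the conclusion from $|\mathbb{Q}_1|<\kappa$ and the density of $\mathbb{Q}_1$ in $\mathcal{B}(\mathbb{Q}_1)$. One caution on the ``key technical fact'': the blanket assertion that $\|\varphi(\dot g)\|_{\mathcal B(\mathbb P)}$ lies in $\mathcal B(\mathbb Q_1)$ and equals $\|\varphi(\dot g)\|_{\mathcal B(\mathbb Q_1)}$ \emph{for any formula} $\varphi$ is false in general (take $\varphi$ to assert something about $\p(\mu)$, say), but it does hold for the absolute, atomic formulas $\dot g(0)=\alpha$ you actually invoke, so the argument is unaffected.
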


\begin{proof}
Suppose that $\{ p_\alpha : \alpha < \eta \} \subseteq \mathbb{P}$, $\eta \geq \kappa$, is a maximal antichain.  Let $\dot{f}$ be a name of a function with domain $\{ 0 \}$ such that $f(0) = \alpha$ iff $p_\alpha \in G$.  There cannot be a regular suborder $\mathbb{Q}$ of size $< \kappa$ and a $\mathbb{Q}$-name $\dot{g}$ that is forced to be equal to $\dot{f}$, since such a $\dot{g}$ would have $<\kappa$ possible values for its range.

Similarly, let $p \in \mathbb{P}$ be arbitrary, and let $\{ p_\alpha : \alpha < \delta \}$  be a maximal antichain with $p = p_0$.  Let $\dot{f}$ be a name of a function with domain $\{ 0 \}$ such that $f(0) = \alpha$ iff $p_\alpha \in G$. If $\mathbb{Q}$ is a regular suborder and $\dot{g}$ is a $\mathbb{Q}$-name such that $\Vdash_\mathbb{P} \dot{f} = \dot{g}$, then there is some $q \in \mathbb{Q}$ forcing $\dot{g}(0) = 0$, so $q \leq p$.
\end{proof}

\begin{proposition}If there exists a $(\mu,\kappa)$-NLC poset, then $\alpha^{<\mu} < \kappa$ for all $\mu < \kappa$.
\end{proposition}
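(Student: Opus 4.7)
The plan is to fix $\alpha < \kappa$, absorb $\alpha$ into a single layer via the collapse clause of NLC, and transfer cardinality estimates from the layer extension back to $V$ by a covering argument.

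First, condition (3) of nice layering makes $\mathbb{P}$ $\mu$-distributive: every $\mathbb{P}$-name for a $<\mu$-sequence of ordinals is absorbed by some $\mu$-closed $\mathbb{Q}\in\mathcal{L}$, so the sequence already lies in $V$. Applying the collapse clause to $\alpha$, I obtain $\mathbb{Q}_1\in\mathcal{L}$ of $V$-cardinality $\beta<\kappa$ with $\Vdash_{\mathbb{Q}_1}|\check\alpha|=\mu$; in particular $[\alpha]^{<\mu}$ is the same set in $V$ and $V^{\mathbb{Q}_1}$, and in $V^{\mathbb{Q}_1}$ the forced bijection $\mu\leftrightarrow\alpha$ gives $|[\alpha]^{<\mu}|^{V^{\mathbb{Q}_1}}=(\mu^{<\mu})^{V^{\mathbb{Q}_1}}$.

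The transfer to $V$ uses a covering lemma: for any $A\in V$ and any $\mathbb{Q}_1$-name $\dot g$ for a surjection $\gamma\to A$, fixing for each $\xi<\gamma$ a maximal antichain in $\mathbb{Q}_1$ whose elements each decide $\dot g(\xi)$ and collecting the $V$-values they force yields a $V$-set of size at most $\gamma\cdot|\mathbb{Q}_1|$ covering $A$. Taking $A=[\alpha]^{<\mu}$ gives $\alpha^{<\mu}\le(\mu^{<\mu})^{V^{\mathbb{Q}_1}}\cdot\beta$, so it remains to establish $(\mu^{<\mu})^{V^{\mathbb{Q}_1}}<\kappa$.

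The main obstacle is precisely this last bound, which has a circular flavor, since naively one would want $(\mu^{<\mu})^V<\kappa$ in hand first. I would break the circularity by working globally in $V^\mathbb{P}$: NLC collapses every $\alpha'$ with $\mu\le\alpha'<\kappa$ to cardinality $\mu$, so $\kappa=\mu^+$ there, while $[\mu]^{<\mu}$ is fixed by $\mu$-distributivity. If $(\mu^{<\mu})^V\ge\kappa$ then by $\kappa$-c.c.\ this cardinal is preserved, forcing $|[\mu]^{<\mu}|^{V^\mathbb{P}}$ to be either $\le\mu$ or $\ge\mu^+$. The first alternative together with the covering lemma already yields $(\mu^{<\mu})^V<\kappa$, a contradiction; I would rule out the second by constructing, via a transfinite diagonalization along $\mathcal{L}$ that repeatedly invokes condition (3), a $\mathbb{P}$-name for an enumeration of $[\mu]^{<\mu}$ by $\mu$ in $V^\mathbb{P}$ and applying the covering lemma to the absorbing layer. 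Executing this diagonalization carefully — in particular, ensuring that condition (3), which absorbs individual $\mu$-sequences of ordinals, suffices for what is effectively a $\mu^+$-indexed family of such absorptions — is the delicate point of the argument.
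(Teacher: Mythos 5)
Your argument takes a forcing-theoretic route that is substantially more complicated than the paper's, and it does not quite close.  The paper's proof is a one-paragraph combinatorial argument carried out entirely in $V$: pick a layer $\mathbb{Q} \in \mathcal{L}$ that collapses $\alpha$ to $\mu$; since $\mathbb{Q}$ is atomless, $\mu$-closed, and has antichains of size $\geq \alpha$ below every condition (because below any $p$ there are $\alpha$ many pairwise-incompatible conditions deciding some value of the collapsing map), one can build a tree $T \subseteq \mathbb{Q}$ of height $\mu$ with $\alpha$-fold splitting at successor levels and with nodes at limit levels supplied by $\mu$-closure.  Then $\alpha^{<\mu} \leq |T| \leq |\mathbb{Q}| < \kappa$, and no forcing, covering, or case split on cardinal arithmetic is required.

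Your covering step is fine as far as it goes: passing to $V^{\mathbb{Q}_1}$ where $|\alpha| = \mu$ and then covering by antichains does give $\alpha^{<\mu} \leq (\mu^{<\mu})^{V^{\mathbb{Q}_1}} \cdot |\mathbb{Q}_1|$.  But this only reduces the problem to bounding $\mu^{<\mu}$, and since $\mathbb{Q}_1$ is $\mu$-distributive and preserves cardinals above $|\mathbb{Q}_1|$, you have $(\mu^{<\mu})^{V^{\mathbb{Q}_1}} = (\mu^{<\mu})^V$ whenever the latter is $\geq \kappa$.  So the inequality you obtain is vacuous precisely in the case you need to rule out; the circularity you flag is real, not merely apparent.

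The proposed escape — a diagonalization along $\mathcal{L}$ producing a $\mathbb{P}$-name for a surjection $\mu \to [\mu]^{<\mu}$ — does not work, and the reason is structural rather than merely ``delicate.''  Since $\mathbb{P}$ is $\mu$-distributive, $([\mu]^{<\mu})^{V^{\mathbb{P}}} = ([\mu]^{<\mu})^V$; if $(\mu^{<\mu})^V \geq \kappa$, then by the $\kappa$-c.c.\ this set retains cardinality $\geq \kappa = (\mu^+)^{V^{\mathbb{P}}}$ in the extension, so no such surjection can exist, and no amount of bookkeeping along $\mathcal{L}$ will produce a name for one.  Condition (3) lets you absorb a single $\mathbb{P}$-name for a $\mu$-sequence into a small layer; it gives no mechanism for compressing a $V$-set of size $\geq \kappa$ into order type $\mu$, and $|\mathcal{L}|$ is in general $\kappa$, so the natural diagonalization has length $\kappa$, not $\mu$.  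What you are missing is that the bound on $\mu^{<\mu}$ should come from the internal combinatorics of a single small layer (closure plus splitting, i.e.\ the tree), not from the forcing extension; the tree argument handles the base case $\alpha = \mu$ and all larger $\alpha$ uniformly, without ever leaving $V$.
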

\begin{proof}
Let $\mathbb P$ be $(\mu,\kappa)$-NLC with layering $\mathcal L$, and let $\alpha < \kappa$.  If $\mathbb Q \in \mathcal L$ collapses $\alpha$ to $\mu$, then we can build a $\mu$-closed tree $T \subseteq \mathbb Q$ of height $\mu$ such that each level is an antichain of size $\geq \alpha$.  $\alpha^{<\mu} \leq |T| < \kappa$. 
\end{proof}

\begin{lemma}[Folklore]
\label{folk}
If $\mathbb{P}$ is a $\mu$-closed partial order such that $\Vdash_\mathbb{P} |\mathbb{P}| = \mu$, then $\mathcal{B}(\mathbb{P}) \cong \mathcal{B}(\col(\mu,|\mathbb{P}|))$.
\end{lemma}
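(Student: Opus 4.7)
My plan is to exhibit dense subsets of $\mathbb{P}$ and of $\col(\mu,\lambda)$, where $\lambda=|\mathbb{P}|$, that are order-isomorphic as forcings, from which the Boolean completions will agree. Passing to the separative quotient, I assume $\mathbb{P}$ is atomless and separative; atomlessness together with $\mu$-closure forces $|\mathbb{P}|\geq\mu$, and I may also assume $\lambda>\mu$ (the case $\lambda=\mu$ is handled by the same argument degenerating).

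The first technical step I carry out is to show that below every $q\in\mathbb{P}$ there is an antichain of size $\lambda$. Let $\dot h$ name a surjection $\mu\to\lambda$, which exists because $\Vdash_{\mathbb{P}}|\lambda|=\mu$. For each $\alpha<\lambda$ I choose $p_\alpha\leq q$ and $\beta_\alpha<\mu$ with $p_\alpha\Vdash\dot h(\beta_\alpha)=\alpha$. For distinct $\alpha\neq\alpha'$ with $\beta_\alpha=\beta_{\alpha'}$ the conditions $p_\alpha,p_{\alpha'}$ are incompatible, since they force contradictory values at $\beta_\alpha$. Since $\mu<\lambda$, a pigeonhole argument yields some $\beta<\mu$ shared by $\lambda$ many $\alpha$'s, producing the required antichain of size $\lambda$ below $q$.

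Next, I would construct by transfinite recursion of length $\mu$ a tree $T\subseteq\mathbb{P}$ ordered by $\leq_{\mathbb{P}}$, of height $\mu$, such that every node in $T$ has exactly $\lambda$ pairwise incompatible immediate extensions (drawn from antichains as in Step~1), and at each limit level $\alpha<\mu$ each branch through $T_{<\alpha}$ is extended by a single lower bound (supplied by $\mu$-closure). Enumerating $\mathbb{P}=\{q_\beta:\beta<\lambda\}$, I would arrange at each successor level that the $\lambda$ new immediate extensions include, for each unprocessed $q_\beta$, one refining a condition compatible with $q_\beta$, ensuring $T$ is dense in $\mathbb{P}$. The collapse poset $\col(\mu,\lambda)$ carries a canonical analogous tree $T'$ consisting of partial functions $f:\alpha\to\lambda$ for $\alpha<\mu$ ordered by reverse inclusion. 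Both trees have the same shape---height $\mu$, $\lambda$-fold branching at every node, closed under bounded descending chains, cardinality $\lambda^{<\mu}$---so a recursive level-by-level bijection $T\to T'$ respecting the partial order gives a dense isomorphism. Hence
\[
\mathcal{B}(\mathbb{P})\cong\mathcal{B}(T)\cong\mathcal{B}(T')\cong\mathcal{B}(\col(\mu,\lambda)).
\]

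The main obstacle I foresee is the coordination at limit stages of the tree construction: at each limit $\alpha<\mu$ I must extend every one of the $\lambda^{|\alpha|}$ branches by a lower bound, coherently with the density bookkeeping and with the eventual bijection onto $T'$. Existence of the lower bounds is immediate from $\mu$-closure; what takes care is the simultaneous and uniform choice across all branches at every limit, in a way that makes the two trees literally isomorphic rather than merely forcing-equivalent.
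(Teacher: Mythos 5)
Your overall plan matches the paper's: pass to the separative quotient, show every condition has a $\lambda$-sized antichain below it, build a tree of height $\mu$ inside $\mathbb{P}$ isomorphic to a dense subtree of $\col(\mu,\lambda)$, and conclude the Boolean completions agree. The first step (pigeonholing on a name for a surjection $\mu\to\lambda$) is fine, and your remark that atomlessness is forced is correct.

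The gap is at limit levels, and it is exactly the spot you flagged as ``the main obstacle,'' but the problem is more serious than a bookkeeping inconvenience. If you extend each branch $b$ through $T_{<\alpha}$ by a \emph{single} lower bound $\ell_b$, then $T_\alpha$ need not be a maximal antichain: $\mu$-closure guarantees that $b$ has \emph{some} lower bound, not a greatest one, so a condition $p$ can lie below every element of $b$ and still be incompatible with your chosen $\ell_b$ (and with the extensions of all other branches). Such a $p$ is then incompatible with everything in $T_\alpha$, hence with everything at all later levels, and nothing in $T_{<\alpha}$ need lie below it either. This simultaneously wrecks the density bookkeeping (when it is $p$'s turn to be processed, there may be no node at the current level compatible with $p$ under which to hang a child $\leq p$) and the final claim that $T$ is dense. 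The paper sidesteps this by placing, at each limit level, a $|\mathbb{P}|$-sized \emph{maximal antichain} of lower bounds below each branch rather than a single lower bound, so that every level remains a maximal antichain of $\mathbb{P}$. The price is that the resulting tree is no longer literally the canonical tree of ordinal-domain partial functions you compare against, but it is still (trivially) isomorphic to a dense subset of $\col(\mu,\lambda)$, which is all the lemma needs. The paper also uses a cleaner density device worth noting: fix a $\mathbb{P}$-name $\dot f$ for a bijection $\mu\to\dot G$ and have the children of each $t\in T_\alpha$ decide $\dot f(\alpha)$; then for arbitrary $p$, take $q\leq p$ forcing $\dot f(\alpha)=\check p$ for some $\alpha$, find $r\in T_{\alpha+1}$ compatible with $q$, and observe that $r$ forces $p\in\dot G$, so $r\leq p$ by separativity. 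This avoids enumerating $\mathbb{P}$ and coordinating a $\lambda$-length bookkeeping against a $\mu$-length recursion.
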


\begin{proof}
Pick a $\mathbb{P}$-name $\dot{f}$ for a bijection from $\mu$ to $\dot{G}$.  We build a tree $T \subseteq \mathbb{P}$ that is isomorphic to a dense subset of $\col(\mu,|\mathbb{P}|)$, and show that it is dense in $\mathbb{P}$.  Each level will be a maximal antichain in $\mathbb{P}$.  Let the first level $T_0 = \{ 1_\mathbb{P} \}$.  If levels $\{ T_\beta : \beta < \alpha + 1 \}$ are defined, below each $p \in T_\alpha$, pick a $|\mathbb{P}|$-sized maximal antichain of conditions deciding $\dot{f}(\alpha)$, and let $T_{\alpha+1}$ be the union of these antichains.  If $\{ T_\beta : \beta < \lambda \}$ is defined up to a limit $\lambda$, pick for each descending chain $b$ through the previous levels, a $|\mathbb{P}|$-sized maximal antichain of lower bounds to $b$, and set $T_\lambda$ equal to the union of these anithchains.  It is easy to check that $T_\lambda$ is a maximal antichain.  Let $T = \bigcup_{\alpha<\mu} T_\alpha$.  To show $T$ is dense, let $p \in \mathbb{P}$.  Let $q \leq p$ be such that for some $\alpha < \mu$, $q \Vdash \dot{f}(\alpha) = p$.   $q$ is compatible with some $r \in T_{\alpha+1}$.  Since $r$ decides $\dot{f}(\alpha)$ and forces it in $\dot{G}$, $r \leq p$.   
\end{proof}

\begin{lemma}
\label{rearrange}
Suppose $\mu < \kappa$ are regular, and $\mathbb{P}$ is $(\mu,\kappa)$-NLC.  If $G$ is $\mathbb{P}$-generic over $V$, then there is a forcing $\mathbb{R} \in V[G]$ such that $\mathbb{R}$ adds a filter $H \subseteq \break \col(\mu,{<}\kappa)$ which is generic over $V$ and such that $(\ord^\mu)^{V[G]} = (\ord^\mu)^{V[H]}$.
\end{lemma}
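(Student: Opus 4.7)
The strategy is to construct inside $V$ a continuous increasing chain $\langle \mathbb{Q}_\alpha : \alpha < \kappa\rangle$ of regular subposets of $\mathbb{P}$, at successor stages drawn from $\mathcal{L}$, whose union $\mathbb{Q}^*$ is isomorphic (in $V$) to $\col(\mu,{<}\kappa)$ and captures every $\mathbb{P}$-name for a function $\mu \to \ord$. Setting $H := G \cap \mathbb{Q}^*$ then produces a $V$-generic filter for $\col(\mu,{<}\kappa)$ lying already in $V[G]$, so the forcing $\mathbb{R}$ may be taken to be the trivial one; the capture property delivers $(\ord^\mu)^{V[G]} = (\ord^\mu)^{V[H]}$.

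To build the chain, first fix in $V$ an enumeration $\langle \dot{f}_\alpha : \alpha < \kappa\rangle$ intended to cover, up to forced equivalence, every $\mathbb{P}$-nice name for a function from $\mu$ to an ordinal that arises in $V[G]$. This is feasible because the $\kappa$-c.c.\ of $\mathbb{P}$ bounds the range of any such function by a $V$-set of size ${<}\kappa$, and the consequence of NLC that $\alpha^{<\mu} < \kappa$ for $\alpha < \kappa$ bounds the number of nice names of each bounded height, giving only $\kappa$ many relevant equivalence classes. Starting from any $\mathbb{Q}_0 \in \mathcal{L}$, at a successor stage $\alpha+1$ combine condition (3) (to capture $\dot{f}_\alpha$) with the NLC collapse clause (to force $|\mathbb{Q}_{\alpha+1}| = \mu$) to produce $\mathbb{Q}_{\alpha+1} \in \mathcal{L}$ extending $\mathbb{Q}_\alpha$. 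Condition (2) then makes the quotient $\mathbb{Q}_{\alpha+1}/\dot{G}_\alpha$ $\mu$-closed, the collapse clause makes it of size $\mu$ in $V[G_\alpha]$, and Lemma~\ref{folk} identifies it with $\col(\mu,\mu)$. At a limit $\lambda$ of cofinality $\geq \mu$ take $\mathbb{Q}_\lambda = \bigcup_{\beta<\lambda} \mathbb{Q}_\beta$, which remains $\mu$-closed and regular because any descending ${<}\mu$-chain eventually sits inside a single $\mathbb{Q}_\beta$; at lower-cofinality limits, apply condition (3) to a name coding the previously built generics in order to absorb the union (up to complete-subalgebra equivalence) into an element of $\mathcal{L}$.

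With the chain in hand, $\mathbb{Q}^*$ is a $\mu$-support iteration of $\col(\mu,\mu)$'s of length $\kappa$, and the standard iteration arithmetic for $\mu$-closed factors identifies it with $\col(\mu,{<}\kappa)$. Condition (3) applied at each stage guarantees each $\dot{f}_\alpha$ is captured by some $\mathbb{Q}_{\alpha+1}$, so every $\mu$-sequence of ordinals in $V[G]$ already lies in $V[G \cap \mathbb{Q}_{\alpha+1}] \subseteq V[H]$, yielding the claimed equality. The main obstacle is the tandem of bookkeeping and limit-stage absorption: one must simultaneously (i) enumerate enough names in $\kappa$ steps to catch every $\mu$-sequence in $V[G]$, (ii) absorb short-cofinality limit unions back into $\mathcal{L}$ up to complete-subalgebra equivalence, and (iii) verify that the resulting union remains a regular subposet of $\mathbb{P}$ throughout. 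Each of these rests on the cardinal arithmetic and closure properties packaged into $(\mu,\kappa)$-NLC — specifically, $\alpha^{<\mu} < \kappa$, $\mu$-closure of quotients, and the capture clause (3).
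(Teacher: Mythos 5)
Your approach is genuinely different from the paper's, and unfortunately it does not go through as written. The paper works in $V[G]$ and defines $\mathbb{R}$ as the poset of filters $h \subseteq \col(\mu,{<}\alpha)$, $\alpha<\kappa$, that are generic over $V$ and satisfy $V[h] = V[G\cap\mathbb{Q}]$ for some layer $\mathbb{Q}$, ordered by end-extension; the desired $H$ lives in $V[G][F]$ for an $\mathbb{R}$-generic $F$, not in $V[G]$ itself. You instead try to construct, inside $V$, a continuous chain of layers whose union is $\col(\mu,{<}\kappa)$ and which already captures everything, so that $\mathbb{R}$ is trivial. There are several concrete obstacles to that plan.

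First, the bookkeeping enumeration cannot be carried out in $V$. The definition of $(\mu,\kappa)$-NLC places no bound at all on $|\mathbb{P}|$ or $|\mathcal{L}|$ --- indeed, later in the paper $\col(\mu,{<}\kappa)\times\add(\mu,\lambda)$ is observed to be $(\mu,\kappa)$-NLC for arbitrary $\lambda$. The hypothesis $\alpha^{<\mu}<\kappa$ for $\alpha<\kappa$ controls $<\mu$-powers of small ordinals, but does not bound the number of maximal antichains of a layer $\mathbb{Q}$ (which can be as large as $2^{|\mathbb{Q}|}$), hence does not bound the number of inequivalent nice names, and certainly does not bound $|\mathcal{L}|$. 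So there is no reason only $\kappa$ many equivalence classes of names matter, and no $V$-enumeration of length $\kappa$ covering them all. (If instead you enumerate only those names that \emph{arise} in $V[G]$, the enumeration depends on $G$ and your chain is no longer built in $V$, undermining the claim that $\mathbb{Q}^{*}\cong\col(\mu,{<}\kappa)$ holds in $V$.)

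Second, the limit stages are not justified by anything in the NLC axioms. $\mathcal{L}$ is just a set of regular suborders; it need not be closed under unions. At limits of cofinality $\geq\mu$ the union is $\mu$-closed, but there is no reason it is a \emph{regular} suborder of $\mathbb{P}$, nor that it belongs to $\mathcal{L}$, so clauses (2) and (3) cannot be applied to it at the next stage. At limits of cofinality $<\mu$ the union may fail even $\mu$-closure, and the appeal to ``a name coding the previously built generics'' is not available from clause (3), which speaks only of names for functions $\mu\to\ord$ and of extending a single given layer, not of absorbing a union of layers. The paper avoids this entirely because the $\mathbb{R}$-generic filter $F$ performs the limit-stage gluing for you.

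Third, the identification of $\mathbb{Q}_{\alpha+1}/G_\alpha$ with $\col(\mu,\mu)$ misreads the NLC collapse clause: the hypothesis is $\Vdash_{\mathbb{Q}_1}|\mathbb{Q}_1|=\mu$, i.e.\ $\mathbb{Q}_1$ collapses its own cardinality (in $V$ possibly much larger than $\mu$) to $\mu$. Lemma~\ref{folk} then gives $\mathbb{Q}_{\alpha+1}/G_\alpha\cong\col(\mu,\gamma)$ for some $\gamma$ with $\mu\leq\gamma<\kappa$. This particular point is repairable --- a $\mu$-support iteration of such collapses is still equivalent to $\col(\mu,{<}\kappa)$ provided the $\gamma$'s are cofinal in $\kappa$, as the paper's own proof notes --- but it should not be glossed over.

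Finally, your conclusion is strictly stronger than the lemma: you claim $H\in V[G]$ with $\mathbb{R}$ trivial. It is far from clear this is always achievable. Over $V$ there are typically $\kappa^{<\kappa}>\kappa$ many maximal antichains of $\col(\mu,{<}\kappa)$, with no control given by $(\mu,\kappa)$-NLC, so a $\kappa$-length construction inside $V[G]$ cannot obviously hit them all. The role of the paper's auxiliary forcing $\mathbb{R}$ is precisely to make those (nonconstructive) choices generically. If you want to pursue the trivial-$\mathbb{R}$ route you would need a substantially new idea for producing a $V$-generic $H$ definably from $G$, and the present proposal does not supply one.
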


\begin{proof}
Let $\mathcal{L}$ witness the $(\mu,\kappa)$-NLC property.  In $V[G]$, let $\mathbb{R}$ be the collection of filters $h \subseteq \col(\mu,{<}\alpha)$ for $\alpha< \kappa$ which are generic over $V$, such that for some $\mathbb{Q} \in \mathcal{L}$, $V[h] = V[G \cap \mathbb{Q}]$.  The ordering is end-extension.

Let $h \in \mathbb{R}$ with $\mathbb{Q}_0 \in \mathcal{L}$ a witness, and let and $\alpha < \kappa$ be arbitrary.  Let $\alpha < \beta < \kappa$ and $\mathbb{Q}_1 \supseteq \mathbb{Q}_0$ in $\mathcal{L}$ be such that in $V[h]$, $|\mathbb{Q}_1 / (G \cap \mathbb{Q}_0 ) | = |\beta|$, and $\mathbb{Q}_1$ collapses $\beta$ to $\mu$.  By the definition and Lemma~\ref{folk}, $\mathbb{Q}_1 / (G \cap \mathbb{Q}_0 )$ is equivalent in $V[h]$ to $\col(\mu,\beta)$, which is equivalent to the ${<}\mu$-support product of $\col(\mu,\gamma)$ for $\alpha \leq \gamma \leq \beta$.  The filter $G \cap \mathbb{Q}_1$ therefore gives a filter $h' \supseteq h$ on $\col(\mu,<\beta+1)$ that is generic over $V$, with $V[h'] = V[\mathbb{Q}_1 \cap G]$.  

Let $h \in \mathbb{R}$ with $\mathbb{Q}_0 \in \mathcal{L}$ a witness, and let $f: \mu \to \ord$ in $V[G]$ be arbitrary.  By the definition of $(\mu,\kappa)$-NLC, we can find some $\mathbb{Q}_1 \supseteq \mathbb{Q}_0$ in $\mathcal{L}$ such that $f \in V[G \cap \mathbb{Q}_1]$.  By the previous paragraph, we may find $\mathbb{Q}_2 \supseteq \mathbb{Q}_1$ in $\mathcal{L}$ equivalent to some $\col(\mu,<\alpha)$, and some filter $h' \subseteq \col(\mu,{<}\alpha)$ generic over $V$, extending $h$, and such that $V[G \cap \mathbb{Q}_2] = V[h']$.

If $F$ is generic over $\mathbb{R}$, let $H = \bigcup_{h \in F} h$. Since $\col(\mu,{<}\kappa)$ is $\kappa$-c.c., $H$ is generic, since any maximal antichain from $V$ intersects some $h \in F$.  By the above arguments, any $f : \mu \to \ord$ in $V[G]$ is in $V[H]$.  Conversely, any $f : \mu \to \ord$ in $V[H]$ lives in some $V[h]$ with $h \in \mathbb{R}$, so is in $V[G]$.   
\end{proof}

\subsubsection{The anonymous collapse}

Let $\kappa$ be a regular cardinal whose regularity is preserved by a forcing $\mathbb{P}$.  Let $A(\mathbb{P})$ be the complete subalgebra of $\mathcal{B} ( \mathbb{P} * \add(\kappa))$ generated by the canonical name for the $\add(\kappa)$-generic set.  More precisely, if $e : \mathbb{P}*\add(\kappa) \to  \mathcal{B} ( \mathbb{P} * \add(\kappa))$ is the canonical dense embedding, $A(\mathbb{P})$ is completely generated by the elements of the form $e(\langle 1, \dot{\{ \langle \alpha,1 \rangle \} } \rangle)$.  By \cite[p. 247]{jechbook}, we have a canonical correspondence between such $\add(\kappa)$-generic sets $X$ which come after forcing with $\kappa$-preserving posets $\mathbb P$, and $A(\mathbb P)$-generic filters $H$.  We will move between the two by writing, for example, $X_H$ and $H_X$.

In the case that $\alpha^{<\mu} < \kappa$ for all $\alpha < \kappa$ and $\mathbb{P} = \col(\mu,{<}\kappa)$, denote $A(\mathbb{P})$ by $A(\mu,\kappa)$, and write $B(\mu,\kappa)$ for $\mathcal{B}(\col(\mu,{<}\kappa)*\add(\kappa))$.

\begin{lemma}
\label{quotdist}
If $\mathbb{P}$ is $(\mu,\kappa)$-NLC, and $H \subseteq A(\mathbb{P})$ is generic over $V$, then \break  $\mathcal{B}(\mathbb{P} * \add(\kappa))/H$ is $\kappa$-distributive in $V[H]$.
\end{lemma}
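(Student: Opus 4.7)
Let $(G, X)$ be $\mathbb{P} * \add(\kappa)$-generic with $H = H_X$, so $V[H] = V[X]$. The plan is to show that every $f \colon \alpha \to \ord$ with $\alpha < \kappa$ in $V[G][X]$ lies in $V[H]$. The first reduction is that $\mathbb{P}$, being $\kappa$-c.c., preserves the regularity of $\kappa$, so $\add(\kappa)^{V[G]}$ is $\kappa$-closed in $V[G]$; hence $f \in V[G]$ and may be represented by a $\mathbb{P}$-name $\dot f$. The second reduction uses the layering: recode $\dot f$ as a $\mu$-sequence of ordinals by padding with zeros if $\alpha < \mu$, or composing with a $\mathbb{P}$-name $\dot h$ for a bijection $\mu \to \alpha$ (which a layer collapsing $\alpha$ supplies) if $\alpha \geq \mu$. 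Applying condition (3) of $(\mu,\kappa)$-NLC and then the collapse condition, I obtain $\mathbb{Q} \in \mathcal{L}$ collapsing itself to $\mu$ with $f \in V[G \cap \mathbb{Q}]$; by Lemma~\ref{folk}, $\mathbb{Q}$ is forcing-equivalent to $\col(\mu,\eta)$ for some $\eta < \kappa$.

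The heart of the argument is to exhibit $f$ in $V[H]$. In $V[G]$, $f$ has cardinality at most $\mu$, so $f$ is coded by some $\mu$-sequence of pairs of ordinals $\tilde f \in V[G]$. My approach is to run an analogue of Lemma~\ref{rearrange} inside $V[H]$: the anonymous collapse $A(\mathbb{P})$ should inherit a layered structure from $\mathbb{P}$, with sublayers $A(\mathbb{Q}) \subseteq A(\mathbb{P})$ defined analogously for each $\mathbb{Q} \in \mathcal{L}$. Reconstructing the proof of Lemma~\ref{rearrange} in this setting produces, within $V[H]$, a filter $K \subseteq \col(\mu,{<}\kappa)$ generic over $V$ such that $V[H]$ and $V[K]$ agree on $\mu$-sequences of ordinals. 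Combined with Lemma~\ref{rearrange} applied to $V[G]$ (which gives some $V[K']$ also agreeing with $V[G]$ on $\mu$-sequences and thus containing $\tilde f$), one transfers to obtain $\tilde f \in V[K]$, whence $f = \ran(\tilde f) \in V[K] \subseteq V[H]$.

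The main obstacle is establishing that $A(\mathbb{P})$ admits the layered structure needed for this internal rearrangement. This requires verifying that the sublayers $A(\mathbb{Q})$ are regular in $A(\mathbb{P})$ with quotients that are $\mu$-closed in the appropriate sense, and that the filter $K$ assembled from $H$ is genuinely $\col(\mu,{<}\kappa)$-generic over $V$ rather than merely over some smaller submodel. This is the point where the full strength of the $(\mu,\kappa)$-NLC hypothesis---rather than merely $\kappa$-c.c.\ together with $\mu$-closed layers---becomes essential, and where I expect the bulk of the verification to take place.
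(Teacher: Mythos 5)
Your opening moves parallel the paper's proof: you pass from $f \in V[G][X]$ to $f \in V[G]$ using the $\kappa$-closure of $\add(\kappa)$ in $V[G]$ (itself a consequence of $\mathbb{P}$ being $\kappa$-c.c.), recode $f$ as a $\mu$-sequence, and invoke the NLC clauses to localize $f$ in $V[G \cap \mathbb{Q}]$ for some $\mathbb{Q} \in \mathcal{L}$ equivalent to $\col(\mu,\eta)$. All of this is correct and matches the paper's reduction at the analogous point.

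Where the proposal goes wrong is the final step. You propose to build, \emph{inside} $V[H] = V[X]$, a filter $K \subseteq \col(\mu,{<}\kappa)$ generic over $V$. This is both unnecessary and, in general, impossible. Lemma~\ref{rearrange} produces such a $K$ only after \emph{further forcing} over $V[G]$, not inside $V[G]$; there is even less reason to expect to find one inside the thinner model $V[X]$. In fact, the paper later proves (in the ``unfortunate reality'' subsection) that a generic extension by $A(\mu,\kappa)$ contains no generic filter for any $\kappa$-c.c.\ forcing $\mathbb{Q}$ with $\den(\mathbb{Q} \restriction q) \geq \kappa$ for all $q$, and $\col(\mu,{<}\kappa)$ is such a forcing. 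Since $A(\mathbb{P}) \cong A(\mu,\kappa)$ by Theorem~\ref{forget}, the object you want to build inside $V[H]$ simply does not exist there, so the verification you anticipate at the end cannot succeed. A secondary issue: the ``sublayers $A(\mathbb{Q}) \subseteq A(\mathbb{P})$'' you posit do not make literal sense, since $A(\mathbb{Q})$ is a subalgebra of $\mathcal{B}(\mathbb{Q} * \add(\kappa))$, not of $\mathcal{B}(\mathbb{P} * \add(\kappa))$, and no $\mu$-closed, size-${<}\kappa$ forcing regularly embeds into $A(\mathbb{P})$.

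The correct (and much shorter) conclusion from the point you reached: since $\mathbb{Q}$ collapses itself to $\mu$, the generic $G \cap \mathbb{Q}$ is coded by some $y \subseteq \mu$ in $V[G]$, so $V[G \cap \mathbb{Q}] = V[y]$. Since $X$ is $\add(\kappa)$-generic over $V[G]$, a density argument shows $y$ appears as a block of $X$, i.e.\ $y = \{\gamma < \mu : \beta + \gamma \in X\}$ for some $\beta < \kappa$; hence $y \in V[X]$, so $f \in V[y] \subseteq V[X] = V[H]$. The whole point of $A(\mathbb{P})$ is that the Cohen set $X$ automatically codes \emph{every} subset of $\mu$ lying in $V[G]$; you should read this off directly rather than attempt to reconstruct a full Levy-collapse generic inside $V[H]$.
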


\begin{proof}
$V[H] = V[X_H]$ for the canonically associated $X_H \subseteq \kappa$, and by forcing with $\mathcal{B}(\mathbb{P} * \add(\kappa))/H$ over $V[H]$, we recover a filter $G * X_H$ for $\mathbb{P} * \add(\kappa)$, generic over $V$.
 
If $G * X$ is $\mathbb{P} * \add(\kappa)$-generic over $V$, then $X$ codes all subsets of $\mu$ that live in $V[G]$.  By the definition of $(\mu,\kappa)$-NLC, every $z \in (\ord^\mu)^{V[G]}$ occurs in some submodel of the form $V[G \cap \mathbb{Q}]$, where $\mathbb{Q}$ is isomorphic to $\col(\mu,\alpha)$ for some $\alpha < \kappa$.  Thus $z \in V[y]$ for some $y \subseteq \mu$ in $V[G]$, so $(\ord^\mu)^{V[X]} \supseteq (\ord^\mu)^{V[G]}$.  Since $\add(\kappa)$ adds no $\mu$-sized sets of ordinals, $(\ord^\mu)^{V[G]} = (\ord^\mu)^{V[G*X]}  \supseteq (\ord^\mu)^{V[X]}$.  Thus $\mathcal{B}(\mathbb{P} * \add(\kappa))/H$ is $\kappa$-distributive.  
\end{proof}

\begin{lemma}
\label{cheat}
Let $V$ be a countable transitive model of ZFC (or just assume generic extensions are always available), and assume $\Vdash^V_\mathbb{P} \kappa$ is regular.  If $X \subseteq \kappa$, the following are equivalent:
\begin{enumerate}[(1)]
\item $X$ is $A(\mathbb{P})$-generic over $V$.
\item There is $G \subseteq \mathbb{P}$ such that $G$ is generic over $V$, and $X$ is $\add(\kappa)$-generic over $V(P_0)$, where $P_0 = \p_\kappa(\kappa)^{V[G]}$.
\end{enumerate}
\end{lemma}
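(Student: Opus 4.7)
The plan is to prove the two directions separately: direction (1) $\Rightarrow$ (2) is a factorization through the quotient of $\mathcal{B}(\mathbb{P}*\add(\kappa))$ by the complete subalgebra $A(\mathbb{P})$, while direction (2) $\Rightarrow$ (1) requires constructing an embedding of $A(\mathbb{P})$ into the Boolean completion of $\add(\kappa)$ as computed in $V(P_0)$.

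For (1) $\Rightarrow$ (2), starting from an $A(\mathbb{P})$-generic $H_X$ over $V$, I force over $V[H_X] = V[X]$ with the quotient $\mathcal{B}(\mathbb{P}*\add(\kappa))/H_X$ to obtain a generic filter $F \supseteq H_X$ for $\mathcal{B}(\mathbb{P}*\add(\kappa))$ over $V$. Decomposing $F$ as $(G, X')$, the $A(\mathbb{P})$-projection of $F$ equals both $H_X$ (by construction) and $H_{X'}$, forcing $X' = X$. So $G$ is $\mathbb{P}$-generic over $V$ and $X$ is $\add(\kappa)^{V[G]}$-generic over $V[G]$. Conditions of $\add(\kappa)$ are bounded partial $\kappa \to 2$ functions, all lying in $P_0$, so $\add(\kappa)^{V(P_0)}$ and $\add(\kappa)^{V[G]}$ agree as posets; every $V(P_0)$-dense set is thus also $V[G]$-dense and met by $X$.

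For (2) $\Rightarrow$ (1), I work in $V(P_0)$, set $\mathcal{B}^* := \mathcal{B}(\add(\kappa)^{V(P_0)})$, and construct $\iota : A(\mathbb{P}) \to \mathcal{B}^*$ by sending each generator $\|\check\alpha \in \dot X\|_{A(\mathbb{P})}$ to $\|\check\alpha \in \dot X\|_{\mathcal{B}^*}$, extended through the $V$-Boolean operations by which $A(\mathbb{P})$ is generated. For any maximal antichain $\mathcal{A} \in V$ in $A(\mathbb{P})$, preservation of $V$-joins gives $\bigvee \iota[\mathcal{A}] = 1$ in $\mathcal{B}^*$, so $\iota[\mathcal{A}]$ is predense there, and the set $D_\mathcal{A} := \{p \in \add(\kappa)^{V(P_0)} : p \leq \iota(b) \text{ for some } b \in \mathcal{A}\}$ lies in $V(P_0)$ and is dense in $\add(\kappa)^{V(P_0)}$. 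The genericity of $X$ over $V(P_0)$ produces $p \subseteq X$ with $p \in D_\mathcal{A}$, whence the witnessing $b$ lies in $H_X$.

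The principal technical obstacle is the existence and injectivity of $\iota$. Well-definedness amounts to checking that the $V$-Boolean relations in $A(\mathbb{P})$ remain valid in $\mathcal{B}^*$; the delicate relations are infinitary meets such as $\bigwedge_{\alpha \in S} \|\check\alpha \in \dot X\|^{f(\alpha)} = 0$ for unbounded $S \in V$, which vanish in both algebras for the same reason---no bounded condition can decide unboundedly many atomic facts. Injectivity follows because a witness $(p, \dot q) \leq a > 0$ in $\mathbb{P}*\add(\kappa)$ yields, once evaluated by $G$, a condition $\dot q^G \in \add(\kappa)^{V[G]} = \add(\kappa)^{V(P_0)}$ lying below $\iota(a)$ in $\mathcal{B}^*$. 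The identification of the resulting pullback filter with the canonical $H_X$ of the Jech correspondence is then immediate from the generator-preserving definition of $\iota$.
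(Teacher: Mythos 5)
Your direction (1)$\Rightarrow$(2) is correct and essentially the same as the paper's. For (2)$\Rightarrow$(1), your approach is genuinely different: the paper argues by contradiction, taking a condition $p\subseteq X$ of length $\alpha$ that (in $V(P_0)$) forces ``$\dot X$ is not $A(\mathbb{P})$-generic over $V$,'' then exhibiting a $Y\supseteq p$ that is $\add(\kappa)$-generic over the larger model $V[G]$ (hence over $V(P_0)$) and for which $G*Y$ is $\mathbb{P}*\add(\kappa)$-generic over $V$, so $Y$ \emph{is} $A(\mathbb{P})$-generic, contradicting the choice of $p$. Your approach instead tries to build a complete homomorphism $\iota:A(\mathbb{P})\to\mathcal{B}^*$ directly and pull back dense sets. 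That can in principle be made to work, but the right way to establish well-definedness is not the special-case observation you give about unbounded atomic meets; rather, one must use the iteration structure: every $a\in A(\mathbb{P})$ is a pair $(1,\dot a)$ with $\dot a$ a $\mathbb{P}$-name for an element of $\mathcal{B}(\add(\kappa))^{V^\mathbb{P}}$, and for a $V$-term $t$ with $\sigma_1(t)=0$ one has $\Vdash_\mathbb{P}\dot s_t=0$, hence $\dot s_t^G=0$ in $\mathcal{B}(\add(\kappa))^{V[G]}$, which agrees with the evaluation of $t$ in $\mathcal{B}(\add(\kappa))^{V(P_0)}$ (the posets coincide and the join operation is an absolute computation on $V$-indexed families), giving $\sigma_2(t)=0$.

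The genuine gap is your injectivity argument, and, additionally, it targets a property that $\iota$ does not have and that you do not actually need. Given $a>0$ in $A(\mathbb{P})$, a witness $(p,\dot q)\le a$ in $\mathbb{P}*\add(\kappa)$ gives $\dot q^G\le\iota(a)$ only when $p\in G$; since $G$ is a \emph{fixed} generic, you cannot arrange this for an arbitrary witness, and indeed if some $p\in G$ forces $\dot a=0$ then $\iota(a)=\dot a^G=0$ even though $a>0$. So $\iota$ is typically non-injective. What you actually need is only that $\iota$ is a $V$-complete Boolean homomorphism with $\iota(0)=0$, $\iota(1)=1$, and $\iota(\|\check\alpha\in\dot X\|_{A(\mathbb{P})})=\|\check\alpha\in\dot X\|_{\mathcal{B}^*}$, and that $\iota\in V(P_0)$ (so $\iota[\mathcal{A}]$ is a $V(P_0)$-set for each maximal antichain $\mathcal{A}\in V$). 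Then $H:=\iota^{-1}[H'_X]$ is automatically a generic ultrafilter on $A(\mathbb{P})$ over $V$ with $\{\alpha:\|\check\alpha\in\dot X\|\in H\}=X$, which is exactly what $A(\mathbb{P})$-genericity of $X$ means; no injectivity is involved. Reframing your last two sentences around these facts, and replacing the injectivity discussion with the correct well-definedness argument above, would close the proof.
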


\begin{proof}
If $X$ is $A(\mathbb{P})$-generic then force with $\mathcal{B}(\mathbb{P} * \add(\kappa))/H_X$ over $V[X]$, obtaining $G$ such that $G*X$ is $\mathbb{P}*\add(\kappa)$-generic over $V$.  Then $X$ is $\add(\kappa)$-generic over $V[G]$, and since $\add(\kappa)^{V[G]} = \add(\kappa)^{V(P_0)}$, $X$ is $\add(\kappa)$-generic over $V(P_0)$.

Suppose $G \subseteq \mathbb{P}$ is generic over $V$, and $X$ is $\add(\kappa)$-generic over $V(P_0)$, but not $A(\mathbb{P})$-generic over $V$.  Then some $p \in \add(\kappa)^{V(P_0)}$ forces this with $\dom(p) = \alpha < \kappa$, and $X \restriction \alpha = p$.  Take $Y \subseteq \kappa$ such that $Y \restriction \alpha = p$ that is $\add(\kappa)$-generic over the larger model $V[G]$.  Then $Y$ is $A(\mathbb{P})$-generic over $V$, and $V(P_0)[Y]$ can see this, but this contradicts the property of $p$.  So $X$ was $A(\mathbb{P})$-generic over $V$.   
\end{proof}

\begin{theorem}
\label{forget}
For any $\mathbb{P}$ that is is $(\mu,\kappa)$-NLC, there is an isomorphism \break $\iota : A(\mathbb{P}) \to A(\mu,\kappa)$ such that $\iota(|| \alpha \in \dot{X} ||_{A(\mathbb{P})}) = || \alpha \in \dot{X} ||_{A(\mu,\kappa)}$ for all $\alpha < \kappa$.
\end{theorem}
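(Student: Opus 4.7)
The plan is to show that a subset $X\subseteq\kappa$ in a generic extension is $A(\mathbb P)$-generic over $V$ if and only if it is $A(\mu,\kappa)$-generic over $V$. Once this coincidence is established, both algebras are completely generated by $\langle\|\alpha\in\dot X\|:\alpha<\kappa\rangle$, and each generic filter is uniquely recoverable from the associated $X$, so the map on generators extends uniquely to the desired isomorphism. Formally, the isomorphism criterion via mutually inverse names for generic filters (the second lemma of Section~1) is witnessed by the names that read off each generic filter from the common $X$, which gives the isomorphism $a\mapsto \|\check a \in \dot f_1(\dot H)\|$ with the required action on generators.

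For the forward implication, suppose $X$ is $A(\mathbb P)$-generic over $V$. Lemma~\ref{cheat} produces a $\mathbb P$-generic $G$ with $X$ being $\add(\kappa)$-generic over $V(P_0)$, where $P_0=\p_\kappa(\kappa)^{V[G]}$. Lemma~\ref{rearrange} then supplies, in a further extension, a $\col(\mu,{<}\kappa)$-generic $H$ over $V$ with $(\ord^\mu)^{V[G]}=(\ord^\mu)^{V[H]}$. Any $(\mu,\kappa)$-NLC forcing is $\mu$-distributive (each name for $f:\mu\to\ord$ is absorbed into a $\mu$-closed layer) and collapses every $\alpha<\kappa$ to size $\leq\mu$, so $\kappa=\mu^+$ in both $V[G]$ and $V[H]$, and every element of $\p_\kappa(\kappa)$ is the range of a $\mu$-sequence. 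Consequently $P_0^{V[G]}=P_0^{V[H]}$. Since $\col(\mu,{<}\kappa)$ is itself $(\mu,\kappa)$-NLC with initial sections as layering, Lemma~\ref{cheat} applied to it then yields that $X$ is $A(\mu,\kappa)$-generic over $V$.

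The reverse implication is the main technical obstacle, and requires a dual form of Lemma~\ref{rearrange}: given a $\col(\mu,{<}\kappa)$-generic $H$ over $V$, there is in $V[H]$ a forcing $\mathbb R'$ adding a $\mathbb P$-generic $G$ over $V$ with $(\ord^\mu)^{V[G]}=(\ord^\mu)^{V[H]}$. I would take $\mathbb R'$ to be the collection of $V$-generic filters $g\subseteq\mathbb Q$, $\mathbb Q\in\mathcal L$, satisfying $V[g]=V[H\cap\col(\mu,{<}\alpha)]$ for some $\alpha<\kappa$, ordered by extension. Existence and extendability of such $g$ rest on Lemma~\ref{folk}: any sufficiently large layer $\mathbb Q\in\mathcal L$ with $\Vdash_{\mathbb Q}|\mathbb Q|=\mu$ is, in $V$, Boolean-equivalent to a collapse of the form $\col(\mu,{<}\alpha)$ (by comparing sizes via the folklore lemma on each side), so the corresponding projection of $H$ transfers to a generic $g\subseteq\mathbb Q$. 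Density of $\bigcup\mathcal L$ in $\mathbb P$, together with the absorption property that every $\mathbb P$-name for a function $\mu\to\ord$ lives in some layer, ensures that the union of a generic $\mathbb R'$-filter is $\mathbb P$-generic over $V$ with the required $\mu$-sequence agreement. With this dual rearrangement established, the argument of the preceding paragraph runs in reverse to complete the equivalence of generic classes and hence the isomorphism.
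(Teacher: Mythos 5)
Your forward direction is essentially the paper's: both run $A(\mathbb P)$-generic $\Rightarrow$ ($G$ via the quotient) $\Rightarrow$ ($H$ via Lemma~\ref{rearrange}) $\Rightarrow$ $A(\mu,\kappa)$-generic via Lemma~\ref{cheat}, using that $\p_\kappa(\kappa)^{V[G]}=\p_\kappa(\kappa)^{V[H]}$. The reverse direction, however, is where you diverge from the paper. The paper does \emph{not} prove a dual form of Lemma~\ref{rearrange}; instead it argues by contradiction using the weak homogeneity of $\col(\mu,{<}\kappa)$ and of $\add(\kappa)$ (as computed in $V(P_0)$): if some condition $(p,\dot q)$ forced $\dot X$ not $A(\mathbb P)$-generic, one takes an arbitrary $A(\mathbb P)$-generic $Y$, forces to produce $H*Y$ generic for $B(\mu,\kappa)$, and conjugates by automorphisms $\pi\in V$ and $\sigma\in V(P_0)$ to land below $(p,\dot q)$ while preserving $A(\mathbb P)$-genericity of the image $Y'$. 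That argument avoids having to build a $\mathbb P$-generic inside a Levy-collapse extension at all.

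Your proposed ``dual rearrangement'' is, as an ultimate fact, true, but the step as you describe it does not work. You say that each sufficiently large layer $\mathbb Q\in\mathcal L$ with $\Vdash_{\mathbb Q}|\mathbb Q|=\mu$ is, \emph{in $V$}, Boolean-equivalent to some $\col(\mu,{<}\alpha)$, and that transferring the ``corresponding projection'' $H\restriction\alpha$ through this $V$-isomorphism gives the extension. But the isomorphisms $\mathcal B(\mathbb Q_1)\cong\mathcal B(\col(\mu,{<}\alpha_1))$ supplied by Lemma~\ref{folk} for different layers are not coherent: nothing forces the one for $\mathbb Q_1$ to restrict to the one for $\mathbb Q_0\subseteq\mathbb Q_1$ when $\alpha_0<\alpha_1$. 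So the generic $g_1\subseteq\mathbb Q_1$ you obtain from $H\restriction\alpha_1$ in general will \emph{not} satisfy $g_1\cap\mathbb Q_0=g_0$, and your $\mathbb R'$ fails extendability as stated. The correct move (mirroring what Lemma~\ref{rearrange} actually does on the other side) is to pass to quotients: given $g_0\subseteq\mathbb Q_0$ with $V[g_0]=V[H\restriction\alpha_0]$, apply the folklore lemma \emph{in $V[g_0]$} to $\mathbb Q_1/g_0$, match cardinalities to identify $\mathcal B(\mathbb Q_1/g_0)$ with $\mathcal B\bigl(\col(\mu,{<}\alpha_1)/(H\restriction\alpha_0)\bigr)$ in $V[g_0]$, and transfer $H\cap\col(\mu,[\alpha_0,\alpha_1))$ through \emph{that} isomorphism. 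With this fix (plus a more careful genericity argument for the upward closure of $\bigcup F$, since the union of generics for distinct layers need not be upward closed in $\mathbb P$), your route does go through; but it reproves a dual rearrangement lemma the paper gets to avoid via the homogeneity trick, so it is a strictly heavier approach.
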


\begin{proof}
Let $X$ be $A(\mathbb{P})$-generic over $V$.  There is a $\kappa$-distributive forcing over $V[X]$ to get $G$ such that $G * X$ is $\mathbb{P} * \add(\kappa)$-generic over $V$.  By Lemma~\ref{rearrange}, we can do further forcing to obtain $H \subseteq \col(\mu,{<}\kappa)$ generic over $V$ such that $(\ord^\mu)^{V[H]} = (\ord^\mu)^{V[G]}$.  By Lemma~\ref{cheat}, $X$ is also $A(\mu,\kappa)$-generic over $V$.

Conversely, every $A(\mu,\kappa)$-generic $X$ is $A(\mathbb{P})$-generic.  For suppose $X$ is a counterexample.  Then there is some $(p,\dot{q}) \in \col(\mu,{<}\kappa) * \add(\kappa)$ such that $(p,\dot{q}) \Vdash \dot{X}$ is not $A(\mathbb{P})$-generic over $V$.  Let $Y$ be any $A(\mathbb{P})$-generic set, and let $P_0 = \p(\mu)^{V[Y]}$.   By the above, $Y$ is $A(\mu,\kappa)$-generic over $V$.  Thus we can force over $V[Y]$ to get $H \subseteq \col(\mu,{<}\kappa)$ such that $H *Y$ is $B(\mu,\kappa)$-generic over $V$.  By the homogeneity of the Levy collapse, there is some automorphism $\pi \in V$ such that $p \in \pi[H] = H^\prime$.   By the homogeneity of Cohen forcing, there is some automorphism $\sigma$ in $V(P_0)$ such that $\sigma[Y]$ is a generic $Y^\prime$ such that $Y^\prime \restriction \dom(\dot{q}^{H^\prime}) = \dot{q}^{H^\prime}$.  $Y^\prime$ is also $A(\mathbb{P})$-generic over $V$.  However, $(p,\dot{q}) \in H^\prime * Y^\prime$, so we have a contradiction.

This implies that we have a canonical correspondence between $A(\mathbb{P})$- and $A(\mu,\kappa)$-generic filters, i.e. definable functions $f,g$ such that for any generic $H$ for $A(\mathbb{P})$, $f(H)$ is the generic for $A(\mu,\kappa)$ computed from $X_H$, and vice versa, and $g(f(H)) = H$.  For $p \in A(\mathbb{P})$, put $\iota(p) = || p \in g(\dot{H}) ||_{A(\mu,\kappa)}$.  It is easy to see that $\iota$ is a complete embedding.  For any $q \in A(\mu,\kappa)$, there is $p \in A(\mathbb{P})$ forces that $q \in f(\dot{H})$.  Thus if $H$ is generic for $A(\mu,\kappa)$ and $\iota(p) \in H$, then $p \in g(H)$, so $q \in f(g(H)) = H$, hence $\iota(p) \leq q$.  The range of $\iota$ is dense, so it is an isomorphism.  By the way we construct $f$ and $g$, $\iota(|| \alpha \in \dot{X} ||_{A(\mathbb{P})}) =  || \alpha \in \dot{X} ||_{A(\mu,\kappa)}$.   
\end{proof}

This machinery has some interesting applications to the absoluteness of some properties of a given powerset.  First, it is easy to see for regular $\mu < \kappa$ such that $\alpha^{<\mu} < \kappa$ for all $\alpha < \kappa$, $\col(\mu,{<}\kappa) \times \add(\mu,\lambda)$ is $(\mu,\kappa)$-NLC for every $\lambda$.  Thus if $X$ is $A(\mu,\kappa)$-generic, then for any $\lambda$, we may further force to obtain a model which is a $(\col(\mu,{<}\kappa) \times \add(\mu,\lambda)) * \add(\kappa)$-generic extension with the same $\ord^\mu$.  Taking inner models given by such $\col(\mu,{<}\kappa) \times \add(\mu,\lambda)$-generic sets, we produce many models with the same cardinals and same $\p(\mu)$, each assigning a different cardinal value for $2^\mu$.  For example, if we add $\omega_1$ Cohen reals to any model of $M$ of ZFC, this is the same as forcing with $\col(\omega,{<}\omega_1)$.  There is for each uncountable ordinal $\alpha \in M$, a generic extension with the same reals and same cardinals, in which it appears we have added $\alpha$ many Cohen reals.

By using weakly compact cardinals, we can get even more dramatic examples.  If $\kappa$ is weakly compact, every $\kappa$-c.c.\ partial order captures small sets in small factors.  To show this, first consider a partial order $\mathbb{P}$ of size $\kappa$.  We can code $\mathbb{P}$ as $A \subseteq \kappa$, and by weak compactness, there is some transitive elementary extension $(V_\kappa,\in,A) \prec (M,\in,B)$.  If $\mu < \kappa$, then any $\mathbb{P}$-name for function $f : \mu \to \ord$ has an equivalent name $\tau \in V_\kappa$ by the $\kappa$-c.c.  Since $A \in M$ and $M$ sees $A$ as a regular suborder of $B$, $M$ thinks that $\tau$ is a $\mathbb{Q}$-name for some regular suborder $\mathbb{Q}$ of $B$.  By elementarity, $V_\kappa$ thinks that $\tau$ is a $\mathbb{Q}$-name for some regular $\mathbb{Q}$ of $A$.  For $\mathbb{P}$ of arbitrary size, let $\tau$ be a $\mathbb{P}$-name of size $<\kappa$, take some regular $\theta$ such that $\mathbb{P},\tau \in H_\theta$, and take an elementary $M \prec H_\theta$ with $\mathbb{P},\tau \in M$ such that $|M| = \kappa$ and $M^{<\kappa} \subseteq M$.  It is easy to see that $M \cap \mathbb{P}$ is a regular suborder of $\mathbb{P}$, and so the above considerations apply to show that there is some regular $\mathbb{Q} \subseteq \mathbb{P} \cap M \subseteq \mathbb{P}$ of size $<\kappa$ such that $\tau$ is a $\mathbb{Q}$-name.

Therefore, if $\kappa$ is weakly compact and $\mathbb{P}$ is $\kappa$-c.c.,\ the collection $\mathcal{L}$ of all regular suborders of $\mathbb{P}$ of size $<\kappa$ witnesses that $\mathbb{P}$ is $(\omega,\kappa)$-nicely layered.  If $\mathbb{P}$ also forces $\kappa = \aleph_1$, then this collection also witnesses that $\mathbb{P}$ is $(\omega,\kappa)$-NLC.  To check this, take any $\mathbb{Q}_0 \in \mathcal{L}$, any $\mathbb{P}$-name $\tau$ of size $<\kappa$, and $\alpha < \kappa$.  Let $H \subseteq \mathbb{Q}_0$ be generic.  Since $\kappa$ is still weakly compact in $V[H]$, there is some regular $\mathbb{Q}_1 \subseteq \mathbb{P}/H$ of size $<\kappa$ in $V[H]$ such that the $(\mathbb{P}/H)$-name associated to $\tau$ is a $\mathbb{Q}_1$-name.  Let $\beta \geq \max \{ \alpha, | \mathbb{Q}_0 * \dot{\mathbb{Q}}_1| \}$.  Since $\mathbb{P} / (\mathbb{Q}_0  * \dot{\mathbb{Q}_1})$ adds a generic for $\col(\omega,\beta)$, we have $\mathbb{Q}_2 \in \mathcal{L}$ extending $\mathbb{Q}_0 * \dot{\mathbb{Q}}_1$ such that $\mathbb{Q}_2 \sim \col(\omega,\beta)$.

In particular, if $\kappa$ is weakly compact, then $\col(\omega,<\kappa) * \dot{\mathbb{Q}}$, where $\dot{\mathbb{Q}}$ is forced to be c.c.c.,\ is $(\omega,\kappa)$-NLC.  Thus an extremely wide variety of forcing extensions with very different theories can be obtained, each sharing the same reals and same cardinals.

\subsubsection{An unfortunate reality}

Despite the universality of $A(\mu,\kappa)$, it is difficult to characterize its combinatorial structure.  While it absorbs all of the small sets added by a $(\mu,\kappa)$-NLC forcing, no such forcing completely embeds into it.  The reader may opt to skip this section, as later results will not depend it.

To show this, we first isolate two properties of a forcing extension that depend on two regular cardinals $\mu <\kappa$.  The author is grateful to Mohammad Golshani for bringing these properties to his attention.
\begin{enumerate}[(1)]
\item \emph{Levy($\mu,\kappa$):} $(\exists A \in [\kappa]^\kappa ) ( \forall y \in [ \kappa ]^\mu \cap V ) y \nsubseteq A$.
\item \emph{Silver($\mu,\kappa$):} $(\exists A \in [\kappa]^\kappa) (\forall X \in [\kappa]^\kappa \cap V)(\exists y \in [X]^\mu \cap V) y \cap A = \emptyset$.
\end{enumerate}

Note that these are both $\Sigma_1$ properties of the parameters $([ \kappa ]^\mu)^V$ and $([ \kappa ]^\kappa)^V$.  For any partial order $\mathbb{P}$, and collection of dense subsets $\mathcal{D} \subseteq \p(\mathbb{P})$ the statement, ``There is a filter $G \subseteq \mathbb{P}$ that is $\mathcal{D}$-generic,'' is also a $\Sigma_1$ property of $\mathbb{P}$ and $\mathcal{D}$.  Now  the following proposition either holds or fails for a given partial order $\mathbb{P}$ and cardinals $\mu < \kappa$:
\[
(*)_{\mu,\kappa} : (\forall X \in [\mathbb{P} ]^\kappa ) ( \exists y \in [X]^\mu ) y \mbox{ has a lower bound in }\mathbb{P}.
\]

\begin{lemma}
If $\mathbb{P}$ is a separative partial order that satisfies $(*)_{\mu,\kappa}$, preserves the regularity of $\kappa$, and such that $\den(\mathbb{P} \restriction p) = \kappa$ for all $p \in \mathbb{P}$, then $\mathbb{P}$ forces $Silver(\mu,\kappa)$.
\end{lemma}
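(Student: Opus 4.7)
I propose to define the witness by $\dot A = \{\check \alpha : q_\alpha \in \dot G\}$ for a family $Q = \{q_\alpha : \alpha < \kappa\} \subseteq \mathbb P$ constructed so that (a) $\{q_\alpha : \alpha \geq \beta\}$ is predense in $\mathbb P$ for every $\beta < \kappa$, and (b) $|\{\alpha < \kappa : r \leq q_\alpha\}| < \kappa$ for every $r \in \mathbb P$. Property (a), combined with the preservation of the regularity of $\kappa$, forces $|\dot A| = \kappa$: given any $r$ and $\beta < \kappa$, predensity yields some $\alpha \geq \beta$ with $q_\alpha$ compatible with $r$, so some $r' \leq r$ forces $\alpha \in \dot A$; iterating gives an unbounded subset of $\kappa$ in the extension.

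For the Silver property, fix $X \in V \cap [\kappa]^\kappa$ and $r \in \mathbb P$. By (b) the set $X' := \{\alpha \in X : r \not\leq q_\alpha\}$ has cardinality $\kappa$. For each $\alpha \in X'$, separativity yields $s_\alpha \leq r$ with $s_\alpha \perp q_\alpha$ (simply take $s_\alpha := r$ when $r \perp q_\alpha$ already). Applying $(*)_{\mu,\kappa}$ to the $\kappa$-family $\{s_\alpha : \alpha \in X'\}$ produces $y \in [X']^\mu$ and a common lower bound $r'$ of $\{s_\alpha : \alpha \in y\}$. Since $r' \leq s_\alpha \perp q_\alpha$ forces $r' \perp q_\alpha$ (incompatibility is inherited downwards) for each $\alpha \in y$, the condition $r'$ satisfies $r' \leq r$ and $r' \Vdash y \cap \dot A = \emptyset$, as needed for \emph{Silver}$(\mu,\kappa)$.

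The main obstacle is the construction of $Q$. I would build it by transfinite recursion of length $\kappa$: bookkeeping ensures every condition in $\mathbb P$ is processed cofinally often for (a), while at each stage $\alpha$ the choice of $q_\alpha$ is restricted to lie outside the upward cones of all conditions $r$ for which $|\{\xi < \alpha : r \leq q_\xi\}|$ has already reached a threshold suitable for (b). The hypothesis $\den(\mathbb P \restriction p) = \kappa$ guarantees that a large set of candidates persists below the bookkeeping target even after removing these forbidden cones, and separativity provides the strict extensions needed to maintain incomparabilities. Verifying that admissible $q_\alpha$'s always exist is the technical heart of the argument.
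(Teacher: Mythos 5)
Your Silver step is essentially the paper's and is correct: given a family with your property (b), pick $\kappa$ many indices where $r$ is not below $q_\alpha$, use separativity to obtain $s_\alpha \leq r$ incompatible with $q_\alpha$, and apply $(*)_{\mu,\kappa}$ to get a $\mu$-sized subfamily with a common lower bound. (A small technicality present in both your write-up and the paper's: the $s_\alpha$ need not be distinct, so the set handed to $(*)_{\mu,\kappa}$ might have size $<\kappa$; this is easily repaired by noting that a single $s$ occurring $\kappa$ many times already realizes the conclusion, but it should be addressed.)

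The genuine gap is the construction of $Q$, which you defer to a bookkeeping recursion whose feasibility you do not establish. The interleaving you describe--keeping tails predense for (a) while simultaneously steering $q_\alpha$ away from the upward cones of every $r$ whose count is approaching a threshold for (b)--is not obviously carried by $\den(\mathbb{P}\restriction p) = \kappa$ alone: at a given stage there may be too many forbidden cones covering the region below the bookkeeping target. You correctly flag this as the heart of the matter, but that is exactly the part left open.

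The paper avoids the issue with a much simpler construction that delivers both properties at once. Fix any dense $\{p_\alpha : \alpha<\kappa\}$ and greedily thin it: put $p_\alpha \in D$ iff there is no $\beta<\alpha$ with $p_\beta\in D$ and $p_\beta\leq p_\alpha$. Then $D$ is dense (whenever $p_\alpha$ is discarded, some earlier kept condition lies below it), and for $p_\beta\in D$ every $q\in D$ with $q\geq p_\beta$ has index $\leq\beta$, so $|\{q\in D: q\geq p_\beta\}|<\kappa$; passing from an arbitrary $r$ to some $p_\gamma\leq r$ gives your (b) globally. Moreover, for any $p$ the set $\{q\in D: q\leq p\}$ is dense below $p$, hence of size $\den(\mathbb{P}\restriction p)=\kappa$, so under any bijection $f: D\to\kappa$ the tails $\{f^{-1}(\alpha):\alpha\geq\beta\}$ are automatically predense--your (a) comes for free, with no recursion-within-a-recursion. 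The rest of your argument then goes through verbatim with $Q = D$ and $\dot A = \{\alpha : f^{-1}(\alpha)\in\dot G\}$.
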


\begin{proof}
Let $\{ p_\alpha : \alpha < \kappa \}$ be a dense subset of $\mathbb{P}$.  Inductively build a dense $D \subseteq \{ p_\alpha : \alpha < \kappa \}$, putting $p_\alpha \in D$ just in case there is no $\beta < \alpha$ such that $p_\beta \in D$ and $p_\beta \leq p_\alpha$.  $D$ has the property that for all $p \in D$, $| \{ q \in D : p \leq q \} | < \kappa$.  Fixing a bijection $f : D \to \kappa$, we claim that if $G  \subseteq \mathbb{P}$ is generic, $A = f[G]$ witnesses $Silver(\mu,\kappa)$.  Note that since $\mathbb{P}$ is nowhere $<\kappa$-dense, $A$ is an unbounded subset of $\kappa$.  Now let $p \in D$ and $X = \{ q_\alpha : \alpha < \kappa \} \in [D]^\kappa$ be arbitrary.  There is some $B \in [\kappa]^\kappa$ such that for all $\alpha \in B$, $p \nleq q_\alpha$.  For each $\alpha \in B$, choose $r_\alpha \leq p$ such that $r_\alpha \perp q_\alpha$.  By $(*)$, there is some $y \in [B]^\mu$ such that $\{ r_\alpha : \alpha \in y \}$ has a lower bound $r$.  We have $r \Vdash \{ q_\alpha : \alpha \in \check{y} \} \cap \dot{G} = \emptyset$.  As $p$ and $X$ were arbitrary, $Silver(\mu,\kappa)$ is forced.
\end{proof}

\begin{lemma}
If $\mathbb{P}$ is a $\kappa$-c.c.\ separative partial order of size $\kappa$ satisfying $\neg (*)_{\mu,\kappa}$, then some $p \in \mathbb{P}$ forces $Levy(\mu,\kappa)$.
\end{lemma}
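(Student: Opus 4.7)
The plan is to let $X = \{p_\alpha : \alpha < \kappa\}$ witness $\neg(*)_{\mu,\kappa}$, form the $\mathbb{P}$-name $\dot A = \{(\check\alpha, p_\alpha) : \alpha < \kappa\}$ so that $\dot A^G = \{\alpha < \kappa : p_\alpha \in G\}$, and show that below some condition $p^*$ the set $\dot A$ has cardinality $\kappa$ yet fails to contain any $y \in [\kappa]^\mu \cap V$. Such a $p^*$ then forces $\mathrm{Levy}(\mu,\kappa)$ with witness $\dot A$.

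The omission half is actually forced by $1$. For any $y \in [\kappa]^\mu \cap V$, working in the completion,
\[
\|\check y \subseteq \dot A\|_{\mathcal{B}(\mathbb P)} = \bigwedge_{\alpha \in y} \|\check\alpha \in \dot A\| = \bigwedge_{\alpha \in y} p_\alpha .
\]
Because $\mathbb P$ is separative and dense in $\mathcal{B}(\mathbb P)$, any nonzero element of $\mathcal{B}(\mathbb P)$ below every $p_\alpha$ (for $\alpha \in y$) would refine to a condition in $\mathbb P$ serving as a lower bound of $\{p_\alpha : \alpha \in y\}$, contradicting the choice of $X$. Hence this meet is $0$.

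The main obstacle is the second half: producing a $p^*$ forcing $|\dot A| = \kappa$. Since $\kappa$-c.c.\ preserves the regularity of $\kappa$, this reduces to forcing $\dot A$ unbounded. Unwinding the name, $p \Vdash \dot A$ is unbounded iff for every $q \leq p$ the set $T(q) := \{\alpha < \kappa : p_\alpha \not\perp q\}$ is unbounded in $\kappa$. Suppose no such $p$ exists; then $D := \{q \in \mathbb P : T(q)\text{ is bounded in }\kappa\}$ is dense. Pick a maximal antichain $E \subseteq D$; by the $\kappa$-c.c.\ we have $|E| < \kappa$, and for each $q \in E$ we may choose $\beta_q < \kappa$ bounding $T(q)$. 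Then $\beta^* := \sup_{q \in E} \beta_q < \kappa$ by regularity of $\kappa$, and for every $\alpha$ with $\beta^* \leq \alpha < \kappa$ the condition $p_\alpha$ is incompatible with every member of $E$, contradicting the maximality of $E$ in $\mathbb P$.

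Combining the two halves, any $p^*$ with $p^* \Vdash |\dot A| = \kappa$ also forces that $\dot A$ is an unbounded subset of $\kappa$ omitting every $y \in [\kappa]^\mu \cap V$, which is precisely $\mathrm{Levy}(\mu,\kappa)$. I expect the second half to be the only nonroutine step; the key insight is that $\kappa$-c.c.\ lets a single ordinal $\beta^*$ uniformly witness bounded\-ness across $E$, and then the sheer size of $X$ forces a contradiction.
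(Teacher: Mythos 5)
Your proof is correct and follows essentially the same approach as the paper: both encode the desired unbounded set as (a copy of) $X \cap \dot G$, show it omits all $y \in [\kappa]^\mu \cap V$ because $\neg(*)_{\mu,\kappa}$ rules out lower bounds, and use the $\kappa$-c.c.\ to find a condition forcing it to have size $\kappa$. The only difference is that you supply the standard $\kappa$-c.c.\ density/antichain argument in full for the unboundedness half, which the paper states in one line as ``By the $\kappa$-c.c., there is some $p$ such that $p \Vdash |\check X \cap \dot G| = \kappa$.''
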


\begin{proof}
Suppose $X \in [\mathbb{P}]^\kappa$ witnesses $\neg (*)_{\mu,\kappa}$.  By the $\kappa$-c.c., there is some $p$ such that $p \Vdash | \check{X} \cap \dot{G} | = \kappa$.  If $y \in [X]^\mu$, then $1 \Vdash \check{y} \nsubseteq \dot{G}$, since otherwise some $q$ is a lower bound to $y$.  Hence $p$ forces that $X \cap G$ witnesses $Levy(\mu,\kappa)$.   
\end{proof}

\begin{lemma}Suppose $\mu < \kappa$, $\mu$ is regular for all $\alpha < \kappa$, $\alpha^\mu < \kappa$.  There are two $(\mu,\kappa)$-NLC partial orders $\mathbb{P}_0$ and $\mathbb{P}_1$ such that $\mathbb{P}_0$ forces $Levy \wedge \neg Silver$, and $\mathbb{P}_1$ forces $\neg Levy \wedge Silver$.
\end{lemma}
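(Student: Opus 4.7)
The plan is to take $\mathbb{P}_1 = \col(\mu,{<}\kappa)$ and $\mathbb{P}_0 = \col(\mu,{<}\kappa) \times \mathbb{C}$, where $\mathbb{C}$ consists of partial functions $p : \kappa \to 2$ with $|\dom p| < \mu$, ordered by reverse extension. Both orders are $(\mu,\kappa)$-NLC via the natural initial-segment layerings $\{\col(\mu,{<}\alpha) : \mu \leq \alpha < \kappa\}$ and $\{\col(\mu,{<}\alpha) \times \mathbb{C}_\beta : \alpha,\beta < \kappa\}$, where $\mathbb{C}_\beta = \{p \in \mathbb{C} : \dom p \subseteq \beta\}$. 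Each layer is a regular $\mu$-closed suborder of size $< \kappa$ (using the hypothesis $\alpha^\mu < \kappa$), the quotients are $\mu$-closed, the $\kappa$-c.c.\ ensures that every $\mathbb{P}$-name for a $\mu$-sequence is absorbed into a single layer, and the $\col$-factor supplies the collapses required for NLC.

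For $\mathbb{P}_1$, I would first verify $(*)_{\mu,\kappa}$ by a $\Delta$-system plus pigeonhole argument: given $\kappa$ conditions, the $\Delta$-system lemma (licensed by $\alpha^{<\mu} < \kappa$) yields a $\mu^+$-sized subfamily with common root, and pigeonholing the fewer-than-$\kappa$ possible restrictions to the root produces $\mu$ compatible conditions whose common extension exists by $\mu$-closure. Since $\mathbb{P}_1$ preserves the regularity of $\kappa$ (as $\mu^+$) and has density $\kappa$ below every condition, the Silver lemma yields $\mathbb{P}_1 \Vdash \textrm{Silver}(\mu,\kappa)$. For $\neg\textrm{Levy}$, $\mu$-closure preserves $[\kappa]^\mu$, so any $A \in V[G] \cap [\kappa]^\kappa$ has $V[G]$-cardinality $\mu^+$, is unbounded in $\mu^+ = \kappa$, and its initial $\mu$-enumeration is a $V$-$\mu$-subset contained in $A$, defeating Levy.

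For $\mathbb{P}_0$, the family $\{(1_{\col},\{(\alpha,1)\}) : \alpha<\kappa\}$ in the $\mathbb{C}$-factor witnesses $\neg(*)_{\mu,\kappa}$: these conditions are pairwise compatible but any $\mu$-sized subfamily has union of support size $\mu$, which is not a condition. By the Levy lemma, $\mathbb{P}_0$ forces $\textrm{Levy}(\mu,\kappa)$; the explicit witness is $A = g^{-1}(1)$ for the $\mathbb{C}$-generic $g$, since for any $y \in V \cap [\kappa]^\mu$ and $p \in \mathbb{C}$ with $|\dom p| < \mu = |y|$, one can pick $\alpha \in y \setminus \dom p$ and extend $p$ by $p(\alpha) = 0$, making the set of conditions forcing $y \not\subseteq A$ dense. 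The main obstacle is showing $\mathbb{P}_0 \Vdash \neg\textrm{Silver}(\mu,\kappa)$. The symmetric density fact---extending $p$ instead by $p(\alpha) = 1$---shows every $V$-$\mu$-subset of $\kappa$ meets $A$, so $\neg\textrm{Silver}$ is immediate with $X = \kappa$ for the specific $B = A$. For a general $\mathbb{P}_0$-name $\dot B$ forced by $(r,p)$ to have size $\kappa$, I would take the canonical $V$-possibility set $X_{\dot B} = \{\alpha < \kappa : \exists q \leq (r,p),\ q \Vdash \alpha \in \dot B\}$, which has $V$-size $\kappa$ by the $\kappa$-c.c., and then run a fusion-style argument below $(r,p)$ using $\mu$-closure and the Cohen density in $\mathbb{C}$ to secure a single extension forcing that every $V$-$\mu$-subset of $X_{\dot B}$ meets $\dot B$. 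The chief difficulty is exactly this uniformity step: for each of the many $V$-$\mu$-subsets $y \subseteq X_{\dot B}$ one gets Cohen-density of ``$y$ meets $\dot B$'' below $(r,p)$, but consolidating these per-$y$ assertions into one master extension requires delicate interleaving of the $\col$ and $\mathbb{C}$ factors, leveraging the $\mu$-closure of $\mathbb{P}_0$ to take fusion limits.
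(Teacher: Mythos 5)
There is a fundamental error: you have the roles of $\mathbb{P}_0$ and $\mathbb{P}_1$ reversed, and both of your supporting arguments for $\mathbb{P}_1 = \col(\mu,{<}\kappa)$ are incorrect.

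First, the Levy collapse \emph{fails} $(*)_{\mu,\kappa}$ rather than satisfying it; in fact $X = \col(\mu,{<}\kappa)$ itself is a witness to $\neg(*)$. Your $\Delta$-system-plus-pigeonhole argument does produce $\mu$ (indeed $\kappa$) many pairwise compatible conditions that agree on the root $r$, but this is not enough: a common lower bound of $\mu$ of them would need support containing the union of their $\mu$ disjoint off-root parts, which generically has size $\mu$ and is therefore not a condition in $\col(\mu,{<}\kappa)$. Pairwise compatibility plus $\mu$-closure does not give directed closure; a $\mu$-sized compatible family need not be descending, and here it provably has no lower bound. Second, the claim that ``$\mu$-closure preserves $[\kappa]^\mu$'' is false: $\mu$-closure means no new sequences of length $<\mu$, so $[\kappa]^{<\mu}$ is preserved, but $\col(\mu,{<}\kappa)$ certainly adds a great many new $\mu$-sized subsets of $\kappa$ (e.g.\ the range of each new surjection from $\mu$ onto an $\alpha < \kappa$). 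So your $\neg\textrm{Levy}$ argument for $\mathbb{P}_1$ collapses. The correct fact, as in the paper, is that $\col(\mu,{<}\kappa)$ forces $\textrm{Levy} \wedge \neg\textrm{Silver}$, i.e.\ it should play the role of $\mathbb{P}_0$.

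To make $(*)_{\mu,\kappa}$ hold you need a poset whose conditions have $\mu$-sized (not $<\mu$-sized) supports, so that the union of $\mu$ of them in a $\Delta$-system still has $\mu$-sized support and can remain a condition. This is exactly what the Silver collapse achieves: conditions are functions on $x \times \alpha$ with $x \in [\kappa]^\mu$ and $\alpha < \mu$. Then one pigeonholes on the height $\alpha$ (only $\mu$ possibilities) and applies the $\Delta$-system lemma (licensed by $\alpha^\mu < \kappa$) to the $\mu$-sized width supports $x$; a $\mu$-sized subfamily agreeing on the root then has a union that is a genuine condition. With that correction, the arguments for the negations then proceed by the same $\Delta$-system analysis; your worry about a ``delicate fusion'' step for $\neg\textrm{Silver}$ is unfounded, as the paper handles it in one $\Delta$-system pass by showing that below a suitable $r$ no condition can force a $\mu$-sized $V$-subset of the thinned-out $X_1$ to miss $\dot A$.
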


\begin{proof}Let $\mathbb{P}_0$ be the Levy collapse $\col(\mu, {<}\kappa)$, and let $\mathbb{P}_1$ be the Silver collapse,
\[ \{ p : (\exists \alpha < \mu)(\exists x \in [\kappa]^\mu) p : x \times \alpha \to \kappa, \text{ and } p(\beta,\gamma) < \beta \text{ for all } (\beta,\gamma) \in \dom p \} \]
It is easy to see that $\mathbb{P}_1$ satisfies $(*)_{\mu,\kappa}$, while $\mathbb{P}_0$ fails this property, as witnessed by $X = \mathbb{P}_0$.  Hence by the previous lemmas, $\mathbb{P}_0$ forces $Levy(\mu,\kappa)$, and $\mathbb{P}_1$ forces $Silver(\mu,\kappa)$.  We must show that the respective negations are also forced.

Let $\dot{A}$ be a $\mathbb{P}_0$-name such that $1 \Vdash \dot{A} \in [\kappa]^\kappa$.  Let $p \in \mathbb{P}$ be arbitrary, and let $\gamma< \kappa$ be such that $\supp(p) \subseteq \gamma$.  Let $X_0 = \{ \alpha < \kappa : p \nVdash \alpha \notin \dot{A} \}$.  For each $\alpha \in X_0$, pick some $q_\alpha \leq p$ such that $q_\alpha \Vdash \alpha \in \dot{A}$.  By a delta-system argument, let $X_1 \in [X_0]^\kappa$ be such that there is $r \leq p$ such that for all $\alpha \in X_1$,  $q_\alpha \restriction \gamma = r$, and for $\alpha \not= \beta$ in $X_1$, $(\supp(q_\alpha) \setminus \gamma) \cap (\supp(q_\beta) \setminus \gamma) = \emptyset$.  For any $q \leq r$ and $y \in [X_1]^\mu$, $q \nVdash \check{y} \cap \dot{A} = \emptyset$.  This is because for such $q$, there is some $\alpha \in y$ such that $(\supp(q_\alpha) \setminus \gamma) \cap \supp(q) = \emptyset$, so $q$ is compatible with $q_\alpha$.  Hence $r \Vdash (\exists X \in [\kappa]^\kappa \cap V)(\forall y \in [X]^\mu \cap V) y \cap \dot{A} \not= \emptyset$.  As $\dot{A}$ and $p$ were arbitrary, $\neg Silver(\mu,\kappa)$ is forced.

Now let $\dot{A}$ be a $\mathbb{P}_1$-name such that $1 \Vdash \dot{A} \in [\kappa]^\kappa$, and let $p \in \mathbb{P}_1$ be arbitrary.  Form $X_0$, $\{q_\alpha : \alpha \in X_0 \}$, and $X_1$ like above.  We can take a $y \in [X_1]^\mu$ such that $\bigcup_{\alpha \in y} q_\alpha = q \in \mathbb{P}_1$.  Then $q \Vdash \check{y} \subseteq \dot{A}$, so $q$ forces $\neg Levy(\mu,\kappa)$.   
\end{proof}

\begin{corollary}Suppose $\mu$, $\kappa$, $\mathbb{P}_0$, and $\mathbb{P}_1$ are as above.  Let $G$ be $\mathbb{P}_0$-generic and $H$ be $\mathbb{P}_1$-generic over $V$.  Let $\mathbb{Q} \in V$ be a partial order.  If $\mathbb{Q}$ forces $Levy(\mu,\kappa)$, then $V[H]$ has no $\mathbb{Q}$-generic, and if $\mathbb{Q}$ forces $Silver(\mu,\kappa)$, then $V[G]$ has no $\mathbb{Q}$-generic.  If $\mathbb{Q}$ is $\kappa$-c.c.\ and of size $\kappa$, then no $\kappa$-closed forcing extension of $V[G]$ or $V[H]$ can introduce a generic for $\mathbb{Q}$.
\end{corollary}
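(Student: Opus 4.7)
The first two statements are direct consequences of upward absoluteness. Both $Levy(\mu,\kappa)$ and $Silver(\mu,\kappa)$ are $\Sigma_1$ in the parameters $([\kappa]^\mu)^V$ and $([\kappa]^\kappa)^V$, so once witnessed in a model $M \supseteq V$ the same witness continues to work in every extension of $M$. Thus if $\mathbb{Q}$ forces $Levy(\mu,\kappa)$ and $K \in V[H]$ is $\mathbb{Q}$-generic over $V$, then $V[K] \models Levy(\mu,\kappa)$, and since $V[K] \subseteq V[H]$, upward absoluteness gives $V[H] \models Levy(\mu,\kappa)$, contradicting the preceding lemma that $\mathbb{P}_1$ forces $\neg Levy(\mu,\kappa)$. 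The case of $Silver$ and $V[G]$ is symmetric.

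For the third statement, my plan is to combine a case analysis on $\mathbb{Q}$ with a preservation argument. By the two preceding lemmas, below every $p \in \mathbb{Q}$ one of three alternatives holds for some $q \leq p$: (a) $\mathbb{Q}\restriction q$ forces $Levy(\mu,\kappa)$, (b) $\mathbb{Q}\restriction q$ forces $Silver(\mu,\kappa)$, or (c) $\mathbb{Q}\restriction q$ has density less than $\kappa$. (If $(*)_{\mu,\kappa}$ fails below $p$ one gets (a); otherwise $(*)_{\mu,\kappa}$ persists below $p$, and either the density drops below $\kappa$ giving (c), or some $q \leq p$ still has density $\kappa$, and the first lemma then yields (b).) A $\mathbb{Q}$-generic $K$ in a $\kappa$-closed extension $W$ of $V[G]$ must contain a condition of one of these three types. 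In case (c), $K$ below that condition is determined by its intersection with a $V$-dense subset of size $<\kappa$; since $\kappa$-closed forcing over $V[G]$ adds no new $<\kappa$-sized subsets of ground-model sets, this intersection already lies in $V[G]$, so $K \in V[G]$ and the generic is not genuinely new.

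In cases (a) and (b) one has $V[K] \models Levy$ or $V[K] \models Silver$, which upward-absolutely lifts to $W$; to derive a contradiction one needs the preservation claims that $\neg Silver(\mu,\kappa)$ survives $\kappa$-closed forcing over $V[G]$ and, dually, that $\neg Levy(\mu,\kappa)$ survives $\kappa$-closed forcing over $V[H]$. This preservation step is the main technical obstacle. My approach would be to adapt the delta-system argument from the preceding lemma to the two-step iteration $\mathbb{P}_0 \ast \dot{\mathbb{R}}$ (and $\mathbb{P}_1 \ast \dot{\mathbb{R}}$). Given an $\dot{\mathbb{R}}$-name $\dot A$ over $V[G]$ for a $\kappa$-sized subset of $\kappa$ and a condition $(p_0, \dot r_0)$, I would select $(q_\alpha, \dot s_\alpha) \leq (p_0, \dot r_0)$ forcing $\alpha \in \dot A$ for $\alpha$ in a size-$\kappa$ set, then apply the $\kappa$-c.c.\ delta-system lemma to the $\mathbb{P}_0$-coordinates to obtain $X_1 \in [\kappa]^\kappa$ with uniform stem $q^*$ and supports pairwise disjoint above some $\gamma$. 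One expects $(q^*, \dot r_0)$ to witness $\neg Silver$ for $X_1$: any extension $(q,\dot s)$ has $\mathbb{P}_0$-support of size $<\mu$, hence is compatible with $q_\alpha$ for $\mu$-many $\alpha$ in any $y \in V \cap [X_1]^\mu$. The hard part is arranging, using the $\kappa$-closure of $\dot{\mathbb{R}}$ in $V[G]$, that the $\dot s_\alpha$'s can be coordinated to guarantee that $\dot s$ and $\dot s_\alpha$ have a common lower bound in $\dot{\mathbb{R}}$ for some such $\alpha$; this provides the required common extension of $(q,\dot s)$ and $(q_\alpha, \dot s_\alpha)$ forcing $\alpha \in y \cap \dot A$. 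The dual construction, exploiting the $\mu$-support structure of the Silver collapse to glue $\mu$-many conditions, handles $\neg Levy$ preservation.
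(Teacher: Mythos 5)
Your proof of the first two claims is correct and is the same as the paper's: both $Levy(\mu,\kappa)$ and $Silver(\mu,\kappa)$ are $\Sigma_1$ in $V$-parameters, so a $\mathbb{Q}$-generic $K \in V[H]$ with $\mathbb{Q}$ forcing $Levy$ would give $V[K] \models Levy$, hence $V[H] \models Levy$ by upward absoluteness, contradicting that $\mathbb{P}_1$ forces $\neg Levy$; and symmetrically for $Silver$ and $V[G]$.

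Your plan for the third claim, however, misreads the statement and then runs into a gap it need not face. The third sentence is not a freestanding claim about arbitrary $\kappa$-c.c.\ posets of size $\kappa$; it continues the conditional of the first two, so the conclusion is that the non-existence of $\mathbb{Q}$-generics in $V[G]$ (resp.\ $V[H]$) already established is preserved by $\kappa$-closed forcing. Your case split into (a), (b), (c) is therefore not needed, and worse, in your case (a) ($\mathbb{Q}\restriction q$ forces $Levy$, with $K$ in a $\kappa$-closed extension $W$ of $V[G]$) there is no contradiction at all, since $V[G]$ and hence $W$ already satisfy $Levy$. More seriously, the preservation claim you flag as the ``main technical obstacle'' (that $\neg Silver$ survives $\kappa$-closed forcing over $V[G]$) is a real gap: the delta-system argument on the $\mathbb{P}_0$-coordinates gives no control over the $\dot{\mathbb{R}}$-names $\dot{s}_\alpha$, and $\kappa$-closure of $\mathbb{R}$ (being closure under sequences of length $<\kappa$) does not let you fuse $\kappa$-many of them. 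The paper avoids all of this with a direct argument at the level of names: given $\kappa$-closed $\mathbb{R} \in V[G]$ and $r \Vdash_{\mathbb{R}} \text{``}\dot{K}$ is $\mathbb{Q}$-generic over $V$,'' use $\kappa$-closure and $|\mathbb{Q}| = \kappa$ to build a descending $\langle r_\alpha : \alpha < \kappa \rangle$ below $r$ so that every $q \in \mathbb{Q}$ has its membership in $\dot{K}$ decided by some $r_\alpha$, and let $K' = \{ q : \exists\alpha\ r_\alpha \Vdash q \in \dot{K} \} \in V[G]$. Since $\mathbb{Q}$ is $\kappa$-c.c., every $V$-maximal antichain $A \subseteq \mathbb{Q}$ has size $<\kappa$, so some $r_\alpha$ decides $A \cap \dot{K}$ completely; as $r_\alpha \Vdash \check{A} \cap \dot{K} \neq \emptyset$, we get $K' \cap A \neq \emptyset$. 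Thus $K'$ is a $\mathbb{Q}$-generic over $V$ lying in $V[G]$, contradicting the first part of the corollary --- with no preservation of $\neg Silver$ or $\neg Levy$ required.
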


\begin{proof}
Since $V[H]$ satisfies $\neg Levy$, and $Levy$ is a $\Sigma_1$ property with parameters in $V$, no inner model of $V[H]$ containing $V$ can satisfy $Levy$.  Likewise, no inner model of $V[G]$ containing $V$ can satisfy $Silver$.  To see that the non-existence of $\mathbb{Q}$-generics is preserved by $\kappa$-closed forcing, suppose that for some such forcing $\mathbb{R} \in V[G]$, $r \Vdash^{V[G]}_\mathbb{R} \dot{K}$ is $\mathbb{Q}$-generic over $V$.  Since $\mathbb{Q}$ has size $\kappa$, we can build a descending sequence $\{ r_\alpha : \alpha < \kappa \}$ below $r$ such that for all $q \in \mathbb{Q}$, there is $r_\alpha$ deciding whether $q \in K$.  Let $K' = \{ q : (\exists \alpha < \kappa) r_\alpha \Vdash q \in \dot{K} \}$.  Any maximal antichain $A \in V$ contained in $\mathbb{Q}$ has size $< \kappa$, thus some $r_\alpha$ completely decides $A \cap K$.  Since $r_\alpha \Vdash \check{A} \cap \dot{K} \not= \emptyset$, we must have $K' \cap A \not= \emptyset$, so $K'$ is $\mathbb{Q}$-generic over $V$.  The argument for $\kappa$-closed forcing over $V[H]$ is the same.   
\end{proof}

\begin{theorem}Suppose $\mu < \kappa$ are regular and $\alpha^\mu<\kappa$ for all $\alpha<\kappa$.  No $(\mu,\kappa)$-NLC forcing regularly embeds into $A(\mu,\kappa)$.  Further, a generic extension by $A(\mu,\kappa)$ has no generic filters for any $\kappa$-c.c.\ forcing $\mathbb{Q}$ such that $\den(\mathbb{Q} \restriction q) \geq \kappa$ for all $q \in \mathbb{Q}$.
\end{theorem}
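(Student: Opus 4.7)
The plan is to assume for contradiction that some $K \in V[X]$ is $\mathbb{Q}$-generic over $V$, with $X$ being $A(\mu,\kappa)$-generic, and to combine Theorem~\ref{forget} with the preceding Corollary to derive a contradiction. The first step is to reduce to the case $|\mathbb{Q}| = \kappa$, so that $\den(\mathbb{Q}\restriction q) = \kappa$ for every $q$ and the preceding Silver/Levy lemmas become applicable to $\mathbb{Q}\restriction p$ for each $p$. For $(\mu,\kappa)$-NLC forcings this follows transparently from the layering $\mathcal{L}$; for arbitrary $\mathbb{Q}$ one passes to a well-chosen $\kappa$-sized regular subalgebra of $\mathcal{B}(\mathbb{Q})$ on which density below every condition remains $\geq \kappa$.

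With this reduction in hand, genericity of $K$ produces a dichotomy. Either some $p \in K$ is such that $\mathbb{Q}\restriction p$ satisfies $(*)_{\mu,\kappa}$, whence by the Silver lemma $\mathbb{Q}\restriction p$ forces $Silver(\mu,\kappa)$; or else $(*)_{\mu,\kappa}$ fails in $\mathbb{Q}\restriction p$ for every $p \in K$, whence by the Levy lemma the $V$-definable set $\{q : q \Vdash Levy(\mu,\kappa)\}$ is dense below every $p \in K$ and therefore meets $K$. In either case some $p \in K$ forces $Silver$ or forces $Levy$.

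The heart of the argument is to then invoke Theorem~\ref{forget} together with Lemma~\ref{quotdist}. In the Silver case, apply the theorem to $\mathbb{P}_0 = \col(\mu,{<}\kappa)$: since $A(\mathbb{P}_0) \cong A(\mu,\kappa)$, one may force over $V[X]$ with the $\kappa$-distributive quotient $\mathcal{B}(\mathbb{P}_0 * \add(\kappa))/X$ to produce a $\mathbb{P}_0$-generic $G_0$ over $V$ for which $X$ is $\add(\kappa)$-generic over $V[G_0]$. The model $V[G_0][X]$ is then a $\kappa$-closed extension of $V[G_0]$ (via $\add(\kappa)$), and it contains $K \cap (\mathbb{Q}\restriction p)$, which is a generic for $\mathbb{Q}\restriction p$ over $V$. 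Since $V[G_0] \models Levy \wedge \neg Silver$ by the preceding analysis of $\mathbb{P}_0$, this directly contradicts the Corollary, which forbids such a generic from appearing in any $\kappa$-closed extension of $V[G_0]$. The Levy case runs symmetrically, using the Silver collapse $\mathbb{P}_1$ and the model $V[G_1]$ in place of $\mathbb{P}_0$ and $V[G_0]$. The first assertion of the theorem then follows at once: any $(\mu,\kappa)$-NLC forcing $\mathbb{P}$ is $\kappa$-c.c.\ and has $\den(\mathbb{P}\restriction p) \geq \kappa$ for every $p$ (by the collapses in $\mathcal{L}$), so a regular embedding $\mathbb{P} \hookrightarrow A(\mu,\kappa)$ would yield a $\mathbb{P}$-generic in $V[X]$, which the second assertion has just ruled out.

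The principal obstacle is the cardinality reduction in the first step: the Silver lemma explicitly requires a dense subset of size exactly $\kappa$, which a general $\kappa$-c.c.\ forcing of cardinality exceeding $\kappa$ need not admit. The layering of a $(\mu,\kappa)$-NLC forcing handles this directly, but for an arbitrary $\kappa$-c.c.\ $\mathbb{Q}$ with $\den(\mathbb{Q}\restriction q) \geq \kappa$ everywhere, one must carefully construct a $\kappa$-sized regular subalgebra whose density below every condition remains $\geq \kappa$, something that is not obviously guaranteed without additional cardinal arithmetic.
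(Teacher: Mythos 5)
Your overall strategy is essentially the paper's: reduce to the case $\den(\mathbb{Q} \restriction q) = \kappa$ everywhere, observe via the Silver and Levy lemmas that $\Vdash_\mathbb{Q} Silver \vee Levy$, and then kill any $\mathbb{Q}$-generic $K$ appearing in $V[X]$ by exhibiting it inside a $\kappa$-closed extension of a model of $\neg Silver$ (resp.\ $\neg Levy$), contradicting the Corollary. The derivation of the first assertion from the second is also the paper's.

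The obstacle you flag is a genuine gap in your write-up, but the paper closes it differently (and more cheaply) than you anticipate. You propose to locate, a priori, a $\kappa$-sized regular subalgebra of $\mathcal{B}(\mathbb{Q})$ that still has density $\geq \kappa$ below every condition; as you note, there is no obvious way to do this for an arbitrary $\kappa$-c.c.\ $\mathbb{Q}$. Instead, the paper argues contrapositively: if some $p \in A(\mu,\kappa)$ forces $\dot K$ to be $\mathbb{Q}$-generic over $V$, then $V[\dot K]$ is an intermediate extension, so there are $q \in \mathcal{B}(\mathbb{Q})$ and $p' \leq p$ with $\mathcal{B}(\mathbb{Q}) \restriction q$ completely embedding into $A(\mu,\kappa) \restriction p'$. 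Since $\den(A(\mu,\kappa)) = \kappa$, this forces $\den(\mathcal{B}(\mathbb{Q}) \restriction q) \leq \kappa$; together with the hypothesis $\den(\mathbb{Q} \restriction q') \geq \kappa$ for all $q'$ this gives density exactly $\kappa$ below every condition in $\mathcal{B}(\mathbb{Q}) \restriction q$, and one then replaces $\mathbb{Q}$ with a dense sub-poset of $\mathcal{B}(\mathbb{Q}) \restriction q$ of size $\kappa$. No auxiliary regular subalgebra need be produced in advance. One further small point: your dichotomy (``either $(*)$ holds for some $p \in K$, or $(*)$ fails for every $p \in K$'') is not quite the right split; the clean form is that for every $p \in \mathbb{Q}$, either $(*)$ holds below $p$ and $p \Vdash Silver$, or $(*)$ fails below $p$ and some $q \leq p$ forces $Levy$, whence $\Vdash_\mathbb{Q} Silver \vee Levy$, and then genericity hands you a $p \in K$ deciding which. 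Your final conclusion is nonetheless reached, so this is only a matter of presentation.
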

\begin{proof}
First note that we only need to consider $\mathbb{Q}$ such that $\den(\mathbb{Q} \restriction q) = \kappa$ for all $q \in \mathbb{Q}$.  For if $p \in A(\mu,\kappa)$ is such that $p \Vdash \dot{K}$ is $\mathbb{Q}$-generic, then there would be some $q \in \mathcal{B}(\mathbb{Q})$ and some $p' \leq p$ such that $\mathcal{B}(\mathbb{Q}) \restriction q$ completely embeds into $A(\mu,\kappa) \restriction p'$.  Since $d(A(\mu,\kappa) = \kappa$, this implies $\mathcal{B}(\mathbb{Q}) \restriction q \leq \kappa$.

Let $\mathbb{Q}$ be any $\kappa$-c.c.\ forcing such that $\den(\mathbb{Q} \restriction q) = \kappa$ for all $q \in \mathbb{Q}$.  For any $p \in \mathbb{Q}$, if $(*)$ holds for $\mathbb{Q} \restriction p$, then $p \Vdash Silver$, and otherwise for some $q \leq p$, $q \Vdash Levy$.  Thus $\Vdash_\mathbb{Q} Levy \vee Silver$.  Suppose $K$ is $\mathbb{Q}$-generic over $V$, and $X$ is $A(\mu,\kappa)$-generic over $V$.  There are two further forcings $\mathbb{R}_0,\mathbb{R}_1$ over $V[X]$ that respectively get filters $G,H$ such that $V[G][X]$ is $\mathbb{P}_0 * \add(\kappa)$-generic, and $V[H][X]$ is $\mathbb{P}_1 * \add(\kappa)$-generic.  If $V[K] \models Levy$, then $K \not \in V[H][X]$, and if $V[K] \models Silver$, then $K \notin V[G][X]$.  Thus $V[X]$ has no $\mathbb{Q}$-generics.
\end{proof}

\subsection{Construction of a dense ideal}

First we will define a useful strengthening of ``nicely layered.''

\begin{definition}
$\mathbb{P}$ is \emph{$(\mu,\kappa)$-very nicely layered (with collapses)} when there is a sequence $\langle \mathbb{Q}_\alpha : \alpha < \kappa \rangle = \mathcal{L}$ such that:
\begin{enumerate}[(1)]
\item $\mathcal{L}$ witnesses that $\mathbb{P}$ is $(\mu,\kappa)$-nicely layered (with collapses),
\item $\mathcal{L}$ is $\subseteq$-increasing,
\item every subset of $\mathbb{P}$ of size $<\mu$ with a lower bound has an infimum, and
\item there is a system of continuous projection maps $\pi_\alpha : \mathbb{P} \to \mathbb{Q}_\alpha$ such that for each $\alpha$, $\pi_\alpha \restriction \mathbb{Q}_\alpha = \id$, and for $\beta < \alpha < \kappa$, $\pi_\beta = \pi_\beta \circ \pi_\alpha$.

(By continuous, we mean that for any $X \subseteq \mathbb P$, if $\inf(X)$ exists, then for all $\alpha<\kappa$, $\inf(\pi_\alpha[X]) = \pi_\alpha(\inf(X))$.)
\end{enumerate}
\end{definition}

A typical example is the Levy collapse $\col(\mu,{<}\kappa)$.  In the general case, we will usually abbreviate the action of the projection maps $\pi_\alpha(q)$ by $q {\restriction} \alpha$.  In applying clause (3), we will use the next proposition, proof of which is left to the reader.

\begin{proposition}
If $\mathbb{P}$ is a partial order such that every descending chain of length $< \mu$ has an infimum, then every directed subset of size $<\mu$ has an infimum.
\end{proposition}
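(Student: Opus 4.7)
My plan is to prove, by induction on cardinals $\gamma \leq \mu$, the strengthened claim that every directed $D \subseteq \mathbb{P}$ with $|D| < \gamma$ which admits a lower bound has an infimum in $\mathbb{P}$; the proposition is the case $\gamma = \mu$ (with the convention that ``has an infimum'' is only meaningful when a lower bound exists). The base $\gamma = \omega$ is trivial, since a finite directed set has a least element, and limit stages of $\gamma$ are immediate.

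For the successor case $\gamma = \delta^+$, there are two subcases. When $\delta = \omega$, given a countable directed $D = \{d_n : n<\omega\}$, I set $c_0 = d_0$ and recursively pick $c_{n+1} \in D$ below both $c_n$ and $d_{n+1}$, using directedness on pairs; the descending chain $\langle c_n : n<\omega\rangle$ has length $\omega < \mu$, so by the chain hypothesis has an infimum $c$, and one checks that $c = \inf D$ (any lower bound of $D$ bounds each $c_n$ and hence $c$). When $\delta \geq \omega_1$, I enumerate $D = \{d_\alpha : \alpha < \delta\}$, fix a lower bound $\ell$, and fix a choice function selecting a lower bound in $D$ for each pair of its elements. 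For $\alpha < \delta$, let $E_\alpha \subseteq D$ be the closure of $\{d_\beta : \beta < \alpha\}$ under this choice function, built in $\omega$ stages; then $E_\alpha$ is directed, admits $\ell$ as lower bound, and $|E_\alpha| \leq \max(|\alpha|, \omega) < \delta$ since $\delta$ is uncountable. By the inductive hypothesis $e_\alpha = \inf E_\alpha$ exists, and since $\alpha < \beta$ implies $E_\alpha \subseteq E_\beta$, the sequence $\langle e_\alpha : \alpha < \delta\rangle$ is descending of length $\delta < \mu$. Its infimum $e$ exists by the chain hypothesis, and $e = \inf D$: each $d_\beta$ lies in $E_{\beta+1}$, so $e \leq e_{\beta+1} \leq d_\beta$; conversely any lower bound of $D$ bounds every $E_\alpha$, hence every $e_\alpha$, hence $e$.

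The main obstacle is that the size bound $|E_\alpha| \leq \max(|\alpha|, \omega)$ fails to be strictly less than $\delta$ when $\delta = \omega$, which is why the countable case is handled separately using only pairwise directedness and a single appeal to the chain hypothesis; once past this small-cardinal glitch, the closure construction and the chain hypothesis combine cleanly at all higher cardinals.
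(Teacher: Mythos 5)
The paper leaves the proof of this proposition to the reader, so there is no ``official'' argument to compare against; I will assess your proof on its own merits.

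Your argument is correct. The strengthening to an induction on cardinals $\gamma \leq \mu$, together with the convention that ``has an infimum'' means ``has an infimum provided it has a lower bound,'' is the right framing (otherwise, in a poset without a minimum element, both hypothesis and conclusion are vacuously false). The finite and limit-cardinal stages are as easy as you say. In the successor case the closure argument is sound: $E_\alpha$ is directed because it is closed under the chosen pairwise-lower-bound function, it is bounded below by $\ell$ since $E_\alpha \subseteq D$, and $E_\alpha \subseteq E_\beta$ for $\alpha < \beta$ because the closures are taken with the same choice function, so $\langle e_\alpha \rangle$ is a descending chain (bounded below by $\ell$) of length $\delta < \mu$, and the verification that its infimum equals $\inf D$ is correct. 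You are also right to isolate the $\delta = \omega$ subcase: the closure of a finite set under a binary operation can have size $\omega = \delta$, so the inductive hypothesis would not apply, and the direct construction of a cofinal descending $\omega$-chain inside $D$ is exactly the repair needed. One tiny gloss you could add: if $|D| < \delta$ in the $\gamma = \delta^+$ step, one appeals directly to the inductive hypothesis for $\gamma' = \delta$; as written you tacitly assume $|D| = \delta$. This is cosmetic and does not affect correctness.
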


\begin{theorem}
\label{maindense}
Assume $\kappa$ carries an almost-huge tower of height $\delta$, and let $j : V \to M$ be given by the tower.  Let $\mu,\lambda$ be regular such that $\mu < \kappa \leq \lambda < \delta$.  Suppose $\Vdash_{A(\mu,\kappa)}$ `` $\dot{ \mathbb{P} }$ is $(\kappa,\delta)$-very nicely layered and forces $\delta = \lambda^+$.''  If $X * H$ is $A(\mu,\kappa) * \dot{\mathbb{P}}$-generic, then in $V[X][H]$, there is a normal, $\kappa$-complete, $\lambda$-dense ideal on $\p_\kappa(\lambda)$.
\end{theorem}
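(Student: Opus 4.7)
The plan is to construct $I$ as the pullback ideal of a generic lift of $j$. Working in $V[X][H]$, let $\mathbb Q = j(A(\mu,\kappa)*\dot{\mathbb P})/(X*H)$, the quotient forcing computed from $M$. A $\mathbb Q$-generic $G^* = X^**H^*$ over $V[X][H]$ yields $X^*\supseteq X$ generic for $j(A(\mu,\kappa)) = A(\mu,\delta)^M$ over $M$ and $H^*\supseteq j[H]$ generic for $j(\mathbb P)$ over $M[X^*]$. The lifting lemma produces $\hat j : V[X][H] \to M[X^*][H^*]$ extending $j$, and we set
\[
I = \bigl\{A\subseteq\p_\kappa(\lambda) : \Vdash_{\mathbb Q}\ j[\check\lambda]\notin\hat j(\check A)\bigr\}.
\]
That $I$ is a normal, fine, $\kappa$-complete ideal is standard once the lift exists: $\crit(\hat j) = \kappa$ yields $\kappa$-completeness, while normality and fineness reduce to $[\id]_{G^*} = \hat j[\lambda]$ via the characterization of Section~1.2. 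The map $[A]_I \mapsto \|j[\check\lambda]\in\hat j(\check A)\|_{\mathbb Q}$ regularly embeds $\p_\kappa(\lambda)/I$ into $\mathcal B(\mathbb Q)$.

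The first hurdle is ensuring that the requirement $j[H]\subseteq H^*$ is consistent with $\mathbb Q$-genericity. Because $|j[H]| = \delta > \kappa$, naive closure will not produce a lower bound for $j[H]$ in $j(\mathbb P)$. One exploits the $(\kappa,j(\delta))$-very nicely layered structure $\langle\mathbb Q_\alpha : \alpha < j(\delta)\rangle$ of $j(\mathbb P)$ in $M$ with continuous projections $\pi_\alpha$: since $H$ is itself built through the analogous layering of $\mathbb P$, for each $\alpha$ the image $\pi_\alpha[j[H]]$ is essentially the $j$-image of a piece of $H$ generated by fewer than $\kappa$ conditions, and so admits an infimum $q_\alpha\in\mathbb Q_\alpha$ by clause~(3) of very nice layering and the $\kappa$-closure of $\mathbb Q_\alpha$. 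The coherence $\pi_\beta = \pi_\beta\circ\pi_\alpha$ for $\beta<\alpha$ assembles the $q_\alpha$'s into a master condition in $\mathbb Q$ below $j[H]$, below which we force.

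The heart of the proof is the density bound. Using that $j[\delta]$ is cofinal in $j(\delta)$ (Corollary~2.2), fix $\gamma = \sup j[\lambda] < j(\delta)$, which has cofinality $\lambda$ in $V$. The plan is to argue that for any $A\in V[X][H]$ with $A\subseteq\p_\kappa(\lambda)$, the Boolean value $\|j[\check\lambda]\in\hat j(\check A)\|_{\mathbb Q}$ is decided by a regular suborder $\mathbb Q^*_\gamma\subseteq\mathbb Q$ generated by the $j(A(\mu,\kappa))$-coordinates below $\gamma$ together with the $\pi_\gamma$-projection of $j(\mathbb P)$. Clause~(4)—continuity of $\pi_\gamma$—combined with $j[\lambda]\subseteq\gamma$ and the way names for subsets of $\p_\kappa(\lambda)$ are interpreted through the layering reduces the relevant Boolean information to $\mathbb Q^*_\gamma$. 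The main technical challenge—and the principal remaining obstacle—is verifying that $\mathbb Q^*_\gamma$ has density $\lambda$ in $V[X][H]$, not merely $\lambda^+$: although $|\gamma|^{V[X][H]}$ can be as large as $\lambda^+$, one uses $\cf(\gamma) = \lambda$ together with the projection structure of the layering to extract a size-$\lambda$ dense subset, bounding the density of $\p_\kappa(\lambda)/I$ by $\lambda$.
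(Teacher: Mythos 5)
Your approach diverges from the paper's in a way that leaves the central difficulty unresolved. You force with $\mathbb{Q} = j(A(\mu,\kappa)*\dot{\mathbb P})/(X*H)$, which is a poset of size around $j(\delta)$ and hence density around $\delta = \lambda^+$, and then try to recover $\lambda$-density by finding a small regular suborder $\mathbb Q^*_\gamma$ that decides the relevant Boolean values. You flag that step yourself as the "principal remaining obstacle," and it is indeed a genuine gap: the ideal lives on all of $\p_\kappa(\lambda)$, the Boolean values $\|j[\check\lambda]\in\hat j(\check A)\|$ depend on the entire generic lift (not just on coordinates below $\gamma$), and it is far from clear that $\cf(\gamma)=\lambda$ lets you prune the suborder down to density $\lambda$. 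The paper sidesteps this entirely by choosing a deliberately small quotient: it forces with $\mathbb{Q} = B(\mu,\kappa)/H_X \times \col(\mu,\lambda)$, which is manifestly $\lambda$-dense (a $\kappa$-dense factor times a $\lambda$-dense factor), and then proves the lift $\hat j$ can already be built in the resulting model $W = V[X][H][K][C]$. The normality of the induced ultrafilter is what makes the embedding $e:\p(\p_\kappa\lambda)/I\to\mathcal B(\mathbb Q)$ a complete embedding, and the density bound then falls out for free. So the key missing idea in your write-up is exactly this change of forcing.

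A second, more local error: you assert that the coherent infima $q_\alpha$ "assemble into a master condition in $\mathbb Q$ below $j[H]$, below which we force." There is no such master condition. The set $\hat j[H]$ has size $\delta$, whereas $\hat j(\mathbb P)$ is only $\delta$-closed (being $(\delta,j(\delta))$-very nicely layered), and clause (3) of very nice layering only guarantees infima of sets of size $<\delta$. What the paper actually does is a $\delta$-length diagonal recursion, building a descending sequence $\langle p_i\rangle_{i<\delta}$ through a fixed enumeration of the $\delta$ many maximal antichains of $\hat j(\mathbb P)$ in $M[\hat X]$, maintaining at each stage that $p_i$ is compatible with every $m_\alpha = \inf \hat j[H\cap\mathbb Q_\alpha]$; the compatibility is preserved precisely because the continuous projections satisfy $m_\beta\restriction j(\alpha) = m_\alpha$ for $\alpha<\beta$. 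The generic filter $\hat H$ is then the upward closure of this chain, not a cone below a single condition. (Relatedly, you should also check that your quotient is even well-defined: $j\restriction (A(\mu,\kappa)*\dot{\mathbb P})$ is not a regular embedding, since maximal antichains of size in $[\kappa,\delta)$ do not have $j[A]$ maximal in $j(A*\dot{\mathbb P})$.)
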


\begin{proof}
Let $H_X$ be the $A(\mu,\kappa)$-generic filter computed from $X$. Let $K \times C$ be $B(\mu,\kappa)/H_X \times \col(\mu,\lambda)$-generic over $V[X][H]$, and for brevity let $W = V[X][H][K][C]$.  Note that $V[X][K] = V[G][X]$, where $G * X$ is some \break $\col(\mu,{<}\kappa) * \add(\kappa)$-generic filter over $V$.  Let $\langle \mathbb{Q}_\alpha : \alpha < \delta \rangle$ witness that $\mathbb{P}$ is $(\kappa,\delta)$-nicely layered.  By the distributivity of $B(\mu,\kappa)/H_X$ in $V[X]$, $\mathbb{P}$ and its layers $\mathbb{Q}_\alpha$ are still $\kappa$-closed in $V[G][X]$.  For $\alpha < \beta$, the relation $\Vdash_{\mathbb{Q}_\alpha}$ ``$\mathbb{Q}_\beta / \mathbb{Q}_\alpha$ is $\kappa$-closed'' holds in $V[G][X]$ because in $V[X]$, $B(\mu,\kappa)/H_X \times \mathbb{Q}_\alpha$ is $\kappa$-distributive.  Furthermore, since no sequences of length $<\mu$ are added, the forcing given by the definition of $\col(\mu,\lambda)$ is the same between $V$, $W$, and intermediate models.

The forcing to get from $V[G]$ to $W$ is equivalent to $(\add(\kappa) \times \col(\mu,\lambda)) *  \mathbb{P}$.  Let $\mathcal{L}$ be the collection of subforcings of the form $(\add(\kappa) \times \col(\mu,\lambda)) * \mathbb{Q}_\alpha$ for $\alpha < \delta$.  This sequence then witnesses the $(\mu,\delta)$-NLC property in $V[G]$.  The closure properties are evident, and since the whole forcing has the $\delta$-c.c., functions from $\mu$ to ordinals are indeed captured by these factors.

Let $P_0 = \p(\mu)^W$, and consider the submodel $M(P_0)$.  In $W$, $Q_0 = \break \p(\add(\delta))^{M(P_0)}$ has cardinality $\delta$.  To show this, let $Y \subseteq \delta$ be $\add(\delta)$-generic over $W$.  By Theorem~\ref{forget}, $Y$ is $A(\mu,\delta)$-generic over $V$, and hence over $M$ since $(\col(\mu,{<}\delta)*\add(\delta))^M = (\col(\mu,{<}\delta)*\add(\delta))^V$ by the closure of $M$.  Since $M[Y]$ thinks $j(\delta)$ is inaccessible, $M[Y] \models |Q_0| < j(\delta)$, so $W[Y] \models |Q_0| = \delta$ since $j(\delta) < (\delta^+)^V$.  Since $W \models 2^\mu = \delta$, $W$ and $W[Y]$ have the same cardinals, so $W \models |Q_0| = \delta$.   Therefore, working in $W$, we can inductively build a set $\hat{X} \subseteq \delta$ that is $\add(\delta)$-generic over $M(P_0)$ with $\hat{X} \cap \kappa = X$.  By Lemma~\ref{cheat}, $\hat{X}$ is $A(\mu,\delta)$-generic over $M[G]$.  A further forcing produces $G^\prime \supseteq G$, such that $G^\prime * \hat{X}$ is $\col(\mu,{<}\delta)*\add(\delta)$-generic over $M$, so we have an elementary $\hat{j} : V[G][X] \to M[G^\prime][\hat{X}]$ extending $j$.  By elementarity, for the corresponding filters $H_X$ and $H_{\hat{X}}$ on the respective algebras $A(\mu,\kappa)^V$ and $A(\mu,\delta)^M$, we have $j[H_X] \subseteq H_{\hat{X}}$.  Hence we can define in $W$ the restricted elementary embedding $\hat{j} : V[X] \to M[\hat{X}]$.

Now we wish to extend $\hat{j}$ to have domain $V[X][H]$.  As in the argument for Lemma~\ref{quotdist}, every element of $(\ord^\mu)^W$ is coded by some element of $M$ and some $y \subseteq \mu$ coded in $\hat{X}$, so $M[\hat{X}]$ is closed under ${<}\delta$-sequences from $W$.  Consequently, $H \cap \mathbb{Q}_\alpha$ and $\hat{j}[ H \cap \mathbb{Q}_\alpha]$ are in $M[\hat{X}]$ for all $\alpha < \delta$.  Also, $M[\hat{X}] \vDash$ ``$\hat{j}(\mathbb{P})$ is $(\delta,j(\delta))$-very nicely layered.''  Each $\hat{j}[ H \cap \mathbb{Q}_\alpha ]$ is a directed set of size $\mu$ in $M[\hat{X}]$, so it has an infimum $m_\alpha \in \hat{j}(\mathbb{Q}_\alpha)$.

Let $\langle A_\alpha : \alpha < \delta \rangle \in W$ enumerate the maximal antichains of  $\hat{j}(\mathbb{P})$ from $M[\hat{X}]$.  (There are only $\delta$ many because $M[\hat{X}]$ thinks this partial order has inaccessible size $j(\delta)$ and is $j(\delta)$-c.c.)  Inductively define an increasing sequence of ordinals $\langle \alpha_i \rangle_{i < \delta} \subseteq \delta$, and a corresponding decreasing sequence of conditions $\langle p_i \rangle_{i < \delta} \subseteq \hat{j}(\mathbb{P})$ as follows.

Assume as the induction hypothesis that we have defined the sequences up to $i$, and for all $\xi < i$ and all $\alpha < \delta$, $p_\xi$ is compatible with $m_\alpha$, and for all $\xi < i$, there is some $a \in A_\xi$ such that $p_\xi \leq a$.  Let $q_i = \inf_{\xi<i} p_\xi$.  This is compatible with all $m_\alpha$ because for all $\alpha$, $\langle p_\xi \wedge m_\alpha : \xi < i \rangle$ is a descending chain in $\hat{j}(\mathbb{P})$.  Let $\alpha_i \geq \sup_{\xi<i} \alpha_\xi$ be such that $A_i \subseteq \hat{j}(\mathbb{Q}_{\alpha_i})$ and $q_i \in \hat{j}(\mathbb{Q}_{\alpha_i})$.  This is possible by the chain condition and because $j[\delta]$ is cofinal in $j(\delta)$.  Choose $p_i \in \hat{j}(\mathbb{Q}_{\alpha_i})$ below $q_i \wedge m_{\alpha_i}$ and some $a \in A_i$.  $p_i$ is compatible with all $m_\alpha$, because for any $\alpha > \alpha_i$, $m_\alpha \restriction j(\alpha_i) = m_{\alpha_i}$.  This is because for any $\alpha < \beta < \delta$, 
\begin{align*}
m_\beta \restriction j(\alpha) = & (\inf \{ j(p) : p \in H {\restriction} \beta \}) \restriction j(\alpha) = \inf \{ j(p) {\restriction} j(\alpha) : p \in H {\restriction} \beta \} \\
= & \inf \{ j(p {\restriction} \alpha) : p \in H {\restriction} \beta \} = \inf \{ j(p) : p \in H {\restriction} \alpha \} = m_\alpha. 
\end{align*}

The upward closure of the sequence $\langle p_i \rangle_{i < \delta}$ is a filter $\hat{H}$ which is $\hat{j}(\mathbb{P})$-generic over $M[\hat{X}]$.  For all $p \in H$, $\hat{j}(p) \in \hat{H}$ since there is some $m_\alpha \leq \hat{j}(p)$.  Thus we get an extended elementary embedding $\hat{j} : V[X][H] \to M[\hat{X}][\hat{H}]$.  In $W$, we define an ultrafilter $U$ over $(\p(\p_\kappa \lambda ))^{V[X][H]}$: let $A \in U$ iff $j[\lambda] \in \hat{j}(A)$.  Note that $j[\lambda] \in \p_{j(\kappa)}(j(\lambda))^{M[\hat{X}][\hat{H}]}$.  $U$ is $\kappa$-complete and normal with respect to functions in $V[X][H]$.  If $f : \p_\kappa(\lambda) \to \lambda$ is a regressive function in $V[X][H]$ on a set $A \in U$, then $\hat{j}(f)(j[\lambda]) = j(\alpha)$ for some $\alpha < \lambda$, so $\{ z \in A : f(z) = \alpha \} \in U$.

Now the forcing to obtain $U$ was $\mathbb{Q} = B(\mu,\kappa)/H_X \times \col(\mu,\lambda)$, the product of a $\kappa$-dense and a $\lambda$-dense partial order.  In $V[X][H]$, let $e : \p(\p_\kappa \lambda) \to \mathcal{B}(\mathbb{Q})$ be defined by $e(A) = || \check{A} \in \dot{U} ||$.  Let $I$ be the kernel of $e$.  $I$ is clearly a normal, $\kappa$-complete ideal.  $e$ lifts to a boolean embedding of $\p(\p_\kappa \lambda) / I$ into $\mathcal{B}(\mathbb{Q})$.  Since $\mathbb{Q}$ is $\lambda^+$-c.c., $I$ is $\lambda^+$-saturated.  If $\langle [A_\alpha] : \alpha < \lambda \rangle$ is a maximal antichain in $\p_\kappa(\lambda) / I$, then $\nabla A_\alpha$ is the least upper bound and is in the dual filter to $I$. $e(\nabla A_\alpha) = || \nabla A_\alpha \in \dot{U} || = 1$, and this is the least upper bound in $\mathcal{B}(\mathbb{Q})$ to $\{ e(A_\alpha) : \alpha <\lambda \}$.  This is because if there were a generic extension in which all $A_\alpha \notin U$, then $\nabla A_\alpha \notin U$ as well since $U$ is normal with respect to sequences from $V[X][H]$.  Therefore $e$ is a complete embedding, and thus $I$ is $\lambda$-dense.  
\end{proof}

We can also characterize the exact structure of $\p(\p_\kappa \lambda) / I$.  First note the following about the ground model embedding $j : V \to M$.  $M$ is the direct limit of the coherent system of $\alpha$-supercompactness embeddings $j_\alpha : V \to M_\alpha$ for $\alpha < \delta$.  Every member of $M_\alpha$ is represented as $j_\alpha(f)(j_\alpha[\alpha])$ for some function $f \in V$ with domain $\p_\kappa(\alpha)$.  If $k_\alpha : M_\alpha \to M$ is the factor map such that $j = k_\alpha \circ j_\alpha$, then the critical point of $k_\alpha$ is above $\alpha$, so $k_\alpha(x) = k_\alpha[x]$ when $M_\alpha \vDash |x| \leq |\alpha|$.  Since $M$ is the direct limit, for any $x \in M$, there is some $\alpha < \delta$ and some $f \in V$ such that
\[ x = k_\alpha([f]) = k_\alpha(j_\alpha(f)(j_\alpha[\alpha])) = j(f)(k_\alpha(j_\alpha[\alpha])) = j(f)(j[\alpha])).
\]

Let $U \subseteq \p(\p_\kappa \lambda) / I$ be generic over $V[X][H]$, and let $j_U : V \to N$ be the generic ultrapower embedding.  Since $e :  \p(\p_\kappa \lambda) / I \to \mathcal{B}(\mathbb{Q})$ is a complete embedding, forcing with $\mathcal{B}(\mathbb{Q}) / e[U]$ over $V[X][H][U]$ produces a model $W$ as above.  Notice that the definition of $e$ and $U$ makes $A \in U$ iff $j[\lambda] \in \hat{j}(A)$.  Hence we can define an elementary embedding $k : N \to M[\hat{X}][\hat{H}]$ by $k([f]) = \hat{j}(f)(j[\lambda])$, and we have $\hat{j} = k \circ j_U$.

What is the critical point of $k$?  Since $N \vDash \mu^+ = \delta$, certainly it must be at least $\delta$.  Let $\beta$ be any ordinal.  There is some $\alpha$ such that $\lambda \leq \alpha < \delta$ and some $f \in V$ such that $\beta = j(f)(j[\alpha])$.  Let $b : \lambda \to \alpha$ be a bijection in $V[X][H]$.  Then $\beta = j(f)(\hat{j}(b)[j[\lambda]])$.  Furthermore, $j[\lambda] = k(j_U[\lambda])$.  Therefore, $\beta = k( j_U(f)(j_U(b)[j_U[\lambda]]))$.  Thus $\beta \in \ran(k)$, and so $k$ does not have a critical point.

Therefore, $N = M[\hat{X}][\hat{H}]$.  By the closure of $M[\hat{X}][\hat{H}]$, the generic $K \times C$ for $\mathbb{Q}$ is in $M[\hat{X}][\hat{H}] = N \subseteq V[X][H][U]$.  So the quotient $\mathcal{B}(\mathbb{Q}) / e[U]$ is trivial and $\p(\p_\kappa \lambda) / I \cong \mathcal{B}(\mathbb{Q}) \restriction q$ for some $q$.   

The generic embeddings coming from $I$ extend the original almost-hugeness embedding.  In particular, $j[\delta]$ is cofinal in $j(\delta)$.  This can also be deduced from the assumption that there is some $A \in I^*$ of size $\lambda$, which of course follows from $\lambda^{<\kappa} = \lambda$.  In contrast, Burke and Matsubara~\cite{BM} proved that if there is a normal, fine, $\kappa$-complete, $\lambda^+$-saturated ideal on $\p_\kappa(\lambda)$ and $\cf(\lambda)<\kappa$, then it is forced that $\sup (j[\lambda^+]) < j(\lambda^+)$.  It seems to be unknown whether it is consistent to have saturated ideals on $\p_\kappa(\lambda)$ for successor $\kappa$ and singular $\lambda$, and this result suggests that quite different methods will be needed for an answer.

\subsubsection{Minimal generic supercompactness}
Generalizing supercompactness, we will say cardinal $\kappa$ is \emph{generically supercompact} when for every $\lambda \geq \kappa$, there is a forcing $\mathbb{P}$ such that whenever $G \subseteq \mathbb{P}$ is generic, there is an elementary embedding $j : V \to M$, where $M$ is a transitive class in $V[G]$, $\crit(j) = \kappa$, $j(\kappa) > \lambda$, and $M^\lambda \cap V[G] \subseteq M$.  We note that unlike in the case of non-generic supercompactness, the condition that $j[\lambda] \in M$ does not imply that $M$ is closed under $\lambda$-sequences from $V[G]$.  Whenever a supercompact $\kappa$ is turned into a successor cardinal by a $\kappa$-c.c.\ forcing, we'll have that for all $\lambda \geq \kappa$, there is a normal, fine, precipitous ideal on $\p_\kappa(\lambda)$ whose generic embeddings always extend the original supercompactness embedding.  But if $j : V \to M$ is an embedding coming from a normal ultrafilter on $\p_\kappa(\lambda)$, then $2^{\lambda^{<\kappa}} < j(\kappa) < (2^{\lambda^{<\kappa}})^+$.  If $\kappa = \mu^+$ in a generic extension $V[H]$, and a further extension gives $\hat{j} : V[H] \to M[\hat{H}] \subseteq V[H][G]$ extending $j$, then $M[\hat{H}]$ is not closed under $\lambda$-sequences from $V[H][G]$.  This is because $| \lambda | = | j(\kappa)| = \mu$ in $V[H][G]$, while $M[\hat{H}]$ thinks $j(\kappa)$ is a cardinal.

Stronger properties of ideals on $\p_\kappa(\lambda)$ are needed to give genuine generic supercompactness.  One such property is $\lambda^+$-saturation, which is implied by $\lambda$-density.  We now sketch how to get a model in which there is a successor cardinal $\kappa$ such that for all regular $\lambda \geq \kappa$, there is a normal, $\kappa$-complete, $\lambda$-dense ideal on $\p_\kappa(\lambda)$.  Start with a super-almost-huge cardinal $\kappa$ and a regular $\mu < \kappa$.  The first part of the forcing is $A(\mu,\kappa)$.  Then we do a proper class iteration, which we prefer to describe instead as an iteration up to an inaccessible $\delta > \kappa$ such that $V_\delta \vDash \kappa$ is super-almost-huge.

Let $T = \{ \alpha < \delta : \kappa$ carries an almost-huge tower of height $\alpha \}$.  Let $C$ be the closure of $T$, and let $\langle \alpha_\beta \rangle_{\beta < \delta}$ be its continuous increasing enumeration.  Over $V^{A(\mu,\kappa)}$, let $\mathbb{P}_\delta$ be the Easton-support limit of the following:
\begin{itemize}
\item Let $\mathbb{P}_0 = \col(\kappa,<\alpha_0)$.
\item If $\beta$ is zero or a successor ordinal, let $\mathbb{P}_{\beta+1} = \mathbb{P}_{\beta} * \col(\alpha_\beta,<\alpha_{\beta+1})$.
\item If $\beta$ is a limit ordinal such that $\alpha_\beta$ is singular, let $\mathbb{P}_{\beta+1} = \col(\alpha_\beta^+,<\alpha_{\beta+1})$.
\item If $\beta$ is a limit ordinal such that $\alpha_\beta$ is regular, let $\mathbb{P}_{\beta+1} = \col(\alpha_\beta,<\alpha_{\beta+1})$.
\end{itemize}

It is routine to verify that this iteration preserves the regularity of the members of $T$, the successors of the singular limit points of $T$, and the regular limit points of $T$.  Further, the set of non-limit-points of $T$ becomes the set of successors of regular cardinals between $\kappa$ and $\delta$.

Let $X \subseteq \kappa$ be $A(\mu,\kappa)$-generic over $V$, and let $H \subseteq \mathbb{P}_\delta$ be generic over $V[X]$. Suppose $\kappa \leq \lambda < \delta$, and $\lambda$ is regular in $V[X][H]$.  Then there is some successor ordinal $\beta<\delta$ such that $\alpha_\beta \in T$ and $\alpha_\beta = \lambda^+$.  Consider the subforcing $A(\mu,\kappa) * \mathbb{P}_\beta = (A(\mu,\kappa) * \mathbb{P}_{\beta-1}) * \col(\lambda,<\alpha_\beta)$.  The forcing $\mathbb{P}_\beta$ is $(\kappa,\alpha_\beta)$-very nicely layered in $V[X]$.

If $j : V \to M_\beta$ is an almost-huge embedding with critical point $\kappa$ and $j(\kappa) = \alpha_\beta$, then by Theorem~\ref{maindense}, there is a normal, $\kappa$-complete, $\lambda$-dense ideal on $\p_\kappa(\lambda)$ in $V[X][H_\beta]$.  Now note that the tail-end forcing $\mathbb{P}_{\beta,\delta}$ is $\alpha_\beta$-closed.  Since $\lambda^{<\kappa} = \lambda$ in $V[X][H_\beta]$, no new subsets of $\p_\kappa(\lambda)$ are added by the tail.  The collection $\{ A_\alpha : \alpha < \lambda \}$ witnessing the $\lambda$-density of $I$ retains this property, as this is a local property of the boolean algebra $\p_\kappa(\lambda) / I$ and $\{ A_\alpha : \alpha < \lambda \}$.  Normality and completeness of $I$ are likewise preserved.  

This method is quite flexible, and can done by iterating collapsing posets other than the Levy collapse, or by using products rather than iterations.

\subsubsection{Dense ideals on successive cardinals?}

At the time of this writing, it is unknown whether there can exist simultaneously a normal $\kappa$-dense ideal on $\kappa$ and a normal $\kappa^+$-dense ideal on $\kappa^+$.  The following is the current best approximation.

Suppose $\langle \kappa_n : n < \omega \rangle$ is a sequence of cardinals such that for all $n$, $\kappa_n$ carries an almost-huge tower of height $\kappa_{n+1}$.  Such a sequence will be called an \emph{almost-huge chain}.  Obviously, extending this to sequences of length longer than $\omega$ requires an extra idea; perhaps we just stack one $\omega$-chain above another, or maybe postulate some relationship between the $\omega$-chains.  By Theorem~\ref{tourney}, such chains occur quite often below a huge cardinal.

Suppose $\langle \kappa_n : 0 < n < \omega \rangle$ is an almost-huge chain, and $\mu < \kappa_1$ is regular.  Consider the full-support iteration $\mathbb{P}$ of $\langle \mathbb{P}_n : n < \omega \rangle$, where $\mathbb{P}_0 = A(\mu,\kappa_1)$, and for all $n < \omega$, $\mathbb{P}_{n+1} = \mathbb{P}_n * A(\kappa_n,\kappa_{n+1})$.  The stage $\mathbb{P}_1 = A(\mu,\kappa_1) * A(\kappa_1,\kappa_2)$ regularly embeds into $A(\mu,\kappa_1) * (\col(\kappa_1,< \! \kappa_2) * \add(\kappa_2))$.  The first two stages here add a normal $\kappa_1$-dense ideal on $\kappa_1$ and make $\kappa_1=\mu^+$, $\kappa_2 = \mu^{++}$.  The third stage preserves this since it adds no subsets of $\kappa_1$.  By Lemma~\ref{quotdist}, the quotient forcing $\mathbb{Q}$ to get from $V^{\mathbb{P}_1}$ to this three-stage extension is $\kappa_2$-distributive.  Now the tail-end forcing $\mathbb{P} / \mathbb{P}_1$ is $\kappa_2$-strategically closed.  Since $\mathbb{Q}$ does not add any plays of the relevant game of length $< \! \kappa_2$, $\mathbb{P} / \mathbb{P}_1$ remains $\kappa_2$-strategically closed in $V^{\mathbb{P}_1 * \mathbb{Q}}$, so forcing with it preserves the $\kappa_1$-dense ideal on $\kappa_1$.   Also, $\mathbb{Q}$ remains $\kappa_2$-distributive in $V^{\mathbb{P}}$, since $\mathbb{Q} \times (\mathbb{P} / \mathbb{P}_1)$ is $\kappa_2$-distributive in $V^{\mathbb{P}_1}$.  It thus remains the case in $V^\mathbb{P}$ that there is a $\kappa_2$-distributive forcing adding a normal $\kappa_1$-dense ideal on $\kappa_1$.

Similarly, consider $V^{\mathbb{P}_n}$ for $n >1$.  $\mathbb{P}_n = \mathbb{P}_{n-2} * (A(\kappa_{n-1}, \kappa_n) * A(\kappa_n,\kappa_{n+1}))$.  Since $|\mathbb{P}_{n-2}| = \kappa_{n-1}$ (or $\mu$ for $n=2$), $\kappa_{n}$ retains an almost-huge tower of height $\kappa_{n+1}$ in $V^{\mathbb{P}_{n-2}}$.  Thus the same argument applies: In $V^{\mathbb{P}_n}$, there is a $\kappa_{n+1}$-distributive forcing adding a normal $\kappa_n$-dense ideal on $\kappa_n$, and this remains true in $V^\mathbb{P}$.  Therefore, we obtain a model in which for all $n >0$, there is a $\mu^{+n+1}$-distributive forcing adding a normal $\mu^{+n}$-dense ideal on $\mu^{+n}$.  By repeating this with a tall enough stack of almost-huge chains, we obtain the consistency of ZFC with the statement, ``For all regular cardinals $\kappa$, there is a $\kappa^{++}$-distributive forcing adding a normal $\kappa^+$-dense ideal on $\kappa^+$.''

\section{Structural constraints}

Saturated ideals have a strong influence over the combinatorial structure of the universe in their vicinity.  Phenomena of this type may also be viewed as the universe imposing constraints on the structural properties of ideals.  Below are some of the most interesting known results to this effect.  Proofs can be found in~\cite{foremanhandbook}.

\begin{enumerate}[(1)]
\item (Tarski) If $I$ is a nowhere-prime ideal which is $\kappa$-complete and $\mu$-saturated for some $\mu < \kappa$, then $2^{<\mu} \geq \kappa$.
\item (Jech-Prikry) If $\kappa = \mu^+$, $2^\mu = \kappa$, and there is a $\kappa$-complete, $\kappa^+$-saturated ideal on $\kappa$, then $2^\kappa = \kappa^+$.
\item (Jech-Prikry) If $\kappa = \mu^+$, and there is a $\kappa$-complete, $\kappa^+$-saturated ideal on $\kappa$, then there are no $\kappa$-Kurepa trees.
\item (Woodin) If there is a countably complete, $\omega_1$-dense ideal on $\omega_1$, then there is a Suslin tree.
\item (Woodin) If there is a countably complete, uniform, $\omega_1$-dense ideal on $\omega_2$, then $2^\omega = \omega_1$.  (Uniform means that all sets of size $<\omega_2$ are in the ideal--equivalent to fineness.)
\item (Shelah) If $2^\omega < 2^{\omega_1}$, then $NS_{\omega_1}$ is not $\omega_1$-dense.
\item (Gitik-Shelah) If $I$ is a $\kappa$-complete, nowhere-prime ideal, then $\den(I) \geq \kappa$.
\end{enumerate}

We note that result (2) easily generalizes to the following: If $\kappa = \mu^+$, $2^\mu = \kappa$, and there is a normal, fine, $\kappa$-complete, $\lambda^+$-saturated ideal on $\p_\kappa(\lambda)$, then $2^\lambda = \lambda^+$.

If no requirements are made for the ideal $I$ and the set $Z$ on which it lives, almost no structural constraints on quotient algebras remain.  The following strengthens a folklore result, probably known to Sikorski.  The argument was supplied by Don Monk in personal correspondence.

\begin{proposition}
Let $\mathbb{B}$ be a complete boolean algebra, and let $\kappa$ be a cardinal such that $2^\kappa \geq |\mathbb{B}|$.  There is a uniform ideal $I$ on $\kappa$ such that $\mathcal{B} \cong \p(\kappa)/I$.
\end{proposition}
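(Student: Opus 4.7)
My plan is to construct a surjective Boolean homomorphism $h : \p(\kappa) \to \mathbb{B}$ whose kernel $I := h^{-1}(0)$ contains every bounded subset of $\kappa$; this $I$ is then the required uniform ideal, with $\p(\kappa)/I \cong \mathbb{B}$ by the first isomorphism theorem for Boolean algebras.

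The construction has three steps. First, fix an independent family $\{A_\xi : \xi < 2^\kappa\} \subseteq \p(\kappa)$, meaning that $\bigcap_{\xi \in F} A_\xi \cap \bigcap_{\xi \in G}(\kappa \setminus A_\xi)$ is unbounded in $\kappa$ for any disjoint finite $F, G \subseteq 2^\kappa$; such a family exists in ZFC by the classical Fichtenholz--Kantorovich construction, generalized to arbitrary infinite $\kappa$. Letting $I_{\mathrm{bnd}}$ denote the ideal of bounded subsets of $\kappa$, the images of the $A_\xi$ in $\p(\kappa)/I_{\mathrm{bnd}}$ generate a subalgebra $\mathcal{A}$ which, by independence, is free on $2^\kappa$ generators $\{a_\xi : \xi < 2^\kappa\}$. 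Second, since $|\mathbb{B}| \leq 2^\kappa$, one can pick a surjection from the generators onto an enumeration of $\mathbb{B}$; by freeness this extends uniquely to a surjective Boolean homomorphism $\phi : \mathcal{A} \to \mathbb{B}$. Third, invoke Sikorski's extension theorem: since $\mathbb{B}$ is complete (hence injective in the category of Boolean algebras), the partial homomorphism $\phi$ extends to a Boolean homomorphism $\tilde h : \p(\kappa)/I_{\mathrm{bnd}} \to \mathbb{B}$. Composing with the quotient map $\p(\kappa) \to \p(\kappa)/I_{\mathrm{bnd}}$ yields the required $h$, which is surjective because its restriction $\phi$ already is, and which sends every bounded set to $0$ by construction.

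The main obstacle is arranging the first step at arbitrary uncountable $\kappa$: one must verify that the generalized independent family not only exists but that its images genuinely freely generate a subalgebra of $\p(\kappa)/I_{\mathrm{bnd}}$ of size $2^\kappa$. Once independence modulo bounded sets is in hand, freeness is automatic, and the remainder of the argument is a direct application of Sikorski's extension theorem together with routine bookkeeping for surjectivity and uniformity.
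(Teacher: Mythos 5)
Your proposal follows essentially the same route as the paper: take a Fichtenholz--Kantorovich independent family of size $2^\kappa$ whose Boolean combinations miss $I_{\mathrm{bnd}}$, observe it generates a free subalgebra, map the generators onto $\mathbb{B}$, extend via Sikorski's theorem (using completeness of $\mathbb{B}$), and take the kernel. The only cosmetic difference is that you pass to the quotient $\p(\kappa)/I_{\mathrm{bnd}}$ before applying Sikorski, while the paper first extends the homomorphism to the subalgebra of $\p(\kappa)$ generated by the free family together with $I_{\mathrm{bnd}}$ and then applies Sikorski there; the two are interchangeable.
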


\begin{proof}
Let $\kappa$, $\mathbb{B}$ be as hypothesized.  By the theorem of Fichtenholz-Kantorovich and Hausdorff (see \cite[Lemma 7.7]{jechbook}), there exists a family $F$ of $2^\kappa$ many subsets of $\kappa$ such that for any $x_1,...,x_n,y_1,...,y_m \in F$, $x_1 \cap ... \cap x_n \cap (\kappa \setminus y_1) \cap ... \cap (\kappa \setminus y_m)$ has size $\kappa$.  $F$ generates a free algebra: closing $F$ under finitary set operations gives a family of sets $G$ such that any equation holding between elements of $G$ expressed as boolean combinations of elements of $F$ holds in all boolean algebras.  If we pick any surjection $h_0 : F \to \mathbb{B}$ and extend it to $h_1 : G \to \mathbb{B}$ in the obvious way, then $h_1$ will be a well-defined homomorphism.

Let $I_{bd}$ be the ideal of bounded subsets of $\kappa$.  Since all elements of $G$ are either empty or have cardinality $\kappa$, $G \cong G/I_{bd}$, so $h_1$ has an extension $h_2$ from the algebra generated by $G \cup I_{bd}$ to $\mathbb{B}$, where $h_2(x) = 0$ for all $x \in I_{bd}$.  Finally, by Sikorski's extension theorem, there is a further extension to a homomorphism $h_3 : \p(\kappa) \to \mathbb{B}$.  The kernel of $h_3$ is an ideal $I$ such that $\p(\kappa)/I \cong \mathbb{B}$.   
\end{proof}

\subsection{Cardinal arithmetic and ideal structure}

A careful examination of the proof of Woodin's theorem (5) shows that $\omega_2$ can be replaced by any $\omega_n$, $2 \leq n < \omega$.  Aside from that, Woodin's argument is rather specific to the cardinals involved.  In \cite{foremanhandbook}, Foreman asked (Open Question 27) whether the analogous statement holds one level up:

\begin{question}[Foreman]Does the existence of an $\omega_2$-complete, $\omega_2$-dense, uniform ideal on $\omega_3$ imply that $2^{\omega_1} = \omega_2$?
\end{question}

To answer this, we invoke an easy preservation lemma about ideals under small forcing.  If $I$ is an ideal, $\mathbb{P}$ is a partial order, and $G \subseteq \mathbb{P}$ is generic, then $\bar{I}$ denotes the ideal generated by $I$ in $V[G]$, i.e. $\{ X : (\exists Y \in I) X \subseteq Y \}$.

\begin{lemma}
\label{pres}
Suppose $I$ is a $\kappa$-complete ideal on $Z \subseteq \p(X)$, $\mathbb{P}$ is a partial order, and $G$ is $\mathbb{P}$-generic.
\begin{enumerate}[(1)]
\item If $\sat(\mathbb{P}) \leq \kappa$, then $\bar{I}$ is $\kappa$-complete in $V[G]$.
\item If $\den(\mathbb{P}) < \kappa$, then $\den(\bar{I})^{V[G]} \leq \den(I)^V$.  
\end{enumerate}
\end{lemma}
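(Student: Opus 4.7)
My strategy for both parts is to use the hypotheses on $\mathbb{P}$ together with the $\kappa$-completeness of $I$ to convert a $V[G]$-level statement about $\bar{I}$ into a single witness chosen from $I$ in the ground model. For (1), fix a sequence of $V$-names $\langle \dot{A}_\alpha : \alpha < \beta \rangle$ with $\beta < \kappa$, each forced to lie in $\bar{I}$ (any $V[G]$-family reduces to this by taking a name for the enumerating function and using that $\kappa$ is preserved by the $\kappa$-c.c.). For each $\alpha$, the set of conditions forcing ``$\dot{A}_\alpha \subseteq \check{B}$ for some $B \in I$'' is dense, so by the $\kappa$-c.c.\ select a maximal antichain of such $\{q_{\alpha,i} : i < \gamma_\alpha < \kappa\}$ with witnesses $B_{\alpha,i} \in I$. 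Then $B_\alpha = \bigcup_i B_{\alpha,i} \in I$ by $\kappa$-completeness, and maximality yields $\Vdash \dot{A}_\alpha \subseteq \check{B}_\alpha$. A second application of $\kappa$-completeness gives $B = \bigcup_{\alpha<\beta} B_\alpha \in I$, and this $B$ covers $\bigcup_\alpha A_\alpha$ in $V[G]$.

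For (2), fix a dense family $\{D_\xi : \xi < \den(I)\} \subseteq I^+$ in $V$ and a dense subset $D \subseteq \mathbb{P}$ of size $\den(\mathbb{P}) < \kappa$; I will show $\{D_\xi\}$ is still dense in $\p(Z)/\bar{I}$ in $V[G]$. Given $A \in \bar{I}^+$ with name $\dot{A}$ and $p \in G$ forcing $\dot{A} \in \bar{I}^+$, set $B_q := \{z : q \Vdash \check{z} \in \dot{A}\}$ for $q \in \mathbb{P}$; note that $B_{q'} \supseteq B_q$ whenever $q' \leq q$. The decisive step is the claim that $\{q \in D : q \leq p \text{ and } B_q \in I^+\}$ is dense below $p$: if instead some $r \leq p$ had $B_q \in I$ for all $q \in D$ with $q \leq r$, then $B = \bigcup\{B_q : q \in D, q \leq r\}$ would be a union of ${<}\kappa$ sets in $I$, hence in $I$; by density of $D$ and the standard fact that if no extension of $r$ forces $\check{z} \in \dot{A}$ then $r \Vdash \check{z} \notin \dot{A}$, we would get $r \Vdash \dot{A} \subseteq \check{B}$, contradicting $p \Vdash \dot{A} \in \bar{I}^+$. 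Picking $q \in G \cap D$ with $q \leq p$ and $B_q \in I^+$, and then $\xi$ with $D_\xi \setminus B_q \in I$, one gets $D_\xi \setminus A \subseteq D_\xi \setminus B_q \in \bar{I}$ (since $q \Vdash \check{B}_q \subseteq \dot{A}$), while $D_\xi \in \bar{I}^+$ since $D_\xi \in I^+$ is covered by no $V$-set in $I$. Thus $[D_\xi]_{\bar{I}} \leq [A]_{\bar{I}}$ is a nonzero witness, and since the family has cardinality $\leq \den(I)$, $\den(\bar{I})^{V[G]} \leq \den(I)^V$.

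The main obstacle is the density claim in (2); making it work requires both the monotonicity $B_{q'} \supseteq B_q$ for $q' \leq q$ and the reduction from $\mathbb{P}$ to its dense subset $D$, so that $\bigcup B_q$ is genuinely a union of ${<}\kappa$ ground-model sets in $I$. Beyond that, the arguments in both parts amount to routine forcing bookkeeping, and neither part uses any structural hypotheses on $I$ (such as fineness or normality) beyond its $\kappa$-completeness.
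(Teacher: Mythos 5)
Your proof is correct, and part (1) matches the paper's argument essentially verbatim. For part (2) you reach the same conclusion (ground-model $I$-positive sets are dense in $\p(Z)/\bar{I}$) via a density argument inside $\mathbb{P}$ that relies only on $\kappa$-completeness of $I$ in $V$, whereas the paper decomposes $A = \bigcup_{d \in D \cap G}\{z : d \Vdash z \in \dot{A}\}$ directly and invokes part (1) --- a cosmetic difference; one small remark is that the monotonicity $B_{q'} \supseteq B_q$ you flag as an essential ingredient is not actually used anywhere in your argument.
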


\begin{proof}
For (1), let $\dot{s}$ be a $\mathbb{P}$-name for a sequence of elements of $\bar{I}$ of length less than $\kappa$.  By $\kappa$-saturation, let $\beta < \kappa$ be such that $1 \Vdash dom(\dot{s}) \leq \beta$.  For each $\alpha < \beta$, let $A_\alpha$ be a maximal antichain such that for $p \in A_\alpha$, $p \Vdash \dot{s}(\alpha) \subseteq \check{b}^p_\alpha$, where ${b}^p_\alpha \in I$.  Then $B = \bigcup_{p, \alpha} b^p_{\alpha} \in I$, and $1 \Vdash \bigcup \dot{s} \subseteq \check{B}$.

For (2), let $D \subseteq \mathbb{P}$ be a dense set of size less than $\kappa$, and let $A \in \bar{I}^+$.  Then $A = \bigcup_{d \in D \cap G} \{ z : d \Vdash z \in \dot{A} \}$.  By (1), there is some $d \in D$ such that $\{ z : d \Vdash z \in \dot{A} \} \notin I$.  This shows that $(\p(Z)/I)^V$ is dense in $(\p(Z)/\bar{I})^{V[G]}$, and the conclusion follows.  
\end{proof} 

\begin{corollary}
\label{wg}
If there is a $\kappa^+$-complete, $\kappa^+$-dense, uniform ideal on $\kappa^{++}$, then $2^\kappa = \kappa^+$.
\end{corollary}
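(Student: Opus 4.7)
The plan is to reduce the corollary to Woodin's result~(5) by means of a small collapse, using Lemma~\ref{pres} for preservation. Suppose toward contradiction that $I$ is a $\kappa^+$-complete, $\kappa^+$-dense, uniform ideal on $\kappa^{++}$ in $V$, yet $2^\kappa \geq \kappa^{++}$. I would force with $\mathbb{P} = \col(\omega,\kappa)$, which has cardinality (hence density) $\kappa$ and is $\kappa^+$-c.c., so that $\mathbb{P}$ collapses $\kappa$ to $\omega$ while preserving $\kappa^+$ and $\kappa^{++}$ as cardinals.

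Letting $G$ be $\mathbb{P}$-generic and $\bar I$ the generated ideal in $V[G]$, Lemma~\ref{pres} gives that $\bar I$ is still $\kappa^+$-complete and $\kappa^+$-dense. Next I would verify that $\bar I$ remains uniform on $\kappa^{++}$: any $A \subseteq \kappa^{++}$ in $V[G]$ with $|A|^{V[G]} \leq \kappa^+$ is contained, by the $\kappa^+$-c.c., in some $B \in V$ with $|B|^V \leq \kappa^+$, and then $B \in I$ by the uniformity of $I$ in $V$, hence $A \in \bar I$. Since $\omega_1^{V[G]} = \kappa^+$ and $\omega_2^{V[G]} = \kappa^{++}$, the ideal $\bar I$ is, in $V[G]$, a countably complete, $\omega_1$-dense, uniform ideal on $\omega_2$, and Woodin's theorem~(5) therefore yields $(2^\omega)^{V[G]} = \omega_1^{V[G]} = \kappa^+$.

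However, since $\kappa$ becomes countable in $V[G]$, every element of $\p(\kappa)^V$ is an element of $\p(\omega)^{V[G]}$; consequently $|(2^\omega)^{V[G]}| \geq (2^\kappa)^V \geq \kappa^{++} = \omega_2^{V[G]}$, contradicting Woodin's bound. Because the corollary delegates all of the combinatorial substance to Lemma~\ref{pres} and to Woodin's theorem, there is no serious obstacle; the only delicate point is the preservation of uniformity in $V[G]$, which is a routine chain-condition covering argument.
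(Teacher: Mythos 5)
Your proof is correct and takes essentially the same approach as the paper: force with $\col(\omega,\kappa)$, invoke Lemma~\ref{pres} to preserve $\kappa^+$-completeness and $\kappa^+$-density, apply Woodin's theorem~(5), and derive a contradiction from the failure of CH in the extension. The only difference is that you spell out the preservation of uniformity and phrase the final contradiction via a cardinality inequality rather than via a single surjection $f : \p(\kappa) \to \kappa^{++}$ witnessing $\neg$CH; these are cosmetic.
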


\begin{proof}
Suppose for a contradiction that $f: \p(\kappa) \to \kappa^{++}$ is a surjection.  Let $\mathbb{P} = \col(\omega,\kappa)$, and let $G$ be $\mathbb{P}$-generic.  Since $\den(\mathbb{P}) = \kappa$, Lemma~\ref{pres} implies that $\bar{I}$ is $\kappa^+$-complete and $\kappa^+$-dense in $V[G]$.  Furthermore, in $V[G]$, $\kappa^+ = \omega_1$ and $\kappa^{++}= \omega_2$.  Thus Woodin's theorem implies that $V[G] \vDash$ CH.  However, $f$ witnesses the failure of CH, a contradiction.  
\end{proof}

Another interesting constraint can be derived from the following:

\begin{theorem}[Shelah~\cite{shelahproper}]
\label{shelahcof}
Suppose $V \subseteq W$ are models of ZFC.  If $\kappa$ is a regular cardinal in $V$, and $\cf(\kappa) \not= \cf(|\kappa|)$ in $W$, then $(\kappa^+)^V$ is not a cardinal in $W$.
\end{theorem}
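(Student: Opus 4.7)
The plan is to show $|\tau|^W \le \theta$, where $\tau = (\kappa^+)^V$ and $\theta = |\kappa|^W$; this gives directly that $\tau$ is not a cardinal in $W$. Let $\sigma = \cf^W(\kappa)$. Note first that $\sigma \le \cf^W(\theta)$ always holds, since composing a cofinal sequence in $\theta$ of length $\cf^W(\theta)$ with any $W$-bijection $\theta \to \kappa$ produces a cofinal subset of $\kappa$ of the same cardinality. So the hypothesis upgrades to the strict inequality $\sigma < \cf^W(\theta) \le \theta$. (In particular $\kappa$ cannot remain a cardinal in $W$, since that would force $\theta = \kappa$ and hence $\cf^W(\theta) = \sigma$, contradicting the assumption.)

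The problem reduces to showing $\cf^W(\tau) \le \theta$: every $\beta < \tau$ satisfies $|\beta|^V \le \kappa$ and hence $|\beta|^W \le \theta$, so a cofinal sequence in $\tau$ of length $\rho \le \theta$ in $W$ gives $|\tau|^W \le \rho \cdot \theta = \theta$. To produce such a sequence I would fix in $V$ bijections $e_\beta \colon \kappa \to \beta$ for $\beta \in [\kappa, \tau)$, and in $W$ a cofinal increasing sequence $\langle \kappa_\alpha : \alpha < \sigma\rangle$ in $\kappa$. For each $\beta$ of $V$-cofinality $\kappa$, the image $e_\beta[\kappa_\alpha]$ has $V$-cardinality strictly below $\kappa$, so by regularity of $\kappa$ in $V$ it is bounded in $\beta$; writing $\gamma_{\beta,\alpha} = \sup e_\beta[\kappa_\alpha]$, the sequence $\langle \gamma_{\beta,\alpha} : \alpha < \sigma\rangle$ is cofinal in $\beta$ in $W$, so $\cf^W(\beta) \le \sigma$ for all such $\beta$.

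The crux is to assemble these local cofinal sequences into a single cofinal sequence in $\tau$. The plan is to choose the $e_\beta$'s coherently in $V$, e.g.\ via a ladder/scale structure, so that for each fixed $\alpha < \sigma$ the ordinals $\gamma_{\beta,\alpha}$ are uniformly bounded below $\tau$ as $\beta$ ranges over the $V$-cofinality-$\kappa$ points of $\tau$. Then $\tau_\alpha = \sup_\beta \gamma_{\beta,\alpha} < \tau$, while $\sup_\alpha \tau_\alpha = \tau$ is automatic from $\sup_\alpha \gamma_{\beta,\alpha} = \beta$. This yields a cofinal $\sigma$-sequence $\langle \tau_\alpha : \alpha < \sigma\rangle$ in $\tau$, giving $\cf^W(\tau) \le \sigma \le \theta$ and finishing the reduction.

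The main obstacle is producing the required coherent $V$-structure: ZFC alone does not provide a square-type sequence on $\tau$ that would make the uniform boundedness work directly. Shelah's argument in \cite{shelahproper} sidesteps this by replacing the direct construction with a pcf-theoretic analysis, working in $W$ with a scale in $\prod_{\alpha < \sigma} |\kappa_\alpha|^W$ and invoking the strict inequality $\sigma < \cf^W(\theta)$ at the pigeonhole step to rule out configurations in which the $\gamma_{\beta,\alpha}$ escape to $\tau$, forcing the cofinalities to align rather than producing a ``bad'' $\sigma$-cofinal sequence in $\theta$.
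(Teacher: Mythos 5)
Your outline correctly identifies the target ($|\tau|^W \le \theta$ where $\tau = (\kappa^+)^V$, $\theta = |\kappa|^W$), correctly reduces to bounding $\cf^W(\tau)$, and the local observation that $\cf^W(\beta) \le \sigma$ for $\beta \in [\kappa,\tau)$ of $V$-cofinality $\kappa$ is sound. But there are two problems. The minor one: your justification of $\sigma \le \cf^W(\theta)$ is wrong --- the image of a cofinal subset of $\theta$ under a bijection onto $\kappa$ is just some subset of $\kappa$ of that size and need not be cofinal in $\kappa$. (The claim is also unnecessary: the hypothesis $\cf^W(\kappa) \ne \cf^W(|\kappa|)$ already forces $\kappa$ to fail to be a cardinal in $W$, since otherwise $\theta = \kappa$ and the two cofinalities coincide.)

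The serious problem is that the assembly step is not a technicality that a coherent choice of the $e_\beta$'s would repair; the plan as you describe it cannot work, because it would yield $\cf^W(\tau)\le\sigma$ without using the hypothesis $\cf^W(\kappa)\ne\cf^W(\theta)$ at all, and that unconditional conclusion is false. Concretely, take $V = L$, $\kappa = \omega_2^L$, and let $W$ be the extension by $\col(\omega_1,\omega_2)$. Then $\theta = \omega_1$, $\sigma = \cf^W(\kappa) = \omega_1 = \cf^W(\theta)$, and $\tau = \omega_3^L = \omega_2^W$ is regular in $W$ with $\cf^W(\tau) = \tau > \theta \ge \sigma$. Here $V = L$ supplies all the square-type coherence one could ask for, so the obstruction is not the absence of a coherent $V$-structure: the hypothesis $\sigma \ne \cf^W(\theta)$ must enter the assembly step itself, and your sketch delegates exactly that step to an unexplained appeal to Shelah's pcf machinery. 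The paper cites the theorem to Shelah without reproving it, so there is no in-paper argument to compare against, but as written your proposal is missing the central idea of the proof.
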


\begin{corollary}[Burke-Matsubara \cite{bmclub}]
\label{cofcon}
If $\kappa = \mu^+$, $\lambda \geq \kappa$ is regular, and $I$ is a normal, fine, $\kappa$-complete, $\lambda^+$-saturated ideal on $\p_\kappa(\lambda)$, then $\{ z : \cf(z) = \cf(\mu) \} \in I^*$.
\end{corollary}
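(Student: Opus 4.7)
The plan is to force with $\p(\p_\kappa\lambda)/I$ below a hypothetically bad set and reach a contradiction by combining the structure of the generic ultrapower with Shelah's cofinality theorem. Suppose for contradiction that $B = \{z : \cf(z) \neq \cf(\mu)\} \in I^+$. Since $\cf(z) \leq |z| \leq \mu$ for $I$-almost all $z$, there are fewer than $\kappa$ possible values of $\cf(z)$, so $\kappa$-completeness lets me refine to $B_\nu := \{z : \cf(z) = \nu\} \in I^+$ for some fixed $V$-regular $\nu \leq \mu$ with $\nu \neq \cf(\mu)$. Forcing below $B_\nu$ yields a generic ultrapower $j : V \to M \subseteq V[G]$ with $\crit(j) = \kappa$ and $[\id] = j[\lambda]$; normality and fineness together with $\kappa$-completeness give $M \models |j[\lambda]| = \mu$, and by Theorem~\ref{disj} the combination of $\lambda^+$-saturation and normality (which yields disjoint representatives for antichains) ensures $M$ is closed under $\lambda$-sequences from $V[G]$.

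Next I would transfer cofinality computations from $M$ to $V[G]$. By {\L}o\'s's theorem and absoluteness of order type, $\cf^M(\lambda) = \cf^M(\ot([\id])) = \nu$; since $M$ is closed under $\lambda$-sequences from $V[G]$, any short cofinal sequence of $\lambda$ in $V[G]$ must already lie in $M$ and conversely, yielding $\cf^{V[G]}(\lambda) = \nu$. The same transfer applied to $\mu$, together with the elementarity identity $\cf^M(\mu) = j(\cf^V(\mu)) = \cf^V(\mu)$ (since $\cf^V(\mu) \leq \mu < \crit(j)$), gives $\cf^{V[G]}(\mu) = \cf^V(\mu)$.

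Finally I would pin down $|\lambda|^{V[G]} = \mu$ and invoke Shelah. The upper bound $|\lambda|^{V[G]} \leq \mu$ is immediate from $M \subseteq V[G]$; strictness would produce a bijection $\mu \to \xi$ with $\xi < \mu$ in $V[G]$, a $\mu$-sequence that would lie in $M$ by closure (as $\mu < \lambda$), contradicting the fact that $j(\mu) = \mu$ is a cardinal of $M$. Now Theorem~\ref{shelahcof} applied to $\lambda$ (regular in $V$, with $(\lambda^+)^V$ preserved by $\lambda^+$-saturation) gives $\cf^{V[G]}(\lambda) = \cf^{V[G]}(|\lambda|^{V[G]}) = \cf^{V[G]}(\mu) = \cf^V(\mu)$; combined with $\cf^{V[G]}(\lambda) = \nu$, this forces $\nu = \cf^V(\mu)$, contradicting our choice of $\nu$. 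The main obstacle is the cardinality step $|\lambda|^{V[G]} = \mu$: ruling out further collapse of $\mu$ relies essentially on the $\lambda$-closure of $M$ from Theorem~\ref{disj}, which is exactly what $\lambda^+$-saturation provides; once that is secured, Shelah's theorem closes the argument automatically.
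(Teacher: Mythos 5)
Your proof is correct and follows essentially the same route as the paper: force with $\p(\p_\kappa\lambda)/I$, use the $\lambda$-closure of $M$ (Theorem~\ref{disj}) to establish $|\lambda|^{V[G]}=\mu$ and to transfer cofinalities between $M$ and $V[G]$, apply Theorem~\ref{shelahcof} to get $\cf^{V[G]}(\lambda)=\cf^V(\mu)$, and read off the conclusion via \L o\'s. The only cosmetic difference is that you phrase it as a proof by contradiction below a refined positive set $B_\nu$, whereas the paper argues directly that $\{z:\cf(z)=\cf(\mu)\}\in G$ for every generic $G$.
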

\begin{proof}
Let $G$ be a generic ultrafilter extending $I^*$.  Since $\crit(j) = \kappa$ and $\lambda^+$ is preserved, $j(\kappa) = \lambda^+$, and $|\lambda| = \mu$ in $V[G]$.  By Shelah's theorem, $\cf(\lambda) = \cf(\mu)$ in $V[G]$ and in the ultrapower $M$ since $M^\mu \cap V[G] \subseteq M$.  Since $1 \Vdash [\id] = j[\lambda]$, \L o\'{s}'s theorem gives $\{ z : \cf(z) = \cf(\mu) \} \in I^*$.   
\end{proof}

\begin{theorem}
\label{dist}
Suppose $\kappa = \mu^+$, and $I$ is a normal, fine, $\kappa^+$-saturated ideal on $\kappa$.  Then $\p(\kappa) / I$ is $\cf(\mu)$-distributive iff $\mu^{<\cf(\mu)} = \mu$.
\end{theorem}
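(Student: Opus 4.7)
I plan to prove both directions using the generic ultrapower $j : V \to M \subseteq V[G]$ obtained by forcing with $\p(\kappa)/I$. Because $I$ is normal, fine, and $\kappa^+$-saturated (so the proposition immediately before Theorem~\ref{disj} applies) and $[\id]_G = j[\kappa] \in M$, Theorem~\ref{disj}(3) tells me that $M$ is closed under $\kappa$-sequences from $V[G]$. Both directions will rely on \L o\'s's theorem and on the fact that $I$, being $\kappa$-complete and nonprincipal, contains every subset of $\kappa$ of size ${<}\kappa$.

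For the forward direction I prove the contrapositive: suppose $\mu^\lambda \geq \mu^+ = \kappa$ for some $\lambda < \cf(\mu)$, and fix any $V$-injection $\psi : \kappa \to {}^\lambda \mu$. By \L o\'s the ultrapower element $[\psi]_G \in M$ is a function $\lambda \to \mu$, hence a $\lambda$-sequence of ordinals in $V[G]$. For any $s_0 \in V$ that is such a function, $\psi^{-1}(s_0)$ has at most one element and so lies in $I$; \L o\'s then forces $[\psi]_G \neq s_0$. Consequently $[\psi]_G \in V[G] \setminus V$ refutes $\cf(\mu)$-distributivity.

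For the converse, assume $\mu^{<\cf(\mu)} = \mu$. The core case handles $s : \lambda \to \mu$ in $V[G]$ with $\lambda < \cf(\mu)$: closure gives $s \in M$, so $s = [f]_G$ for some $f : \kappa \to {}^\lambda \mu$ in $V$, and $|\ran(f)| \leq \mu^\lambda = \mu < \kappa$. By $\kappa$-completeness the positive pieces of the partition $\{f^{-1}(c) : c \in \ran(f)\}$ form a maximal antichain in $\p(\kappa)/I$ (the union of the $I$-null pieces is a ${<}\kappa$-union of sets in $I$, hence in $I$), so $G$ selects some $f^{-1}(c^*)$ and $s = c^* \in V$ by \L o\'s.

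From this core case I extract $\cf^{V[G]}(\mu) = \cf(\mu)$, for otherwise a $V[G]$-cofinal sequence in $\mu$ of length $<\cf(\mu)$ would already lie in $V$, contradicting $\cf^V(\mu) = \cf(\mu)$. Cofinality invariance and $|\kappa|^{V[G]} = \mu$ then give $\cf^{V[G]}(\kappa) = \cf(\mu) > \lambda$. For a general $s : \lambda \to \ord$ in $V[G]$, the $\kappa^+$-chain condition confines each value of $s$ to a $V$-set of ordinals of size ${\leq}\kappa$, so a $V$-bijection reduces matters to $\tilde s : \lambda \to \kappa$; this $\tilde s$ is bounded in $\kappa$ in $V[G]$ by the cofinality calculation, and a further $V$-bijection into $\mu$ reduces to the core case. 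The main subtlety is precisely this order of reduction: the $\mu$-valued core step must be established first, since it is what pins $\cf^{V[G]}(\mu)$ down to its ground-model value and thereby allows unbounded-valued sequences to be bounded below $\kappa$.
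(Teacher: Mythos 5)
Your proof is correct and uses the same underlying toolkit as the paper's (generic ultrapower $j : V \to M$, closure of $M$ under $\kappa$-sequences from $V[G]$ via Theorem~\ref{disj}, the $\kappa^+$-c.c., Shelah's Theorem~\ref{shelahcof}, and the hypothesis $\mu^{<\cf(\mu)} = \mu$), but the converse direction is organized differently enough to be worth comparing. For the forward direction, the two arguments are the same idea in slightly different clothing: you produce a new $\lambda$-sequence concretely as $[\psi]_G$, where $\psi : \kappa \to {}^\lambda\mu$ is a $V$-injection, while the paper enumerates $[\mu]^{<\cf(\mu)}$ by a cardinal $\delta > \mu$ and notes that $j[\delta] \neq j(\delta)$ forces a new small subset of $\mu$ into $M$; both rest on the fact that $j$ fixes every member of $({}^\lambda\mu)^V$. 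For the converse, the paper first reduces to functions $f : \beta \to \kappa$ by the $\kappa^+$-c.c., then invokes Corollary~\ref{cofcon} (Burke--Matsubara, itself derived from Theorem~\ref{shelahcof}) to conclude $\cf^M(\kappa) = \cf(\mu) > \beta$, bounds $\ran(f)$ below some $\gamma < \kappa$, and finishes in one line by noting that $|{}^\beta\gamma| \leq \mu^\beta = \mu < \kappa$ forces $j({}^\beta\gamma) = ({}^\beta\gamma)^V$, so $f \in V$. You instead first handle $\mu$-valued sequences by a direct partition argument on $\kappa$ (which uses only genericity and $\kappa$-completeness of $I$, not the ambient embedding), then extract $\cf^{V[G]}(\mu) = \cf^V(\mu)$ as a byproduct, apply Theorem~\ref{shelahcof} to get $\cf^{V[G]}(\kappa) = \cf(\mu)$, and then reduce an arbitrary $s : \lambda \to \mathrm{Ord}$ to the core case via the $\kappa^+$-c.c.\ and two $V$-bijections. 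Logically the two converses use the same ingredients, with the paper's version being the more compressed of the two; yours makes the dependency structure more explicit and avoids quoting Corollary~\ref{cofcon}, at the cost of a few extra reduction steps and the observation that $\mu$ is preserved (which you should justify by noting that a collapsing map $\nu \to \mu$ with $\nu < \mu$ would lie in $M$ by closure, contradicting $j(\mu) = \mu$).
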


\begin{proof}
Suppose $\p(\kappa) / I$ is $\cf(\mu)$-distributive, and let $\{ f_\alpha : \alpha < \delta \}$ be an enumeration of $[\mu]^{<\cf(\mu)}$, where $\delta$ is a cardinal.  If $\mu < \delta$, then for any $\p(\kappa) / I$-generic $G$, $([\mu]^{<\cf(\mu)})^V$ is a proper subset of $([\mu]^{<\cf(\mu)})^{V[G]}$, since $j[\delta] \not= j(\delta)$.  This contradicts the distributivity of  $\p(\kappa) / I$.

Since $\p(\kappa)/I$ is $\kappa^+$-saturated, it is $\cf(\mu)$-distributive iff it is $(\cf(\mu), \kappa)$- \break distributive.  Let $G$ be $\p(\kappa) / I$-generic and let $M$ be the generic ultrapower.  Let $\beta < \cf(\mu)$, and suppose $f \in V[G]$ is a function from $\beta$ to $\kappa$.   By Theorem~\ref{disj}, $f \in M$.  By Corollary~\ref{cofcon}, $M \vDash \cf(\kappa) = \cf([\id]) = \cf(\mu)$.  Thus there is a $\gamma < \kappa$ such that $\ran(f) \subseteq \gamma$.  Observe that $j(^\beta \gamma) = (^\beta \gamma)^M = (^\beta \gamma)^V$, since $\mu^\beta < \kappa$.  Hence $f \in V$.  
\end{proof}

\subsection{Stationary reflection}
A stationary subset $S$ of a regular cardinal $\kappa$ is said to reflect if there is some $\alpha < \kappa$ of uncountable cofinality such that $S \cap \alpha$ is stationary in $\alpha$.  A collection of stationary subsets $\{ S_i : i < \delta \}$ of $\kappa$ is said to reflect simultaneously if there is some $\alpha < \kappa$ if $S_i \cap \alpha$ is stationary for all $i < \delta$.  It is well known that if $\kappa = \mu^+$ and $X$ is a set of regular cardinals below $\mu$, then the statement that every stationary subset of $\{ \alpha < \kappa : \cf(\alpha) \in X \}$ reflects contradicts $\square_\mu$, and the statement that every pair of stationary subsets of $\{ \alpha < \kappa : \cf(\alpha) \in X \}$ reflect simultaneously contradicts the weaker principle $\square(\kappa)$.

\begin{theorem}
Suppose there is a $\kappa^{+}$-complete, $\kappa^{++}$-saturated, uniform ideal on $\kappa^{+n}$ for some $n \geq 2$.  Then for $2 \leq m \leq n$, every collection of $\kappa$ many stationary subsets of $\kappa^{+m}$ contained in $\cof(\leq \kappa)$ reflects simultaneously. 
\end{theorem}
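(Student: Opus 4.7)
The plan is to analyze the generic ultrapower associated to $I$ and locate a simultaneous reflection point for $\langle j(S_\alpha)\rangle_{\alpha<\kappa}$ inside $M$, then apply elementarity.  Fix $\vec{S} = \langle S_\alpha : \alpha<\kappa\rangle$ with each $S_\alpha \subseteq \kappa^{+m}$ stationary and concentrated on $\cof(\leq\kappa)$.  Restricting $I$ to a positive set of minimal completeness, I may assume the generic embedding $j:V\to M$ has $\crit(j) = \kappa^+$.  By the disjoint-representatives principle together with Theorem~\ref{disj}, $M^{\kappa^+} \cap V[G] \subseteq M$.  The forcing $\p(\kappa^{+n})/I$ is $\kappa^{++}$-c.c., so $\kappa^{+m}$ stays regular in $V[G]$ (for $m \geq 2$) and every $V$-stationary subset of $\kappa^{+m}$ remains stationary.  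Since $\kappa < \crit(j)$, $j(\vec{S}) = \langle j(S_\alpha)\rangle_{\alpha<\kappa}$, and $j(\beta) = \sup j[\beta]$ for every $V$-ordinal $\beta$ of cofinality ${\leq}\kappa$.

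Let $\eta := \sup j[\kappa^{+m}]$; because $\cf^V(\kappa^{+m}) = \kappa^{+m} > \kappa$, $j$ is discontinuous at $\kappa^{+m}$, so $\eta < j(\kappa^{+m})$.  I will show that in $M$, $\eta$ simultaneously reflects $\langle j(S_\alpha)\rangle$.  Since $j[S_\alpha] \subseteq j(S_\alpha) \cap \eta$ and every $M$-club in $\eta$ is also a $V[G]$-club, it suffices to prove that $j[S_\alpha]$ meets every $V[G]$-club $C \subseteq \eta$.  Given such a $C$, define in $V[G]$
\[
\hat{C} = \{\beta < \kappa^{+m} : C \text{ is cofinal in } \sup j[\beta]\}.
\]
An interleaving construction — alternately choose $\gamma_n \in C$ above $\sup j[\beta_n]$ and $\beta_{n+1} < \kappa^{+m}$ with $j(\beta_{n+1}) > \gamma_n$, then pass to the $\omega$-limit — shows $\hat{C}$ is unbounded; closure follows from the identity $\sup j[\sup_\xi \beta_\xi] = \sup_\xi \sup j[\beta_\xi]$ combined with the closure of $C$ in $\eta$.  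Thus $\hat{C}$ is a $V[G]$-club in $\kappa^{+m}$.  Pick $\beta \in S_\alpha \cap \hat{C}$, available because $S_\alpha$ is stationary in $V[G]$.  Since $\cf^V(\beta) \leq \kappa$, continuity of $j$ at $\beta$ gives $\sup j[\beta] = j(\beta)$, so $C$ is cofinal in $j(\beta) < \eta$; the closure of $C$ in $\eta$ then forces $j(\beta) \in C$, whence $j(\beta) \in j[S_\alpha] \cap C$.

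Hence $M$ sees $\eta < j(\kappa^{+m})$ as a simultaneous reflection point for $\langle j(S_\alpha)\rangle$, and elementarity of $j$ delivers $\eta^* < \kappa^{+m}$ in $V$ simultaneously reflecting $\langle S_\alpha\rangle$.  The main obstacle is verifying that $\hat{C}$ is closed and unbounded; the essential use of the hypothesis $S_\alpha \subseteq \cof(\leq\kappa)$ comes at the final step, where the continuity of $j$ at $\beta$ is exactly what converts ``$C$ cofinal in $\sup j[\beta]$'' into ``$j(\beta) \in C$''.
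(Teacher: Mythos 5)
Your overall strategy is the same as the paper's: pass to a generic ultrapower $j : V \to M \subseteq V[G]$ with $\crit(j) = \kappa^+$, observe that $\kappa^{+m}$ and the sets $S_\alpha$ keep their regularity and stationarity in $V[G]$ via the $\kappa^{++}$-c.c., pull back clubs in $\eta = \sup j[\kappa^{+m}]$ along $j$ to clubs in $\kappa^{+m}$, use continuity of $j$ at points of $V$-cofinality $\leq\kappa$ to show each $j[S_\alpha]$ is stationary in $\eta$, then finish by elementarity. Your set $\hat C$ plays exactly the role that the paper's continuous extension of $j\restriction S_\alpha$ (and its preimage of a club) plays.

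The gap is in the single line where you justify $\eta < j(\kappa^{+m})$: ``because $\cf^V(\kappa^{+m}) = \kappa^{+m} > \kappa$, $j$ is discontinuous at $\kappa^{+m}$.'' That inference is not valid. An elementary embedding can be continuous at a regular cardinal above its critical point --- for instance at any cardinal it fixes, in which case $\sup j[\alpha] = \alpha = j(\alpha)$. To conclude $\eta < j(\kappa^{+m})$ you need both that $j(\kappa^{+m}) > \kappa^{+m}$ and that $\cf^{V[G]}(j(\kappa^{+m})) \neq \kappa^{+m}$, and for $m < n$ neither of these is automatic from the cofinality of $\kappa^{+m}$ alone. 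The paper fills this in with a counting argument: uniformity and the family of $\kappa^{+n+1}$ almost-disjoint functions force $j(\kappa^{+n}) \geq \kappa^{+n+1}$; since cardinals $\geq \kappa^{++}$ are preserved by the $\kappa^{++}$-c.c.\ (hence are $M$-cardinals) while $\kappa^+$ is not an $M$-cardinal, a count of $M$-cardinals in $(\kappa,\kappa^{+n+1})$ pins down $j(\kappa^{+m}) = (\kappa^{+m+1})^V$ for $1 \leq m \leq n$. Because $\kappa^{+m+1}$ is a $V[G]$-regular cardinal strictly larger than $\kappa^{+m} = \cf^{V[G]}(\eta)$, the inequality $\eta < j(\kappa^{+m})$ then follows. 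You should insert this step; once it is in place, the rest of your argument (the club $\hat C$, the choice of $\beta \in S_\alpha \cap \hat C$, and the reflection at $\eta$) is correct.
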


\begin{proof}
Suppose $I$ is such an ideal and $j : V \to M \subseteq V[G]$ is a generic embedding arising from the ideal.  The critical point of $j$ is $\kappa^+$, and all cardinals above $\kappa^{+}$ are preserved.  Since $I$ is uniform, and there is a family of $\kappa^{+n+1}$ many almost-disjoint functions from $\kappa^{+n}$ to $\kappa^{+n}$, $j(\kappa^{+n}) \geq (\kappa^{+n+1})^V$.  The first $n-1$ cardinals in $V$ above $\kappa$ must map onto the first $n-1$ cardinals in $M$ above $\kappa$.  But in $M$, there are at least $n-1$ cardinals in the interval $(\kappa,(\kappa^{+n+1})^V)$ since all cardinals above $\kappa^+$ are preserved.  Thus if $j(\kappa^{+n}) > (\kappa^{+n+1})^V$, then $\kappa^{+n+1}$ would be collapsed.  So for $1 \leq m \leq n$, $j(\kappa^{+m}) = (\kappa^{+m+1})^V$.

Let $\{ S_\alpha : \alpha < \kappa \}$ be stationary subsets of $\kappa^{+m}$ concentrating on $\cof(\leq \kappa)$, where $2 \leq m \leq n$.  By the $\kappa^{++}$-chain condition, these sets remain stationary in $V[G]$.  By the above remarks, $\gamma = \sup (j[\kappa^{+m}]) < j(\kappa^{+m})$.  For each $\alpha$, $j \restriction S_\alpha$ is continuous since $\kappa < \crit(j)$.  For each $\alpha$, let $C_\alpha$ be the closure of $S_\alpha$.  In $V[G]$, we can define a continuous increasing function $f : C_\alpha \to \gamma$ extending $j \restriction S_\alpha$ by sending $\sup (S_\alpha \cap \beta)$ to $\sup (j[S_\alpha \cap \beta])$ when $\beta$ is a limit point of $S_\alpha$. This shows that $j[S_\alpha]$ is stationary in $\gamma$.  Now $M$ may not have $j[S_\alpha]$ as an element, but it satisfies that $j(S_\alpha) \cap \gamma$ is stationary in $\gamma$.  Furthermore, $j( \{ S_\alpha : \alpha < \kappa \}) = \{ j(S_\alpha) : \alpha < \kappa \}$, and $M$ sees that these all reflect at $\gamma$.  By elementarity, the $S_\alpha$ have a common reflection point.  
\end{proof}

\begin{proposition}
\label{Z-refl}
Suppose $\mu,\kappa,\lambda$ are regular cardinals such that $\omega < \mu < \kappa = \mu^+ < \lambda$, and $I$ is an ideal on $\p_\kappa(\lambda)$ as in Theorem~\ref{maindense}.  Then every collection $\{ S_i : i < \mu \}$ of stationary subsets of $\lambda \cap \cof( \omega)$ reflects simultaneously.
\end{proposition}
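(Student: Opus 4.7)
The plan is to exploit the generic ultrapower arising from $I$.  Force over $V[X][H]$ with $\p(\p_\kappa(\lambda))/I$ to obtain a generic ultrafilter $U$; by the analysis concluding the proof of Theorem~\ref{maindense}, the associated embedding $\hat{j}: V[X][H] \to N$ extends $j : V \to M$, satisfies $\hat{j}(\kappa) = \delta = \lambda^+$, and has $N = M[\hat{X}][\hat{H}]$ closed under $\lambda$-sequences from $V[X][H][U]$.  Set $\gamma = \sup \hat{j}[\lambda]$.  Since $\hat{j}(\lambda)$ is regular in $N$ with $\hat{j}(\lambda) > \hat{j}(\kappa) = \lambda^+ > \mu = |\hat{j}[\lambda]|^N$, the set $\hat{j}[\lambda]$ is bounded in $\hat{j}(\lambda)$, so $\gamma < \hat{j}(\lambda)$.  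Because $\p(\p_\kappa\lambda)/I \cong \mathcal{B}(B(\mu,\kappa)/H_X \times \col(\mu,\lambda)) \restriction q$ is $\mu$-closed (the crucial input discussed below) and collapses $\lambda$ to $\mu$, one has $\cf^{V[X][H][U]}(\gamma) = \cf^{V[X][H][U]}(\lambda) = \mu$, and $\lambda$-closure of $N$ over $V[X][H][U]$ yields $\cf^N(\gamma) = \mu > \omega$.  Thus $\gamma$ is a viable reflection candidate in $N$.

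The next step is to show $\hat{j}(S_i) \cap \gamma$ is stationary in $\gamma$ in $N$ for each $i < \mu$.  Since $\mu < \kappa = \crit(\hat{j})$, one has $\hat{j}(\langle S_i \rangle_{i<\mu}) = \langle \hat{j}(S_i) \rangle_{i<\mu}$, so by elementarity this will produce the desired simultaneous reflection point $\alpha < \lambda$ of uncountable cofinality in $V[X][H]$.  Because $\omega < \crit(\hat{j})$, $\hat{j}$ is continuous at every $\alpha \in \lambda \cap \cof^{V[X][H]}(\omega)$: $\hat{j}(\alpha) = \sup \hat{j}[\alpha] < \gamma$, whence $\hat{j}[S_i] \subseteq \hat{j}(S_i) \cap \gamma$.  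By $\lambda$-closure of $N$, every $\mu$-sized subset of $\gamma$ in $V[X][H][U]$ lies in $N$, so stationarity in $\gamma$ in $N$ agrees with stationarity in $V[X][H][U]$; it therefore suffices to show $\hat{j}[S_i]$ is stationary in $\gamma$ in $V[X][H][U]$.

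Given a club $C \subseteq \gamma$ in $V[X][H][U]$, define $f : \lambda \to \gamma$ by $f(\alpha) = \sup \hat{j}[\alpha]$.  Then $f$ is continuous, strictly increasing, has cofinal range, and agrees with $\hat{j}$ on $\lambda \cap \cof^{V[X][H]}(\omega)$; hence $f^{-1}[C]$ is a club in $\lambda$ in $V[X][H][U]$.  The crucial fact is that $\p(\p_\kappa\lambda)/I$ is $\mu$-closed, hence $\omega_1$-closed: $B(\mu,\kappa)$ is the boolean completion of the $\mu$-closed poset $\col(\mu,{<}\kappa)*\add(\kappa)$, the complete subalgebra $A(\mu,\kappa)$ inherits $\mu$-closure, the quotient $B(\mu,\kappa)/H_X$ in $V[X]$ inherits it, the product with the $\mu$-closed $\col(\mu,\lambda)$ is $\mu$-closed, and restricting the boolean completion to $q$ preserves $\mu$-closure.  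The classical preservation argument for $\omega_1$-closed forcing then applies: choose a countable $M \prec H_\theta^{V[X][H]}$ with $M \cap \lambda = \alpha \in S_i$ and containing the relevant names, build a descending $(M, \p(\p_\kappa\lambda)/I)$-generic chain and take its lower bound $q$; then $q$ forces $\alpha \in \dot{C}$ for any forced club name $\dot{C}$, so stationary subsets of $\lambda \cap \cof(\omega)$ are preserved in $V[X][H][U]$.  Hence $f^{-1}[C] \cap S_i \neq \emptyset$, and any $\alpha$ in this intersection gives $\hat{j}(\alpha) = f(\alpha) \in C \cap \hat{j}[S_i]$.

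The main technical point is the $\mu$-closure of the quotient forcing $\p(\p_\kappa\lambda)/I$, which follows cleanly from the factorization in Theorem~\ref{maindense} together with the stability of $\mu$-closure under passage to complete subalgebras, quotients by generic filters, products, and boolean completions.  With this in hand, the continuity of $\hat{j}$ at $\omega$-cofinal ordinals and the standard preservation of $\cof(\omega)$-concentrated stationary sets by $\omega_1$-closed forcing combine to give simultaneous reflection at $\gamma$ in $N$, which elementarity then transfers to a point below $\lambda$ in $V[X][H]$.
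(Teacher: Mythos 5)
Your argument goes off the rails at the ``crucial input'' --- the claim that $\p(\p_\kappa\lambda)/I$ is $\mu$-closed. The chain of reasoning you offer does not support it. First, the boolean completion $\mathcal{B}(\mathbb{P})$ of a $\mu$-closed poset $\mathbb{P}$ is not in general $\mu$-closed as an algebra: it has a dense $\mu$-closed subset, which gives $\mu$-strategic closure and $\mu$-distributivity, but a descending $<\mu$-chain of nonzero boolean values can have infimum $0$ (a lottery sum of $\mu$-closed posets already gives a counterexample). Second, and more fatally, even the dense $\mu$-closed subset does not survive the passage to the complete subalgebra $A(\mu,\kappa)$ or to the quotient $B(\mu,\kappa)/H_X$. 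Lemma~\ref{quotdist} gives only $\kappa$-distributivity of $B(\mu,\kappa)/H_X$, and nothing in the paper or in general forcing theory delivers a dense $\mu$-closed subset of that quotient. Your elementary-submodel argument (``build a descending $(M,\p(\p_\kappa\lambda)/I)$-generic chain and take its lower bound'') therefore has no justification: the lower bound may fail to exist.

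The paper's proof avoids this by using the factorization $\p(\p_\kappa\lambda)/I \cong \mathcal{B}(\mathbb{P}\times\mathbb{Q})\restriction q$ at the level of the two factors rather than trying to transfer closure to the whole algebra. The factor $\mathbb{P} = B(\mu,\kappa)/H_X$ is $\kappa$-dense, hence $\kappa^+$-c.c., and $\kappa^+$-c.c.\ forcing preserves stationary subsets of the regular cardinal $\lambda>\kappa$. Since $\mathbb{P}$ is also $\kappa$-distributive, $\mathbb{Q}=\col(\mu,\lambda)$ stays $\mu$-closed after forcing with $\mathbb{P}$, and $\mu$-closed forcing with $\mu>\omega$ preserves stationary subsets of $\lambda\cap\cof(\omega)$. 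The composite thus preserves stationarity of the $S_i$, which is all that is needed: you do not need, and should not claim, $\omega_1$-closure of the full quotient algebra. The remainder of your argument --- continuity of $\hat{j}$ at $\omega$-cofinal ordinals, boundedness of $\sup\hat{j}[\lambda]$, transferring stationarity of $\hat{j}[S_i]$ into $N$ via $\lambda$-closure, and pulling back by elementarity --- matches the paper and is fine once the preservation step is repaired.
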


\begin{proof}The algebra $\p(\p_\kappa(\lambda)) / I$ is isomorphic to $\mathcal{B}(\mathbb{P} \times \mathbb{Q})$, where $\mathbb{P}$ is $\kappa$-dense and $\Vdash_\mathbb{P} ``\mathbb{Q}$ is $\mu$-closed.''  Forcing with $\mathbb{P} \times \mathbb{Q}$ thus preserves the stationarity of any subset of $\lambda \cap \cof(\omega)$.  If $j : V \to M \subseteq V[G]$ is a generic embedding arising from the ideal, then since $j[\lambda] \in M$ and $M$ thinks $j(\lambda)$ is regular, $\gamma = \sup(j[\lambda])< j(\lambda)$.  The restriction of $j$ to each $S_i$ is continuous, and as above we may define in $V[G]$ a continuous increasing function from the closure of $S_i$ into $\gamma$, showing $j[S_i]$ is stationary in $\gamma$ for each $i$.  Thus $M \models (\forall i < \mu) j(S_i) \cap \gamma$ is stationary, so by elementarity, the collection reflects simultaneously.   
\end{proof}

\subsection{Nonregular ultrafilters}
The computation of the cardinality of ultrapowers is an old problem of model theory.  Originally, it was conjectured that if $\mu,\kappa$ are infinite cardinals, and $U$ is a countably incomplete uniform ultrafilter on $\kappa$, then $| \mu^\kappa / U | = \mu^\kappa$ \cite{ck}.  It was shown by Donder~\cite{donder} that this conjecture holds in the core model below a measurable cardinal.  A key tool in computing the size of ultrapowers is the notion of regularity:

\begin{definition}
An ultrafilter $U$ on $Z$ is called \emph{$(\mu, \kappa)$-regular} if there is a sequence $\langle A_\alpha : \alpha < \kappa \rangle \subseteq U$ such that for any $Y \subseteq \kappa$ of order type $\mu$, $\bigcap_{\alpha \in Y} A_\alpha = \emptyset$.
\end{definition}

\begin{theorem}[Keisler \cite{keisler}]
\label{keisler}
Suppose $U$ is a $(\mu, \kappa)$-regular ultrafilter on $Z$, witnessed by $\langle A_\alpha : \alpha < \kappa \rangle$.  For each $z \in Z$, let $\beta_z = \ot( \{ \alpha : z \in A_\alpha \}) < \mu$.  Then for any sequence of ordinals $\langle \gamma_z : z \in Z \rangle$, we have $| \prod \gamma_z^{\beta_z} / U | \geq | \prod \gamma_z / U |^\kappa$.
\end{theorem}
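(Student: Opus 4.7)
The plan is to construct an injection from $(\prod \gamma_z / U)^\kappa$ into $\prod \gamma_z^{\beta_z} / U$, which immediately gives the desired inequality. For each $z \in Z$, let $\langle \alpha^z_\eta : \eta < \beta_z \rangle$ enumerate $\{\alpha < \kappa : z \in A_\alpha\}$ in increasing order; by definition of $\beta_z$ this is a bijection. Given a $\kappa$-sequence $\vec f = \langle f_\xi : \xi < \kappa\rangle$ of functions with $f_\xi(z) < \gamma_z$ for all $z$, define $\Phi(\vec f)$ on $Z$ by letting $\Phi(\vec f)(z) \in \gamma_z^{\beta_z}$ be the function $\eta \mapsto f_{\alpha^z_\eta}(z)$. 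Intuitively, at each $z$ we pack into a single $\beta_z$-tuple the values $f_\xi(z)$ for those $\xi$ with $z \in A_\xi$, in the canonical order provided by the regularity witness.

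The key step is to verify that $\Phi$ induces a well-defined injection $\langle [f_\xi]_U \rangle_{\xi<\kappa} \mapsto [\Phi(\vec f)]_U$ on ultrapower classes. Both well-definedness and injectivity reduce to a single claim: if $[\Phi(\vec f)]_U = [\Phi(\vec g)]_U$, then $[f_\xi]_U = [g_\xi]_U$ for every $\xi < \kappa$. To check this, fix $\xi$ and set $B = \{z \in Z : \Phi(\vec f)(z) = \Phi(\vec g)(z)\} \in U$. For each $z \in B \cap A_\xi$, the ordinal $\xi$ occurs in the enumeration $\langle \alpha^z_\eta \rangle$, say as $\alpha^z_{\eta_z}$; evaluating both $\Phi(\vec f)(z)$ and $\Phi(\vec g)(z)$ at $\eta_z$ yields $f_\xi(z) = g_\xi(z)$. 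Since $A_\xi \in U$, we have $B \cap A_\xi \in U$, so $[f_\xi]_U = [g_\xi]_U$.

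This is essentially a direct counting argument and I do not anticipate any significant obstacle. The only genuine uses of regularity are that each $\beta_z < \mu$ (matching the exponents in the target ultraproduct) and that each $A_\xi$ belongs to $U$, so that $U$-many $z$ witness coordinate $\xi$ and thereby allow $f_\xi$ to be recovered from $\Phi(\vec f)$. The conceptual content is that the regularity witness lets each point $z$ simultaneously ``see'' only fewer than $\mu$ of the $\kappa$ coordinates, so a single $\beta_z$-valued record at $z$ suffices to encode the whole $\kappa$-sequence up to $U$-equivalence.
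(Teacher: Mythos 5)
Your proof is the standard argument for Keisler's theorem; the paper cites this result rather than proving it, so there is no in-paper proof to compare against. The construction of $\Phi$ and your verification of the key claim are correct. One small wrinkle in the framing: the assignment $\langle [f_\xi]_U\rangle_{\xi<\kappa} \mapsto [\Phi(\vec f)]_U$ is \emph{not} well-defined on ultrapower classes. If one replaces each $f_\xi$ by $g_\xi$ with $N_\xi = \{z : f_\xi(z)\neq g_\xi(z)\}\notin U$, then $\Phi(\vec f)$ and $\Phi(\vec g)$ can differ on $\bigcup_{\xi<\kappa}(A_\xi\cap N_\xi)$, a union of $\kappa$ many $U$-null sets that need not be $U$-null; so well-definedness in the ``forward'' direction genuinely fails, and it is not equivalent to the implication you prove. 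This does not affect the result: your claim is precisely the ``reverse'' direction, and it shows that the map $\vec f \mapsto [\Phi(\vec f)]_U$, viewed on the unquotiented product $\prod_{\xi<\kappa}\prod_z\gamma_z$, has each fiber contained in a single $\sim$-class, where $\vec f\sim\vec g$ iff $[f_\xi]_U=[g_\xi]_U$ for all $\xi$. Hence its image in $\prod\gamma_z^{\beta_z}/U$ has cardinality at least $|\prod\gamma_z/U|^\kappa$, which is exactly the inequality required. Equivalently, fix a system of representatives for the classes in $\prod\gamma_z/U$ before applying $\Phi$; your argument then shows the resulting map is injective.
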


Obviously any uniform ultrafilter on a cardinal $\kappa$ is $(\kappa,\kappa)$-regular.  Also, any fine ultrafilter on $\p_\kappa(\lambda)$ is $(\kappa,\lambda)$-regular, as witnessed by $\langle \hat{\alpha} : \alpha < \lambda \rangle$.  Much can be seen by exploiting a connection between dense ideals and nonregular ultrafilters.

\begin{lemma}[Huberich \cite{huberich}]
\label{huberich}
Suppose $\mathbb{B}$ is a complete boolean algebra of density $\kappa$, where $\kappa$ is regular.  Then there is an ultrafilter $U$ on $\mathbb{B}$ such that whenever $X \subseteq \mathbb{B}$ and $\sum X \in U$, then there is $Y \subseteq X$ such that $|Y| < \kappa$ and $\sum Y \in U$.
\end{lemma}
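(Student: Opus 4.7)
The plan is to build $U$ by a transfinite recursion of length $\kappa$, choosing a decreasing sequence $\langle b_\xi : \xi < \kappa\rangle$ of positive elements whose upward closure is $U$, while using bookkeeping to guarantee the small-sub-sum property. First I would observe a crucial preliminary: $\mathbb{B}$ is $\kappa^+$-c.c., because any antichain $A$ injects into a fixed dense $D = \{d_\xi : \xi < \kappa\}$ via $a \mapsto \min\{\xi : d_\xi \leq a\}$. Consequently every antichain has cardinality at most $\kappa$, and we may reduce the statement to antichains: given arbitrary $X \subseteq \mathbb{B}$, take a maximal antichain $A$ among elements lying below some member of $X$; then $|A| \leq \kappa$, $\sum A = \sum X$, and a witness $A' \in [A]^{<\kappa}$ with $\sum A' \in U$ lifts to $Y \in [X]^{<\kappa}$ with $\sum Y \geq \sum A' \in U$ by selecting one $x \in X$ above each $a \in A'$.

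Next I would enumerate $D$ as $\{d_\xi : \xi < \kappa\}$ and fix a bookkeeping of length $\kappa$ that assigns to each stage an antichain to be processed, interleaved so that any antichain arising from the first $\xi$ decisions is addressed at some later stage (possible since the antichains generated from any initial segment of length $<\kappa$ number at most $\kappa$). At stage $\xi+1$, choose $b_{\xi+1} \leq b_\xi$ to (i) decide $d_\xi$ (put $b_{\xi+1} \leq d_\xi$ if $b_\xi \wedge d_\xi \neq 0$, otherwise $b_{\xi+1} \leq -d_\xi$) and (ii) handle the bookkeeping task: if this is an antichain $A$ with $b_\xi \leq \sum A$, refine further so that $b_{\xi+1} \leq \sum A'$ for some $A' \in [A]^{<\kappa}$, otherwise arrange $b_{\xi+1} \wedge \sum A = 0$. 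The key density fact justifying (ii) is that $D_A = \{c : \exists A' \in [A]^{<\kappa}, c \leq \sum A'\}$ is dense below $\sum A$: for any $c \leq \sum A$, one has $c = \sum_{a \in A}(c \wedge a)$, so some $c \wedge a$ is nonzero, and $c \wedge a \leq \sum \{a\}$ with $|\{a\}| = 1 < \kappa$. The decisions in (i) ensure that the resulting filter meets $\{d, -d\}$ for each $d \in D$ and is therefore an ultrafilter, since $D$ is dense.

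The hard part is the limit stages, where the meet $\bigwedge_{\eta<\xi} b_\eta$ could collapse to zero. I would organize the recursion as a tree of candidate conditions, with levels of size at most $\kappa$ by the chain condition, prune branches whose meets vanish, and extract a cofinal branch via a König-style argument exploiting the regularity of $\kappa$ together with the fact that the bookkeeping only imposes $<\kappa$ demands on any initial segment, so that at each limit a suitable branch survives. Once a cofinal branch is in hand, the filter $U$ it generates has the desired property: for any $X$, passing to its associated antichain $A$ and invoking the bookkeeping stage that processed $A$ yields the required $A' \in [A]^{<\kappa}$ with $\sum A' \in U$, hence the required $Y \in [X]^{<\kappa}$. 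The most delicate point will be arranging the bookkeeping so that every antichain that could ever satisfy $\sum A \in U$ is actually visited during the construction, which uses the reduction step to confine our attention to antichains built from $D$ and hence to at most $\kappa$ equivalence classes relative to the decisions made along the chosen branch.
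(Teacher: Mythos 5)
Your reduction to antichains and your density observation are fine, and the $\kappa^+$-c.c.\ observation is correct, but the core of your strategy --- building $U$ as the upward closure of a $\kappa$-length decreasing chain $\langle b_\xi : \xi < \kappa \rangle$ with nonzero meets at all limit stages --- cannot work for an arbitrary complete boolean algebra of density $\kappa$, and the ``tree plus K\"onig-style argument'' you invoke to patch the limit stages does not repair it. Consider $\mathbb{B} = \mathcal{B}(\col(\omega,\omega_1))$ with $\kappa = \omega_1$. Your $U$, being an ultrafilter, must contain $\lVert \dot f(n) = \alpha_n \rVert$ for some $\alpha_n < \omega_1$ for every $n < \omega$ (since $\bigwedge_{\alpha} \lVert \dot f(n) \ne \alpha \rVert = 0$, an ultrafilter cannot omit every value). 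Each such decision is witnessed by some $b_{\xi_n}$, and $\xi_* := \sup_n \xi_n < \omega_1$ by regularity; but then $\bigwedge_{\eta < \xi_*} b_\eta \le \bigwedge_n \lVert \dot f(n) = \alpha_n \rVert = 0$, since no condition in $\col(\omega,\omega_1)$ decides $\dot f$ on all of $\omega$. So \emph{no} decreasing $\omega_1$-chain with nonzero limit meets can generate an ultrafilter on this algebra, regardless of bookkeeping or which branch of a tree you try to follow. The K\"onig-style extraction also fails on its own terms: a tree of height $\kappa$ with levels of size $\le \kappa$ need not have a cofinal branch (Aronszajn trees already defeat this at $\kappa = \omega_1$).

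The paper's proof avoids chains entirely. For each maximal antichain $A$ it defines a closure ordinal $\gamma_A < \kappa$ (using regularity of $\kappa$) and the set $C_A = \{ d \in D \restriction \gamma_A : (\exists a \in A)\, d \le a \}$, which has size $< \kappa$. It then shows the family $F = \{ \sum C_A : A \text{ a maximal antichain} \}$ has the finite intersection property by a finite chaining argument: ordering $\gamma_{A_1} \le \cdots \le \gamma_{A_n}$, one finds $d_{\alpha_1} \ge \cdots \ge d_{\alpha_n}$ with $d_{\alpha_i} \in C_{A_i}$, so $d_{\alpha_n} \le \bigwedge_i \sum C_{A_i}$. \emph{Any} ultrafilter $U \supseteq F$ then has the desired property, because if $\sum X \in U$ one passes to a maximal antichain $A'$ refining $X$ and gets $\sum C_{A'} \in F \subseteq U$ with $|C_{A'}| < \kappa$. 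The key idea you are missing is that the required property can be guaranteed by a small filter base fixed in advance; there is no need to enumerate tasks and decide them along a chain, which is in general impossible.
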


\begin{proof}
Let $D = \{ d_\alpha : \alpha < \kappa \}$ be dense in $\mathbb{B}$.  For any maximal antichain $A \subseteq \mathbb{B}$, let $\gamma_A > 0$ be least such that for all $\alpha < \gamma_A$, there are $\beta < \gamma_A$ and $a \in A$ such that $d_\beta \leq d_\alpha \wedge a$.  Let $C_A = \{ d \in D \restriction \gamma_A : (\exists a \in A) d \leq a \}$.  Let $F = \{ \sum C_A : A$ is a maximal antichain$\}$.

We claim $F$ has the finite intersection property.  Let $A_1,...,A_n$ be maximal antichains.  We may assume $\gamma_{A_1} \leq ... \leq \gamma_{A_n}$.  Let $d_{\alpha_1} \leq d_0 \wedge a_1$ for some $a_1 \in A_1$, where $\alpha_1 < \gamma_{A_1}$.  Let $d_{ \alpha_2} \leq d_{\alpha_1} \wedge a_2$ for some $a_2 \in A_2$, where $\alpha_2 < \gamma_{A_2}$.  Proceeding inductively, we get a descending chain $d_{\alpha_1} \geq ... \geq d_{\alpha_n}$, where each $d_{\alpha_i} \in C_{A_i}$.  Thus $d_{\alpha_n} \leq \sum C_{A_1} \wedge ... \wedge \sum C_{A_n}$.

Let $U \supseteq F$ be any ultrafilter.  If $\sum X \in U$, then we can find an antichain $A$ that is maximal below $\sum X$ such that $(\forall a \in A)(\exists x \in X) a \leq x$.  Extending $A$ it to a maximal antichain $A'$, we have $\sum C_{A'} \in F$.  Since $|C_{A'}| < \kappa$, the conclusion follows.   
\end{proof}

If $I$ is an ideal on $Z$ and $U'$ is an ultrafilter on $\p(Z)/I$, then $U'$ generates an ultrafilter $U \supseteq I^*$ on $Z$ by taking $U = \{ X : [X]_I \in U' \}$.

\begin{lemma}
\label{singreg}
Suppose $\kappa=\mu^+$, $\lambda$ is regular, and $I$ is a normal and fine, $\kappa$-complete, $\lambda$-dense ideal on $Z \subseteq \p_\kappa(\lambda)$.  Then any ultrafilter $U \supseteq I^*$ given by Lemma \ref{huberich} is $(\cf(\mu)+1,\lambda)$-regular.
\end{lemma}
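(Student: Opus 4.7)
First, I would invoke Corollary~\ref{cofcon} (applicable since $\lambda$-density implies $\lambda^+$-saturation) to obtain $Z^* := \{z \in Z : \cf(\sup z) = \cf(\mu)\} \in I^* \subseteq U$. For each $z \in Z^*$, fix an increasing cofinal sequence in $z$ of length $\cf(\mu)$ and let $c_z \subseteq z$ be its range; such a sequence exists because $z$ is cofinal in $\sup z$ and $\cf(\sup z) = \cf(\mu)$.

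The natural candidates for the regularity witnesses are $A_\alpha := \{z \in Z^* : \alpha \in c_z\}$ for $\alpha < \lambda$. For each $z \in Z^*$, the index set $\{\alpha < \lambda : z \in A_\alpha\}$ equals $c_z$, which has order type $\cf(\mu)$, so it contains no subset of order type $\cf(\mu)+1$; hence $\bigcap_{\alpha \in Y} A_\alpha = \emptyset$ for every $Y \subseteq \lambda$ of order type $\cf(\mu)+1$, as required.

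The crux is to guarantee $A_\alpha \in U$ for each $\alpha < \lambda$. This depends on the specific Huberich construction of $U$: letting $D = \{[D_\beta] : \beta < \lambda\}$ be the dense subset from Lemma~\ref{huberich}, $U$ is built to contain $\sum [C_A]$ for every maximal antichain $A$, where $C_A \subseteq D$ has size less than $\lambda$. The plan is to coordinate $z \mapsto c_z$ with $D$: one partitions each $\hat{\alpha}$ modulo $I$ according to the ``position'' of $\alpha$ within an associated cofinal $\cf(\mu)$-sequence in $z$, treats this as a maximal antichain below $[\hat{\alpha}]$, and invokes Huberich's covering property to extract a positive piece that lies in $U$ and witnesses ``$\alpha \in c_z$''.

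The principal obstacle is the global coordination across all $\lambda$ many $\alpha$. A single uniform choice of $z \mapsto c_z$ pins $[c]_U$ inside the ultrapower $M_U$ to a cofinal subset of $[\id]$ of $M_U$-cardinality $\cf(\mu)$, which can absorb at most $\cf(\mu)$ of the images $j(\alpha)$, placing only $\cf(\mu)$ many $A_\alpha$'s in $U$. To achieve $\lambda$ many, one must allow the choice of $c_z$ to vary with $\alpha$ in a controlled way: the key is to diagonalize across $\lambda$-many local Huberich choices using the $\kappa$-completeness of $I$, producing an assignment whose resulting $A_\alpha$'s simultaneously lie in $U$ while keeping the externally-indexed set $\{\alpha : z \in A_\alpha\}$ of order type at most $\cf(\mu)$ for every $z \in Z^*$.
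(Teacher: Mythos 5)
Your opening moves match the paper: you invoke Corollary~\ref{cofcon} to get $\{z : \cf(z) = \cf(\mu)\} \in I^*$ and fix a cofinal $c_z \subseteq z$ of order type $\cf(\mu)$. But your candidate witnesses $A_\alpha = \{z : \alpha \in c_z\}$ do not work, and your own diagnosis of the obstruction is correct: with a single fixed assignment $z \mapsto c_z$, only about $\cf(\mu)$ of the sets $A_\alpha$ can land in any ultrafilter, so nothing close to a $\lambda$-indexed family is obtained. The remedy you sketch --- ``diagonalize across $\lambda$-many local Huberich choices using the $\kappa$-completeness of $I$, allowing $c_z$ to vary with $\alpha$'' --- is not a proof and in fact works against you: if $c_z$ depends on $\alpha$, the set $\{\alpha : z \in A_\alpha\}$ is no longer controlled by $\ot(c_z)$ and the order-type bound that made regularity hold in your second paragraph evaporates. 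Moreover, $\kappa$-completeness only controls unions of size $<\kappa$, and $\lambda \geq \kappa$, so it does not obviously let you coordinate $\lambda$-many choices.

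The missing idea is to relax ``$\alpha \in c_z$'' to ``$c_z$ meets a window associated to $\alpha$.'' The paper builds a $\lambda$-sequence of pairwise disjoint, increasing intervals $(x_\alpha, y_\alpha) \subseteq \lambda$ and takes $X_\alpha = \{z : c_z \cap (x_\alpha,y_\alpha) \neq \emptyset\}$. The freedom to choose each interval is exactly what lets you force membership in $U$: having chosen $x_\beta$ above all previous $y_\alpha$, normality turns the regressive map $z \mapsto \min(c_z \setminus (x_\beta+1))$ into a maximal antichain, and Huberich's covering property yields a subantichain of size $<\lambda$ whose sum is in $U'$; taking $y_\beta$ to bound the corresponding (fewer than $\lambda$) ordinal values puts $X_\beta$ in $U$. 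Meanwhile regularity is immediate from disjointness: if $z \in \bigcap_{\alpha \in s} X_\alpha$, picking one point of $c_z$ in each $(x_\alpha,y_\alpha)$ gives an order-embedding of $s$ into $c_z$, so $\ot(s) \leq \cf(\mu)$. Your proposal never arrives at this interval device, and without it the argument does not close.
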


\begin{proof}
Let $U'$ be an ultrafilter on $\p(Z)/I$ given by Lemma \ref{huberich} and let $U$ be the corresponding ultrafilter on $Z$.  By Corollary~\ref{cofcon}, $\{ z : \cf(z) = \cf(\mu) \} \in I^*$.  For such $z$, choose $A_z \subseteq z$ of order type $\cf(\mu)$ that is cofinal in $z$.    We will inductively build a sequence of intervals $\{ (x_\alpha,y_\alpha) : \alpha < \lambda \}$, each contained in $\lambda$, such that $y_\alpha < x_\beta$ when $\alpha < \beta$, and such that for all $\alpha$, $\{ z : A_z \cap (x_\alpha,y_\alpha) \not= \emptyset \} \in U$.

Suppose we have constructed the intervals up to $\beta$.  Let $\lambda > x_\beta > \sup \{ y_\alpha : \alpha < \beta \}$.  For $z \in \hat{x}_\beta$, let $y_\beta(z) \in z$ be such that $A_z \cap (x_\beta,y_\beta(z)) \not= \emptyset$.  Since $I$ is normal, there is a maximal antichain $A$ of $I$-positive sets such that for all $a \in A$, $y_z(\beta)$ is the same for all $z \in a$.  There is some $A' \subseteq A$ of size $< \lambda$ such that $\sum A' \in U'$.  Let $y_\beta > x_\beta$ be such that for $z \in a \in A'$, $y_\beta(z) < y_\beta$.

For $\alpha < \lambda$, let $X_\alpha = \{ z : A_z \cap (x_\alpha,y_\alpha) \not= \emptyset \}$.  Since each $A_z$ has order type $\cf(\mu)$ and the intervals $(x_\alpha,y_\alpha)$ are disjoint and increasing, each $A_z$ cannot have nonempty intersection with all intervals in some sequence of length greater than $\cf(\mu)$.  Thus if $s \subseteq \lambda$ and $z \in \bigcap_{\alpha \in s} X_\alpha$, then $\ot(s) \leq \cf(\mu)$. 
\end{proof}

\begin{lemma}
\label{ultsize}
Suppose $I$ is a $\lambda$-dense, $\kappa$-complete ideal on $Z$, and $\p(Z)/I$ is a complete boolean algebra.  Then any $U \supseteq I^*$ given by Lemma \ref{huberich} has the property that for all $\alpha < \kappa$, $| \alpha^Z / U | \leq 2^{<\lambda}$.
\end{lemma}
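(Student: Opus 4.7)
The plan is to show that every $U$-class in $\alpha^Z/U$ admits a canonical ``dense approximation'' built from a fixed dense subset of $\mathbb{B} := \p(Z)/I$ via the Huberich property of $U$, and then to count these approximations. Fix a dense $D \subseteq \mathbb{B}$ of cardinality $\lambda$, and let $U'$ denote the ultrafilter on $\mathbb{B}$ corresponding to $U$. Given $f : Z \to \alpha$ with $\alpha < \kappa$, the $\kappa$-completeness of $I$ makes $\langle [f^{-1}(\beta)]_I : \beta < \alpha \rangle$ a genuine partition of $1$ in $\mathbb{B}$. Setting $D_f := \{ d \in D : d \leq [f^{-1}(\beta)]_I \text{ for some } \beta < \alpha \}$, density forces $\sum D_f = 1 \in U'$, so the Huberich property yields $C \subseteq D_f$ with $|C| < \lambda$ and $\sum C \in U'$. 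Disjointness of the partition then determines a unique $\phi : C \to \alpha$ via $c \leq [f^{-1}(\phi(c))]_I$.

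I would next show that the pair $(C,\phi)$ recovers $[f]_U$. Form the partition $\langle a_\beta \rangle_{\beta < \alpha}$ of $1$ in $\mathbb{B}$ by setting $a_\beta := \sum\{ c \in C : \phi(c)=\beta\}$ for $\beta \in \mathrm{ran}(\phi) \setminus \{0\}$, $a_\beta := 0$ for $\beta \notin \mathrm{ran}(\phi) \cup \{0\}$, and $a_0 := 1 - \sum_{\beta \neq 0} a_\beta$; using $|\alpha|<\kappa$ together with $\kappa$-completeness, this partition can be realized as $\langle [g^{-1}(\beta)]_I \rangle$ for some $g : Z \to \alpha$. Because $\alpha < \kappa$, the relation $f \sim_U g$ translates into the purely algebraic statement $\bigvee_{\beta < \alpha} [f^{-1}(\beta)]_I \wedge a_\beta \in U'$, and a direct computation (using $c \leq [f^{-1}(\phi(c))]_I$ and the disjointness of the partition) shows this join dominates $\sum C$, hence lies in $U'$.

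The counting is then routine. There are at most $\lambda^{<\lambda}$ subsets $C \subseteq D$ of size $<\lambda$ and at most $\alpha^{<\lambda}$ maps $\phi : C \to \alpha$. In the natural regime $\alpha < \kappa \leq \lambda$, both quantities are $\leq \lambda^{<\lambda}$; since $2^{<\lambda} \geq \lambda$ holds for every infinite $\lambda$, one has $\lambda^{<\lambda} = 2^{<\lambda}$, giving $|\alpha^Z/U| \leq 2^{<\lambda}$. The main technical obstacle is the second step: $|C|$ can exceed $\kappa$, so the Boolean sum $\sum C$ need not correspond to a set-theoretic union of representatives in $\p(Z)$, which is why $f \sim_U g$ must be verified inside $\mathbb{B}$ via the algebraic reformulation that the hypothesis $\alpha < \kappa$ makes available.
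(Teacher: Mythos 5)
Your proposal is correct and follows essentially the same strategy as the paper's proof: use the Huberich ultrafilter to extract, for each $f : Z \to \alpha$, a ${<}\lambda$-sized subset of a fixed dense set together with an $\alpha$-labeling, then count these pairs; the only cosmetic difference is that the paper first refines the fiber partition to a maximal antichain inside $D$ before applying the Huberich property, whereas you apply it directly to the dense set $D_f$ (which works equally well, since the labeled pieces of $C$ are still pairwise disjoint across distinct labels). One small caveat: the inference from $2^{<\lambda}\geq\lambda$ to $\lambda^{<\lambda}=2^{<\lambda}$ is not valid in general (it fails for singular strong limit $\lambda$), but it does hold for regular $\lambda$, and regularity of $\lambda$ is already built into the hypothesis of Lemma~\ref{huberich}, so the conclusion stands.
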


\begin{proof}
Let $U'$ be an ultrafilter on $\p(Z)/I$ given by Lemma \ref{huberich} and let $U$ be the corresponding ultrafilter on $Z$.
To compute a bound on $|\alpha^Z/U|$ for $\alpha < \kappa$, we identify a small subset of $\alpha^Z$ and show that it contains representative of every equivalence class modulo $U$. Let $D$ witness $\lambda$-density. Choose an antichain $A \subseteq D$ of size $< \! \lambda$, and choose $f : A \to \alpha$.  There are $\sum_{\gamma < \lambda} \lambda^\gamma \cdot \alpha^\gamma = 2^{<\lambda}$ many choices.  Using $\kappa$-completeness, let $\{ B_\beta : \beta < \alpha \}$ be pairwise disjoint and such that each $[B_\beta]_I = \sum f^{-1}(\beta)$.  Let $g_f : Z \to \alpha$ be defined by $g_f(z) = \beta$ if $z \in B_\beta$ and $g_f(z) = 0$ if $z \notin \bigcup_{\beta<\alpha} B_\beta$.

Now let $g : Z \to \alpha$ be arbitrary.  By $\kappa$-completeness, $A = \{ g^{-1}(\beta) :\beta < \alpha$ and $g^{-1}(\beta) \in I^+ \}$ forms a maximal antichain.  Let $A' \subseteq D$ be a maximal antichain refining $A$.  There is some $A'' \subseteq A'$ of size $< \! \lambda$ such that $\sum A'' \in U'$.  Let $f : A'' \to \alpha$ be defined by $f(a) = \beta$ iff $a \leq_I g^{-1}(\beta)$.  If $[B]_I = \sum A''$, then $\{ z \in B : g(z) \not= g_f(z) \} \in I$, so $g =_U g_f$.
\end{proof}

The following contrasts with the consistency results of Section 2:

\begin{corollary}Suppose $\mu$ is a singular cardinal such that $2^{\cf(\mu)} < \mu$, $\lambda$ is regular, and $2^{<\lambda} < 2^\lambda$.  Then there is no normal and fine, $\lambda$-dense ideal on $\p_{\mu^+}(\lambda)$.  Furthermore, there is a proper class of such $\lambda$.
\end{corollary}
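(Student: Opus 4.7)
The plan is to combine Lemmas \ref{huberich}, \ref{singreg}, and \ref{ultsize} with Keisler's Theorem~\ref{keisler} to force a contradiction in cardinal arithmetic. Assume toward contradiction that $I$ is such an ideal on some $Z \subseteq \p_{\mu^+}(\lambda)$. First observe that $\p(Z)/I$ is a complete boolean algebra: $\lambda$-density yields $\lambda^+$-saturation, so by the disjoint-representatives proposition (valid for normal, fine, $|X|^+$-saturated ideals), Theorem~\ref{disj} applies. Apply Lemma~\ref{huberich} to $\p(Z)/I$ to obtain an ultrafilter $U'$ extending $I^*$ with the small-supremum property, and pass to the induced ultrafilter $U$ on $Z$. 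By Lemma~\ref{singreg}, $U$ is $(\cf(\mu)+1,\lambda)$-regular, witnessed by a sequence $\langle A_\alpha : \alpha < \lambda \rangle$; set $\beta_z := \ot\{\alpha : z \in A_\alpha\} \le \cf(\mu)$ for each $z \in Z$.

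Next, apply Keisler's Theorem~\ref{keisler} with the constant sequence $\gamma_z = 2$. Since every function $Z \to 2$ is $U$-equivalent to a constant, $|2^Z/U| = 2$, and the inequality yields
\[
|\textstyle\prod_{z \in Z} 2^{\beta_z} / U| \;\geq\; |2^Z/U|^{\lambda} \;=\; 2^{\lambda}.
\]
On the other side, $2^{\beta_z} \leq 2^{\cf(\mu)} < \mu < \mu^+$ by hypothesis, so the left-hand ultrapower injects into $(2^{\cf(\mu)})^Z/U$. By Lemma~\ref{ultsize} applied with $\alpha = 2^{\cf(\mu)} < \mu^+$, this latter ultrapower has cardinality at most $2^{<\lambda}$. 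Chaining the two bounds gives $2^\lambda \leq 2^{<\lambda}$, contradicting the hypothesis $2^{<\lambda} < 2^\lambda$.

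For the final clause, I would argue that the class of regular $\lambda$ with $2^{<\lambda} < 2^{\lambda}$ is unbounded among the cardinals. Indeed, if $2^\nu = 2^{\nu^+}$ held for every $\nu$ beyond some $\nu_0$, then $\nu \mapsto 2^\nu$ would be eventually constant at some value $C$, and taking $\nu = C$ would give $2^C = C$, contradicting Cantor. Hence there are unboundedly many successor cardinals $\lambda = \nu^+$ with $2^{<\lambda} = 2^\nu < 2^{\nu^+} = 2^\lambda$, producing a proper class of admissible $\lambda$.

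The only delicate point is calibrating $\gamma_z$ in the Keisler step: the test cardinals $\gamma_z^{\beta_z}$ must lie below $\mu^+$ for Lemma~\ref{ultsize} to cap the ultrapower at $2^{<\lambda}$, while $\gamma_z$ itself must be at least $2$ for $|\prod \gamma_z/U|^{\lambda}$ to reach $2^{\lambda}$. The assumption $2^{\cf(\mu)} < \mu$ is precisely what permits the choice $\gamma_z = 2$ to satisfy both constraints simultaneously, and no weaker choice of parameters would close the gap.
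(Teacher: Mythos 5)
Your argument for the non-existence of the ideal is essentially identical to the paper's: both apply Lemma~\ref{huberich} to $\p(Z)/I$ (after noting it is a complete boolean algebra), invoke Lemma~\ref{singreg} for $(\cf(\mu)+1,\lambda)$-regularity, run Keisler's Theorem~\ref{keisler} with $\gamma_z = 2$ to get $|\prod 2^{\beta_z}/U| \ge 2^\lambda$, and then cap the left side by $2^{<\lambda}$ via Lemma~\ref{ultsize} using $2^{\cf(\mu)} < \mu < \mu^+$. That part is fine, and your discussion of why the constant $\gamma_z = 2$ is the right choice is a useful observation.

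The argument for the ``proper class'' clause has a genuine gap. You reduce the negated conclusion to the hypothesis ``$2^\nu = 2^{\nu^+}$ for all $\nu$ beyond some $\nu_0$'' and assert that this forces $\nu \mapsto 2^\nu$ to be eventually constant. But that assertion is not justified at weakly inaccessible cardinals: the hypothesis $2^\gamma = 2^{\gamma^+}$ controls the continuum function only across successor steps and, by the formula $2^\nu = (2^{<\nu})^{\cf(\nu)}$, across singular limits; at a weakly inaccessible $\nu$ this formula gives no information beyond $2^\nu \geq 2^{<\nu}$, and $2^\nu$ is free to jump strictly above the value $C$ attained below. Thus ``eventually constant'' does not follow from the successor instances alone. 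The fix is exactly what the paper does: the failure of the conclusion actually gives $2^{<\lambda} = 2^\lambda$ for \emph{all} regular $\lambda \geq \alpha$, including inaccessibles, and the paper uses this directly (``if $\beta$ is regular, $2^{<\beta} = 2^\beta$ by assumption''), handling the singular case separately via Jech's Theorem 5.16 and the computation $\kappa^{\cf(\beta)} = (2^\gamma)^{\cf(\beta)} = 2^\gamma = \kappa$ for suitable $\gamma$. You should replace your ``eventually constant'' step by the paper's induction, which invokes the full strength of the negated hypothesis rather than only its successor instances.
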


\begin{proof}
Suppose such an ideal exists, and let $U \supseteq I^*$ be given by Lemma \ref{huberich}.  Then $| \prod \mu /U| \leq 2^{<\lambda}$, and $U$ is $(\cf(\mu)+1,\lambda)$-regular.  Theorem~\ref{keisler} implies that $| \prod 2^{\cf(\mu)} / U | \geq 2^\lambda$, a contradiction.

Assume for a contradiction that $\alpha$ is such that $2^{<\lambda} = 2^\lambda$ for all regular $\lambda \geq \alpha$.  Let $\kappa = 2^\alpha$.  We will show by induction the impossible conclusion that $2^\beta = \kappa$ for all $\beta \geq \alpha$.  Suppose that this holds for all $\gamma < \beta$. If $\beta$ is regular, $2^{<\beta} = 2^\beta$ by assumption, so $2^\beta = \kappa$.  If $\beta$ is singular, then by \cite[Theorem 5.16]{jechbook} $2^\beta = (2^{<\beta})^{\cf(\beta)} = \kappa^{\cf(\beta)}$.  If $\cf(\beta) < \gamma < \beta$, then $\kappa = 2^\gamma = (2^\gamma)^{\cf(\beta)} = \kappa^{\cf(\beta)}$. 
\end{proof}

\begin{corollary}
\label{incon1}
If $\kappa$ is singular such that $2^{\cf(\kappa)} < \kappa$, then there is no uniform, $\kappa^+$-complete, $\kappa^+$-dense ideal on $\kappa^{+n}$ for $n \geq 2$.
\end{corollary}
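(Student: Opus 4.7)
The plan is to reduce to the preceding corollary by deriving, from any hypothetical $I$, a normal and fine, $\kappa^+$-dense ideal on $\p_{\kappa^+}(\kappa^+)$ in $V$, then invoking that corollary with $\mu = \kappa$ and $\lambda = \kappa^+$. The cardinal-arithmetic hypothesis $2^{<\kappa^+} < 2^{\kappa^+}$ that the preceding corollary requires will come for free once $2^\kappa = \kappa^+$ is established.

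First I would extend Corollary~\ref{wg} from ideals on $\kappa^{++}$ to ideals on $\kappa^{+n}$ for every $n \geq 2$, essentially by repeating its proof.  Suppose for contradiction that $f : \p(\kappa) \to \kappa^{++}$ is a surjection in $V$, and force with $\col(\omega,\kappa)$.  By Lemma~\ref{pres}, $\bar I$ is $\aleph_1$-complete and $\aleph_1$-dense in $V[G]$ on the set $\kappa^{+n}$, which now has cardinality $\aleph_n$.  To invoke Woodin's theorem (extended to $\omega_n$ as remarked earlier), I would first uniformize by replacing $\bar I$ with the ideal $\hat I$ generated by $\bar I$ together with all $V[G]$-sets of size ${<}\kappa^{+n}$; $\hat I$ is proper because the combined size of any covering $X \cup Y$ with $X \in \bar I$ and $|Y|^{V[G]} < \aleph_n$ is at most $\aleph_{n-1}$ in $V[G]$, while $\kappa^{+n} = \aleph_n$ survives.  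The extended Woodin theorem then gives $V[G] \models \mathrm{CH}$, which is contradicted by the restriction of $f$ to $\p(\kappa)^V \subseteq \p(\omega)^{V[G]}$ surjecting onto $\aleph_2^{V[G]}$.  Hence $2^\kappa = \kappa^+$, and consequently $2^{<\kappa^+} = \kappa^+ < 2^{\kappa^+}$.

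Next I would construct $I'$.  Let $\dot j : V \to M$ be the $V$-name for the generic ultrapower embedding associated to $I$; uniformity and $\kappa^+$-completeness of $I$ give $\crit(\dot j) = \kappa^+$ and $\dot j \restriction \kappa^+ = \id$, so $\dot j[\kappa^+] = \kappa^+ \in M$.  Thus in $V[G]$ the set $\dot U' = \{ A \in \p(\p_{\kappa^+}(\kappa^+))^V : \kappa^+ \in \dot j(A) \}$ is a normal, fine, $\kappa^+$-complete ultrafilter, and the $V$-ideal $I' = \{ A : \| \check A \in \dot U' \|_{\p(\kappa^{+n})/I} = 0 \}$ inherits these properties via standard computations: normality follows from the equivalence $\kappa^+ \in \dot j(\nabla_\alpha A_\alpha)$ iff some $A_\beta \in \dot U'$ for $\beta < \kappa^+$, and fineness from $\alpha \in \kappa^+$ for every $\alpha < \kappa^+$.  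The map $e([A]_{I'}) = \| \check A \in \dot U' \|$ is then a complete boolean embedding of $\p(\p_{\kappa^+}(\kappa^+))/I'$ into $\p(\kappa^{+n})/I$: joins in the domain are $\kappa^+$-indexed diagonal unions by Proposition~\ref{sums}, which $e$ preserves, and general joins reduce to this case using the $\kappa^{++}$-saturation of $I'$ inherited from $I$.  Since complete embeddings do not increase density (project a dense set of size $\kappa^+$ in the codomain back through the regular suborder), $I'$ is $\kappa^+$-dense.

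Finally, the preceding corollary applied with $\mu = \kappa$ and $\lambda = \kappa^+$ forbids the existence of $I'$, because all of $\kappa$ singular with $2^{\cf(\kappa)} < \kappa$, $\kappa^+$ regular, and $2^{<\kappa^+} < 2^{\kappa^+}$ hold.  This contradicts our construction of $I'$, completing the proof.  The main obstacle will be verifying the complete-embedding property of $e$ cleanly, since this underpins both the $\kappa^+$-density of $I'$ and the faithful transfer of structure from $I$ to $I'$; the essential ingredients are normality of $I'$ (which makes $\kappa^+$-indexed joins explicit as diagonal unions) together with $\kappa^{++}$-saturation (which reduces all other joins to the $\kappa^+$-indexed case).
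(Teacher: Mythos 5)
Your proposal follows essentially the same route as the paper: derive an ideal on $\kappa^+$ (equivalently $\p_{\kappa^+}(\kappa^+)$) from $I$ via $X \mapsto \|\kappa^+ \in j(X)\|$, verify it is normal, fine, and $\kappa^+$-dense using normality of the induced generic ultrafilter and the $\kappa^{++}$-saturation inherited from $I$, obtain $2^\kappa = \kappa^+$ from the collapse-to-Woodin argument, and then contradict the preceding corollary with $\mu = \kappa$, $\lambda = \kappa^+$. One small inaccuracy worth flagging: in your extension of Corollary~\ref{wg} you claim $|X \cup Y| \leq \aleph_{n-1}$ in $V[G]$ for $X \in \bar I$ and $Y$ small, but $X \in \bar I$ only means $X$ is covered by some $B \in I$, which can have size $\kappa^{+n}$, so that bound is false; the intended ``uniformization'' is in fact unnecessary, since the $\kappa^+$-c.c.\ of $\col(\omega,\kappa)$ already implies every $V[G]$-set of size ${<}\aleph_n$ is covered by a $V$-set of size ${<}\kappa^{+n}$ lying in $I$, so $\bar I$ is uniform as it stands.
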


\begin{proof}Assume $I$ is a uniform, $\kappa^+$-complete, $\kappa^+$-dense ideal on $\kappa^{+n}$ for some $n \geq 2$.  Define $\phi : \p(\kappa^+) \to {\p(\kappa^{+n})/I}$ by $X \mapsto || \kappa^+ \in j(X) ||_{\p(\kappa^{+n})/I}$.  Let $J = \ker \phi$.  $\phi$ lifts to an embedding of $\p(\kappa^+)/J$ into $\p(\kappa^{+n})/I$.  Since $J$ is clearly normal and $\kappa^{++}$-saturated, the embedding is regular, since for a maximal antichain $\{ A_\alpha : \alpha < \kappa^+ \}$, $\Vdash \kappa^+ \in j(\nabla_{\alpha < \kappa^+} A_\alpha)$, so it is forced that for some $\alpha<\kappa^+$, $\phi(A_\alpha)$ is in the generic filter.  Thus $J$ is a normal $\kappa^+$-dense ideal on $\kappa^+$.  We have $2^{<\kappa^+} < 2^{\kappa^+}$ by Corollary~\ref{wg}, so $\kappa$ cannot be singular such that $2^{\cf(\kappa)} < \kappa$.   
\end{proof}

These methods can also be used to deduce more cardinal arithmetic consequences of dense ideals.  First we need a few more lemmas:

\begin{theorem}[Kunen-Prikry \cite{kp}]
\label{kp}
If $\kappa$ is regular and $U$ is a $(\kappa^+,\kappa^+)$-regular ultrafilter, then $U$ is $(\kappa,\kappa)$-regular.
\end{theorem}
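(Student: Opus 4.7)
My plan is a two-step reduction, first to uniform ultrafilters on $\kappa^+$ and then to a ladder-system analysis in the ultrapower.  Let $\langle A_\alpha : \alpha < \kappa^+ \rangle \subseteq U$ witness $(\kappa^+,\kappa^+)$-regularity.  Set $S_z = \{\alpha < \kappa^+ : z \in A_\alpha\}$; the witness hypothesis together with the regularity of $\kappa$ forces $|S_z| \le \kappa$, since a subset of $S_z$ of order type $\kappa^+$ would place $z$ in the corresponding $\kappa^+$-fold intersection.  Because $\kappa < \cf(\kappa^+)$, each $S_z$ is bounded in $\kappa^+$, so $g(z) := \sup S_z$ defines a function $g : Z \to \kappa^+$.

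The first step is to reduce to uniform ultrafilters on $\kappa^+$.  The pushforward $V := g_*(U)$ is uniform: for any $\alpha < \kappa^+$, $A_\alpha \subseteq \{z : g(z) > \alpha\}$, so every cobounded subset of $\kappa^+$ belongs to $V$.  Moreover, $(\kappa,\kappa)$-regularity pulls back along $g$: given a witness $\langle C_\xi : \xi < \kappa\rangle \subseteq V$, setting $B_\xi := g^{-1}(C_\xi) \in U$ yields $\bigcap_{\xi \in Y} B_\xi = g^{-1}(\bigcap_{\xi \in Y} C_\xi) = g^{-1}(\emptyset) = \emptyset$ for any $Y \subseteq \kappa$ of order type $\kappa$.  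So it suffices to prove that every uniform ultrafilter on $\kappa^+$ is $(\kappa,\kappa)$-regular.

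For that step, I would pass to the ultrapower $j_V : V \to M_V$ and work with a ladder system: for each limit $\alpha < \kappa^+$, fix a cofinal increasing map $c_\alpha : \cf(\alpha) \to \alpha$.  The element $\eta := [\id]_V$ lies strictly above $j_V[\kappa^+]$, and $\cf^{M_V}(\eta) = [z \mapsto \cf(z)]_V$ is an $M_V$-cardinal at most $j_V(\kappa)$.  The ladder system produces, via \L o\'{s}, an $M_V$-cofinal sequence $\sigma : \cf^{M_V}(\eta) \to \eta$.  Choosing $\gamma^* < \cf^{M_V}(\eta) \le j_V(\kappa)$ minimal with $\sigma(\gamma^*) \ge j_V(\kappa)$, one extracts from $\sigma$ and the $c_\alpha$ an $M_V$-set $X \subseteq j_V(\kappa)$ with $j_V[\kappa] \subseteq X$ and $|X|^{M_V} < j_V(\kappa)$; translating back via \L o\'{s} gives the required witness $\langle B_\xi : \xi < \kappa\rangle$.

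The main obstacle is the case $\cf^{M_V}(\eta) = j_V(\kappa)$, where $\cf(\alpha) = \kappa$ for $V$-a.e.\ $\alpha$: here the ladder system alone does not bound $j_V[\kappa]$ within $j_V(\kappa)$.  One must use that $\kappa^+$, being a successor cardinal, is not measurable, so $V$ is not $\kappa^+$-complete; this non-completeness yields a refined internal decomposition of $V$ that, combined with the ladder system, permits the extraction of $X$.
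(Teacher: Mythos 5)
The paper cites this result from Kunen--Prikry without proof, so there is no in-paper argument to compare against; I'll assess your attempt on its own terms.

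Your first reduction is sound (modulo the cosmetic slip that $A_\alpha \subseteq \{z : g(z) \ge \alpha\}$ rather than ${>}\alpha$, which changes nothing): from $(\kappa^+,\kappa^+)$-regularity of $U$ you correctly extract $g : Z \to \kappa^+$ with a uniform pushforward $V = g_*(U)$, and $(\kappa,\kappa)$-regularity pulls back along $g$. So the task does reduce to showing that every uniform ultrafilter $V$ on $\kappa^+$ (with $\kappa$ regular) is $(\kappa,\kappa)$-regular, which is equivalent to producing $X \in M_V$ with $j_V[\kappa] \subseteq X \subseteq j_V(\kappa)$ and $|X|^{M_V} < j_V(\kappa)$.

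The second half has a genuine gap, and I think the ladder system is the wrong tool. Even in your ``easy'' case $\cf^{M_V}(\eta) < j_V(\kappa)$, the set $\sigma[\gamma^*]$ you propose extracting is a small subset of $j_V(\kappa)$, but there is simply no reason it should contain $j_V[\kappa]$: the values $\sigma(\gamma) = [\,\alpha \mapsto c_\alpha(\delta(\alpha))\,]_V$ need not ever equal a fixed $j_V(\xi)$. In your ``hard'' case $\cf^{M_V}(\eta) = j_V(\kappa)$ you explicitly defer to an unspecified ``refined internal decomposition,'' which is where the actual content of the theorem lives. Moreover the split on $\cf^{M_V}(\eta)$ is not the operative dichotomy. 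The standard argument instead splits on whether $\sup j_V[\kappa] = j_V(\kappa)$, and uses \emph{bijections} rather than ladders. If $\sup j_V[\kappa] < j_V(\kappa)$, pick $h : \kappa^+ \to \kappa$ representing $\sup j_V[\kappa]$; then $A_\xi = \{\alpha : h(\alpha) > \xi\}$ is a decreasing chain in $V$ with empty intersection, and since $\kappa$ is regular this already yields $(\kappa,\kappa)$-regularity. If $\sup j_V[\kappa] = j_V(\kappa)$, fix bijections $e_\alpha : \kappa \to \alpha$ for $\kappa \le \alpha < \kappa^+$; in $M_V$ their \L o\'{s}-class $e$ is a bijection from $j_V(\kappa)$ onto $\eta = [\id]_V$. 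For each $\gamma < \kappa^+$ we have $e^{-1}(j_V(\gamma)) < j_V(\kappa) = \sup j_V[\kappa]$, so there is $\xi_\gamma < \kappa$ with $\{\alpha : \gamma \in e_\alpha[\xi_\gamma]\} \in V$. By pigeonhole, some fixed $\xi^* < \kappa$ works for $\kappa$-many $\gamma$, say $\{\gamma_\zeta : \zeta < \kappa\}$; the sets $B_\zeta = \{\alpha : \gamma_\zeta \in e_\alpha[\xi^*]\} \in V$ then witness $(\kappa,\kappa)$-regularity, since $|e_\alpha[\xi^*]| = |\xi^*| < \kappa$ forbids any $\alpha$ from lying in $\kappa$-many of the $B_\zeta$. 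The bijections are essential here; a ladder $c_\alpha$ only enumerates a cofinal sequence, not all of $\alpha$, so it cannot feed this pigeonhole.
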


\begin{lemma}
\label{linear}
Suppose $( L, < )$ is a linear order such that for all $x \in L$, \break $| \{ y \in L : y < x \} | \leq \kappa$.  Then $|L| \leq \kappa^+$.
\end{lemma}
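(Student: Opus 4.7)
The plan is to bound $|L|$ by exhibiting an increasing cofinal sequence of short length, then covering $L$ by initial segments.

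First, if $L$ is empty the claim is trivial, so assume not. I would define an increasing sequence $\langle x_\alpha : \alpha < \beta\rangle$ in $L$ by transfinite recursion: pick $x_0 \in L$ arbitrary, and given $\langle x_\gamma : \gamma < \alpha\rangle$, set $x_\alpha$ equal to any strict upper bound of $\{x_\gamma : \gamma < \alpha\}$ in $L$ if one exists, and halt otherwise. This must halt at some stage $\beta$ since $L$ is a set, and at this halting stage $\{x_\alpha : \alpha < \beta\}$ is cofinal in $L$ (every $y \in L$ satisfies $y \leq x_\alpha$ for some $\alpha < \beta$, else $y$ would be an admissible choice for $x_\beta$).

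The key observation is that $\beta \leq \kappa^+$: if the recursion succeeded in defining $x_{\kappa^+}$, then the set $\{x_\alpha : \alpha < \kappa^+\}$ would be a subset of $\{y \in L : y < x_{\kappa^+}\}$ of cardinality $\kappa^+$, contradicting the hypothesis.

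Finally, writing $L = \bigcup_{\alpha < \beta} \{y \in L : y \leq x_\alpha\}$, each set on the right has cardinality at most $\kappa + 1$, so $|L| \leq |\beta| \cdot (\kappa+1) \leq \kappa^+$, completing the proof. There is no real obstacle here; the argument is a direct transfinite induction, and the only subtle point is recognizing that the halting stage of the greedy construction bounds the cofinal length of $L$, which in turn is bounded by the predecessor constraint on a single element.
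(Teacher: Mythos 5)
Your proof is correct. The paper states Lemma~\ref{linear} without giving a proof, so there is no reference argument to compare against, but yours is the natural one and it works. Two points worth making explicit, both of which you handle correctly but tersely: first, the passage from ``$\{x_\alpha : \alpha < \beta\}$ has no strict upper bound in $L$'' to ``$\{x_\alpha : \alpha < \beta\}$ is cofinal'' genuinely uses that $<$ is a \emph{linear} order (in a mere partial order a non-upper-bound need not lie below any member of the set); second, the bound $\beta \leq \kappa^+$ is obtained by observing that if $x_{\kappa^+}$ were defined then its predecessor set would contain the $\kappa^+$ distinct elements $\{x_\alpha : \alpha < \kappa^+\}$, violating the hypothesis. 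With $\beta \leq \kappa^+$ established, covering $L$ by the initial segments $\{y : y \leq x_\alpha\}$ for $\alpha < \beta$, each of size $\leq \kappa$ (for infinite $\kappa$, which is the intended setting), gives $|L| \leq \kappa^+ \cdot \kappa = \kappa^+$ as claimed.
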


\begin{corollary}
\label{wg2}
Suppose there is a $\kappa^+$-complete, $\kappa^+$-dense ideal on $\kappa^{+n}$, where $n \geq 2$.  Then for $0 \leq m \leq n$, $2^{\kappa^{+m}} = \kappa^{+m+1}$.
\end{corollary}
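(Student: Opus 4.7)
The plan is induction on $m$, reducing each case to Corollary \ref{wg} (for $m=0$) or to the generalized form of Jech--Prikry (for $m\geq 1$). Throughout, restrict $I$ to a positive set so that it is uniform on $\kappa^{+n}$, and let $j\colon V\to M\subseteq V[G]$ be the generic embedding associated with $I$. Then $j(\kappa^{+k}) = (\kappa^{+k+1})^V$ for $1\leq k\leq n$, by the almost-disjoint-function argument in the theorem preceding Proposition \ref{Z-refl}. Defining $\phi_k(X) = \|\kappa^{+k}\in j(X)\|_{\p(\kappa^{+n})/I}$ and $J_k = \ker\phi_k$ as in the proof of Corollary \ref{incon1} yields a normal, uniform, $\kappa^+$-complete, $\kappa^+$-dense ideal $J_k$ on $\kappa^{+k}$ for each $1\leq k\leq n+1$.

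The base case $m=0$ is obtained by applying Corollary \ref{wg} to $J_2$ on $\kappa^{++}$. For the inductive step, assume $2^{\kappa^{+j}} = \kappa^{+j+1}$ for all $j<m$. My plan is to invoke the generalized Jech--Prikry statement displayed just after the list of structural constraints, applied with the ``$\kappa$'' there taken to be $\kappa^+$ (so $\mu=\kappa$ and $2^\mu = \kappa^+$ by the base case) and $\lambda = \kappa^{+m}$: the conclusion would then be $2^{\kappa^{+m}} = \kappa^{+m+1}$, provided we supply a normal, fine, $\kappa^+$-complete, $\kappa^{+m+1}$-saturated ideal on $\p_{\kappa^+}(\kappa^{+m})$. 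Such an ideal is given by the seed construction $K_m = \{X\subseteq\p_{\kappa^+}(\kappa^{+m}) : \|j[\kappa^{+m}]\in j(X)\|_{\p(\kappa^{+n})/I} = 0\}$, whose quotient regularly embeds into $\p(\kappa^{+n})/I$; the $\kappa^{++}$-saturation of the latter comfortably gives $\kappa^{+m+1}$-saturation for $K_m$.

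The main obstacle is verifying that the seed $j[\kappa^{+m}]$ lies in $M$, which is what makes $K_m$ well defined. For $m=1$ this is immediate since $j\restriction\kappa^+ = \id$. For $m\geq 2$, the plan is to bootstrap upward using the inductive hypothesis and Theorem \ref{disj}(3): having established $\kappa^{+m-1}$-closure of $M$ from the previous inductive step (via the seed $j[\kappa^{+m-1}]\in M$), I would represent the length-$\kappa^{+m}$ sequence $\langle j(\alpha):\alpha<\kappa^{+m}\rangle$ as an element of $M$ by coding through functions $\kappa^{+n}\to\kappa^{+m}$ whose equivalence classes modulo $I$ enumerate $j[\kappa^{+m}]$. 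The $\kappa^+$-density of $\p(\kappa^{+n})/I$ supplies the needed antichain-based reductions, and this coding step, which closely parallels the closure analysis in the proof of Theorem \ref{maindense}, is the most delicate part of the induction.
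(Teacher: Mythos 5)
Your approach diverges substantially from the paper's and, as sketched, it has a fatal cardinality mismatch in the inductive step. You propose to seed an ideal on $\p_{\kappa^+}(\kappa^{+m})$ using $j[\kappa^{+m}]$. For such a seed to define a (proper, fine) ideal, it must be an element of $\p_{j(\kappa^+)}(j(\kappa^{+m}))$ as computed in $M$; in particular it must have size $< j(\kappa^+) = \kappa^{++}$. But $j[\kappa^{+m}]$ has order type (hence cardinality) $\kappa^{+m}$, which for $m \geq 2$ is $\geq \kappa^{++} = j(\kappa^+)$. So $j[\kappa^{+m}] \notin j(X)$ for every $X \subseteq \p_{\kappa^+}(\kappa^{+m})$, meaning every $\phi_m(X)$ is zero and $K_m$ is the whole power set --- not an ideal at all. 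The construction thus collapses exactly at $m = 2$, which is where the induction is supposed to start doing real work.

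Separately, even setting this aside, the bootstrap you sketch for showing $j[\kappa^{+m}] \in M$ does not go through. Theorem~\ref{disj}(3) runs in the direction ``$j[\lambda] \in M$ implies $M$ is $\lambda$-closed,'' not the reverse, and $\kappa^{+m-1}$-closure of $M$ says nothing about a set of size $\kappa^{+m}$ being an element of $M$. For ideals on a cardinal (as opposed to normal fine ideals on $\p_\kappa(\lambda)$, where $[\id] = j[\lambda]$ is built into the definitions), there is no reason for $j[\kappa^{+m}]$ to lie in the ultrapower. The coding step you flag as ``most delicate'' is in fact the whole problem, and it is not fillable by these means.

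The paper's proof proceeds along a genuinely different line. Rather than seeding off embeddings, it fixes the ultrafilter $U \supseteq I^*$ produced by Lemma~\ref{huberich}, bounds $|\kappa^{\kappa^{+n}}/U| \leq 2^\kappa = \kappa^+$ via Lemma~\ref{ultsize} and Corollary~\ref{wg}, and then uses Lemma~\ref{linear} to propagate the bound $|(\kappa^{+m})^{\kappa^{+n}}/U| \leq \kappa^{+m+1}$ upward. The Kunen--Prikry theorem (Theorem~\ref{kp}) downgrades $(\kappa^{+n},\kappa^{+n})$-regularity of $U$ to $(\kappa^{+m},\kappa^{+m})$-regularity, and then Keisler's theorem (Theorem~\ref{keisler}) turns these regularity and ultrapower-size facts into the bound $2^{\kappa^{+m}} \leq \kappa^{+m+1}$. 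The inductive hypothesis is used only to replace $2^{\kappa^{+m-1}}$ by $\kappa^{+m}$ inside Keisler's estimate. This is a ZFC ultrafilter-combinatorics argument; no generic embeddings, no seed constructions, no closure of $M$.
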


\begin{proof}
Let $I$ be such an ideal, and let $U \supseteq I^*$ be given by Lemma~\ref{huberich}.  By Lemma~\ref{ultsize}, $|\kappa^{\kappa^{+n}} / U | \leq 2^\kappa$, which is $\kappa^+$ by Corollary~\ref{wg}.  Note that for any cardinal $\mu$, any ultrafilter $V$ on a set $Z$, and any $g : Z \to \mu^+$,  $\{ [f]_V : f <_V g \}$ has cardinality at most $| \mu^Z / V |$.   Thus, applying Lemma~\ref{linear} inductively, we get that $| (\kappa^{+m})^{\kappa^{+n}} / U | \leq \kappa^{+m+1}$ for all $m < \omega$.

$U$ is $(\kappa^{+n},\kappa^{+n})$-regular, so by Theorem~\ref{kp}, it is $(\kappa^{+m},\kappa^{+m})$-regular for $m \leq n$.  Assume for induction that $2^{\kappa^{+r}} = \kappa^{+r+1}$ for $r < m \leq n$; note the base case $m = 1$ holds.  Let $\{ X_\alpha : \alpha < \kappa^{+m} \}$ witness $(\kappa^{+m},\kappa^{+m})$-regularity, and let $\beta_z = \ot(\{ \alpha : z \in X_\alpha \})$.  By Theorem~\ref{keisler} and the above observations, we have:
\[ 2^{\kappa^{+m}} \leq |\prod 2^{\beta_z} / U| \leq |\prod 2^{\kappa^{+m-1}} / U| = |\prod \kappa^{+m} / U| \leq \kappa^{+m+1} \mbox{. } \] \end{proof}

We note that if the hypothesis of Corollary~\ref{wg2} is consistent, then no cardinal arithmetic above $\kappa^{+n}$ can be deduced from it, since any forcing which adds no subsets of $\kappa^{+n}$ will preserve the relevant properties of the ideal.

By combining this technique with the results of Section 2, we can answer the following, which was Open Question 16 from \cite{foremanhandbook}:

\begin{question}[Foreman]
Is it consistent that there is a uniform ultrafilter $U$ on $\omega_3$ such that $\omega^{\omega_3} /U$ has cardinality $\omega_3$? Is it consistent that there is a uniform ultrafilter $U$ on $\aleph_{\omega+1}$ such that $\omega^{\aleph_{\omega+1}} /U$ has cardinality $\aleph_{\omega+1}$? Give a characterization of the possible cardinalities of ultrapowers.
\end{question}

\begin{theorem}Assume ZFC is consistent with a super-almost-huge cardinal.  Then it is consistent that every regular uncountable cardinal $\kappa$ carries a uniform ultrafilter $U$ such that $|\omega^\kappa / U| = \kappa$. 
\end{theorem}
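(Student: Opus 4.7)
The plan is to apply the super-almost-huge construction from Section 2.4 with $\mu = \omega$, yielding a model $W$ in which, for every regular $\lambda \geq \omega_1$, there is a normal, $\omega_1$-complete, $\lambda$-dense ideal $I_\lambda$ on $\p_{\omega_1}(\lambda)$. First I would verify that $2^{<\lambda} = \lambda$ for every regular $\lambda \geq \omega_1$ in $W$; this is arranged by the construction via Corollary~\ref{wg2} and the derivation of dense ideals on cardinals from those on $\p_{\omega_1}(\lambda)$ as in the proof of Corollary~\ref{incon1}, giving in particular $|\p_{\omega_1}(\lambda)| = \lambda^\omega = \lambda$ for regular uncountable $\lambda$.

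For each regular uncountable $\kappa$ in $W$, I would apply Lemma~\ref{huberich} to the complete Boolean algebra $\p(\p_{\omega_1}(\kappa))/I_\kappa$ (complete by Theorem~\ref{disj}) of density $\kappa$, obtaining an ultrafilter $U$ on $Z := \p_{\omega_1}(\kappa)$ with the small-sum property. Lemma~\ref{ultsize}, applied with $\alpha = \omega$ and density $\kappa$, immediately gives $|\omega^Z/U| \leq 2^{<\kappa} = \kappa$. To pass to $\kappa$, I would construct a bijection $\pi : Z \to \kappa$ for which $\pi^{-1}$ of every bounded subset of $\kappa$ lies in $I_\kappa$, by enumerating $Z$ compatibly with the $\subseteq$-increasing filtration $A_\alpha = \{z \in Z : z \subseteq \alpha\}$ (each $A_\alpha \in I_\kappa$ by fineness, and $|A_\alpha| \leq |\alpha|^\omega < \kappa$ by GCH). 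Set $U^* = \pi_*(U)$. Then $U^*$ is uniform on $\kappa$, because any $Y \subseteq \kappa$ with $|Y| < \kappa$ is bounded (regularity), whence $\pi^{-1}(Y) \in I_\kappa$ and $Y \notin U^*$; and because $\pi$ is a bijection, $|\omega^\kappa/U^*| = |\omega^Z/U| \leq \kappa$.

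For the lower bound $|\omega^\kappa/U^*| \geq \kappa$, I would exploit the density $\kappa$ together with the small-sum property. The Huberich ultrafilter is countably incomplete, giving the trivial bound $|\omega^\kappa/U^*| \geq 2^\omega = \omega_1$ from general ultrafilter theory, which already suffices for $\kappa = \omega_1$. For $\kappa > \omega_1$, I would exhibit $\kappa$ pairwise $U^*$-inequivalent $\omega$-valued functions by pulling back the canonical representatives $g_f$ from the proof of Lemma~\ref{ultsize}: given a dense $\{d_\alpha : \alpha < \kappa\}$ in $\p(\kappa)/\pi_*I_\kappa$, pairs $(A,f)$ with $A \subseteq D$ an antichain of size ${<}\kappa$ and $f : A \to \omega$ yield canonical $g_f$, and the small-sum property ensures that distinct induced partitions give genuinely distinct $U^*$-classes. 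Combining this with the upper bound gives the equality $|\omega^\kappa/U^*| = \kappa$.

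The main obstacle is the lower bound: the upper bound and uniformity are quick consequences of Lemmas~\ref{ultsize} and the bijection $\pi$, but pinning down the cardinality of the ultrapower to exactly $\kappa$ (rather than some value strictly between $\omega_1$ and $\kappa$) requires carefully exploiting the interaction between the density, the saturation, and the small-sum property, and is where the construction's tightness is essential. This delicate match between upper and lower bounds is what makes the theorem answer Foreman's question and cover every regular uncountable $\kappa$ simultaneously in a single model.
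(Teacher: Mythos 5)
Your overall structure matches the paper: take the model from Section 2 with $\mu=\omega$, fix the $\lambda$-dense ideal $I_\lambda$ on $\p_{\omega_1}(\lambda)$, apply Lemma~\ref{huberich} to get $U$ with the small-sum property, and invoke Lemma~\ref{ultsize} to bound $|\omega^Z/U|\le 2^{<\lambda}=\lambda$. The uniformity of the transported ultrafilter is also handled correctly (the paper observes more simply that every $Y\subseteq Z$ of size ${<}\lambda$ is in $I$, so any bijection $Z\to\lambda$ works, but your refinement is fine). Your shortcut for $\kappa=\omega_1$ via countable incompleteness and CH is legitimate.

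The gap is in the lower bound for $\kappa>\omega_1$. Your claim that ``the small-sum property ensures that distinct induced partitions give genuinely distinct $U^*$-classes'' is not true, and nothing in your argument forces the canonical representatives $g_f$ to be pairwise $U$-inequivalent. The small-sum property is a \emph{surjectivity} statement (every $g$ is $U$-equivalent to some $g_f$); it gives the upper bound on $|\omega^Z/U|$, not a lower bound. Different pairs $(A,f)$ can easily produce the same equivalence class — trivially, any two pairs whose $f$'s agree $U$-almost everywhere do — and a priori the ultrapower could be as small as $\omega_1$. What the paper actually does is exploit the $(\kappa,\lambda)$-\emph{regularity} of any fine ultrafilter on $\p_\kappa(\lambda)$, witnessed by $\langle \hat{\alpha}:\alpha<\lambda\rangle$: by Theorem~\ref{keisler} (Keisler) with $\gamma_z=2$ and $2^\mu=\kappa$, one gets $|\kappa^Z/U|\ge 2^\lambda>\lambda$, and then Lemma~\ref{linear} together with the observation that $\{[f]_U:f<_U g\}$ has size at most $|\mu^Z/U|$ for any $g:Z\to\mu^+$ gives $|\kappa^Z/U|\le |\mu^Z/U|^+$, forcing $|\mu^Z/U|\ge\lambda$. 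This regularity-plus-Keisler step is the missing idea; without it, the lower bound does not close.

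As a minor point, your appeal to Corollary~\ref{wg2} and Corollary~\ref{incon1} to get $2^{<\lambda}=\lambda$ is circuitous: the construction in Section 2 directly forces GCH above $\mu$, and the paper's intermediate lemma simply takes ``GCH at cardinals $\ge\mu$'' as a hypothesis.
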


This follows from Section 2 and the next result.

\begin{lemma}Suppose $\kappa = \mu^+$, GCH holds at cardinals $\geq \mu$, and for all regular $\lambda \geq \kappa$, there is a normal and fine, $\kappa$-complete, $\lambda$-dense ideal on $\p_\kappa(\lambda)$.  Then for every regular $\lambda$, there is a uniform ultrafilter $U$ on $\lambda$ such that $|\mu^\lambda / U| = \lambda$.
\end{lemma}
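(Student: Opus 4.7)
The plan is to fix regular $\lambda \geq \kappa$, use the hypothesized $\lambda$-dense ideal $I$ on $Z = \p_\kappa(\lambda)$ to build a Huberich-style ultrafilter on the quotient, and push it forward to a uniform ultrafilter $U_\lambda$ on $\lambda$ whose ultrapower of $\mu$ has cardinality exactly $\lambda$. Under GCH at cardinals $\geq \mu$, one computes $|Z| = \lambda^{<\kappa} = \lambda^\mu = \lambda$, so I may fix a bijection $\pi : Z \to \lambda$. By normality, fineness and $\lambda^+$-saturation, every antichain in $\p(Z)/I$ admits pairwise disjoint representatives, so by Theorem~\ref{disj}(2) the quotient is a complete $\lambda$-dense Boolean algebra. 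Applying Huberich's Lemma~\ref{huberich} produces an ultrafilter $U' \subseteq \p(Z)/I$; lift to $U = \{A \subseteq Z : [A]_I \in U'\}$ on $Z$, and let $U_\lambda$ be the pushforward along $\pi$.

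I would first check uniformity: any $A \in U$ is $I$-positive, and if $|A|<\lambda$ then $\bigcup A \subsetneq \lambda$, so picking $\gamma \notin \bigcup A$ places $A \subseteq Z \setminus \hat{\gamma} \in I$ by fineness, a contradiction. Next, the upper bound $|\mu^\lambda/U_\lambda|=|\mu^Z/U| \leq \lambda$ is immediate from Lemma~\ref{ultsize} applied at $\alpha = \mu < \kappa$, once one notes that GCH at $\geq \mu$ gives $2^{<\lambda}=\lambda$ for every regular $\lambda > \mu$.

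For the lower bound, the decisive construction is to define, for each $\alpha < \lambda$, a rank function $r_\alpha : Z \to \mu$ by $r_\alpha(z) = \ot(z \cap \alpha)$; this is well-defined because $|z| < \kappa = \mu^+$ forces $\ot(z \cap \alpha) \leq \mu$. For $\alpha < \beta < \lambda$ and any $z \in \hat{\alpha} \cap \hat{\beta}$, the inclusion $z \cap \alpha \subsetneq z \cap \beta$ is strict, witnessed by $\alpha$ itself, so $r_\alpha(z) < r_\beta(z)$ pointwise on the $I^*$-set $\hat{\alpha} \cap \hat{\beta}$. Since $U \supseteq I^*$, this gives $[r_\alpha]_U <_U [r_\beta]_U$, so the classes $\{[r_\alpha]_U : \alpha < \lambda\}$ are pairwise distinct in $\mu^Z/U$. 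Combining bounds: $|\mu^\lambda/U_\lambda| = \lambda$.

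The main obstacle I foresee is the lower bound. A first attempt via Lemma~\ref{singreg} (giving $(\mu+1,\lambda)$-regularity of $U$) and Keisler's theorem~\ref{keisler} only delivers a lower bound on $|\kappa^Z/U|$ (indeed $\geq 2^\lambda$), not on $|\mu^Z/U|$. The rank-function trick sidesteps this by directly exploiting the fact that $\p_\kappa(\lambda)$-members have order type $\leq \mu$ together with fineness, and as a byproduct shows the Huberich ultrafilter cannot be $\mu^+$-complete.
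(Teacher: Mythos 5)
Your upper bound is correct and is exactly the paper's: apply Lemma~\ref{ultsize} to the Huberich ultrafilter $U$ on $Z=\p_\kappa(\lambda)$, noting $2^{<\lambda}=\lambda$ under GCH above $\mu$.

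The lower bound, however, contains a genuine error. You assert that $r_\alpha(z)=\ot(z\cap\alpha)\leq\mu$ because $|z|<\kappa=\mu^+$. This is false: a set of ordinals of cardinality $\leq\mu$ can have \emph{any} order type strictly below $\mu^+=\kappa$, and in fact by Corollary~\ref{cofcon} almost every $z$ (mod $I$) has $\cf(\ot(z))=\cf(\mu)$, so $\ot(z)\geq\mu$. Thus your rank functions take values in $\kappa$, not $\mu$, and the strictly increasing chain $\langle[r_\alpha]_U:\alpha<\lambda\rangle$ lives in $\kappa^Z/U$. What you have shown is $|\kappa^Z/U|\geq\lambda$, which is exactly the quantity that your discarded Keisler approach controls; you have reproduced the obstacle rather than sidestepped it. (The paper in fact \emph{does} take the Keisler route: the $(\kappa,\lambda)$-regularity given by fineness, together with $2^\mu=\kappa$, yields $|\kappa^Z/U|\geq 2^\lambda>\lambda$ via Theorem~\ref{keisler}.)

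The missing ingredient, in both your write-up and your discussion of the Keisler alternative, is Lemma~\ref{linear} together with the observation in the proof of Corollary~\ref{wg2}: for any $g:Z\to\mu^+$, the set of $<_U$-predecessors of $[g]_U$ has cardinality at most $|\mu^Z/U|$ (compose each predecessor with bijections $g(z)\to|g(z)|\leq\mu$), and hence $|\kappa^Z/U|\leq|\mu^Z/U|^+$. Combined with $|\kappa^Z/U|>\lambda$ this gives $|\mu^Z/U|\geq\lambda$. Your rank functions can be salvaged in exactly this spirit: either note that every $[r_\alpha]_U$ is a strict predecessor of $[z\mapsto\ot(z)]_U$ and apply the observation to that single $\kappa$-valued bound, or fix injections $c_z:\ot(z)\to\mu$ (available since $|z|\leq\mu$) and pass to $s_\alpha(z):=c_z(r_\alpha(z))$, which are $\mu$-valued and pairwise $U$-inequivalent. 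Either patch recovers the result, but as written the proof has a gap precisely where you claim to have avoided one.
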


\begin{proof}
Let $I$ be a normal and fine, $\kappa$-complete, $\lambda$-dense ideal on $Z = \p_\kappa(\lambda)$, where $\kappa= \mu^+$ and $\lambda$ is regular.  Note that $|Z| = \lambda$, and every $Y \subseteq Z$ of size $<\lambda$ is in $I$.   Let $U \supseteq I^*$ be given by Lemma~\ref{ultsize}, so that $| \mu^Z / U | \leq 2^{<\lambda}$.  If $2^{<\lambda} = \lambda$, then $| \mu^Z / U | \leq \lambda$.  Since $2^\mu = \kappa$ and any ultrafilter extending $I^*$ is $(\kappa,\lambda)$-regular, Theorem~\ref{keisler} implies that $| \kappa^Z / U | > \lambda$, and Lemma~\ref{linear} implies that $| \kappa^Z / U | \leq | \mu^Z / U |^+$.  Thus $| \mu^Z / U | = \lambda$.   
\end{proof}

The following extra conclusion can be immediately deduced in the case of $\mu < \aleph_\omega$ and $\lambda = \rho^+$, where $\cf(\rho) = \omega$.  Suppose $\mu = \omega_n$.  Since $| \omega_{n+1}^Z / U | > \lambda$, we cannot have $| \omega_m^Z / U | < \rho$ for any $m$, since by Lemma~\ref{linear}, we would have $| \omega_r^Z / U | < \rho$ for all $r < \omega$.  Also, $U$ is $(\omega,\omega)$-regular, so Theorem~\ref{keisler} implies that $|\omega^Z /U| \geq |\omega^Z /U|^\omega$.  Thus $| \omega_m^Z / U | = \lambda$ for all $m \leq n$.

\section{Compatibility with square}
Solovay~\cite{solovay} showed that $\square_\delta$ fails when $\delta \geq \kappa$ and $\kappa$ is strongly compact.  In contrast, we will show $(\forall \delta \geq \kappa)\square_\delta$ is consistent with the kind of generically supercompact $\kappa$ constructed in Section 2.  The key difference is that nontrivial forcings may be absorbed into the quotient algebras of the ideals in the generic case.

We start with a model given by Section 2, force $\square$, and show that dense ideals still exist.  We will use the following variation on Foreman's duality theorem~\cite{foremanduality}.

\begin{lemma}
\label{dualabsorb}
Suppose $I$ is a precipitous ideal on $Z \subseteq \p(X)$ and $e : \mathbb{P} \to \mathcal{B}(\p(Z)/I)$ is a regular embedding.  Suppose that for all generic $G \subseteq \mathcal{B}(\p(Z)/I)$, if $j : V \to M \subseteq V[G]$ is the associated embedding and $H = e^{-1}[G]$, there is a filter $\hat{H} \in V[G]$ that is $j(\mathbb{P})$-generic over $M$ and such that $j[H] \subseteq \hat{H}$.  Then there is a $\mathbb{P}$-name for a precipitous ideal $J$ on $Z$ such that $\mathcal{B}( \mathbb{P} * \dot{\p(Z)/J}) \cong \mathcal{B}(\p(Z)/I)$.  Furthermore, $J$ has the same completeness and normality properties as $I$.
\end{lemma}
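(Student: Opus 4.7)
The plan is to construct $J$ inside $V[H]$ as the kernel of a natural Boolean homomorphism $\p(Z)^{V[H]} \to \mathbb{Q}$, where $\mathbb{Q} := \mathcal{B}(\p(Z)/I)/e[H]$, and then identify $\mathbb{Q}$ with $\p(Z)^{V[H]}/J$. Force with $\mathbb{B} := \mathcal{B}(\p(Z)/I)$ to obtain a generic $G$; let $j : V \to M$ be the derived ultrapower embedding, $H := e^{-1}[G]$, and $\hat{j} : V[H] \to M[\hat{H}]$ the lift of $j$ guaranteed by hypothesis together with the lifting lemma. Write $z^* := [\id]_G \in M$. Fix once and for all in $V$ (using choice) a rule selecting $\hat{H}$ from $G$, thereby specifying, inside $V[H]$, a $\mathbb{Q}$-name for $\hat{H}$. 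In $V[H]$, define the $\mathbb{Q}$-name $\dot{U}$ by $|| A \in \dot{U} ||_\mathbb{Q} := || z^* \in \hat{j}(A) ||_\mathbb{Q}$ for each $A \in \p(Z)^{V[H]}$, and set $J := \{A \in \p(Z)^{V[H]} : || A \in \dot{U} ||_\mathbb{Q} = 0\}$.

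Since $\hat{j}$ is elementary, $\dot{U}$ is forced to be an ultrafilter on $\p(Z)^{V[H]}$, so $J$ is a proper ideal in $V[H]$. If $I$ is $\kappa$-complete, then $\crit(\hat{j}) = \crit(j) \geq \kappa$, so $\hat{j}$ commutes with ${<}\kappa$-unions and $J$ is $\kappa$-complete. If $I$ is normal and fine, the preliminaries give $z^* = j[X] = \hat{j}[X]$, from which a diagonal computation shows $z^* \in \hat{j}(\nabla_x A_x)$ iff some $A_x \in \dot{U}$, yielding normality of $J$; fineness follows from $z^* \ni \hat{j}(x) = j(x)$ for each $x \in X$. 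Precipitousness is automatic: the $V[H]$-generic ultrapower by $\dot{U}$ identifies with a submodel of the transitive class $M[\hat{H}] \subseteq V[G]$, hence is well-founded.

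For the isomorphism, define $\pi : \p(Z)^{V[H]}/J \to \mathbb{Q}$ by $\pi([A]_J) := || A \in \dot{U} ||_\mathbb{Q}$. This is injective by the definition of $J$ and a Boolean homomorphism because $\dot{U}$ is forced to respect Boolean operations. For density of the range, each nonzero $q \in \mathbb{Q} \subseteq \mathbb{B}$ lies above some $[C]_I$ with $C \in \p(Z)^V$; for such $C$, $\hat{j}(C) = j(C)$, and the standard correspondence in the Boolean-valued model gives $|| z^* \in j(C) ||_\mathbb{Q} = [C]_I$, so $\pi([C]_J) = [C]_I$. Hence $\pi$ extends to a Boolean isomorphism $\mathcal{B}(\p(Z)^{V[H]}/J) \cong \mathbb{Q}$, and composing with the regular embedding $e : \mathbb{P} \to \mathbb{B}$ yields $\mathcal{B}(\mathbb{P} * \dot{\p(Z)/J}) \cong \mathcal{B}(\p(Z)/I)$.

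The main obstacle is showing that $\dot{J}$ is truly a $\mathbb{P}$-name: a priori $\dot{U}$ depends on the non-canonical choice of section $G \mapsto \hat{H}$, and one must check that the resulting value $|| A \in \dot{U} ||_\mathbb{Q}$ is determined by data available in $V[H]$. Unwinding through $j(\mathbb{P})$-names in $M$, for any $\mathbb{P}$-name $\dot{A}$ with $\dot{A}^H = A$ one has $z^* \in \hat{j}(A)$ iff $|| z^* \in j(\dot{A}) ||_{j(\mathbb{P})} \in \hat{H}$; the coherence $j[H] \subseteq \hat{H}$ together with the elementarity of $\hat{j}$ forces this membership to be controlled by the $\mathbb{Q}$-generic, so different admissible sections yield the same ideal up to $\mathbb{P}$-forced equivalence. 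This name-theoretic bookkeeping is the principal technical burden; once it is dispatched, the verification of the ideal-theoretic properties above is routine.
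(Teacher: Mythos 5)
Your proposal is correct and takes essentially the same route as the paper: define $J$ in $V[H]$ by evaluating the Boolean value of $[\id]\in\hat{j}(X)$ in the quotient algebra $\mathbb{Q}=\mathcal{B}(\p(Z)/I)/e[H]$, show the induced map has dense range, and use the factor map into $M[\hat{H}]$ to obtain precipitousness, completeness, and normality. Two small remarks. First, your density step asserts that a nonzero $q\in\mathbb{Q}$ lies above some $[C]_I$ with $C\in V$, but one must check that such a $[C]_I$ can be chosen to lie in $\mathbb{Q}$ (i.e., compatible with $e[H]$), not merely below $q$ in $\mathcal{B}(\p(Z)/I)$; this follows by a routine argument using the projection associated to $e$ and genericity of $H$, but it deserves a sentence (the paper sidesteps this by arguing density of $\ran(\iota)$ directly in $\mathcal{B}(\p(Z)/I)$, taking $A\in I^+$ and building a pair $(p,\dot{X})$ with $\iota(p,\dot{X})\le[A]_I$). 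Second, the concern in your final paragraph is a red herring: since the lemma asserts only the \emph{existence} of a $\mathbb{P}$-name for a suitable $J$, fixing once and for all in $V$ a $\mathcal{B}(\p(Z)/I)$-name for $\hat{H}$ already yields a well-defined $\dot{J}$, and there is no need to show, nor reason to expect, that different admissible choices yield the same ideal.
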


\begin{proof}
Let $H$ be $\mathbb{P}$-generic over $V$, and let $G$ be $\mathcal{B}(\mathcal{P}(Z)/I)$-generic over $V$ with $e[H] \subseteq G$.  Let $j : V \to M$ be the generic ultrapower embedding, and let $\hat{H}$ be as hypothesized.  Then $j$ is uniquely extended to $\hat{j} : V[H] \to M[\hat{H}]$.  In $V[H]$, let $\mathbb{Q} = \mathcal{B}(\mathcal{P}(Z)/I) / e[H]$, and let $J = \{ X \subseteq Z : 1 \Vdash_{\mathbb{Q}} [\id]_M \notin \hat{j}(X) \}$.  Let $\iota : \mathbb{P} * \dot{\p(Z)/J}$ be defined by $\iota(p,\dot{X}) = e(p) \wedge || [\id] \in \hat{j}(\dot{X}) ||$.

It is easy to see that $\iota$ is order and antichain preserving.  To see that the range of $\iota$ is dense, let $A \in I^+$ be arbitrary.  Take $G$ with $A \in G$, so that $[\id] \in j_G(A)$.  If $H = e^{-1}[G]$, then some $p \in H$ forces $A \in J^+$.  Let $\dot{X}$ be a $\mathbb{P}$-name such that $p \Vdash \dot{X} = \check{A}$ and $q \Vdash \dot{X} = Z$ whenever $q \perp p$; this makes sure $\dot{X}$ is forced to be in $J^+$.  Then $\iota(p,\dot{X}) \leq A$, since any $G$ with $e(p) \wedge || [\id] \in \hat{j}(\dot{X}) || \in G$ must have $[\id] \in j(A)$ and thus $A \in G$.

Suppose $H * \bar{G} \subseteq  \mathbb{P} * \p(Z) / \bar{I}$ is generic, and let $G * \hat{H} = \iota[H * \bar{G}]$.  For $A \in J^+$, $A \in \bar{G}$ iff $[\id]_M \in \hat{j}(A)$.  If $i : V[H] \to N = V[H]^Z / \bar{G}$ is the canonical ultrapower embedding, then there is an elementary embedding $k : N \to M[\hat{H}]$ given by $k([f]_N) = \hat{j}(f)([\id]_M)$, and $\hat{j} = k \circ i$.  Thus $N$ is well-founded, so $J$ is precipitous.  If $f : Z \to \ord$ is a function in $V$, then $k([f]_N) = j(f)([\id]_M) = [f]_M$.  Thus $k$ is surjective on ordinals, so it must be the identity, and $N = M[\hat{H}]$.  Since $i = \hat{j}$ and $\hat{j}$ extends $j$, $i$ and $j$ have the same critical point, so the completeness of $J$ is the same as that of $I$.  Finally, since $[\id]_N = [\id]_M$, $I$ is normal in $V$ iff $J$ is normal in $V[H]$, because $j \restriction X = \hat{j} \restriction X$, and normality is equivalent to $[\id] = j[X]$.
\end{proof}

For a cardinal $\delta$, let $\mathbb{S}_\delta$ be the collection of bounded approximations to a $\square_\delta$ sequence.  That is, a condition is a sequence $\langle C_\alpha : \alpha \in \eta \cap \mathrm{Lim} \rangle$ such that $\eta < \delta^+$ is a successor ordinal, each $C_\alpha$ is a club subset of $\alpha$ of order type $\leq \delta$, and whenever $\beta$ is a limit point of $C_\alpha$, $C_\alpha \cap \beta = C_\beta$.  For proof of the following lemma, we refer the reader to \cite{sssr}.

\begin{lemma}For every cardinal $\delta$, $\mathbb{S}_\delta$ is countably closed and $(\delta+1)$-strategically closed and adds a $\square_\delta$ sequence $\langle C_\alpha : \alpha \in \delta^+ \cap \mathrm{Lim} \rangle = \bigcup G$, where $G \subseteq \mathbb{S}_\delta$ is the generic filter.  For every regular $\lambda \leq \delta$, there is a $\mathbb{S}_\delta$-name for a ``threading'' partial order $\mathbb{T}_\delta^\lambda$ that adds a club $C \subseteq (\delta^+)^V$ of order type $\lambda$ and such that whenever $\alpha$ is a limit point of $C$, $C \cap \alpha = C_\alpha$.  Furthermore, $\mathbb{S}_\delta * \mathbb{T}_\delta^\lambda$ has a $\lambda$-closed dense subset of size $2^\delta$.
\end{lemma}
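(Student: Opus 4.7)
The plan is to verify in turn the four assertions: countable closure and $(\delta{+}1)$-strategic closure of $\mathbb{S}_\delta$; that the generic yields a $\square_\delta$-sequence; the existence and basic behaviour of $\mathbb{T}_\delta^\lambda$; and the existence of a $\lambda$-closed dense subset of $\mathbb{S}_\delta * \mathbb{T}_\delta^\lambda$ of size $2^\delta$.

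For countable closure, given a descending $\omega$-sequence $s_0 \geq s_1 \geq \cdots$ with $\mathrm{dom}(s_n) = \eta_n + 1$, form the union $s = \bigcup s_n$ defined on $\bar\eta = \sup_n \eta_n$. To extend $s$ to a legitimate condition, pick any strictly increasing cofinal $\omega$-sequence $\langle \beta_k \rangle$ in $\bar\eta$ consisting of successor ordinals and set $C_{\bar\eta} = \{\beta_k : k<\omega\}$; since $C_{\bar\eta}$ has no limit points, no coherence condition is triggered, so $s \cup \{(\bar\eta,C_{\bar\eta})\}$ is a lower bound. For $(\delta{+}1)$-strategic closure, I would describe a winning strategy for Even in which, along with a condition $s_\xi$ at each Even move, Even records a ``thread'' $t_\xi \subseteq \mathrm{dom}(s_\xi)$: a closed set of successor ordinals cofinal in $\sup \mathrm{dom}(s_\xi)$, of order type $\leq \delta$, that end-extends the threads of earlier Even moves. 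At a limit stage $\xi$ of the game with $\bar\eta = \sup_{\zeta<\xi} \sup \mathrm{dom}(s_\zeta)$, take $t = \bigcup_{\zeta<\xi} t_\zeta \cup \{\bar\eta\}$; then defining $C_{\bar\eta} = t \setminus \{\bar\eta\}$ is automatically coherent with all previously committed $C_\alpha$ because every limit point $\alpha$ of this set already satisfied $C_\alpha = t \cap \alpha$ in the earlier Even move that first introduced $\alpha$. Since the game has length at most $\delta + 1$, the order types stay $\leq \delta$.

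That $\bigcup G$ is a $\square_\delta$-sequence is then immediate: genericity ensures $\mathrm{dom}(\bigcup G) = \delta^+ \cap \mathrm{Lim}$ (for each $\alpha < \delta^+$ the set of conditions with $\mathrm{dom} \ni \alpha$ is dense by the closure just established), and the coherence plus the order type bound are hardwired into the definition of the poset. The threading poset $\mathbb{T}_\delta^\lambda$ is defined in $V^{\mathbb{S}_\delta}$ as the set of closed bounded $c \subseteq \delta^+$ of order type $< \lambda$ that ``thread'' $\vec C$, i.e. $c \cap \alpha = C_\alpha$ whenever $\alpha$ is a limit point of $c$; the ordering is end-extension. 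A generic union produces a club of order type exactly $\lambda$ threading $\vec C$.

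For the last assertion, work directly with the two-step iteration and let $D \subseteq \mathbb{S}_\delta * \mathbb{T}_\delta^\lambda$ be the set of pairs $(s,\check c)$ for which $c$ is an actual (ground model) closed bounded subset of $\mathrm{dom}(s)$, $\max c = \max \mathrm{dom}(s)$, and $c \cap \alpha = s(\alpha)$ for every limit point $\alpha$ of $c$, with $\mathrm{ot}(c) < \lambda$. Density of $D$ follows from the strategic closure of $\mathbb{S}_\delta$ together with the fact that any condition in $\mathbb{T}_\delta^\lambda$ can, below a strengthening of the $\mathbb{S}_\delta$-part, be replaced by a check-name: given $(s,\dot c)$ one plays out enough of Even's strategy to decide $\dot c$. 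To see $D$ is $\lambda$-closed, take a descending sequence $\langle (s_\xi, \check c_\xi) \rangle_{\xi < \gamma}$ with $\gamma < \lambda$; let $c = \bigcup c_\xi \cup \{\bar\eta\}$ where $\bar\eta = \sup_\xi \max c_\xi$, and extend $\bigcup s_\xi$ by setting $C_{\bar\eta} = c \setminus \{\bar\eta\}$, which has order type $< \lambda \leq \delta$ and coheres with the previously committed values because of the matching condition on $D$; then $(s, \check c) \in D$ is a lower bound. Finally $|D| = 2^\delta$ because every element of $D$ is determined by the underlying sequence $s$, which is a function from an ordinal $< \delta^+$ to subsets of $\delta^+$ of size $\leq \delta$, and one checks that these can be coded by a subset of $\delta$. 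The step I expect to be the most delicate is the strategic closure argument, since Even must commit at each move to $C_\alpha$ values that will remain coherent through all future limit stages of the game; threading the move via an auxiliary family $t_\xi$ of successor ordinals is what makes the commitments consistent.
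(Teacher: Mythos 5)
The paper itself does not prove this lemma but cites an external reference, so there is no in-paper argument to compare against; your proof follows the standard approach to both claims. That said, there are a few places where the details as written would not survive scrutiny.

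The most substantive issue is your definition of the dense set $D$: you require $c \subseteq \dom(s)$, but $\dom(s)$ consists only of \emph{limit} ordinals, while the coherence requirement $c \cap \alpha = s(\alpha)$ at limit points $\alpha$ of $c$ forces $c$ to contain whatever successor ordinals happen to occur in $s(\alpha) = C_\alpha$. In a generic square sequence the $C_\alpha$'s do contain successor ordinals (this is dense), and once the $\mathbb{T}^\lambda_\delta$-part is decided to a ground-model set with successors in it, no end-extension of it can lie in your $D$. So $D$ as literally defined is not dense. The fix is harmless but necessary: $c$ should be allowed to be an arbitrary closed bounded subset of $\sup\dom(s)+1$ with $\max c = \max\dom(s)$ threading $s$; nothing in your closure or cardinality computation changes.

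The description of the thread in the strategic-closure argument is also internally inconsistent: you write $t_\xi \subseteq \dom(s_\xi)$ (a set of limit ordinals) and simultaneously call $t_\xi$ ``a closed set of successor ordinals cofinal in $\sup\dom(s_\xi)$.'' A set of successor ordinals cofinal in an ordinal of uncountable cofinality cannot be closed, and it certainly is not a subset of $\dom(s_\xi)$. The underlying idea — maintain an end-extending thread whose only limit points below its sup are the suprema reached at earlier limit stages of the game, where Even has committed $C_\alpha$ to equal the thread — is the right one and is what your ``automatically coherent'' step relies on; but the statement of what $t_\xi$ is needs to be corrected to match. Finally, the claim that ``every element of $D$ is determined by $s$'' is false (the same $s$ admits many $c$'s), though your cardinality bound of $2^\delta$ is still correct since there are at most $(\delta^+)^{<\lambda} \leq 2^\delta$ choices of $c$. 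You also should say a word about why a $\lambda$-closed dense $D$ of this form ensures the generic for $\mathbb{T}^\lambda_\delta$ actually has order type exactly $\lambda$ (e.g.\ by density of the sets forcing $\ot(\dot{c})\geq\xi$ for each $\xi<\lambda$, which uses that conditions in $D$ can always be extended by playing out Even's strategy).
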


\begin{theorem}
Suppose $\kappa$ is super-almost-huge and $\mu < \kappa$ is regular.  Then there is a $\mu$-distributive forcing extension in which $\kappa = \mu^+$, $\square_\lambda$ holds for all cardinals $\lambda \geq \kappa$, and for all regular $\lambda \geq \kappa$ there is a normal, fine, $\kappa$-complete, $\lambda$-dense ideal on $\p_\kappa(\lambda)$.
\end{theorem}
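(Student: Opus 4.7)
The plan is to combine the Section 2 construction with standard square-forcing, applying Lemma~\ref{dualabsorb} to show the dense ideals survive.

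Starting from a super-almost-huge $\kappa$ and regular $\mu < \kappa$, I would first perform the forcing of Section 2 to obtain a model $V^*$ in which $\kappa = \mu^+$, GCH holds at and above $\mu$, and for each regular $\lambda \geq \kappa$ there is a normal, fine, $\kappa$-complete, $\lambda$-dense ideal $I_\lambda$ on $\p_\kappa(\lambda)$; the quotient is isomorphic, up to a condition, to $\mathcal{B}(B(\mu,\kappa)/H_X \times \col(\mu,\lambda))$. Over $V^*$, let $\mathbb{S}$ be the reverse Easton iteration that at each cardinal stage $\nu \geq \kappa$ adds a $\square_\nu$-sequence via $\mathbb{S}_\nu$. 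Standard arguments would show $\mathbb{S}$ is $\mu$-distributive, preserves cardinals, and forces $\square_\nu$ for every cardinal $\nu \geq \kappa$.

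For fixed regular $\lambda \geq \kappa$, split $\mathbb{S} = \mathbb{S}_{\leq \lambda} * \mathbb{S}_{>\lambda}$. The tail $\mathbb{S}_{>\lambda}$ is $\lambda^+$-strategically closed, so it preserves any $\lambda^+$-saturated ideal on $\p_\kappa(\lambda)$, reducing the task to producing a $\lambda$-dense normal $\kappa$-complete ideal in $V^*[\mathbb{S}_{\leq \lambda}]$. Since $\mathbb{S}_\lambda$ has density $\lambda^+$, it cannot regularly embed into the $\lambda$-dense algebra $\mathcal{B}(\p(\p_\kappa(\lambda))/I_\lambda)$, so I would apply Lemma~\ref{dualabsorb} to $\mathbb{P} = \mathbb{S}_{\leq \lambda}$ and $I = I_{\lambda^+}$. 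The lemma then yields a precipitous, normal, $\kappa$-complete ideal $J_{\lambda^+}$ on $\p_\kappa(\lambda^+)$ in $V^*[\mathbb{S}_{\leq \lambda}]$ with $\mathcal{B}(\mathbb{S}_{\leq \lambda} * \dot{\p(\p_\kappa(\lambda^+))/J_{\lambda^+}}) \cong \mathcal{B}(\p(\p_\kappa(\lambda^+))/I_{\lambda^+})$. I would then define $J_\lambda$ on $\p_\kappa(\lambda)$ as the pullback along $A \mapsto \{z \in \p_\kappa(\lambda^+) : z \cap \lambda \in A\}$; the coherence of the almost-huge tower relating $I_\lambda$ and $I_{\lambda^+}$ then makes $J_\lambda$ normal, fine, $\kappa$-complete, and $\lambda$-dense.

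Verifying the hypotheses of Lemma~\ref{dualabsorb} requires two things. First, a regular embedding $e : \mathbb{S}_{\leq \lambda} \to \mathcal{B}(\p(\p_\kappa(\lambda^+))/I_{\lambda^+})$: for each $\nu \leq \lambda$, $\mathbb{S}_\nu * \mathbb{T}_\nu^\mu$ is $\mu$-closed of size $\leq \lambda^+$ under GCH, so by Lemma~\ref{folk} it is Boolean-isomorphic to $\col(\mu,\lambda^+)$ and embeds into the $\col(\mu,\lambda^+)$-factor of the quotient; since $\mathbb{S}_{\leq \lambda}$ is a complete subalgebra of its threaded version, this yields $e$. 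Second, the lifting condition: given a generic ultrapower embedding $j : V^* \to N$ arising from $I_{\lambda^+}$, one must produce $\hat H$ that is $j(\mathbb{S}_{\leq \lambda})$-generic over $N$ and contains $j[S]$ for the $\mathbb{S}_{\leq \lambda}$-generic $S$. Since $j[S]$ is a directed set of bounded conditions in $j(\mathbb{S}_{\leq \lambda})$ and $N$ is closed under ${<}j(\kappa)$-sequences from the ambient extension (by the closure argument in the proof of Theorem~\ref{maindense}), one takes the infimum of $j[S]$ as a master condition and uses the $(j(\lambda)+1)$-strategic closure of $j(\mathbb{S}_{\leq \lambda})$ in $N$ to build the required generic.

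The hard part will be the master-condition construction together with verifying that the projected $J_\lambda$ has density $\lambda$ rather than $\lambda^+$. The master condition works because $\sup j[\lambda^+] < j(\lambda^+)$ in $N$ and $j$-images of bounded $\mathbb{S}_\lambda$-conditions can be amalgamated using the coherence of their $\square$-sequences. For the density, one exploits that the $\col(\mu,\lambda)$-factor of $\p(\p_\kappa(\lambda))/I_\lambda$ corresponds to a complete subalgebra of the $\col(\mu,\lambda^+)$-factor of $\p(\p_\kappa(\lambda^+))/I_{\lambda^+}$ via the tower coherence, and $\mathbb{S}_{\leq \lambda}$ absorbs precisely the $[\lambda,\lambda^+)$-worth of extra density in the bigger collapse, so the projection along $A \mapsto \{z : z \cap \lambda \in A\}$ recovers $\lambda$-density.
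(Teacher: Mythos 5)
Your high-level framework is the same as the paper's --- force $\square$ over the Section~2 model, then resurrect the dense ideals by absorbing the square forcing into the quotient algebras via Lemma~\ref{dualabsorb}, with a master condition built from the threads. But the factorization step has a genuine gap, and the workaround you propose does not close it.

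You split $\mathbb{S} = \mathbb{S}_{\leq\lambda} * \mathbb{S}_{>\lambda}$, observe correctly that $\mathbb{S}_\lambda$ has density $\lambda^+$ and so cannot regularly embed into the $\lambda$-dense algebra $\p(\p_\kappa(\lambda))/I_\lambda$, and then route around this by applying Lemma~\ref{dualabsorb} to the $\lambda^+$-dense ideal $I_{\lambda^+}$ on $\p_\kappa(\lambda^+)$ instead. This gives a $\kappa$-complete, normal, fine ideal $J_{\lambda^+}$ in $V^*[\mathbb{S}_{\leq\lambda}]$ whose quotient is $\lambda^+$-dense --- not $\lambda$-dense. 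Pulling it back along $A \mapsto \{z : z\cap\lambda \in A\}$ produces a normal fine ideal on $\p_\kappa(\lambda)$, but Lemma~1.4(4) only gives you that a complete subalgebra of a $\lambda^+$-dense algebra is $\lambda^+$-dense; nothing forces the density of the pullback down to $\lambda$. Your appeal to ``the tower coherence'' making ``$\mathbb{S}_{\leq\lambda}$ absorb precisely the $[\lambda,\lambda^+)$-worth of extra density'' is exactly the claim that needs a proof and does not get one; I do not see how to make it precise, and the paper does not attempt anything like it.

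The paper avoids the problem by splitting at $\lambda$ rather than $\lambda^+$: it factors the square forcing as $\mathbb{P}_{<\lambda} \times \mathbb{P}_{\geq\lambda}$, so that $\mathbb{S}_\lambda$ sits in the \emph{tail}, not the head. Since $\mathbb{S}_\lambda$ (and everything above it) is $(\lambda+1)$-strategically closed, $\mathbb{P}_{\geq\lambda}$ adds no new functions $\lambda\to\mathrm{Ord}$, hence no new subsets of $\p_\kappa(\lambda)$; so a $\lambda$-dense ideal on $\p_\kappa(\lambda)$ living in $V^*[\mathbb{P}_{<\lambda}]$ survives unchanged to the final model. The head $\mathbb{P}_{<\lambda}$ has size $\lambda$ (GCH plus Easton support), and the threaded version $\mathbb{Q}_{<\lambda}$ has a $\mu$-closed dense subset of size $\lambda$, so by Lemma~\ref{folk} it embeds into $\mathcal{B}(\col(\mu,\lambda))$ and therefore into the quotient algebra of $I_\lambda$ itself --- no detour through $I_{\lambda^+}$ is needed, and the ideal produced by Lemma~\ref{dualabsorb} automatically has density $\lambda$, since $\mathcal{B}(\mathbb{P}_{<\lambda} * \p(Z_\lambda)/J_\lambda) \cong \mathcal{B}(\p(Z_\lambda)/I_\lambda)$ is $\lambda$-dense. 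Your instinct that $\mathbb{S}_\lambda$ is ``too big'' for the absorption was the right signal; the response should have been to move it to the side of the factorization that does not touch $\p_\kappa(\lambda)$, not to climb up to $I_{\lambda^+}$. (The difference between an Easton product as in the paper and the reverse Easton iteration you propose is comparatively minor, though the paper's master-condition and cardinal-preservation arguments lean on the product factorization, so you would need to re-derive those.)
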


\begin{proof}
By Section 2, we may pass to a $\mu$-distributive forcing extension in which $\kappa = \mu^+$ and for all regular $\lambda \geq \kappa$ there is a normal, fine, $\kappa$-complete, $\lambda$-dense ideal on $\p_\kappa(\lambda)$, and GCH holds above $\mu$.  Over this model, force with $\mathbb{P}$, the Easton support product of $\mathbb{S}_\lambda$ where $\lambda$ ranges over all cardinals $\geq \kappa$.  For every cardinal $\lambda$, $\mathbb{P}$ naturally factors into $\mathbb{P}_{<\lambda} \times \mathbb{P}_{\geq \lambda}$.   Note that if $\lambda \geq \kappa$, $\mathbb{P}_{\geq \lambda}$ is $(\lambda + 1)$-strategically closed.

First we show that for each regular $\lambda \geq \kappa$, $\mathbb{P}_{\geq \lambda}$ is $\lambda^+$-distributive in $V^{\mathbb{P}_{<\lambda}}$.  Suppose that $H_0 \times H_1$ is $(\mathbb{P}_{<\lambda} \times \mathbb{P}_{\geq \lambda})$-generic, and $f : \lambda \to \ord$ is in $V[H_0][H_1]$.  Then in $V[H_1]$, there is a $\mathbb{P}_{<\lambda}$-name $\tau$ for $f$.  By GCH and the fact that we take Easton support, $|\mathbb{P}_{<\lambda}| = \lambda$, so it is $\lambda^+$-c.c.\ in $V[H_1]$.  Thus $\tau$ may be assumed to be a subset of $V$ of size $\lambda$.  By the strategic closure of $\mathbb{P}_{\geq \lambda}$, $\tau \in V$.  Thus $f = \tau^{H_0} \in V[H_0]$, establishing the claim.

Next we show that $\mathbb{P}$ preserves all regular cardinals.  First note that since $\mathbb{P}$ is $(\kappa+1)$-strategically closed, $\mathbb{P}$ cannot change the cofinality of any regular $\delta$ to some $\lambda \leq \kappa$.  If $\mathbb{P}$ does not preserve regular cardinals, then in some generic extension $V[G]$, there are $\lambda < \delta$ which are regular in $V$ with $\kappa < \lambda$, such that $V[G] \models \cf(\delta) = \lambda$.  Let $H = H_0 \times H_1$, where $H_0 \subseteq \mathbb{P}_{<\lambda}$ and $H_1 \subseteq \mathbb{P}_{\geq \lambda}$.  By the $\lambda^+$-c.c.\ of $\mathbb{P}_{<\lambda}$, $V[H_0] \models \cf(\delta) > \lambda$, and by the $\lambda^+$-distributivity of $\mathbb{P}_{\geq \lambda}$ in $V[H_0]$, $V[H] \models \cf(\delta) > \lambda$, a contradiction.  Since a square sequence is upwards absolute to models with the same cardinals and $\mathbb{S}_\lambda$ regularly embeds into $\mathbb{P}$ for all $\lambda \geq \kappa$, $\mathbb{P}$ forces $(\forall \lambda \geq \kappa) \square_\lambda$.

For each regular $\lambda \geq \kappa$, let $Z_\lambda = \p_\kappa(\lambda)$.  We want to show that in $V^{\mathbb{P}}$, for each regular $\lambda \geq \kappa$, there is a normal, fine, $\lambda$-dense ideal on $Z_\lambda$.  It suffices to show that such an ideal exists in $V^{\mathbb{P}_{<\lambda}}$, since $\mathbb{P}_{\geq \lambda}$ adds no subsets of $\lambda$, and $|Z_\lambda | = \lambda$.  First note that by the strategic closure of $\mathbb{P}$, the dense ideal on $\kappa$ is unaffected.

Let $\mathbb{Q}$ be the Easton support product of $\mathbb{S}_\lambda * \mathbb{T}_\lambda^\mu$, where $\lambda$ ranges over all cardinals. There is a coordinate-wise regular embedding of $\mathbb{P}$ into $\mathbb{Q}$.  When $\lambda$ is regular, $\mathbb{Q}_{<\lambda}$ has a dense $\mu$-closed subset of size $\lambda$.  Hence it regularly embeds into $\mathcal{B}(\col(\mu,\lambda))$.  The dense ideal $I_\lambda$ on $Z_\lambda$ in $V$ has quotient algebra isomorphic to $\mathcal{B}(\mathbb{R} \times \col(\mu,\lambda))$ for some small $\mathbb{R}$, and so $\mathbb{Q}_{<\lambda}$ regularly embeds into this forcing.

If $G \subseteq \p(Z_\lambda)/I_\lambda$ is generic, let $H$ be the induced generic for $\mathbb{P}_{<\lambda}$, and let $j : V \to M \subseteq V[G]$ be the ultrapower embedding.  Recall that $\crit(j) = \kappa$, $j(\kappa) = \lambda^+$, $\lambda^{++}$ is a fixed point of $j$, and $j[\lambda] \in M$.  First note that $j[\lambda] \setminus j(\kappa)$ is an Easton set in $M$.  If $j(\kappa) \leq \delta \leq j(\lambda)$ and $\delta$ is regular in $M$, then since $\ot(j[\lambda] \cap \delta) \leq \lambda < \delta$, $\sup(j[\lambda] \cap \delta) < \delta$.

For each cardinal $\delta$ such that $\kappa \leq \delta < \lambda$, let $\langle C_\alpha^\delta : \alpha < \delta^+ \rangle$ be the $\square_\delta$ sequence and let $t_\delta$ be the ``thread'' of order type $\mu$, both given by $H \restriction ( \mathbb{S}_\delta * \mathbb{T}^\mu_\delta)$.  By the $\mu$-distributivity of $\mathbb{S}_\delta * \mathbb{T}_\delta^\mu$, all initial segments of $t_\delta$ are in $V$, and since they are small, $j(t_\delta \cap \alpha) = j[t_\delta \cap \alpha]$ for $\alpha < \delta^+$, and $j$ is continuous at all limit points of $t_\delta$.  Let $\gamma_\delta = \sup(j[\delta^+]) < j(\delta^+)$, and in $M$ consider $m_\delta = \bigcup_{\alpha < \delta^+} j(\langle C_\alpha^\delta : \beta < \alpha \rangle) \cup \{ (\gamma_\delta, j[t_\delta] ) \} $.  Each $m_\delta$ is a condition in $(\mathbb{S}_{j(\delta)})^M$, and the sequence $m = \langle m_\delta : \delta \in j[\lambda] \setminus \kappa \cap \card^M \rangle$ is a condition in $(\mathbb{P}_{<j(\lambda)})^M$.

$M$ thinks $\mathbb{P}_{<j(\lambda)}$ is $j(\kappa)$-strategically closed, and this is true in $V[G]$ as well since these models share the same $\lambda$-sequences, and $j(\kappa) = \lambda^+$ in $V[G]$.  Since $j(\lambda^+) < (\lambda^{++})^V$, $\p(\mathbb{P}_{<j(\lambda)})^M$ has cardinality $\lambda^+$ in $V[G]$.  Thus we may use the winning strategy to build a filter $\hat{H} \subseteq (\mathbb{P}_{<j(\lambda)})^M$ that is generic over $M$, with $m \in \hat{H}$.  Since $m$ is a lower bound to $j(p)$ for all $p \in H$, we have $j[H] \subseteq \hat{H}$.

Therefore, the hypotheses of Lemma~\ref{dualabsorb} are satisfied with respect to $I_\lambda$, $Z_\lambda$, and $\mathbb{P}_{<\lambda}$.  Thus we have a $\mathbb{P}_{<\lambda}$-name for a normal and fine ideal $J_\lambda$ on $Z_\lambda$ such that $\mathcal{B}(\mathbb{P}_{<\lambda} * \dot{\p(Z_\lambda)/J_\lambda)} \cong \mathcal{B}(\p(Z_\lambda)/I_\lambda)$.  Hence $J_\lambda$ is $\lambda$-dense in $V[H]$.
\end{proof}

We note that in the case $\kappa = \omega_1$, $\p(Z_\lambda)/J_\lambda \cong \mathcal{B}(\col(\omega,\lambda))$ for all regular $\lambda$.  But for higher cardinals, Proposition~\ref{Z-refl} shows the quotient algebras must differ from those given by Theorem~\ref{maindense}.  The crux is that the ``threading'' forcings are left over as regular suborders.

It is not possible to improve this result to get the consistency of, for example, ``For all cardinals $\lambda \geq \omega_1$, $\square_\lambda$ holds and there is a normal, fine, $\lambda$-dense ideal on $\p_{\omega_1}(\lambda)$.''  Burke and Matsubara~\cite{BM} showed that if $\cf(\lambda) < \kappa$ and there is normal, fine, $\kappa$-complete, $\lambda^+$-saturated ideal on $\p_\kappa(\lambda)$, then every stationary subset of $\lambda^+ \cap \cof({<}\kappa)$ reflects.

\bibliographystyle{amsplain.bst}
\bibliography{junebib}

\providecommand{\bysame}{\leavevmode\hbox to3em{\hrulefill}\thinspace}
\providecommand{\MR}{\relax\ifhmode\unskip\space\fi MR }
\providecommand{\MRhref}[2]{%
  \href{http://www.ams.org/mathscinet-getitem?mr=#1}{#2}
}
\providecommand{\href}[2]{#2}
\begin{thebibliography}{10}

\bibitem{BM}
Douglas Burke and Yo~Matsubara, \emph{Ideals and combinatorial properties}, J.
  Symbolic Logic \textbf{62} (1997), no.~1, 117--122.

\bibitem{bmclub}
\bysame, \emph{The extent of strength in the club filters}, Israel J. Math.
  \textbf{114} (1999), 253--263.

\bibitem{ck}
Chen~Chun Chang and H.~Jerome Keisler, \emph{Model theory}, third ed., Stud.
  Logic Found. Math., 73, North-Holland, 1990.

\bibitem{sssr}
James Cummings, Matthew Foreman, and Menachem Magidor, \emph{Squares, scales
  and stationary reflection}, J. Math. Log. \textbf{1} (2001), no.~1, 35--98.

\bibitem{donder}
Hans-Dieter Donder, \emph{Regularity of ultrafilters and the core model},
  Israel J. Math. \textbf{63} (1988), no.~3, 289--322.

\bibitem{ET}
Paul Erd\H{o}s and Alfred Tarski, \emph{On families of mutually exclusive
  sets}, Ann. of Math. \textbf{44} (1943), no.~2, 315--329.

\bibitem{foremanhandbook}
Matthew Foreman, \emph{Ideals and generic elementary embeddings}, Handbook of
  set theory (Matthew Foreman and Akihiro Kanamori, eds.), vol.~2, Springer,
  Dordrecht, 2010, pp.~885--1147.

\bibitem{foremanduality}
\bysame, \emph{Calculating quotient algebras of generic embeddings}, Israel J.
  Math. \textbf{193} (2013), no.~1, 309--341.

\bibitem{huberich}
Markus Huberich, \emph{A note on {B}oolean algebras with few partitions modulo
  some filter}, Math. Logic Quart. \textbf{42} (1996), no.~2, 172--174.

\bibitem{jechbook}
Thomas Jech, \emph{Set theory}, third millennium ed., Springer Monogr. Math.,
  Springer-Verlag, Berlin, 2003.

\bibitem{kanamori}
Akihiro Kanamori, \emph{The higher infinite}, Springer-Verlag, Berlin, 2003.

\bibitem{keisler}
H.~Jerome Keisler, \emph{On cardinalities of ultraproducts}, Bull. Amer. Math.
  Soc. \textbf{70} (1964), 644--647.

\bibitem{kunen}
Kenneth Kunen, \emph{Set theory}, Stud. Log. (Lond.), 34, College Publications,
  London, 2011.

\bibitem{kp}
Kenneth Kunen and Karel Prikry, \emph{On descendingly incomplete ultrafilters},
  J. Symbolic Logic \textbf{36} (1971), 650--652.

\bibitem{shelahproper}
Saharon Shelah, \emph{Proper forcing}, Lec. Notes in Math. 940,
  Springer-Verlag, 1982.

\bibitem{solovay}
Robert Solovay, \emph{Strongly compact cardinals and the $\mathrm{GCH}$},
  Proceedings of the Tarski Symposium, Proc. Sympos. Pure Math., vol. XXV,
  Amer. Math. Soc., 1971, pp.~365--372.

\end{thebibliography}

\end{document}